\documentclass[reqno]{amsart}
\usepackage[left=2.7cm,right=2.7cm,top=3.5cm,bottom=3cm]{geometry}
\usepackage[english]{babel}
\usepackage[utf8]{inputenc}
\usepackage[T1]{fontenc}
\usepackage{MnSymbol}
\usepackage{amsmath}
\usepackage{amsfonts}
\usepackage{amsthm}
\usepackage{upgreek}
\usepackage[usenames,dvipsnames]{color}
\usepackage{mathrsfs}
\usepackage[
    colorlinks = true,
    linkcolor = black,
    urlcolor  = black,
    citecolor = blue,
    bookmarksopen = true,
    backref = page
]{hyperref}

\theoremstyle{plain}
\newtheorem{thm}{Theorem}[section]
\newtheorem{cor}[thm]{Corollary}
\newtheorem{lem}[thm]{Lemma}
\newtheorem{prop}[thm]{Proposition}

\theoremstyle{definition}
\newtheorem{dfn}[thm]{Definition}
\newtheorem{eg}[thm]{Example}
\newtheorem{rmk}[thm]{Remark}

\newcommand{\field}[1]{\mathbb{#1}}
\newcommand{\Q}{\field{Q}}
\newcommand{\C}{\field{C}}

\newcommand{\R}{\field{R}}

\newcommand{\N}{\field{N}}
\newcommand{\Z}{\field{Z}}
\newcommand{\A}{\field{A}}
\newcommand{\F}{\field{F}}
\newcommand{\p}{\field{P}}
\newcommand{\G}{\field{G}}

\DeclareMathOperator{\Aut}{Aut}
\DeclareMathOperator{\rank}{rank}
\DeclareMathOperator{\Lie}{Lie}

\DeclareMathOperator{\Gal}{Gal}

\DeclareMathOperator{\Hom}{Hom}
\DeclareMathOperator{\id}{id}

\DeclareMathOperator{\Spec}{Spec}

\DeclareMathOperator{\Frac}{Frac}
\DeclareMathOperator{\Frob}{Frob}

\DeclareMathOperator{\pgl}{PGL}
\DeclareMathOperator{\PSL}{PSL}
\DeclareMathOperator{\sep}{sep}

\DeclareMathOperator{\gl}{GL}

\DeclareMathOperator{\Stab}{Stab}
\DeclareMathOperator{\pr}{pr}

\DeclareMathOperator{\Inn}{Inn}
\DeclareMathOperator{\Sl}{SL}
\newcommand{\et}{\mathrm{\acute{e}t}}

\newcommand{\im}{\text{Im}}

\newcommand{\cala}{\mathcal A}

\newcommand{\calu}{\mathcal U}
\newcommand{\calb}{\mathcal B}
\newcommand{\calc}{\mathcal C}

\newcommand{\dcal}{\mathcal D}

\newcommand{\calf}{\mathcal F}
\newcommand{\gcal}{\mathcal G}

\newcommand{\cali}{\mathscr I}

\newcommand{\lcal}{\mathcal L}

\newcommand{\calo}{\mathscr O}

\newcommand{\calt}{\mathscr T}
\newcommand{\calv}{\mathscr V}

\newcommand{\caly}{\mathscr Y}

\newcommand{\gota}{\mathfrak a}

\newcommand{\gotg}{\mathfrak g}

\newcommand{\gotm}{\mathfrak m}
\newcommand{\gotn}{\mathfrak n}
\newcommand{\gotp}{\mathfrak p}
\newcommand{\gotq}{\mathfrak q}

\newcommand{\goth}{\mathfrak h}

\renewcommand{\ge}{\geqslant}
\renewcommand{\le}{\leqslant}

\newcommand{\interior}[1]{%
	{\kern0pt#1}^{\mathrm{o}}%
}
\newtheorem{innercustomgeneric}{\customgenericname}
\providecommand{\customgenericname}{}
\newcommand{\newcustomtheorem}[2]{%
	\newenvironment{#1}[1]
	{%
		\renewcommand\customgenericname{#2}%
		\renewcommand\theinnercustomgeneric{##1}%
		\innercustomgeneric
	}
	{\endinnercustomgeneric}
}

\newcustomtheorem{customthm}{Theorem}

\numberwithin{equation}{section}

\title{Ax--Schanuel for the Drinfeld $j$-function}
\author{Gal Binyamini}
\address{Department of Mathematics, Weizmann Institute of Science, Israel}
\email{gal.binyamini@weizmann.ac.il}
\author{Dmitry Novikov}
\address{Department of Mathematics, Weizmann Institute of Science, Israel}
\email{dmitry.novikov@weizmann.ac.il}
\author{Francesco Maria Saettone}
\address{Department of Mathematics, Weizmann Institute of Science, Israel}
\email{francesco.saettone@weizmann.ac.il}

\begin{document}
	
\begin{abstract}
    We prove an analog of the Ax--Schanuel theorem for the Drinfeld $j$-function in odd characteristic. Roughly speaking, if the graph of $\boldsymbol j\colon\Omega^n\rightarrow\A^n_{\C_\infty}$ and its derivatives has an atypical intersection $\mathcal{V}$ with an algebraic variety, then $\mathcal{V}$ projects to a weakly-special subvariety in $\A^n_{\C_\infty}$.

    More generally, we prove a positive characteristic analog of the differential Galois theoretic Ax--Schanuel theorem of Bl\'azquez-Sanz, Casale, Freitag and Nagloo. Our main theorem for $\boldsymbol j$ follows by applying this result to a suitable foliation. A theorem of Pink allows us to conclude that the special varieties in the sense of Bl\'azquez-Sanz et al. in this context agree with the classical weakly-special subvarieties in the Drinfeld sense.    
\end{abstract}	

  \maketitle
  \setcounter{tocdepth}{2}
	\tableofcontents
    
\section{Introduction}

\medskip
\noindent\textbf{Functional transcendence.}\quad
A functional transcendence theorem governs the ways in which an algebraic variety can meet the graph of a transcendental uniformisation \emph{atypically}, i.e., in a larger dimension  than a naive count  predicts. The prototype is Ax's theorem for the complex exponential \cite{ax}: coordinates together with their exponentials have transcendence degree exceeding the rank of their differentials unless the coordinates obey a linear relation over the constants; geometrically, an algebraic variety meets the graph of $\exp$ atypically only along cosets of algebraic subgroups. For the elliptic modular function the corresponding result was established by Pila and Tsimerman \cite{pt}: the $j$ function, recorded together with its first two derivatives, is exactly as algebraically independent as the modular relations allow. For general Shimura varieties, Mok, Pila and Tsimerman \cite{pt-shimura} identified the sole source of atypicality with the weakly-special subvarieties. Theorems of this kind provide the functional transcendence input at the foundation of the Pila--Zannier method: an Ax--Schanuel statement is precisely what turns a counting estimate into geometry, and its strength dictates how far into the Zilber--Pink hierarchy the method can climb \cite{pilaAO,habegger-pila}.
 
\medskip
\noindent\textbf{Two approaches, and positive characteristic.}\quad
In characteristic zero, two routes to Ax--Schanuel theorems have proved especially effective. The first, developed by Pila--Tsimerman for the modular $j$-function \cite{pt} and by Mok--Pila--Tsimerman for Shimura varieties \cite{pt-shimura}, is $o$-minimal: the restriction of the uniformisation to a fundamental domain is definable in a tame (real) geometric structure, and the Pila--Wilkie counting theorem, applied to the orbit of the monodromy group, forces an atypical intersection to be stabilised by enough elements of the arithmetic group. The second, introduced by Bl\'azquez-Sanz, Casale, Freitag and Nagloo \cite{axschanuel}, is differential geometric: the uniformising differential equation is encoded as a flat connection on a principal bundle, atypical intersections are transported into the transverse direction of the horizontal foliation, and a sparsity theorem for algebraic subgroups---ultimately Lie theory in the semisimple group---confines the resulting transverse germ to a proper algebraic subgroup, over which the differential Galois group of the connection drops.
 
The present paper carries the second strategy into positive characteristic. A straightforward adaptation would find serious obstructions on both sides of the argument. The Drinfeld upper half-plane is a rigid-analytic space over $\C_\infty$, so the $o$-minimal approach has no immediate analog. In fact, the $o$-minimal approach relies on the real geometry underlying complex uniformisation: after identifying $\C$ with $\R^2$, the classical $j$-function restricted to an entire fundamental domain is definable in $\R_{\mathrm{an},\exp}$, as in \cite{pilaAO}. No analogous theory over $\C_\infty$ has yet been investigated. Although the rigid-analytic Pila--Wilkie theorem of Binyamini--Kato \cite{bk} provides an alternative counting tool, its affinoid setting does not by itself replace this global definability input (indeed, the Drinfeld fundamental domain  is not affinoid). On the differential side, ordinary derivations degenerate in characteristic $p$ (they annihilate all $p$-th powers, even nonconstant ones) and do not generate the differential algebra of the Ramanujan system in the Drinfeld setting. Even deeper than either obstruction is the failure of the Lie-theoretic mechanism behind sparsity: in characteristic $p$ an analytic subgroup germ is not governed by its tangent space, and sparsity may genuinely fail: indeed $\G_{a,\C_\infty}^{2}$ contains one-dimensional analytic subgroup germs that are Zariski dense (as shown in Example~\ref{eg}). We therefore replace the flat connection by the \emph{Hasse--Schmidt foliation} defined by the hyperderivatives of Drinfeld quasi-modular forms, the differential Galois group by the stabiliser of the Zariski closure of a Hasse--Schmidt leaf, and Lie-theoretic sparsity by a \emph{$p$-sparsity} theorem for $\pgl_{2,\C_\infty}^{\,n}$. With these substitutions the ideas of \cite{axschanuel} admit a positive characteristic extension.
 
\medskip
\noindent\textbf{The Drinfeld $j$-function.}\quad
By now, a well-known ``dictionary'' shows that  arithmetic of $F=\F_q(T)$ runs parallel to that of $\Q$, with $A=\F_q[T]$ in the role of $\Z$ and the completion $\C_\infty$ of an algebraic closure of $F_\infty=\F_q(\!(1/T)\!)$ in the role of $\C$. Rank-$2$ Drinfeld $A$-modules can be thought of as analogs of elliptic curves, and their rigid-analytic uniformisation presents each of them as $\C_\infty/(Az+A)$ for a point $z$ of the Drinfeld upper half-plane
\[
\Omega=\p^1(\C_\infty)-\p^1(F_\infty),
\]
the non-archimedean counterpart of the complex upper half-plane, now a rigid-analytic space. The modular invariant $j(z)=g(z)^{q+1}/\Delta(z)$ is rigid-analytic and $\gl_2(A)$-invariant, and identifies the coarse moduli line with the affine line, $Y(1)_{\C_\infty}\simeq\A^1_{\C_\infty}$; coordinatewise it gives the uniformisation
\[
\boldsymbol j\colon\Omega^n\rightarrow\A^n_{\C_\infty}
\]
of the $n$-fold self-product.
 
For us the decisive feature of $j$ is  its hyperdifferential algebra. The classical $j$ satisfies a third-order algebraic differential equation (equivalently, the Ramanujan system tying $E_2,E_4,E_6$ to their derivatives) and it is precisely this equation that drives the geometric proof of modular Ax--Schanuel. The Drinfeld $j$ obeys a rather faithful analog in which hyperderivatives replace derivatives. Let $g$ and $h$ be the Drinfeld modular forms of weights $q-1$ and $q+1$, normalised so that $\Delta=-h^{q-1}$, and let $E$ be the false Eisenstein series, the analog of the classical quasi-modular $E_2$. As studied in \cite{gekelerj}, writing $D_1$ for the first (normalised) hyperderivative, one has what we call the Drinfeld--Ramanujan system
\[
D_1E=E^2,\qquad D_1g=-(Eg+h),\qquad D_1h=Eh,
\]
and, by the theorem of Bosser and Pellarin \cite{bp}, the ring $\C_\infty[E,g,h]$ is stable under every hyperderivative $D_a$, while $E,g,h$ are algebraically independent over $\C_\infty$. This closed hyperdifferential system is the structure on which the whole argument turns; it is the positive characteristic replacement for the flat connection.
 
\medskip
\noindent\textbf{Main result.}\quad
The natural Ax--Schanuel statement records, alongside each $j(z_i)$, its first two hyperderivatives; we phrase it in the coordinates of the Ramanujan system. Let $\widetilde{\caly}:=\p^1_{\C_\infty}\times\A^3_{\C_\infty}$, with coordinates $(z,E,g,h)$; let $\pi_j^n\colon\widetilde{\caly}^{\,n}\dashrightarrow\A^n_{\C_\infty}$ be the rational map induced coordinatewise by the $j$-invariant; and let $\lcal\subset\widetilde{\caly}^{\,n}$ be a product Ramanujan leaf, the analytic leaf described by the uniformisation (Section~3).
 
\begin{customthm}{A}
Assume that $q$ is odd. Let $V_0\subset\widetilde{\caly}^{\,n}$ be an irreducible algebraic subvariety, let $\lcal$ be a product Ramanujan leaf, and let
$\calv\subset V_0\cap\lcal$ be an irreducible analytic subvariety. Set
$V:=\overline{\calv}^{\,\mathrm{Zar}}\subset V_0$. If
\[
\dim V<\dim\calv+3n
\]
then $\overline{\pi_j^n(V)}^{\,\mathrm{Zar}}\subset\A^n_{\C_\infty}$ is contained in
a proper weakly-special subvariety.
\end{customthm}

\begin{rmk}
The hypothesis that $q$ is odd is unfortunately required at several points of the proof. Here are some of them.
It ensures that the group scheme $\boldsymbol\mu_2$ acts trivially on the Ramanujan phase space, so that the action descends to $\pgl_2$, and that the degree $2$ phase space quotient \eqref{e:mu2-quotient} is \'etale.
In characteristic $2$, the central kernel is nonreduced and acts nontrivially on $g$ and $h$, preventing the former descent.
Moreover, the proof of $p$-sparsity uses the simplicity of
$\Lie(\pgl_2)$, which fails in characteristic $2$
(Lemmata~\ref{l:frobenius-goursat} and~\ref{l:pgl-p-sparse}).
On the arithmetic side, we also use that
$\Sl_2(F_\gotp)\rightarrow\pgl_2(F_\gotp)$ has open image
(Corollary~\ref{c:adelic-open-finite-index}).

Despite these obstructions, we believe that the even characteristic case should hold as well.
\end{rmk}
 
The exceptional loci are the expected ones, the Drinfeld counterparts of the weakly-special subvarieties of the classical modular Ax--Schanuel theorem. For $\gamma\in\gl_2(F)$, let $T_\gamma\subset Y(1)^2$ be the Zariski closure of the rigid-analytic curve $z\mapsto\bigl(j(z),j(\gamma z)\bigr)$; these are the Hecke correspondences on the Drinfeld modular curve, and they record isogeny relations between rank-$2$ Drinfeld modules. A \emph{weakly-special subvariety} of $Y(1)^n\simeq\A^n_{\C_\infty}$ is an irreducible component of a locus obtained by imposing finitely many equations $x_i=a_i$, with $a_i\in\C_\infty$, and finitely many relations $(x_i,x_j)\in T_\gamma$. The alternatives offered by Theorem~A coincide with the classical ones: constant coordinates and isogenies between the corresponding rank-$2$ Drinfeld modules.
 
Since $3n=\dim_{\C_\infty}\pgl_{2,\C_\infty}^{\,n}$, the hypothesis $\dim V<\dim\calv+3n$ is the classical Ax--Schanuel atypicality condition: the intersection of $V$ with the Ramanujan leaf exceeds its expected dimension $\dim V-3n$, the defect being measured against the dimension of the symmetry group of the uniformisation. On the open locus where $jD_1j\neq0$, the Ramanujan coordinates recover the first two hyperderivatives of $j$,
\[
\C_\infty\Bigl(E,\tfrac hg,\,gh\Bigr)=\C_\infty\bigl(j,\,D_1j,\,D_1^2j\bigr).
\]
These are the coordinates on the
$\boldsymbol\mu_2$-quotient by $(g,h)\mapsto(-g,-h)$; see Section~3.
Thus Theorem~A is an Ax--Schanuel theorem \emph{with derivatives} for the Drinfeld $j$-function, in its natural coordinates. Two special cases orient the reader. For $n=1$ the theorem amounts to the Zariski density of the lifted Ramanujan leaf in $\widetilde{\caly}$, which essentially amounts to the algebraic independence result of Bosser and Pellarin \cite{bp}, which indeed enters the proof (Lemma~\ref{l:basic-lifted-leaf-dense}); the substance of Theorem~A lies at $n\ge2$, where every relation across coordinates is forced to be an isogeny. At the other extreme, choosing $V_0$ to constrain only the $z$-coordinates, leaving the Ramanujan coordinates free, yields the Ax--Lindemann theorem for $\boldsymbol j$, strengthening the hyperbolic Ax--Lindemann theorem of our previous work \cite{bns}.
 
Formally, Theorem~A is the juxtaposition of two results: the abstract Hasse--Schmidt Ax--Schanuel theorem (Theorem~\ref{t:ax-schanuel}), which places $\overline{\pi_j^n(V)}^{\,\mathrm{Zar}}$ inside a proper \emph{HS-special} subvariety, i.e., a maximal irreducible subvariety over which the Hasse--Schmidt Galois group is a proper subgroup of $\pgl_{2,\C_\infty}^{\,n}$, and the result identifying HS-special subvarieties with weakly-special ones (Theorem~\ref{t:HS-weakly}). The first half is more geometric, the second more arithmetic in nature; we now describe each, signposting the points at which characteristic $p$ forces the argument off the characteristic zero path.

\medskip
\noindent\textbf{Strategy of the proof.}\quad
The geometric part of the proof is inspired by the ideas of
Bl\'azquez-Sanz--Casale--Freitag--Nagloo \cite{axschanuel}. Its broad
architecture is the same, but the argument by which it is carried out is
substantially new. After passing to the finite $\boldsymbol\mu_2$-quotient
of the Ramanujan phase space and making an equivariant birational change of
coordinates, we work, over a smooth open $X^\circ$ of the $j$-image $X$, on
a principal homogeneous space
\[
P_X=X^\circ\times G\rightarrow X^\circ
\]
where $G:=\pgl_{2,\C_\infty}^{\,n}$. The Hasse--Schmidt leaves on $P_X$
project locally isomorphically to $X^\circ$, and the $G$-coordinate, in a horizontal trivialization, therefore
records  the geometry transverse to the foliation. An atypical
intersection $\calv\subset V\cap\lcal$ gives, after translation to the
identity, a rigid-analytic image germ
\[
\Sigma\subset(G^{\mathrm{an}},e)
\]
with
\[
\dim\Sigma\le\dim V-\dim\calv<\dim G.
\]
The global d\'evissage is then to show that $\Sigma$ is a subgroup germ, to
place it in a proper algebraic subgroup $H\subsetneq G$, and to pass to the
quotient by $H$. The quotient produces a proper Hasse--Schmidt invariant
subvariety, inside which a minimal invariant subvariety has
Hasse--Schmidt Galois group strictly smaller than $G$. It follows that $X$
is HS-non-generic and hence lies in a proper HS-special subvariety.

The resemblance with \cite{axschanuel} is thus architectural rather than
technical. In characteristic zero, a connection form captures the
transverse geometry to first order and converts it into a proper Lie
subalgebra $\goth\subsetneq\Lie(G)$. Neither part of this mechanism survives
in the present setting: ordinary derivations do not encode the Drinfeld
Ramanujan system, and in characteristic $p$ an analytic subgroup germ is
not determined by its tangent space. Our proof therefore replaces the
first-order Lie algebra $\goth$ by the entire transverse germ $\Sigma$ and
establishes its group structure directly, to all orders.
More precisely, after shrinking $\calv$, the normalized transverse image
germs $\Sigma_v(V)$ agree with $\Sigma$ for $v\in\calv$. 
Using two copies of an enlarged Hasse--Schmidt foliation (as defined in \ref{ss:enlarged}), we express containment of transverse image germs as a condition on the
leaf intersection dimension.
Proposition~\ref{p:atypical-closed} then shows that, for
fixed $v_0\in V^\circ$, the locus
$\{v\in V^\circ:\Sigma_v(V)\subseteq\Sigma_{v_0}(V)\}$
is Zariski closed in a fixed open subset $V^\circ\subset V$
(Lemma~\ref{l:translated-image-jets-algebraic}).
The Zariski density of $\calv$ and equality of dimensions
therefore give $\Sigma_v(V)=\Sigma$ throughout $V^\circ$.
Changing the normalization of the transverse projection and applying the rigid analytic identity principle then gives stability under right division, hence the subgroup germ property (Lemma~\ref{l:translated-germ-constant}). This is the counterpart of a crucial step in
\cite[Lemma~3.10]{axschanuel}, where the image of the restricted connection form is shown to be generically constant and to form a proper Lie subalgebra of $\Lie(G)$.

Another novelty is our $p$-sparsity theorem for $G=\pgl_{2,\C_\infty}^{\,n}$. Since analytic subgroup germs in
characteristic $p$ are not controlled by their tangent spaces, its proof
cannot proceed through Lie theory. Instead, we establish a rigidity
theorem directly for the germs themselves. In two factors, a proper
subgroup germ with dominant coordinate projections is forced to lie in a
Frobenius-twisted graph; Goursat's lemma then extends the conclusion to
arbitrary $n$. This replaces the Lie-theoretic sparsity input of
\cite{axschanuel} by a general characteristic-$p$ argument.

Once $\Sigma\subset(H^{\mathrm{an}},e)$ has been obtained, the original
d\'evissage resumes. Let
\[
\rho\colon P_X\rightarrow H\backslash P_X
\]
be the quotient map and set
$Y:=\overline{\rho(V)}^{\,\mathrm{Zar}}$.
The germ of $Y$ at a general point of $\rho(\calv)$ is contained in a
quotient leaf. Since $Y$ dominates $X^\circ$ and the quotient leaves have
dimension $\dim X$, one obtains $\dim Y=\dim X$; the Zariski closedness of
the atypicality loci then implies that $Y$ is Hasse--Schmidt invariant.
Therefore $\rho^{-1}(Y)$ is a proper invariant subvariety of $P_X$.
A minimal invariant subvariety contained in $\rho^{-1}(Y)$ has
Hasse--Schmidt Galois group strictly smaller than $G$, and therefore $X$ is
contained in a proper HS-special subvariety. This proves
Theorem~\ref{t:ax-schanuel}.

The proof is therefore not a formal characteristic-$p$ transcription of
\cite{axschanuel}. It preserves the conceptual shape of the characteristic zero strategy, but replaces both of its decisive engines by new ones: the connection form and Lie-algebra arguments by an analysis of transverse image germs using enlarged Hasse--Schmidt foliations, and ordinary sparsity by the analogous notion of $p$-sparsity, which takes the Frobenius into account.

\medskip
\noindent\textbf{From HS-special to weakly-special.}\quad
Let $X\subset\A^n_{\C_\infty}$ be HS-non-generic, and let $\cala$ be an irreducible analytic component of $\boldsymbol j^{-1}(X)$. The stabiliser $\Delta_\cala^+\subset\pgl_2(A)^n$ of $\cala$, which is the analytic monodromy of $X$, preserves the leaf attached to $\cala$, hence lands in the Hasse--Schmidt Galois group and has proper Zariski closure $H_\cala\subsetneq\pgl_{2,\C_\infty}^{\,n}$ (Lemma~\ref{l:HS-monodromy-proper}). Against this properness we play an openness: for every coordinate on which $X$ is nonconstant, the corresponding projection of the monodromy is \emph{adelically} open in $\Sl_2(\mathbf A_F^\infty)$, which we prove by computing the monodromy of the congruence tower $Y(\gotn\gota)\rightarrow Y(\gotn)$ through the \'etale fundamental group and strong approximation (Lemma~\ref{l:coordinate-adelic-open-corrected}). Each one-coordinate projection of $H_\cala$ is therefore $\pgl_{2,\C_\infty}$, and Goursat's lemma produces a pair $i\neq j$ of coordinates with $\pr_{ij}(H_\cala)\subsetneq\pgl_{2,\C_\infty}^{\,2}$. For this global algebraic subgroup, the abstract Goursat argument and Van der Waerden's classification of the automorphisms of $\pgl_2(\C_\infty)$ give a graph twisted by a field automorphism; algebraicity and Lemma~\ref{l:analytic-field-automorphism} force that automorphism to be a power of the Frobenius. A direct  descent of the conjugating element then gives a relation $\pr_j(\gamma)=\delta_0\,\Frob^m(\pr_i(\gamma))\,\delta_0^{-1}$ on a finite index subgroup, with $\delta_0\in\pgl_2(F)$ (Lemmata~\ref{l:F-frobenius-graph} and~\ref{l:step3-frobenius-relation}).
 
Two steps remain, and both are arithmetic. The Frobenius twist is eliminated by a congruence level point counting at a prime $\gotp\neq \infty$: writing $q_\gotp:=|A/\gotp|$, adelic openness gives $q_\gotp^{3e-O(1)}$ residue classes in the $j$-th coordinate modulo $\gotp^e$, whereas a $p^m$-Frobenius graph gives at most $q_\gotp^{3e/p^m+O(1)}$ such classes; hence $m=0$ (Lemma~\ref{l:no-positive-frobenius-twist} and Lemma~\ref{p:projected-pair-isogeny}). And the untwisted graph is converted into geometry: choosing a prime $\gotp$ not dividing $\gotn$ and spreading the situation out over a finitely generated field, we compare the monodromy of the $\gotp$-power tower with the projective $\gotp$-adic Tate representations of the two coordinate Drinfeld modules (Lemma~\ref{l:tate-comparison-identity-components}); the graph relation makes these representations conjugate (Corollary~\ref{l:graph-monodromy-projective-tate}), and the Tate conjecture for Drinfeld modules, due to Taguchi and Tamagawa \cite{taguchi,tamagawa}, combined with \cite{pinkmt}, converts conjugacy into an isogeny---that is, into a Hecke correspondence $T_\gamma$ containing the $(i,j)$-image of $X$ (Lemma~\ref{l:projective-tate-conjugacy-isogeny-corrected}, Lemma~\ref{l:isogeny-hecke}). This proves Theorem~\ref{t:HS-weakly}.
 
\medskip
\noindent\textbf{Characteristic-$p$ phenomena.}\quad
It may be interesting to single out at least two features of the argument, for neither has a characteristic-zero counterpart. The first is the fragility of sparsity itself. Over $\C$, an analytic subgroup germ of a linear algebraic group is faithfully recorded by its Lie algebra; over $\C_\infty$, additive power series produce Zariski dense proper subgroup germs, so that $p$-sparsity is false in general and holds for $\pgl_{2}^{\,n}$ by virtue of a genuinely non-infinitesimal argument. The second is the persistence of the Frobenius: at every stage where characteristic zero sees only inner automorphisms, characteristic $p$ sees the Frobenius twists $\Inn(\delta)\circ\Frob^m$---among the ``exceptional'' subgroups of the sparsity theorem, and again among the possible relations between coordinate monodromies. In our previous work \cite{bns} the Frobenius twist was excluded by a counting argument: a finite index subgroup of $\F_q[T]$ cannot fit inside the sparser subring $\F_q[T^{q^m}]$. Here the same sparsity is measured in finite congruence quotients. For $q_\gotp:=|A/\gotp|$, adelic openness supplies $q_\gotp^{3e-O(1)}$ monodromy classes modulo $\gotp^e$, whereas a $p^m$-Frobenius graph supplies at most $q_\gotp^{3e/p^m+O(1)}$ classes. Thus a positive Frobenius twist is too ``sparse'' to carry the open local monodromy. The two mechanisms are avatars of a single principle, the ``sparsity'' of $p$-th powers, surfacing once at the level of points and once at the level of groups.
 
\medskip
\noindent\textbf{Earlier works.}\quad
As already mentioned, in characteristic zero, the modular Ax--Schanuel theorem is due to Pila--Tsimerman \cite{pt}, its extension to Shimura varieties to Mok--Pila--Tsimerman \cite{pt-shimura}, and the differential proof that serves as our template to Bl\'azquez-Sanz--Casale--Freitag--Nagloo \cite{axschanuel}. In positive characteristic, functional transcendence of Ax--Schanuel type has so far been available in the linear regime: Kowalski \cite{kow} obtains such statements by means of iterative Hasse--Schmidt derivations (but not in the Drinfeld setting), and the companion paper \cite{bns} proves Ax--Lindemann theorems, for Drinfeld exponentials and for $\boldsymbol j$, by direct point counting; Theorem~A strengthens the latter to a full Ax--Schanuel with derivatives, and, to our knowledge, it is the first Ax--Schanuel theorem in positive characteristic beyond the linear setting. It also supplies the transcendence input which, in the classical Pila--Zannier scheme, underlies statements of Andr\'e--Oort type, for products of Drinfeld modular curves a theorem of Breuer \cite{breuerCM}.

On the arithmetic side, our debt is to Pink's study of the adelic and Tate module monodromy of Drinfeld modules \cite{pinkmt,breuerpink}, which we use to convert the resulting monodromy graph into an isogeny.

\medskip
\noindent\textbf{Future directions.}\quad
Theorem~A supplies the functional transcendence input needed to carry the
Pila--Zannier strategy beyond Andr\'e--Oort. In \cite{bns} we developed a common rigid analytic framework for the two basic Drinfeld uniformisations, the Drinfeld exponential and the $j$-function: combining
Ax--Lindemann with the rigid-analytic Pila--Wilkie theorem of Binyamini--Kato, we proved Manin--Mumford for a product of two Drinfeld modules of equal rank and Andr\'e--Oort for a product of two Drinfeld
modular curves. In Section~\ref{ss:alao}, we deduce the hyperbolic Ax--Lindemann in arbitrary dimension, extending \cite[Theorem~3.11]{bns}, which allows to recover Andr\'e--Oort theorem for arbitrary products of Drinfeld modular curves (indeed in odd characteristic). Together with the finer height estimates required by the higher strata, Theorem~A also points towards analogs of  Zilber--Pink in $Y(1)^n$.

On the Drinfeld modules side, the parallel next step is an Ax--Schanuel theorem for the Drinfeld exponential, which will hopefully be studied by the authors in a forthcoming work. Combined with the counting framework of \cite{bns} and suitable height estimates, such a theorem would open a route  to higher unlikely intersections in products of
Drinfeld modules. Already for the Carlitz module, it would provide a crucial functional transcendence input toward the (full) analog of the Zilber--Pink conjecture of Brownawell and Masser \cite{brownawell-masser} for powers of the Carlitz module.

\medskip
\noindent\textbf{Organization of the paper.}\quad
Section~2 collects the background on Drinfeld modular curves: the Drinfeld upper half-plane, level structures and the congruence tower, Conrad's theory of irreducible components of rigid spaces---our substitute for elementary complex analytic component arguments---and the weakly-special subvarieties of $Y(1)^n$. In Section~3 the theorem is stated and proved. Its first part develops Hasse--Schmidt foliations on algebraic varieties, proves the Zariski closedness of the atypicality loci (Proposition~\ref{p:atypical-closed}), constructs the Drinfeld--Ramanujan foliation together with its $\pgl_2$-equivariance and the $\boldsymbol\mu_2$-quotient and principal homogeneous space model, introduces the Hasse--Schmidt Galois group, and establishes the $p$-sparsity theorem. The second part carries out the d\'evissage  and proves the abstract Ax--Schanuel theorem (Theorem~\ref{t:ax-schanuel}). The final part is the arithmetic comparison of HS-special with weakly-special subvarieties (Theorem~\ref{t:HS-weakly}): analytic monodromy, congruence towers and adelic openness, the Tate module comparison, and the two-coordinate criterion.

\subsubsection*{Conventions}
By a variety we shall mean an integral, separated scheme of finite type. Unless explicitly stated otherwise, an intersection of algebraic
subvarieties of a fixed variety is endowed with its reduced induced
structure. The same convention applies to intersections of algebraic
subgroups. Fiber products, kernels, stabilizer group schemes, torsion
group schemes, and scheme-theoretic images retain their usual
scheme-theoretic meaning.

\subsubsection*{Acknowledgements}
This work was partially done while the authors were at the Institute for Advanced Study in Princeton, and they would like to thank the institute for its hospitality and for providing excellent working conditions. G.B. was supported by the Marvin V. and Beverly J. Mielke Endowed Fund and the Infosys Member Fund, and D.N. was supported by the Kovner Member Fund. G.B. and F.M.S. were also supported by the European Union (ERC, SharpOS, 101087910) and by the Israel Science Foundation (grant No. 2067/23). D.N. was also supported by the Israel Science Foundation grant 1167/17 and by Minerva grant 714141. 

\subsubsection*{AI statement}
All of the new ideas of this work are due to the authors. {\em Chat GPT-6 Astra} was used to proofread the manuscript and correct small technical errors.

\section{The Drinfeld \texorpdfstring{$j$}{j}-function}

We recall the basic notation for Drinfeld modular curves and for the Drinfeld modular forms used below.  Our conventions are standard;
see \cite{drinfellipt,gekeler,pap} for Drinfeld modules and Drinfeld
modular curves, and \cite{gekelerj,gekeler2,bp} for Drinfeld modular and quasi-modular forms.

\subsection{Drinfeld modular curves}

\subsubsection{The Drinfeld upper half-plane}

Let
\[
A:=\F_q[T],
\qquad
F:=\F_q(T),
\]
and let $\infty$ be the place of $F$ corresponding to $1/T$.  We write
$F_\infty$ for the completion of $F$ at $\infty$ and $\C_\infty$ for the
completion of an algebraic closure of $F_\infty$.  The Drinfeld upper
half-plane is the rigid analytic space
\[
\Omega:=\p^1(\C_\infty)-\p^1(F_\infty).
\]
The group $\gl_2(F_\infty)$ acts on $\Omega$ by M\"obius action.  For $\gamma=
\begin{bmatrix}
a&b\\ c&d
\end{bmatrix}
\in \gl_2(F_\infty)$,
we write
$\gamma z:=\frac{az+b}{cz+d}$ and  $\lambda_\gamma(z):=cz+d$.
For $z\in\Omega$ one has $\lambda_\gamma(z)\neq0$.
For $z\in\Omega$, the lattice
\[
\Lambda_z:=Az+A\subset \C_\infty
\]
defines an isomorphism class of rank-$2$ Drinfeld $A$-modules over $\C_\infty$. Fix once and for all a fundamental period
$\widetilde\pi\in\C_\infty^\times$ of the Carlitz module $T\mapsto T+\uptau$. We take $\varphi^z$ to be the representative
associated with the lattice $\widetilde\pi\Lambda_z$ and write
\[
\varphi_T^z=
T+g(z)\uptau+\Delta(z)\uptau^2
\qquad
\text{for}
\quad
\uptau(x)=x^q.
\]
This gives rigid holomorphic functions $g$ and $\Delta$ on $\Omega$.
The corresponding Drinfeld $j$-invariant is
\[
j(z):=\frac{g(z)^{q+1}}{\Delta(z)}.
\]
\subsubsection{Drinfeld modular curves}

Let $\Gamma(1):=\pgl_2(A)$.
The quotient
\[
\Gamma(1)\backslash \Omega
\]
is the analytification of the coarse moduli curve of rank-$2$ Drinfeld
$A$-modules.  This curve is denoted by $Y(1)$.
The function $j$ is invariant under $\Gamma(1)$ and so it induces the following rigid-analytic isomorphism
\[
Y(1)_{\C_\infty}\simeq \A^1_{\C_\infty}.
\]
Thus, throughout the paper, we identify $Y(1)_{\C_\infty}$ with
$\A^1_{\C_\infty}$ using the coordinate $j$.

For $n\ge1$ we write
\[
\boldsymbol j\colon\Omega^n\rightarrow \A^n_{\C_\infty},
\qquad
(z_1,\ldots,z_n)\mapsto
\bigl(j(z_1),\ldots,j(z_n)\bigr).
\]
Under the identification $Y(1)_{\C_\infty}\simeq\A^1_{\C_\infty}$, this is
the analytic uniformization map of $Y(1)^n$.

\subsubsection{Level structures}

Let $\gotn\subset A$ be a non-zero proper ideal.  We denote by $\widetilde\Gamma(\gotn):=
\ker\bigl(\Sl_2(A)\rightarrow \Sl_2(A/\gotn)\bigr)$
the principal congruence subgroup of level $\gotn$, and by
\[
\Gamma(\gotn)\subset \pgl_2(A)
\]
its image.  We shall also use $\Gamma^+$
for the image of $\Sl_2(A)$ in $\pgl_2(A)$.  Thus
$\Gamma(\gotn)\subset\Gamma^+$ is a finite index normal subgroup.

We recall the moduli interpretation of this level.  Let $\varphi$ be a
rank-$2$ Drinfeld $A$-module over an $\F_q$-scheme $S$.
We write $\varphi[\gotn]:=
\bigcap_{a\in\gotn}\ker(\varphi_a)$
for the $\gotn$-torsion group scheme.  Since $A=\F_q[T]$ is a principal ideal domain, if $\gotn=(N)$ then
$\varphi[\gotn]=\ker(\varphi_N)$.

A full level-$\gotn$ structure on $\varphi$ is an $A$-linear map
\[
\alpha\colon (\gotn^{-1}/A)^2\rightarrow \varphi(S)
\]
such that, for every prime ideal $\gotp\supset\gotn$, the restriction of
$\alpha$ to $(\gotp^{-1}/A)^2$ satisfies
\[
\varphi[\gotp]
=
\sum_{x\in(\gotp^{-1}/A)^2}[\alpha(x)]
\]
as Cartier divisors on $\mathbb G_{a,S}$.  Here $[\alpha(x)]$ denotes the
section of $\mathbb G_{a,S}$ defined by the point $\alpha(x)$.

Over $\C_\infty$, the group scheme $\varphi[\gotn]$ is finite \'etale of
rank $|A/\gotn|^2$.  In this case the preceding divisor condition is
equivalent to saying that $\alpha$ induces an isomorphism of finite
$A$-module schemes
$((\gotn^{-1}/A)^2)_{\C_\infty}
\stackrel{\sim}{\rightarrow}
\varphi[\gotn]$.

For later use, we also recall the associated Tate modules; see
\cite[Section~1]{pinkmt}. Let $\varphi$ be a rank-$2$ Drinfeld $A$-module of generic characteristic over a field $K\supset F$, and let $\gotp\ne\infty$ be a prime of $A$. Its $\gotp$-adic {\em Tate module} is
\[
T_\gotp(\varphi):=\varprojlim_e\varphi[\gotp^e](K^{\sep})
\]
a free $A_\gotp$-module of rank $2$, carrying a continuous action of
$G_K=\Gal(K^{\sep}/K)$. We write
\[
V_\gotp(\varphi):=T_\gotp(\varphi)\otimes_{A_\gotp}F_\gotp
\]
for the rational Tate module and
$\rho_{\varphi,\gotp}\colon G_K\rightarrow\gl_2(F_\gotp)$ for the resulting
representation, defined up to conjugacy. Its projectivization will be denoted by $\bar\rho_{\varphi,\gotp}\colon G_K\rightarrow\pgl_2(F_\gotp)$.
Compatible full level-$\gotp^e$ structures amount to compatible bases of
the finite quotients of $T_\gotp(\varphi)$.

We shall work with compatible geometric connected components of the
full-level modular curves. Fix once and for all a point
$z_0\in\Omega$.
For every non-zero proper ideal $\gotm\subset A$, let $Y(\gotm)$ denote the
geometric connected component of the full level-$\gotm$ Drinfeld modular
curve selected by $z_0$. We choose these components compatibly as $\gotm$
varies. Their rigid analytifications are identified with
\[
Y(\gotm)^{\mathrm{an}}
\simeq
\Gamma(\gotm)\backslash\Omega.
\]

Let
\[
\pi_\gotm\colon\Omega\rightarrow Y(\gotm)^{\mathrm{an}}
\]
be the quotient map. Forgetting the whole level structure gives a finite
morphism
\[
\nu_\gotm\colon Y(\gotm)\rightarrow Y(1).
\]
Under the preceding uniformizations,
\[
\nu_\gotm^{\mathrm{an}}\circ\pi_\gotm=j.
\]

More generally, if $\gotm'\subset\gotm$ are non-zero proper ideals, the
level-forgetting morphism between the compatible components is denoted by
\[
\nu_{\gotm',\gotm}\colon
Y(\gotm')\rightarrow Y(\gotm).
\]
These maps satisfy
$\nu_\gotm\circ\nu_{\gotm',\gotm}=\nu_{\gotm'}$
and, rigid analytically,
$\nu_{\gotm',\gotm}^{\mathrm{an}}\circ\pi_{\gotm'}=
\pi_\gotm$.

For $N\ge1$, we use the product notation
\[
\pi_\gotm^N
:=
\pi_\gotm\times\cdots\times\pi_\gotm
\colon
\Omega^N\rightarrow Y(\gotm)^{N,\mathrm{an}},
\]
and similarly $\nu_\gotm^N$ and $\nu_{\gotm',\gotm}^N$. Thus
\[
(\nu_\gotm^N)^{\mathrm{an}}\circ\pi_\gotm^N
=
\boldsymbol j
\]
and
\[
(\nu_{\gotm',\gotm}^N)^{\mathrm{an}}\circ\pi_{\gotm'}^N
=
\pi_\gotm^N.
\]

For every non-zero proper ideal $\gotm$, let
\[
\kappa_\gotm\colon
\widetilde\Gamma(\gotm)\rightarrow\Gamma(\gotm)
\]
be the natural projection.

We use the standard rigid quotient construction
$\Omega\rightarrow\Gamma(\gotm)\backslash\Omega$: the action is discontinuous, the
quotient map is admissible-locally trivial, and its fibers are the
$\Gamma(\gotm)$-orbits. See
Drinfeld~\cite[Proposition~6.2]{drinfellipt}
and Gekeler~\cite[Chapters~I--II]{gekeler}.

\begin{lem}\label{l:level-tower-galois}
Assume that $q$ is odd. For every non-zero proper ideal $\gotm\subset A$,
the group $\widetilde\Gamma(\gotm)$ acts freely on $\Omega$, the map
\[
\kappa_\gotm\colon
\widetilde\Gamma(\gotm)\rightarrow\Gamma(\gotm)
\]
is an isomorphism, and the quotient map
\[
\pi_\gotm\colon\Omega\rightarrow Y(\gotm)^{\mathrm{an}}
\]
is admissible-locally an isomorphism.
Moreover, if $\gotm'\subset\gotm$ are non-zero proper ideals, then
\[
\nu_{\gotm',\gotm}\colon Y(\gotm')\rightarrow Y(\gotm)
\]
is a connected finite \'etale Galois cover with group
\[
\Gamma(\gotm)/\Gamma(\gotm')
\simeq
\widetilde\Gamma(\gotm)/\widetilde\Gamma(\gotm').
\]
\end{lem}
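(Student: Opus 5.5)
We prove the four assertions in order: freeness, injectivity of $\kappa_\gotm$, local triviality of $\pi_\gotm$, and the Galois property of $\nu_{\gotm',\gotm}$.

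\emph{Freeness.} Stabilisers of points of $\Omega$ in $\gl_2(A)$ are finite. This is the standard discreteness of the action \cite[Proposition~6.2]{drinfellipt}: the stabiliser of $z$ is the unit group of the order $\{\lambda\in\C_\infty:\lambda\Lambda_z\subset\Lambda_z\}$, which is finite. So let $\gamma\in\widetilde\Gamma(\gotm)$ fix $z\in\Omega$.

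\begin{itemize}
\item Writing $\gamma z=z$ gives $(cz+d)(z,1)^t=\gamma(z,1)^t$, so $\lambda_\gamma(z)$ is an eigenvalue of $\gamma$.
\item Since $\gamma$ preserves $\Lambda_z$, the scalar $\lambda_\gamma(z)$ is an automorphism of $\Lambda_z$. Hence it lies in $\overline{\F}_q^\times$, and in fact in $\F_{q^2}^\times$.
\item As $\det\gamma=1$, the eigenvalues are $\zeta,\zeta^{-1}$ with $\zeta$ a root of unity of order prime to $p$, so $\gamma$ is semisimple of finite order prime to $p$.
\item Reduce modulo a prime $\gotp\supset\gotm$. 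The reduction of $\gamma$ is the identity, so $\zeta\equiv1$ modulo a prime above $\gotp$. Roots of unity of order prime to $p$ inject into residue fields, so $\zeta=1$.
\item $\gamma$ is then semisimple with all eigenvalues $1$, hence $\gamma=1$.
\end{itemize}

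The same argument applied to $\pm\gamma$ shows that $-1\notin\widetilde\Gamma(\gotm)$: since $q$ is odd, $-1\not\equiv1\bmod\gotm$. The only scalars in $\Sl_2(A)$ are $\pm1$, so $\widetilde\Gamma(\gotm)$ meets the centre trivially. Therefore $\kappa_\gotm$, being surjective by definition, is an isomorphism.

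\emph{Local triviality of $\pi_\gotm$.} The action is discontinuous: $\Omega$ has an admissible covering by affinoids $U$ with $\{\gamma:\gamma U\cap U\neq\emptyset\}$ finite. Fix $z$. Since $\Gamma(\gotm)$ acts freely, for each of the finitely many $\gamma\neq1$ in this set we have $\gamma z\neq z$. We can therefore shrink $U$ to a smaller affinoid neighbourhood $U'$ of $z$ with $\gamma U'\cap U'=\emptyset$ for all $\gamma\ne1$. Then $\pi_\gotm|_{U'}$ is an isomorphism onto an admissible open, by the construction of the rigid quotient in Gekeler \cite[Chapters~I--II]{gekeler}. Covering $\Omega$ by such $U'$ proves that $\pi_\gotm$ is admissible-locally an isomorphism.

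\emph{The Galois cover.} $\Gamma(\gotm')$ is normal in $\Gamma(\gotm)$, since it is a kernel of reduction, and the quotient is finite. The group $\Gamma(\gotm)/\Gamma(\gotm')$ acts on $\Gamma(\gotm')\backslash\Omega$ with quotient $\Gamma(\gotm)\backslash\Omega$. This action is free: if $\gamma\Gamma(\gotm')$ fixes the class of $z$, then $\gamma\delta z=z$ for some $\delta\in\Gamma(\gotm')$, and freeness of $\Gamma(\gotm)$ gives $\gamma\delta=1$. Hence $\nu^{\mathrm{an}}_{\gotm',\gotm}$ is a finite étale Galois covering with group $\Gamma(\gotm)/\Gamma(\gotm')$. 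It is connected because $\Omega$ is connected. The isomorphism $\Gamma(\gotm)/\Gamma(\gotm')\simeq\widetilde\Gamma(\gotm)/\widetilde\Gamma(\gotm')$ follows from the isomorphisms $\kappa_\gotm$ and $\kappa_{\gotm'}$.

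To pass to the algebraic statement, note that $\nu_{\gotm',\gotm}$ is a finite morphism of algebraic curves. By rigid GAGA, being étale and having the automorphism group act simply transitively on geometric fibres can both be checked after analytification. Connectedness also transfers, since the analytification of a connected variety is connected.

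The step I expect to be the main obstacle is the freeness argument. One must make sure that the stabiliser genuinely consists of finite-order elements of order prime to $p$. The point is that unipotent elements act without fixed points on $\Omega$, because their fixed points lie in $\p^1(F_\infty)$. This is where the hypothesis that $q$ is odd enters, through $-1\not\equiv1$.
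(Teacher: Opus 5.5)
Your proposal is correct and follows the paper's proof step for step. It uses $q$ odd in the same way, to get $-I\notin\widetilde\Gamma(\gotm)$ since $2\in A^\times$. It appeals to the admissible-local structure of the rigid quotient together with freeness, and transports the analytic deck group $\Gamma(\gotm)/\Gamma(\gotm')$ to $\nu_{\gotm',\gotm}$. The only difference is that you prove freeness yourself, via the unit group of the multiplier order of $\Lambda_z$, where the paper simply cites Gekeler.

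One small point in your freeness argument. When $\zeta=\pm1$, semisimplicity of $\gamma$ does not follow from the eigenvalues alone. It does follow from your own identification of $\Stab_{\gl_2(A)}(z)$ with a subgroup of $\F_{q^2}^\times$ via the injective map $\gamma\mapsto\lambda_\gamma(z)$, which even gives $\gamma=1$ directly once $\zeta=1$. Note also that freeness itself does not use that $q$ is odd.
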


\begin{proof}
The freeness of the action of the principal congruence subgroup
$\widetilde\Gamma(\gotm)$ on $\Omega$, for $\gotm\ne A$, is standard;
see \cite[Chapter~I]{gekeler}. The rigid-analytic uniformization and its
compatibility with level-forgetting morphisms are given in
\cite[Section~6]{drinfellipt} and \cite[Chapter~I]{gekeler}.

Since $q$ is odd, one has
$-I\notin\widetilde\Gamma(\gotm)$. Indeed, otherwise
$-I\equiv I\bmod{\gotm}$, so $2\in\gotm$, which is impossible because
$2\in A^\times$ and $\gotm$ is proper. Therefore
$\kappa_\gotm\colon
\widetilde\Gamma(\gotm)\rightarrow\Gamma(\gotm)$
is an isomorphism. The admissible-local description of the rigid quotient,
together with freeness, shows that $\pi_\gotm$ is admissible-locally an
isomorphism.

Now let $\gotm'\subset\gotm$. The subgroup
$\Gamma(\gotm')$ is normal in $\Gamma(\gotm)$. Hence the map between the
chosen analytic components is
$\Gamma(\gotm')\backslash\Omega
\rightarrow
\Gamma(\gotm)\backslash\Omega$.
It is a connected finite \'etale Galois cover with deck group
$\Gamma(\gotm)/\Gamma(\gotm')$. By compatibility of the chosen components,
this is the analytification of
$\nu_{\gotm',\gotm}\colon Y(\gotm')\rightarrow Y(\gotm)$.
Finally, the isomorphisms $\kappa_\gotm$ and $\kappa_{\gotm'}$ identify the
deck group with
$\widetilde\Gamma(\gotm)/\widetilde\Gamma(\gotm')$.
\end{proof}

\subsubsection{Rigid irreducible components over \texorpdfstring{$\C_\infty$}{C-infinity}}

We use Conrad's terminology for irreducible components of rigid analytic
spaces \cite{conrad-irred}. Throughout, rigid analytic spaces are taken over
the complete algebraically closed non-archimedean field $\C_\infty$. Let $X$
be a rigid analytic space over $\C_\infty$, and let
$\nu\colon \widetilde X\rightarrow X$
be the normalization of $X$. The irreducible components of $X$ are the
reduced analytic subsets
$\nu(\widetilde X_i)\subset X$,
where $\widetilde X_i$ runs through the connected components of
$\widetilde X$. We say that $X$ is irreducible if $X$ is non-empty and has a
unique irreducible component. An analytic subset $Z\subset X$ is irreducible
if it is irreducible with its reduced rigid analytic structure
\cite[Definition~2.2.2]{conrad-irred}.

We shall use the following consequences. First, if
$U\subset X$ is a non-empty admissible open, then every irreducible component
of $U$ is contained in a unique irreducible component of $X$, and the
intersection of an irreducible component of $X$ with $U$ is a possibly empty
union of irreducible components of $U$ \cite[Cor.~2.2.9]{conrad-irred}.
Second, if $X_0$ is a non-empty locally finite type $\C_\infty$-scheme, then
the irreducible components of $X_0^{\mathrm{an}}$ are  the
analytifications of the irreducible components of $X_0$
\cite[Theorem~2.3.1]{conrad-irred}. Third, if $X$ is normal, then its
irreducible components are its connected components. In particular, since
smooth rigid analytic spaces over $\C_\infty$ are normal, every non-empty
connected smooth rigid analytic space over $\C_\infty$ is irreducible.

Finally, if $x$ is a smooth point of a rigid analytic space $X$ over
$\C_\infty$, then $\calo_{X,x}$ is a regular local ring, hence a domain. Thus $\Spec(\calo_{X,x})$ is irreducible. By
\cite[p.~496]{conrad-irred}, this is
equivalent to saying that there is exactly one irreducible component of $X$
passing through $x$. This observation is used below whenever we pass between
smooth level branches and irreducible analytic components of preimages under
$\boldsymbol j$.

\begin{lem}
\label{l:smooth-branch-bridge}
Assume that $q$ is odd. Let
$X\subset Y(1)^N_{\C_\infty}$
be irreducible, let $\gotn\subset A$ be a non-zero proper ideal, and let
$B\subset(\nu_\gotn^N)^{-1}(X)$
be a non-empty smooth irreducible open subset. Consider $U:=(\pi_\gotn^N)^{-1}(B^{\mathrm{an}})$.
Then $U$ is a smooth admissible open subset of
$\boldsymbol j^{-1}(X^{\mathrm{an}})$. Every connected component
$\cala'$ of $U$ is irreducible and is contained in a unique irreducible
component
\[
\cala\subset\boldsymbol j^{-1}(X^{\mathrm{an}}).
\]
Moreover,
\[
\Stab_{\Gamma(\gotn)^N}(\cala')
\subset
\Stab_{(\Gamma^+)^N}(\cala).
\]
\end{lem}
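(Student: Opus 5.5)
The argument has four steps: openness and smoothness of $U$, irreducibility of its connected components, containment in a unique component of the preimage, and the stabiliser inclusion.

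\emph{Step 1: $U$ is a smooth admissible open.} By Lemma~\ref{l:level-tower-galois}, the product map $\pi_\gotn^N\colon\Omega^N\rightarrow Y(\gotn)^{N,\mathrm{an}}$ is admissible-locally an isomorphism. Since $B$ is open in $(\nu_\gotn^N)^{-1}(X)$, its analytification $B^{\mathrm{an}}$ is an admissible open of $\bigl((\nu_\gotn^N)^{-1}(X)\bigr)^{\mathrm{an}}$. Using $(\nu_\gotn^N)^{\mathrm{an}}\circ\pi_\gotn^N=\boldsymbol j$, we get
\[
(\pi_\gotn^N)^{-1}\bigl(((\nu_\gotn^N)^{-1}(X))^{\mathrm{an}}\bigr)=\boldsymbol j^{-1}(X^{\mathrm{an}}).
\]
Hence $U$ is an admissible open of $\boldsymbol j^{-1}(X^{\mathrm{an}})$. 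Because $\pi_\gotn^N$ is admissible-locally an isomorphism, $U$ is locally isomorphic to $B^{\mathrm{an}}$. The space $B^{\mathrm{an}}$ is smooth because $B$ is, so $U$ is smooth.

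\emph{Step 2: connected components of $U$ are irreducible.} A connected component $\cala'$ of $U$ is a non-empty connected smooth rigid space. By the third consequence of Conrad's theory recalled above, it is irreducible.

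\emph{Step 3: containment in a unique irreducible component.} Apply Conrad's first consequence \cite[Cor.~2.2.9]{conrad-irred} to the admissible open $U\subset\boldsymbol j^{-1}(X^{\mathrm{an}})$. Since $\cala'$ is an irreducible component of $U$, it lies in a unique irreducible component $\cala$ of $\boldsymbol j^{-1}(X^{\mathrm{an}})$.

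\emph{Step 4: the stabiliser inclusion.} Take $\gamma\in\Stab_{\Gamma(\gotn)^N}(\cala')$.
\begin{itemize}
\item First, $\gamma$ preserves $\boldsymbol j^{-1}(X^{\mathrm{an}})$, since $\boldsymbol j$ is invariant under $\pgl_2(A)^N$.
\item Second, $\gamma$ preserves $U$, since $\pi_\gotn^N$ is $\Gamma(\gotn)^N$-invariant.
\item Since $\gamma$ acts on $\boldsymbol j^{-1}(X^{\mathrm{an}})$ by an analytic automorphism, it permutes the irreducible components; this uses functoriality of normalization. So $\gamma\cala$ is an irreducible component containing $\gamma\cala'=\cala'$.
\item By the uniqueness in Step 3, $\gamma\cala=\cala$.
\end{itemize}
Finally $\Gamma(\gotn)\subset\Gamma^+$, which gives the inclusion of stabilisers.

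\emph{Main obstacle.} The delicate point is Step 1's identification of $U$ as an admissible open of the preimage. This requires checking that the analytification of the open immersion $B\hookrightarrow(\nu_\gotn^N)^{-1}(X)$ is an admissible open immersion. It also requires that pulling back along the local isomorphism $\pi_\gotn^N$ preserves admissibility and smoothness, where the reduced structure on $\boldsymbol j^{-1}(X^{\mathrm{an}})$ must be compared with that on the level-$\gotn$ preimage. I would handle this by working admissible-locally, on opens where $\pi_\gotn^N$ is an isomorphism onto its image.
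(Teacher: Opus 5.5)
Your proposal is correct and follows the paper's proof step by step. The steps match: admissible openness from the factorisation $\boldsymbol j=(\nu_\gotn^N)^{\mathrm{an}}\circ\pi_\gotn^N$; smoothness because $\pi_\gotn^N$ is admissible-locally an isomorphism; normality making connected components irreducible; Conrad's Cor.~2.2.9 for the unique ambient component $\cala$; and that uniqueness forcing $\gamma\cala=\cala$. You also make explicit a detail the paper leaves implicit, namely that $\gamma$ preserves $\boldsymbol j^{-1}(X^{\mathrm{an}})$ by $\pgl_2(A)^N$-invariance of $\boldsymbol j$, and this is exactly what makes $\gamma\cala$ an irreducible component.
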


\begin{proof}
Since $\boldsymbol j=(\nu_\gotn^N)^{\mathrm{an}}\circ\pi_\gotn^N$,
the space $U$ is an admissible open subset of
$\boldsymbol j^{-1}(X^{\mathrm{an}})$. By
Lemma~\ref{l:level-tower-galois}, $\pi_\gotn^N$ is
admissible-locally an isomorphism; hence $U$ is smooth.

Thus $U$ is normal, so its connected components are irreducible. By the
component results recalled above, each such component $\cala'$ is contained
in a unique irreducible component $\cala$ of
$\boldsymbol j^{-1}(X^{\mathrm{an}})$.

If $\gamma\in\Gamma(\gotn)^N$ stabilizes $\cala'$, then $\gamma\cala$ is
another irreducible component containing
$\gamma\cala'=\cala'$. Uniqueness gives $\gamma\cala=\cala$. Since
$\Gamma(\gotn)\subset\Gamma^+$, the claimed inclusion follows.
\end{proof}

\subsubsection{weakly-special subvarieties}

For $\gamma\in\gl_2(F)$, the analytic map
\[
\Omega\rightarrow Y(1)^2\simeq\A^2_{\C_\infty},
\qquad
z\mapsto \bigl(j(z),j(\gamma z)\bigr)
\]
has algebraic image.  We denote its Zariski closure by
\[
T_\gamma\subset Y(1)^2.
\]
These are the Hecke correspondences on the Drinfeld modular curve $Y(1)$.
Equivalently, they record isogeny relations between rank-$2$ Drinfeld
$A$-modules.

A {\em weakly-special subvariety} of
$Y(1)^n\simeq\A^n_{\C_\infty}$
is an irreducible component of a subvariety cut out by finitely many conditions of the following two kinds:
\[
x_i=a_i
\qquad
\text{for}
\quad
a_i\in\C_\infty
\]
and
\[
(x_i,x_j)\in T_\gamma
\qquad
\text{for}
\quad
\gamma\in\gl_2(F),\ \text{with }i\neq j.
\]

\begin{lem}
\label{l:isogeny-hecke}
Let $X\subset Y(1)^N_{\C_\infty}$ be irreducible. Suppose that, for some
$i\ne j$, the two generic rank-$2$ Drinfeld modules attached to the $i$-th and
$j$-th coordinate projections of $X$ are geometrically isogenous. Then $X$ is
contained in a proper weakly-special subvariety.
\end{lem}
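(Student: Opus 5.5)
Write $x_i,x_j$ for the coordinates and $\pr_{ij}\colon Y(1)^N\rightarrow Y(1)^2$ for the projection. The plan is to show that $\overline{\pr_{ij}(X)}^{\,\mathrm{Zar}}$ lies in a single Hecke correspondence $T_\gamma$. Then $X$ lies in an irreducible component of $\pr_{ij}^{-1}(T_\gamma)=\{(x_i,x_j)\in T_\gamma\}$, which is weakly-special. It is proper because $T_\gamma$ is a curve in $Y(1)^2$, so the cut-out locus has codimension one.

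\emph{Degenerate case.} If either coordinate projection of $X$ is constant, say $x_i\equiv a_i$ on $X$, then $X$ lies in the proper weakly-special subvariety $\{x_i=a_i\}$ and we are done. So assume both $\pr_i(X)$ and $\pr_j(X)$ are dominant onto $Y(1)$. Let $K=\C_\infty(X)$, let $\bar K$ be an algebraic closure, and let $\varphi^{(i)},\varphi^{(j)}$ be rank-$2$ Drinfeld modules over $\bar K$ with $j$-invariants $x_i,x_j$. The hypothesis supplies an isogeny $u\colon\varphi^{(i)}\rightarrow\varphi^{(j)}$ over $\bar K$. Its kernel is a finite $A$-submodule of $\varphi^{(i)}[\gotn]$ for some nonzero $\gotn$. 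Two facts follow:
\begin{itemize}
\item $x_j$ is algebraic over $\C_\infty(x_i)$;
\item the image curve $C:=\overline{\pr_{ij}(X)}^{\,\mathrm{Zar}}$ is an irreducible curve in $Y(1)^2$ dominating both factors.
\end{itemize}

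\emph{Identifying $C$ with a Hecke curve.} Choose a point $z\in\Omega$ with $j(z)$ a general point of $\pr_i(X)$, and choose a point of $X$ above it. Specializing $\varphi^{(i)}$, $\varphi^{(j)}$ and $u$ there gives an isogeny $\varphi^{z}\rightarrow\psi$ with $j(\psi)=x_j$. The first step is to keep the isogeny defined after specialization, by spreading out over a dense open subset of $X$. Over $\C_\infty$, isogenous lattices are commensurable up to scaling, so $\psi\cong\varphi^{w}$ with $\Lambda_w\supset c\Lambda_z$ of finite index. Equivalently $w=\gamma z$ for some $\gamma\in\gl_2(F)$, so the specialized point is $(j(z),j(\gamma z))\in T_\gamma$.

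The main obstacle is that $\gamma$ a priori depends on the point. To handle it, I would bound the degree of the isogeny using the kernel size $|A/\gotn|^2$, which is fixed by the generic isogeny. This leaves only finitely many $\gamma$ up to $\gl_2(A)$ and scalars, namely the cosets of matrices in $M_2(A)$ with determinant dividing a fixed $N\in A$. Hence $C(\C_\infty)$ lies in a finite union of the curves $T_\gamma$ on a Zariski dense set of points. Since $C$ is irreducible, $C\subset T_\gamma$ for one $\gamma$, and as both are curves, $C=T_\gamma$. This finishes the argument.
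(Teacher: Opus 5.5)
Your proposal is correct. It reaches the Hecke relation by a different route from the paper, though the last step is the same. The paper stays at the generic point. After passing to a level cover, it notes that the geometric isogeny has finite $A$-degree, and invokes the moduli interpretation of Drinfeld Hecke correspondences to conclude directly that the generic point of $\pr_{ij}(X)$ lies on some $T_\gamma$. It then finishes exactly as you do: $X$ is irreducible, the component of $\pr_{ij}^{-1}(T_\gamma)$ containing $X$ is weakly special, and it is proper because $T_\gamma\neq Y(1)^2$. You instead specialize the generic isogeny to $\C_\infty$-points. There you use the lattice description of isogenies to land on $(j(z),j(\gamma z))$, and you still need three further inputs. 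First, the possible $\gamma$ must be uniformly finite: they correspond to intermediate lattices $c\Lambda_z\subset\Lambda'\subset a^{-1}c\Lambda_z$, which are finitely many up to $\gl_2(A)$ and scalars, independently of $z$. Second, the specialization points must be Zariski dense. Third, you use the irreducibility of $C$. What your route buys is that it only uses the paper's own analytic definition of $T_\gamma$, as the Zariski closure of $z\mapsto(j(z),j(\gamma z))$. The paper's one-line argument tacitly identifies this analytic curve with the algebraic, moduli-theoretic Hecke correspondence so that it can be applied at a non-closed point. The price you pay is a spreading-out step and the uniform bound. One small correction is needed there. The isogeny $u$ is only defined over a finite extension $L$ of $K=\C_\infty(X)$, so you must spread it out over a dense open of a variety with function field $L$ that is finite over a dense open of $X$, not over a dense open of $X$ itself. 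You should spread out, together with $u$, a map $\hat u$ with $\hat u\circ u=\varphi^{(i)}_a$; this guarantees that every specialized kernel stays inside $\varphi[a]$. The images of these points are still Zariski dense in $C$, so the rest of your argument goes through unchanged.
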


\begin{proof}
After passing to sufficiently high level, take the level component above $X$ and
its generic point. A geometric isogeny between the two generic rank-$2$ Drinfeld
modules has finite $A$-degree. By the moduli interpretation of Drinfeld Hecke
correspondences, the generic point of $\pr_{ij}(X)$ lies on some
$T_\gamma\subset Y(1)^2$ with $\gamma\in\gl_2(F)$. Since $T_\gamma$ is Zariski
closed and $X$ is irreducible, we have 
$X\subset\pr_{ij}^{-1}(T_\gamma)$.
The irreducible component of $\pr_{ij}^{-1}(T_\gamma)$ containing $X$ is weakly
special by definition, and it is proper because $T_\gamma\ne Y(1)^2$.
\end{proof}

\section{Ax--Schanuel}

\subsection{Hasse--Schmidt structures}

\subsubsection{Hasse--Schmidt derivatives and jets}\label{s:hs-jets}

Let $U\subset \Omega$ be an admissible open, let
$f\in\calo_\Omega(U)$ be a rigid holomorphic function, and let $z\in U$.  The {\em Hasse--Schmidt derivatives}, or {\em hyperderivatives}, of $f$ are defined by 
\[
f(z+\varepsilon)
=\sum_{a\ge 0}(\dcal_a f)(z)\varepsilon^a
\]
for $|\varepsilon|_\infty$ sufficiently small.  Equivalently, if  $f(z)=\sum_{m\ge 0}a_mz^m$,
then $\dcal_a f(z)=\sum_{m\ge a}\binom{m}{a}a_mz^{m-a}$.

The linear operators $\{\dcal_a\}_{a\ge0}$ form an iterative derivation:
\begin{equation}\label{e:leibniz}
\dcal_0=\id,\qquad
\dcal_a(fg)=\sum_{b+c=a}(\dcal_bf)(\dcal_cg),
\qquad
\dcal_a\circ\dcal_b=\binom{a+b}{a}\dcal_{a+b}
\end{equation}
where the middle formula amounts to their Leibniz rule.
Moreover, the ring of holomorphic functions on $\Omega$ is closed under $\dcal_n$, as mentioned in \cite[p.~16]{bp}.

The $m$-th Hasse--Schmidt jet of $f$ at $z$ is the truncated  expansion
\[
\operatorname{jet}^m_z(f)
:=
\sum_{a=0}^{m}(\dcal_a f)(z)\varepsilon^a
\in \C_\infty[\varepsilon]/(\varepsilon^{m+1}).
\]
In particular, for $f\in\calo_{U,z}$, one has  $f\in\gotm_z^{m+1}$ if and only if $(\dcal_a f)(z)=0$ for every $0\le a\le m$, where $\gotm_z$ is the maximal ideal of $\calo_{U,z}$.

As in \cite{bp}, we shall use the normalized hyperderivatives
\[
D_a:=(-\widetilde\pi)^{-a}\dcal_a.
\]
For a tuple $F=(f_1,\ldots,f_s)\in\calo_\Omega(U)^s$
we write
\[
J_D^m(F)(z)
:=
\bigl((D_a f_i)(z)\bigr)_{
\substack{1\le i\le s\\0\le a\le m}}
\in \A_{\C_\infty}^{s(m+1)}.
\]

\subsubsection{Hasse--Schmidt foliation}\label{ss:hs-foliation}

Let $(R,\mathfrak m)$ be a Noetherian local ring, and let $I\subset R$ be an
ideal. For $k\ge 0$, set
\[
J_k(R):=R/\gotm^{k+1}
\]
and let $j_k\colon R\rightarrow J_k(R)$
be the canonical projection. We denote the length of $R/I$ by 
$\ell_R(R/I)$.

We record the following standard algebraic lemma.

\begin{lem}\label{l:multiplicity-local}
We have $\ell_R(R/I)>k$ if and only if $\ell_R\bigl(J_k(R)/j_k(I)\bigr)>k$.
\end{lem}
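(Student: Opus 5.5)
We compare the two quotients through the surjection $J_k(R)/j_k(I) = R/(I+\gotm^{k+1})$. Since $j_k(I)$ is the image of $I$ in $R/\gotm^{k+1}$, there is a canonical identification $J_k(R)/j_k(I)\simeq R/(I+\gotm^{k+1})$. Lengths over $J_k(R)$ and over $R$ agree for modules annihilated by $\gotm^{k+1}$, so it suffices to show
\[
\ell_R(R/I)>k \iff \ell_R\bigl(R/(I+\gotm^{k+1})\bigr)>k .
\]

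The direction ``$\Leftarrow$'' is immediate. The quotient $R/(I+\gotm^{k+1})$ is a quotient of $R/I$, so its length is at most $\ell_R(R/I)$, with the convention that the latter may be $\infty$.

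For ``$\Rightarrow$'', consider the descending chain of ideals
\[
R=I+\gotm^0\supseteq I+\gotm\supseteq I+\gotm^2\supseteq\cdots\supseteq I+\gotm^{k+1}.
\]
Set $M_i:=(I+\gotm^i)/(I+\gotm^{i+1})$. The key claim is that if $M_i=0$ for some $i\le k$, then $I+\gotm^{i}=I+\gotm^{i+1}$. Multiplying this equality by $\gotm$ gives $I+\gotm^{i+1}=I+\gotm^{i+2}$, and inductively $I+\gotm^{i}=I+\gotm^{N}$ for all $N\ge i$. By Krull's intersection theorem applied to the Noetherian local ring $R/I$, we get $\bigcap_N(I+\gotm^N)=I$, so $I+\gotm^i=I$. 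Then $R/I=R/(I+\gotm^{i})$, which has length $\le i\le k$ since each $M_{i'}$ for $i'<i$ is a nonzero module of finite length.

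Now argue as follows.
\begin{enumerate}
\item If all $M_0,\dots,M_k$ are nonzero, then each has length $\ge1$, so $\ell_R\bigl(R/(I+\gotm^{k+1})\bigr)\ge k+1>k$.
\item Otherwise, by the claim, $R/(I+\gotm^{k+1})=R/I$. The two lengths then coincide, and the equivalence is trivial.
\end{enumerate}
In either case, $\ell_R(R/I)>k$ implies $\ell_R\bigl(R/(I+\gotm^{k+1})\bigr)>k$.

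The only delicate point is the stabilization step, which uses Krull's intersection theorem, and this is where Noetherianity is needed. The case $\ell_R(R/I)=\infty$ is covered by step 1: if the chain stabilized, $R/I$ would have finite length.
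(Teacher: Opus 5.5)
Your proof is correct and is essentially the paper's. Your ideals $I+\gotm^i$ are the preimages in $R$ of the powers $\gotn^i$ of $\gotn=\gotm A$ in $A=R/I$ that the paper uses. The differences are minor: you deduce $\gotm^i\subseteq I$ from the stabilization via Krull's intersection theorem, whereas the paper applies Nakayama directly to $\gotn^i=\gotn\cdot\gotn^i$, and you argue directly rather than by contrapositive.

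One aside is false, though your case analysis never uses it: it only needs $R/I=R/(I+\gotm^{k+1})$. The claim that $R/(I+\gotm^i)$ has length $\le i$ fails, for example, for $R=\C_\infty[\![x,y]\!]$, $I=\gotm^2$, $i=k=2$, where the length is $3$. The nonzero successive quotients only give length $\ge i$, and finiteness of the length is all you actually need.
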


\begin{proof}
Since $J_k(R)/j_k(I)\simeq R/(I+\gotm^{k+1})$,
the ``if'' implication is immediate.

Conversely, suppose $\ell_R\bigl(R/(I+\gotm^{k+1})\bigr)\le k$. Let us set $A:=R/I$ and $\gotn:=\gotm A$.
Then
\[
A/\mathfrak n^{k+1}\simeq R/(I+\mathfrak m^{k+1}).
\]
If $\gotn^{k+1}\neq 0$, then the chain
$A\supset \gotn\supset \gotn^2\supset\cdots\supset \gotn^{k+1}$
has $k+1$ nonzero successive quotients $\gotn^i/\gotn^{i+1}$, for $0\le i\le k$. Indeed, if $\gotn^i/\gotn^{i+1}=0$ for some $i\le k$, then $\gotn^i=\gotn\gotn^i$, and Nakayama's lemma gives $\gotn^i=0$, contradicting $\gotn^{k+1}\neq0$. Hence $\ell_R(A/\gotn^{k+1})\ge k+1$,
contrary to the assumption. Therefore $\gotn^{k+1}=0$, i.e., $\gotm^{k+1}\subset I$. Thus
\[
R/I\simeq R/(I+\mathfrak m^{k+1})
\]
and consequently $\ell_R(R/I)\le k$.
\end{proof}

We now introduce the form of Hasse--Schmidt foliation that will be used later on.
Let $X$ be an algebraic variety over $\C_\infty$. A rank-$r$ {\em Hasse--Schmidt foliation} $\calf$ on $X$ is given, on every affine open
$U=\Spec A\subset X$, by operators
$D_\alpha\colon A\rightarrow A$ for $\alpha\in\N^r$. Let $\mathbf t=(t_1,\ldots,t_r)$ be a tuple of  formal variables, and let us set $\mathbf t^\alpha:=t_1^{\alpha_1}\cdots t_r^{\alpha_r}$ for 
$\alpha=(\alpha_1,\ldots,\alpha_r)\in\N^r$. Writing
\[
\theta_{\mathbf t}(f)
:=
\sum_{\alpha\in\N^r}D_\alpha(f)\mathbf t^\alpha,
\]
we require that $\theta_{\mathbf t}\colon A\rightarrow A[\![\mathbf t]\!]$ be a
$\C_\infty$-algebra morphism, compatible with restriction, and that
\[
\operatorname{ev}_{\mathbf t=0}\circ\theta_{\mathbf t}=\id_A,
\qquad
(\theta_{\mathbf s}\widehat\otimes 1)\circ\theta_{\mathbf t}
=
\theta_{\mathbf{s+t}},
\]
where $\theta_{\mathbf s}\widehat\otimes 1$ acts coefficientwise on
$A[\![\mathbf t]\!]$.
When these expansions converge on rigid-analytic neighbourhoods, their
images are called local analytic leaves.

For $x\in U$ and $m\ge0$, we define the $m$-th Hasse--Schmidt
jet of $F\in A$ along the leaf of $\calf$ through $x$ by
\[
\operatorname{jet}^m_{\calf,x}(F)
:=
\sum_{|\alpha|\le m}D_\alpha(F)(x)t^\alpha
\in
\C_\infty[t_1,\ldots,t_r]/(t_1,\ldots,t_r)^{m+1}.
\]
We also write
\[
\operatorname{jet}^\infty_{\calf,x}(F)
:=
\sum_{\alpha\in\N^r}D_\alpha(F)(x)t^\alpha
\in
\C_\infty[\![t_1,\ldots,t_r]\!]
\]
for the full formal expansion.

Let $V\subset X$ be a closed subvariety, with ideal sheaf $\cali_V\subset\calo_X$. For $x\in V(\C_\infty)$, we set
\[
I_x:=
\bigl(
\operatorname{jet}^\infty_{\calf,x}(f):f\in \cali_{V,x}
\bigr)
\subset
\C_\infty[\![t_1,\ldots,t_r]\!],
\]
and define $\dim(\calf\cap V)_x
:=\dim
\big(\C_\infty[\![t_1,\ldots,t_r]\!]/I_x\big)$.

The finite jet determinant argument in the proof below is inspired by the
multiplicity-operator construction of
\cite[Section~2.1]{bn}, adapted here to
Hasse--Schmidt jets and to positive dimensional intersections.

\begin{prop}\label{p:atypical-closed}
For every $k\ge0$, the set
\[
A_k(V,\calf)
:=\bigl\{
x\in V(\C_\infty):\dim(\calf\cap V)_x\ge k
\bigr\}
\]
is Zariski closed in $V$.
\end{prop}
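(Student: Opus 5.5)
We prove that the condition $\dim(\calf\cap V)_x\ge k$ is equivalent to a single Hilbert-function condition on finite jets, and then show that this condition is cut out by polynomial (rank/minor) equations in the coordinates of $x$.

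\textbf{Step 1: reduction to one finite jet order.} The question is Zariski local, so we work on an affine open $U=\Spec A$ with $V\cap U$ defined by generators $f_1,\dots,f_s$ of $\cali_V(U)$. Since $\theta_{\mathbf t}$ is a ring morphism and $\operatorname{jet}^\infty_{\calf,x}$ is the composite of $\theta_{\mathbf t}$ with evaluation at $x$, $I_x$ is generated by the jets of $f_1,\dots,f_s$. Put $R=\C_\infty[\![\mathbf t]\!]$ with maximal ideal $\gotm$. The Hilbert--Samuel function $m\mapsto\ell(R/(I_x+\gotm^{m+1}))$ agrees for $m\gg0$ with a polynomial of degree $\dim R/I_x$. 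Lemma~\ref{l:multiplicity-local} is the zero-dimensional prototype of the comparison we need: it says finite truncation detects length.

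To get a comparison uniform in $x$, we use that $I_x$ is generated in bounded degree at the level of the parameter family. Let $W\subset U\times\A^{N}$ parametrize the truncated jets $J_m(x)=(D_\alpha f_i(x))_{|\alpha|\le m}$. Then
\[
h_x(m):=\dim_{\C_\infty} R/(I_x+\gotm^{m+1})=\binom{m+r}{r}-\rank M_m(x),
\]
where $M_m(x)$ is the matrix, with polynomial entries in the $D_\alpha f_i(x)$, of the linear map $(g_i)\mapsto\sum g_i\,\operatorname{jet}^m(f_i)$ on polynomials of degree $\le m$ modulo $\gotm^{m+1}$. Hence for each $m$ the set $\{x: h_x(m)\ge c\}$ is Zariski closed, being defined by the vanishing of minors of $M_m(x)$, whose entries lie in $A$.

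\textbf{Step 2: from Hilbert functions to dimension.} We have $\dim R/I_x\ge k$ if and only if $h_x(m)\ge\binom{m+k}{k}$ for all $m$; this follows from Macaulay's theory, e.g.\ by comparing with a generic linear section, or by passing to the initial ideal and using that monomial ideals of dimension $<k$ have Hilbert function eventually below $\binom{m+k}{k}$. In fact a lex-segment (Macaulay growth) argument shows it suffices to check $m\le m_0$, where $m_0$ depends only on a bound for the degrees of generators of the initial ideals. Granting only the pointwise characterization, we get
\[
A_k=\bigcap_m\{x: h_x(m)\ge\binom{m+k}{k}\},
\]
an intersection of Zariski closed sets, which is closed by Noetherianity. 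This bypasses any need for a uniform $m_0$. So the essential point is the equivalence ``$\dim\ge k \iff h(m)\ge\binom{m+k}{k}$ for all $m$''.

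\textbf{Main obstacle.} The key difficulty is proving this equivalence for arbitrary ideals in a power series ring. The direction ``$\Rightarrow$'' requires lower bounds: if $\dim R/I\ge k$, choose a minimal prime $P$ with $\dim R/P\ge k$ and apply Noether normalization, $\C_\infty[\![y_1..y_k]\!]\hookrightarrow R/P$ finite and injective. The injection on truncations gives $h(m)\ge\#\{\text{monomials of degree}\le m\text{ in }k\text{ variables}\}$, provided the $y_i$ can be taken of order one. This is possible after a generic linear change of coordinates, since $\C_\infty$ is infinite. The direction ``$\Leftarrow$'' holds because the Hilbert polynomial of $R/I$ has degree $\dim R/I$; if $\dim<k$, then $h(m)=O(m^{k-1})$, which eventually falls below $\binom{m+k}{k}$. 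Finally, we check that the affine-local descriptions glue. This is immediate, since closedness is local and the jets are compatible with restriction, as required in the definition of the foliation.
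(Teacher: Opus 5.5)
Your argument is correct in outline, but it follows a different route from the paper.

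\textbf{The two routes.} The paper reduces $\dim R/I_x\ge k$ to a family of one-dimensional tests. It shows $A_k(V,\calf)=\bigcap_\lambda\{x:\dim R/(I_x+(\ell_1,\dots,\ell_{k-1}))\ge1\}$, the intersection running over all $(k-1)$-tuples $\lambda$ of independent linear forms; prime avoidance supplies linear systems of parameters. Then, for each fixed $\lambda$, it turns ``positive dimension $=$ infinite length'' into the determinantal conditions $\ell(R_m/I^\lambda_{x,m})>m$ via Lemma~\ref{l:multiplicity-local}.

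You instead use a single Hilbert--Samuel criterion: $\dim R/I\ge k$ if and only if $\ell(R/(I+\gotm^{m+1}))\ge\binom{m+k}{k}$ for all $m$. For $k=1$ this is exactly what Lemma~\ref{l:multiplicity-local} gives. So your proof generalises the paper's $k=1$ step directly to all $k$. It needs no auxiliary parameter $\lambda$ and only one family of rank conditions, indexed by $m$. The price is the direction ``$\Rightarrow$'': it requires the lower bound $\ell(B/\gotm_B^{m+1})\ge\binom{m+d}{d}$ for \emph{every} $m$ and every local ring $B$ of dimension $d$. That is less elementary than the paper's Nakayama and prime-avoidance arguments. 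Your direction ``$\Leftarrow$'' (the Hilbert--Samuel polynomial has degree $\dim R/I$) and your determinantal description of $\{x:h_x(m)\ge c\}$ are fine.

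\textbf{The justification of the lower bound needs repair.} Taking $y_1,\dots,y_k$ ``of order one'' does not make the truncation maps $\C_\infty[\![y]\!]/(y)^{m+1}\to B/\gotm_B^{m+1}$ injective. For the cusp $B=\C_\infty[\![x,y]\!]/(x^2-y^3)$, the element $y_1=x$ has order one and $\C_\infty[\![y_1]\!]\to B$ is finite and injective. Yet $x^2=y^3\in\gotm_B^3$, so $y_1^2$ dies in $B/\gotm_B^3$.

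What you actually need is that the initial forms of the $y_i$ be algebraically independent in $\mathrm{gr}_{\gotm_B}(B)$. Get this by applying graded Noether normalisation to $\mathrm{gr}_{\gotm_B}(B)$, a standard graded $\C_\infty$-algebra of Krull dimension $d=\dim B$. Since $\C_\infty$ is infinite, there are degree-one elements $\bar y_1,\dots,\bar y_d$ with $\C_\infty[\bar y_1,\dots,\bar y_d]\hookrightarrow\mathrm{gr}_{\gotm_B}(B)$ finite and injective. Hence $\dim_{\C_\infty}\gotm_B^j/\gotm_B^{j+1}\ge\binom{j+d-1}{d-1}$, and summing over $j\le m$ gives the bound. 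Generic linear forms achieve this, which is presumably what your ``generic linear change of coordinates'' intended. You can apply this directly to $B=R/I_x$, so the passage to a minimal prime is unnecessary. With this repair your proof is complete.
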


\begin{proof}
The claim is local on $V$. Indeed, if $U\subset X$ is an open subset, then
for every $x\in V\cap U$ the germ $(\calf\cap V)_x$ is the same as the germ
$(\calf|_U\cap (V\cap U))_x$. Hence $A_k(V,\calf)\cap U=A_k(V\cap U,\calf|_U)$.
Since being Zariski closed is local on $V$, it is enough to prove the claim after replacing $X$ by an affine open neighbourhood of an arbitrary point of
$V$. Thus we may assume that $X=\Spec A$ is affine. In this case $V$ is defined by a finitely generated ideal $I_V\subset A$.

We set $R:=\C_\infty[\![t_1,\ldots,t_r]\!]$ with $\gotm:=(t_1,\ldots,t_r)$.
For $m\ge0$, we also set $R_m:=R/\gotm^{m+1}$. The quotient $R_m$ is a finite dimensional $\C_\infty$-vector space. We fix the monomial basis $\{t^\alpha:|\alpha|\le m\}$ and write
$N_m:=\dim_{\C_\infty}R_m$.

We first reduce the general case to the case of positive dimensional intersection. Let $1\le k\le r$, and let
\[
\lambda=(\ell_1,\ldots,\ell_{k-1})
\]
be a tuple of independent $\C_\infty$-linear forms in
$t_1,\ldots,t_r$. For $x\in V$, consider
\[
I_x^{\lambda}:=I_x+(\ell_1,\ldots,\ell_{k-1})\subset R.
\]
We claim that
\begin{equation}\label{e:intersection}
A_k(V,\calf)=\bigcap_{\lambda}
\bigl\{x\in V:\dim R/I_x^{\lambda}\ge 1\bigr\}
\end{equation}
where $\lambda$ runs over all such tuples of linear forms. Indeed, set $A_x:=R/I_x$ and let $\gotq$ be its maximal ideal. Let
$L\subset\gotq$ be the $\C_\infty$-span of the images of
$t_1,\ldots,t_r$, so that $L$ generates $\gotq$. If $\dim A_x\ge k$, then quotienting by one element of the maximal ideal can lower the dimension by at most one (see, for instance,
\cite[Lemma~10.60.13]{stacks}). Applying this successively gives
\[
\dim A_x/(\ell_1,\ldots,\ell_{k-1})
\ge
\dim A_x-(k-1)
\ge 1.
\]
Thus we get $\dim R/I_x^\lambda\ge1$ for every $\lambda$. On the other hand, suppose $\dim A_x=d<k$. Note that one can choose
$d$ linear forms $m_1, \ldots ,m_d \in L$ whose images form a system of parameters of $A_x$. Indeed, assume that $m_1,\ldots,m_i$ have already been chosen and that $\dim A_x/(m_1,\ldots,m_i)=d-i>0$. Let $\gotp_1,\ldots,\gotp_N$ be the minimal primes of this quotient. The image of $L$ is not contained in any $\gotp_j$, since it generates the maximal ideal. Hence, since $\C_\infty$ is infinite, prime avoidance allows us to choose a linear form $m_{i+1}$ whose image avoids all the $\gotp_j$, see \cite[Lemma~10.15.2]{stacks}. By the same dimension estimate, quotienting by $m_{i+1}$ lowers the dimension by one. After
$d$ steps we get 
$\dim A_x/(m_1,\ldots,m_d)=0$.
Extending $m_1,\ldots,m_d$ to a tuple
$\lambda=(\ell_1,\ldots,\ell_{k-1})$ of independent linear forms, we still have
\[
\dim R/I_x^\lambda=0\ ,
\]
since this is a quotient of the zero dimensional ring
$A_x/(m_1,\ldots,m_d)$. Hence $x$ does not belong to the intersection
\eqref{e:intersection}. This proves the claim.

It is therefore enough to prove that, for every fixed tuple
$\lambda=(\ell_1,\ldots,\ell_c)$ of linear forms, the set
\[
T_{\lambda}
:=\bigl\{
x\in V:\dim R/(I_x+(\ell_1,\ldots,\ell_c))\ge1
\bigr\}
\]
is Zariski closed.

Fix such a tuple $\lambda$. Choose generators
$f_1,\ldots,f_s$ for $I_V$. For $x\in V$, let
$I_{x,m}^{\lambda}\subset R_m$ be the image of
$I_x+(\ell_1,\ldots,\ell_c)$ in $R_m$. Since the Hasse--Schmidt expansion gives a morphism, $I_{x,m}^{\lambda}$ is generated as an ideal of $R_m$ by
\[
\operatorname{jet}^m_{\calf,x}(f_1),\ldots,
\operatorname{jet}^m_{\calf,x}(f_s),
\ell_1,\ldots,\ell_c.
\]
As a $\C_\infty$-vector subspace of $R_m$, the ideal
$I_{x,m}^{\lambda}$ is therefore spanned by
\[
t^\beta\operatorname{jet}^m_{\calf,x}(f_i)
\qquad \text{for}\quad |\beta|\le m,\ 1\le i\le s
\]
and by
\[
t^\beta\ell_j
\qquad \text{for}\quad|\beta|\le m,\ 1\le j\le c
\]
where all products are taken modulo $\gotm^{m+1}$.
Using the monomial basis $\{t^\alpha:|\alpha|\le m\}$ of $R_m$, we form the matrix $M_{m,\lambda}(x)$ whose columns are the coefficient vectors of these
spanning elements. The entries coming from
$t^\beta\operatorname{jet}^m_{\calf,x}(f_i)$ are regular functions of $x$,
as they are among the functions $D_\alpha(f_i)$ with $|\alpha|\le m$.
The entries coming from $t^\beta\ell_j$ are constants. Therefore all entries
of $M_{m,\lambda}(x)$ are regular functions on $V$.

By construction,
\[
\ell_{\C_\infty}(R_m/I_{x,m}^{\lambda})
=
N_m-\rank M_{m,\lambda}(x).
\]
Hence the condition $\ell_{\C_\infty}(R_m/I_{x,m}^\lambda)>m$
is equivalent to $\rank M_{m,\lambda}(x)\le N_m-m-1$.
This is a determinant condition, i.e., it is cut out by the vanishing of the $(N_m-m)$-minors of $M_{m,\lambda}(x)$. Thus, for fixed $m$ and $\lambda$, the locus
\[
T_{\lambda,m}
:=
\bigl\{
x\in V:
\ell_{\C_\infty}(R_m/I_{x,m}^\lambda)>m
\bigr\}
\]
is Zariski closed in $V$.

Now, let $J_x^\lambda:=I_x+(\ell_1,\ldots,\ell_c)\subset R$.
The quotient $R/J_x^\lambda$ has positive dimension if and only if it has
infinite length. By Lemma~\ref{l:multiplicity-local}, for every $m\ge0$ we have
\[
\ell_R\big(R/J_x^\lambda\big)>m
\qquad
\text{if and only if}
\qquad
\ell_{\C_\infty}\big(R_m/I_{x,m}^\lambda\big)>m.
\]
Therefore
\[
\dim R/J_x^\lambda\ge1
\quad
\text{if and only if}
\quad
\ell_{\C_\infty}(R_m/I_{x,m}^\lambda)>m
\]
for every $m\ge0$. This shows that
\[
T_\lambda
=\bigcap_{m\ge0}T_{\lambda,m}.
\]
Each $T_{\lambda,m}$ is Zariski closed, hence $T_\lambda$ is Zariski closed. Finally, $A_k(V,\calf)$ is, by \eqref{e:intersection}, an intersection of Zariski closed subsets of $V$, and is therefore Zariski closed. The cases $k=0$ and $k>r$ are  respectively $A_0(V,\calf)=V$ and $A_k(V,\calf)=\emptyset$.
\end{proof}

\begin{lem}\label{l:hs-invariance}
Let $X$ carry a rank-$r$ Hasse--Schmidt foliation $\calf$ for $r\ge1$, and let $V\subset X$ be a closed subvariety. Then $V\subset A_r(V,\calf)$ if and only if
$D_\alpha(\cali_V)\subset\cali_V$ for all $\alpha\in\N^r$.
\end{lem}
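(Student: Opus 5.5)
The plan is to reduce both implications to a local statement at points $x\in V$ and to use the formal Taylor map $\theta_{\mathbf t}$ evaluated at $x$. As in the proof of Proposition~\ref{p:atypical-closed}, both conditions are local on $X$, so we may take $X=\Spec A$ affine, with $V$ defined by an ideal $I_V$. For $x\in V$, consider the $\C_\infty$-algebra morphism
\[
\theta_x:=\operatorname{ev}_x\circ\theta_{\mathbf t}\colon A\rightarrow R:=\C_\infty[\![t_1,\ldots,t_r]\!],\qquad F\mapsto\operatorname{jet}^\infty_{\calf,x}(F).
\]
Then $I_x$ is the ideal of $R$ generated by $\theta_x(I_V)$. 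Since $R$ is a regular local ring of dimension $r$, the condition $\dim R/I_x\ge r$ is equivalent to $I_x=0$, i.e.\ to $D_\alpha(f)(x)=0$ for all $f\in I_V$ and all $\alpha$. Thus $V\subset A_r(V,\calf)$ means exactly that every $D_\alpha(f)$, $f\in I_V$, vanishes at every $\C_\infty$-point of $V$.

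\emph{The direction ``if''.} If $D_\alpha(I_V)\subset I_V$, then $D_\alpha(f)(x)=0$ for $x\in V$, so $I_x=0$ and $\dim(\calf\cap V)_x=r$.

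\emph{The direction ``only if''.} Assume $V\subset A_r(V,\calf)$. Then for each $f\in I_V$ and each $\alpha$, the regular function $D_\alpha(f)$ vanishes on $V(\C_\infty)$. Since $\C_\infty$ is algebraically closed and $V$ carries its reduced structure, the Nullstellensatz gives $D_\alpha(f)\in I_V$ (here $I_V$ is radical). This is the required invariance $D_\alpha(\cali_V)\subset\cali_V$, after gluing over affine opens using compatibility of the $D_\alpha$ with restriction.

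The only point requiring care is the equivalence $\dim R/I_x=r\iff I_x=0$. This holds because $R$ is a domain of dimension $r$: any nonzero ideal $I$ contains a nonzero element $g$, and $\dim R/(g)=r-1$ by Krull's principal ideal theorem in the catenary regular ring $R$. I expect this to be the main step to check, together with the observation that the ideal generated by $\theta_x(I_V)$ is zero exactly when its generators vanish. The remaining steps are routine.
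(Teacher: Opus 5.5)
Your proposal is correct and follows essentially the same route as the paper's proof. Both reduce to the affine case, use that $\dim R/I_x\ge r$ forces $I_x=0$ because $R$ is an $r$-dimensional domain, and then use that $V$ is reduced (the Nullstellensatz over $\C_\infty$) to pass from the vanishing of $D_\alpha(f)$ at every point of $V$ to $D_\alpha(f)\in I_V$.
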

\begin{proof}
The statement is local, so assume $X=\Spec A$ and let $I_V\subset A$ be the ideal of $V$. Let us also consider $R:=\C_\infty[\![t_1,\ldots,t_r]\!]$. 

Suppose  $V\subset A_r(V,\calf)$. Let $f\in I_V$ and $x\in V$. By definition, the ideal $I_x=\bigl(\operatorname{jet}^{\infty}_{\calf,x}(h):h\in I_V\bigr)\subset R$ satisfies $\dim R/I_x\ge r$. Since $R$ has dimension $r$, this gives $\dim R/I_x=r$. Since $R$ is a domain, every nonzero ideal of $R$ has positive height. Hence $I_x=0$. Therefore
$\operatorname{jet}^{\infty}_{\calf,x}(f)
=\sum_{\alpha\in\N^r}D_\alpha(f)(x)t^\alpha=0$.
Thus $D_\alpha(f)(x)=0$ for every $\alpha$ and every $x\in V$. Since $V$ is
reduced, we get $D_\alpha(f)\in I_V$ for every $\alpha\in\N^r$.

On the other hand, suppose that $D_\alpha(I_V)\subset I_V$ for every
$\alpha\in\N^r$. Then, for every $f\in I_V$ and every $x\in V$, all coefficients $D_\alpha(f)(x)$ vanish. Hence $I_x=0$, so $\dim R/I_x=\dim R=r$.
Therefore $x\in A_r(V,\calf)$ for every $x\in V$.
\end{proof}

\begin{cor}\label{c:zariski-closure-leaf}
Let $X$ carry a rank-$r$ Hasse--Schmidt foliation $\calf$, and let
$V_0\subset X$ be an algebraic subvariety. Let $\calv$ be an irreducible
analytic subset contained in $V_0\cap\lcal$, where $\lcal$ is a  leaf of $\calf$, and set $V:=\overline{\calv}^{\mathrm{Zar}}\subset V_0$.
If $\dim\calv=m$, then
$V\subset A_m(V,\calf)$.
\end{cor}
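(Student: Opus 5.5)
The plan is to show that $A_m(V,\calf)$ is a Zariski closed subset of $V$ that contains $\calv$. Since $V$ is the Zariski closure of $\calv$, this forces $A_m(V,\calf)=V$. By Proposition~\ref{p:atypical-closed}, $A_m(V,\calf)$ is Zariski closed in $V$, so the only real content is the inclusion $\calv\subset A_m(V,\calf)$. Moreover, it suffices to check this at a Zariski dense set of points of $\calv$, for instance at its smooth points, which form a dense admissible open subset.

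Fix a smooth point $x\in\calv$ at which the local analytic leaf $\lcal$ through $x$ is given by the convergent Hasse--Schmidt expansion. Concretely, there is a rigid-analytic parametrisation $\phi\colon(\C_\infty^r,0)\rightarrow(\lcal,x)$ such that every regular function $F$ satisfies $F(\phi(\mathbf t))=\theta_{\mathbf t}(F)(x)=\operatorname{jet}^\infty_{\calf,x}(F)$ as convergent power series. This is exactly the meaning of ``local analytic leaf'' in the definition, so that pulling back along $\phi$ identifies the completed local ring of $\lcal$ at $x$ with $R=\C_\infty[\![t_1,\ldots,t_r]\!]$.

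Next I would use that $\calv\subset V\cap\lcal$ and that $\calv$ has dimension $m$ at $x$. Let $\mathfrak a\subset R$ be the ideal of the germ $\phi^{-1}(\calv)$, or rather the kernel of $R\rightarrow\widehat{\calo}_{\calv,x}$. Every $f\in\cali_{V,x}$ vanishes on $\calv$, so $\operatorname{jet}^\infty_{\calf,x}(f)=f\circ\phi\in\mathfrak a$, and hence $I_x\subset\mathfrak a$. This gives a surjection $R/I_x\twoheadrightarrow R/\mathfrak a\simeq\widehat{\calo}_{\calv,x}$. The target has Krull dimension $\dim_x\calv=m$, because $\calv$ is irreducible of dimension $m$ and, by Conrad's theory recalled in Section~2, irreducible rigid spaces are equidimensional. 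Thus $\dim(\calf\cap V)_x=\dim R/I_x\ge m$, so $x\in A_m(V,\calf)$.

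The main obstacle is justifying the identification of the analytic germ of the leaf with $\Spec R$. This requires that $\phi$ is a local isomorphism onto the leaf germ, so that the closed analytic germ $\calv\subset\lcal$ corresponds to a quotient of $R$ whose completion has dimension $m$. I would handle this by invoking the standing convention that local analytic leaves are images of the convergent expansions $\theta_{\mathbf t}$, and then comparing completions: dimension is preserved under completion of the analytic local ring of $\calv$.

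Once $\calv\cap\{\text{good points}\}\subset A_m(V,\calf)$ is established, closedness and Zariski density finish the argument.
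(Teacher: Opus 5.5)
Your proposal is correct and follows the paper's proof. Both show that smooth points of $\calv$ lie in $A_m(V,\calf)$, use Proposition~\ref{p:atypical-closed} for Zariski closedness, and conclude by Zariski density of $\calv$ in $V$; your extra step (the inclusion $I_x\subset\mathfrak a$, giving a surjection $R/I_x\twoheadrightarrow R/\mathfrak a$ onto a ring of dimension $m$) makes explicit the local inequality $\dim(\calf\cap V)_x\ge m$, which the paper asserts without further argument.
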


\begin{proof}
The smooth locus of $\calv$ is dense in $\calv$. At every point of this smooth
locus, the intersection of $V$ with the leaf has dimension at least $m$.
Hence $\calv^{\mathrm{sm}}\subset A_m(V,\calf)$.
By Proposition~\ref{p:atypical-closed}, the set $A_m(V,\calf)$ is Zariski
closed in $V$. Since $\calv^{\mathrm{sm}}$ is Zariski dense in
$V=\overline{\calv}^{\mathrm{Zar}}$, we get $V\subset A_m(V,\calf)$.
\end{proof}

\subsubsection{Ramanujan foliation}

Let $h=P_{q+1,1}$ be the Poincar\'e series normalized as in
\cite{gekelerj,bp}, using the fixed Carlitz period $\widetilde\pi$,
so that $\Delta=-h^{q-1}$. Set
\[
\tilde E:=\frac{1}{h}\frac{dh}{dz}\qquad
\text{and}
\qquad
E:=-\widetilde\pi^{-1}\tilde E.
\]
Thus $E$ is the false Eisenstein series in the normalization of
\cite{bp}. Following \cite[(2), p.~4]{bp}, which in turn follows \cite[(8.6), Theorem~9.1 and p.~686]{gekelerj}, we
consider the Ramanujan-type system
\begin{equation}\label{e:ramanujan}
\begin{cases}
D_1E=E^2\\
D_1g=-(Eg+h)\\
D_1h=Eh
\end{cases}
\end{equation}
where we recall that $D_1=(-\widetilde\pi)^{-1}\dfrac{d}{dz}$. By
\cite[Theorem~2]{bp}, for every $n\ge 0$ one has
\begin{equation}\label{e:algebraicity}
D_n\bigl(\C_\infty[E,g,h]\bigr)\subset \C_\infty[E,g,h].
\end{equation}

Let $\caly:=\A^3_{\C_\infty}$
with coordinates $(E,g,h)$, and define
\[
C\colon\Omega\rightarrow \caly,
\qquad
z \mapsto (E(z),g(z),h(z)).
\]
For a connected admissible open $U\subset\Omega$, we call
\[
\lcal_U:=C(U)\subset\caly
\]
a (local) {\em Ramanujan leaf}. The curve $C(\Omega)$ shall be called the basic Ramanujan leaf.

For $\gamma\in\gl_2(A)$, write $\gamma=\begin{bmatrix}a&b\\ c&d\end{bmatrix}$ and
$\lambda_\gamma(z):=cz+d$.
Since $z\in\Omega$, one has $\lambda_\gamma(z)\neq0$.

Let $\widetilde{\caly}:=\p^1_{\C_\infty}\times \caly$
with coordinates $(z,E,g,h)$, and let
\[
\widetilde{\lcal}_U
:=
\bigl\{(z,E(z),g(z),h(z)):z\in U\bigr\}
\subset \widetilde{\caly}
\]
be the lifted local Ramanujan leaf.

By \eqref{e:algebraicity}, the operators $D_n$ define a rank-$1$ Hasse--Schmidt foliation $\calf$ on $\caly=\A^3_{\C_\infty}$, i.e.,  the Hasse--Schmidt jet expansion has one formal parameter.  On the affine chart with coordinate $z$, this lifted foliation is given by $D_0z=z$, $D_1z=(-\widetilde\pi)^{-1}$, and  $D_nz=0$ for $n\ge2$, together with the operators $D_n$ on $\C_\infty[E,g,h]$. Thus \eqref{e:algebraicity} is what makes this an algebraic Hasse--Schmidt foliation in the sense of Section~\ref{ss:hs-foliation} and Proposition~\ref{p:atypical-closed}.

 We consider the following rational action of $\pgl_{2,\C_\infty}$ on
$\widetilde{\caly}$. For $[\gamma]\in\pgl_{2,\C_\infty}$, choose
a representative $\gamma=
\begin{bmatrix}
a&b\\ c&d
\end{bmatrix}
\in \Sl_{2,\C_\infty}$
and set $\lambda_\gamma(z):=cz+d$. We define
\begin{equation}\label{e:rationaltransformation}
[\gamma].(z,E,g,h)
:=
\left(
\gamma z,\,
\lambda_\gamma(z)^2
\left(E-\frac{c}{\widetilde\pi\,\lambda_\gamma(z)}\right),\,
\lambda_\gamma(z)^{q-1}g,\,
\lambda_\gamma(z)^{q+1}h
\right).
\end{equation}
This is independent of the determinant one representative.

We recall the following standard result.

\begin{lem}\label{l:dense}
Let $\Gamma\le \pgl_2(A)$ be of finite index. Then $\Gamma$ is Zariski dense in $\pgl_{2,\C_\infty}$. 
\end{lem}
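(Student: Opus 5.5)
The plan is to show that the Zariski closure $H:=\overline{\Gamma}^{\mathrm{Zar}}\subset\pgl_{2,\C_\infty}$ is all of $\pgl_{2,\C_\infty}$. First reduce to a convenient subgroup. If $\Gamma'\le\Gamma$ then $\overline{\Gamma'}\subset\overline{\Gamma}$, so it suffices to treat a finite index subgroup of $\Gamma$. The group $\Gamma\cap\Gamma^+$ has finite index in $\Gamma^+$. Hence its preimage $\widetilde\Gamma\le\Sl_2(A)$ has finite index, and $\widetilde\Gamma$ contains some principal congruence subgroup. This step is not automatic, since $\Sl_2(\F_q[T])$ fails the congruence subgroup property. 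To avoid needing it, I will work directly with a finite index subgroup $\widetilde\Gamma\le\Sl_2(A)$ and show that it is Zariski dense in $\Sl_{2,\C_\infty}$. Density then passes to the image in $\pgl_{2,\C_\infty}$, because $\Sl_2\to\pgl_2$ is surjective on $\C_\infty$-points.

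\textbf{Unipotent elements.} The subgroup $\widetilde\Gamma\cap U^+(A)$, where $U^+(A)=\{\begin{bmatrix}1&b\\0&1\end{bmatrix}:b\in A\}\simeq A$, has finite index in $A$. Hence it is infinite and is of the form $\{\begin{bmatrix}1&b\\0&1\end{bmatrix}:b\in B\}$ for an infinite additive subgroup $B\subset A$. An infinite subset of $\G_{a,\C_\infty}$ is Zariski dense, because a proper closed subset of a curve is finite. Therefore $\overline{\widetilde\Gamma}^{\mathrm{Zar}}$ contains the full upper unipotent group $U^+$. The same argument applied to lower triangular unipotents shows that it contains $U^-$.

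\textbf{Generation.} The algebraic subgroup $\overline{\widetilde\Gamma}^{\mathrm{Zar}}$ now contains $U^+$ and $U^-$. These two subgroups generate $\Sl_{2,\C_\infty}$. At the level of points, $\Sl_2(K)$ is generated by elementary matrices over any field $K$. Over $\C_\infty$ algebraically closed this gives equality, since the closed subgroup contains every $\C_\infty$-point of $\Sl_2$ and $\Sl_2$ is reduced. Pushing forward along $\Sl_{2,\C_\infty}\to\pgl_{2,\C_\infty}$, the image of a Zariski dense set under a dominant morphism is dense. This gives $\overline{\Gamma}^{\mathrm{Zar}}=\pgl_{2,\C_\infty}$.

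\textbf{Main obstacle.} The potential difficulty is the reduction from $\Gamma\le\pgl_2(A)$ to a finite index subgroup of $\Sl_2(A)$ containing nontrivial unipotents. The unipotent argument sidesteps congruence subgroups entirely, because finite index in $\Sl_2(A)$ alone forces $\widetilde\Gamma\cap U^\pm(A)$ to have finite index in $U^\pm(A)\simeq A$. So the only care needed is that $\Gamma^+$ has finite index in $\pgl_2(A)$. This holds because the cokernel is controlled by $\det$ modulo squares of $\F_q^\times$, which is finite.
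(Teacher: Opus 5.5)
Your argument is correct, but it takes a different route from the paper.

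The paper writes $\pgl_2(A)=\bigsqcup_{i=1}^n\gamma_i\Gamma$ and takes Zariski closures to get $\pgl_{2,\C_\infty}\subseteq\bigcup_i\gamma_i\overline{\Gamma}$. It then concludes by irreducibility of $\pgl_{2,\C_\infty}$. The one nontrivial input, the Zariski density of $\pgl_2(A)$ itself, is quoted as standard and not proved.

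You instead prove density of the finite-index subgroup directly. The intersection $\widetilde\Gamma\cap U^{\pm}(A)$ is an infinite subgroup of $U^{\pm}(A)\simeq A$, so $\overline{\widetilde\Gamma}$ contains $U^+$ and $U^-$. These generate $\Sl_{2,\C_\infty}$, and you push forward to $\pgl_{2,\C_\infty}$.

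Your route is self-contained: applied to $\Gamma=\pgl_2(A)$, it proves exactly the fact the paper takes for granted, and it bypasses the coset/irreducibility step. The paper's route is shorter and more general. In any irreducible algebraic group, a finite-index subgroup of a Zariski-dense subgroup is again Zariski dense, with no group-specific structure (unipotents, elementary generation) needed.

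Two minor clean-ups:
\begin{itemize}
\item The clause asserting that $\widetilde\Gamma$ contains a principal congruence subgroup is false in general, as you yourself note. Delete it, since nothing uses it.
\item Your ``main obstacle'' is not an obstacle. The bound $[\Gamma^+:\Gamma\cap\Gamma^+]\le[\pgl_2(A):\Gamma]$ holds automatically. Density of the subgroup $\Gamma\cap\Gamma^+\subset\Gamma$ already gives density of $\Gamma$. So finiteness of $[\pgl_2(A):\Gamma^+]$ is never needed.
\end{itemize}
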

\begin{proof}
As the index $[\pgl_2(A):\Gamma]=n$ is finite, we choose representatives
$\gamma_1,\dots,\gamma_n\in\pgl_2(A)$ such that
$\pgl_2(A)=\bigsqcup_{i=1}^n\gamma_i\Gamma$. Taking Zariski closures and using
the Zariski density of $\pgl_2(A)$ in $\pgl_{2,\C_\infty}$, we obtain
\[
\pgl_{2,\C_\infty}
=
\overline{\pgl_2(A)}
\subseteq
\bigcup_{i=1}^n\gamma_i\overline{\Gamma}.
\]
Since $\pgl_{2,\C_\infty}$ is irreducible, we conclude.
\end{proof}

Consider the rational map
\[
r\colon\pgl_{2,\C_\infty}\times\widetilde{\caly}\dashrightarrow
\widetilde{\caly},
\qquad
([\gamma],x)\mapsto r_\gamma(x)
\]
defined by \eqref{e:rationaltransformation}, and let $\calu$ be the open subset on which $r$ is regular. Let $\Gamma_r:=\bigl\{\big([\gamma],x,r_\gamma(x)\big):([\gamma],x)\in\calu\bigr\}$ denote its graph.

\begin{lem}\label{l:leaf-preserving-closed}
There is a Zariski closed subset $\calb\subset\calu$ such that
$([\gamma],x)\in\calb$ if and only if the product Hasse--Schmidt leaf through
$([\gamma],x,r_\gamma(x))$ has positive dimensional intersection with
$\Gamma_r$.
\end{lem}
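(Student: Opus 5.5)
The plan is to realise the condition in the statement as an atypicality locus $A_1(\cdot,\cdot)$ for a suitable Hasse--Schmidt foliation, and then invoke Proposition~\ref{p:atypical-closed}. Consider the ambient variety
\[
Z:=\pgl_{2,\C_\infty}\times\widetilde{\caly}\times\widetilde{\caly}.
\]
Equip it with the product foliation $\calf_Z$ that is trivial on the $\pgl_2$ factor and equals the lifted Ramanujan foliation $\calf$ on each $\widetilde{\caly}$ factor.

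\textbf{Step 1: build the foliation.} On the $\pgl_2$ factor take $D_\alpha=0$ for $\alpha\neq0$. On each $\widetilde{\caly}$ factor use the operators $D_n$ described after \eqref{e:algebraicity}, giving a rank-$2$ foliation with parameters $(t_1,t_2)$. Its expansion map is the tensor product of the three expansion maps. It is a $\C_\infty$-algebra morphism, it is compatible with restriction, and it satisfies the identity and additivity axioms because each factor does. Regularity on affine charts comes from \eqref{e:algebraicity} together with the explicit formulas for $D_nz$. The product leaf through $([\gamma],x,y)$ is then $\{[\gamma]\}\times\lcal_x\times\lcal_y$.

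\textbf{Step 2: reduce to a closed subvariety.} Let $W:=\overline{\Gamma_r}^{\mathrm{Zar}}\subset Z$, which is irreducible of dimension $\dim\calu$. On the open set $\calu\times\widetilde{\caly}$ of $Z$ we have $\Gamma_r=W\cap(\calu\times\widetilde{\caly})$. By the locality argument at the start of the proof of Proposition~\ref{p:atypical-closed}, the germ $(\calf_Z\cap W)_w$ at a point $w\in\Gamma_r$ can be computed in this open set. Proposition~\ref{p:atypical-closed} then shows that $A_1(W,\calf_Z)$ is Zariski closed in $W$. Hence $A_1(W,\calf_Z)\cap\Gamma_r$ is closed in $\Gamma_r$.

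\textbf{Step 3: transport and check the definition.} The projection $\Gamma_r\to\calu$, $([\gamma],x,r_\gamma(x))\mapsto([\gamma],x)$, is an isomorphism with inverse $([\gamma],x)\mapsto([\gamma],x,r_\gamma(x))$. Define
\[
\calb:=\text{the image of }A_1(W,\calf_Z)\cap\Gamma_r\text{ under this isomorphism}.
\]
Then $\calb$ is closed in $\calu$. It remains to match this with the condition in the statement: $\dim(\calf_Z\cap\Gamma_r)_w\ge1$ should mean exactly that the formal leaf through $w$ meets $\Gamma_r$ in a positive-dimensional germ.
\begin{itemize}
\item The quantity $\dim R/I_w$ is the dimension of the formal intersection germ, since $R/I_w$ is the completed local ring of the intersection of $\Gamma_r$ with the formal leaf.
\item Where the leaves converge analytically, the analytic and formal dimensions agree, because the leaf parametrisation is an analytic embedding of a disc and the analytic local ring completes to the formal one.
\end{itemize}
If the statement's notion of intersection is read formally, this step is tautological.

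\textbf{Expected obstacle.} The main difficulty is this last identification, together with checking that the product structure really is a Hasse--Schmidt foliation in the paper's sense, which requires multi-index additivity. The latter is routine because the factors commute: their expansion maps act on separate tensor factors. For the former I would work purely with formal germs, as in Corollary~\ref{c:zariski-closure-leaf}, and use the rigid-analytic identity principle, so that no convergence issue arises.
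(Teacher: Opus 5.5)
Your proposal is correct and follows the paper's proof. It uses the same product foliation $\gcal$ (trivial on $\pgl_{2,\C_\infty}$, Ramanujan on both copies of $\widetilde{\caly}$), Proposition~\ref{p:atypical-closed} with $k=1$, and transport of $A_1$ along the isomorphism $\Gamma_r\simeq\calu$. The only cosmetic difference is that the paper applies the proposition directly to $\Gamma_r$ inside $\calu\times\widetilde{\caly}$, where it is closed because $\widetilde{\caly}$ is separated; you use this same fact implicitly in Step~2 when you pass through the closure $W$.
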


\begin{proof}
On $\calu\times\widetilde{\caly}$ consider the product Hasse--Schmidt
foliation $\gcal$ on $\pgl_{2,\C_\infty}\times\widetilde{\caly}\times\widetilde\caly$ which is trivial in the $\pgl_{2,\C_\infty}$-direction and is equal to
$\calf$ on both copies of $\widetilde{\caly}$. This foliation is algebraic by
\eqref{e:algebraicity}. Since $\widetilde{\caly}$ is separated, the graph
$\Gamma_r$ is closed in $\calu\times\widetilde{\caly}$.

By Proposition~\ref{p:atypical-closed}, applied with $k=1$ to
$\Gamma_r\subset\calu\times\widetilde{\caly}$, the set $A_1(\Gamma_r,\gcal)$ is Zariski closed in $\Gamma_r$. Since the projection
$\Gamma_r\rightarrow\calu$ is an isomorphism, we denote by
$\calb\subset\calu$ the image of $A_1(\Gamma_r,\gcal)$ under this
identification. This is Zariski closed in $\calu$.
\end{proof}

Thus $([\gamma],x)\in\calb$  means that, infinitesimally along the Hasse--Schmidt leaf through $x$, the relation $y=r_\gamma(x)$ remains inside a Ramanujan leaf in the second factor.

\begin{lem}\label{l:basic-lifted-leaf-dense}
The basic lifted Ramanujan leaf is Zariski dense in $\widetilde{\caly}$.
\end{lem}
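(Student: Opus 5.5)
The plan is to show that the Zariski closure $W$ of the curve $\widetilde{\lcal}_\Omega=\{(z,E(z),g(z),h(z)):z\in\Omega\}$ is all of $\widetilde{\caly}=\p^1\times\A^3$. Equivalently, we want the four functions $z,E,g,h$ on $\Omega$ to be algebraically independent over $\C_\infty$. Bosser--Pellarin gives the algebraic independence of $E,g,h$ over $\C_\infty$, so the projection of $W$ to $\caly=\A^3$ is dominant and $\dim W\ge3$. The whole point is to exclude $\dim W=3$, i.e.\ to exclude that $z$ is algebraic over $\C_\infty(E,g,h)$.

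Suppose, for contradiction, that $W$ is an irreducible hypersurface. We exploit the $\Gamma(1)$-equivariance of the leaf. For $\gamma\in\Sl_2(A)$, the transformation laws of $E,g,h$ (quasi-modularity of $E$ with the $-c/(\widetilde\pi\lambda_\gamma)$ correction, and modularity of $g,h$ in weights $q-1$, $q+1$ and type compatible with the odd-$q$ descent) say exactly that
\[
r_\gamma\bigl(z,E(z),g(z),h(z)\bigr)=\bigl(\gamma z,E(\gamma z),g(\gamma z),h(\gamma z)\bigr).
\]
Hence $r_\gamma$ maps the basic lifted leaf into itself, and so $r_\gamma(W)=W$ for all $\gamma$ in the image $\Gamma^+$ of $\Sl_2(A)$. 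By Lemma~\ref{l:dense}, $\Gamma^+$ is Zariski dense in $\pgl_{2,\C_\infty}$. The condition $r_\gamma(W)\subset W$, i.e.\ $r^{-1}(W)\supset\{\gamma\}\times W$ inside $\calu$, is Zariski closed in $\gamma$. Therefore $W$ is stable under the whole rational action of $\pgl_{2,\C_\infty}$.

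We now derive a contradiction from $\pgl_2$-invariance of a hypersurface $W$ dominating $\caly$. Look at the fibres of the action in the $(E,g,h)$ directions. Fix a generic point $(z_0,E_0,g_0,h_0)\in W$ and consider the stabiliser of $z_0$ in $\pgl_2$, which is a Borel subgroup $B_{z_0}$ of dimension $2$. Conjugating so that $z_0=\infty$ in the chart (or working directly), the elements fixing $z_0$ act on the fibre $\{z_0\}\times\A^3$ by
\[
(E,g,h)\mapsto\bigl(\lambda^2E-\lambda c/\widetilde\pi,\ \lambda^{q-1}g,\ \lambda^{q+1}h\bigr),
\]
with $\lambda=\lambda_\gamma(z_0)$ and an independent translation parameter $c$. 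The orbit of $(E_0,g_0,h_0)$ under this two-parameter group is $2$-dimensional. Adding the transitive motion in $z$, the $\pgl_2$-orbit of a generic point of $\widetilde{\caly}$ is $3$-dimensional, so the invariant hypersurface $W$ would have to be the closure of a single generic orbit. Then $W$ is a union of orbits, and its function field has transcendence degree $0$ over the invariant field $\C_\infty(\widetilde{\caly})^{\pgl_2}$. That invariant field has transcendence degree $1$, generated by something like $g^{q+1}/h^{q-1}$, which is essentially $j$ up to sign. So $W$ is $\{F(g^{q+1}/h^{q-1})=0\}$ for some nonconstant polynomial $F$, i.e.\ a finite union of level sets of $j$. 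Restricting to the leaf, this says $j(z)$ is constant on $\Omega$, which contradicts the nonconstancy of $j$. Hence $\dim W=4$.

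The main obstacle is the middle step: making rigorous that invariance under the rational action passes from $\Gamma^+$ to its Zariski closure, since the action is only rational. I would handle it through the graph $\Gamma_r$ and the closed set $\{([\gamma],x)\in\calu: x\in W\Rightarrow r_\gamma(x)\in W\}$ restricted to a dense open of $W$. A secondary point is verifying the modular transformation law for $E$ with the exact constant appearing in \eqref{e:rationaltransformation}, which I would take from Gekeler's formula for $E(\gamma z)$.
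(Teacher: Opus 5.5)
Your argument is correct, but it takes a genuinely different and much heavier route than the paper. The paper uses only the translations $z\mapsto z+a$, $a\in A$, which fix $E,g,h$. From a relation $\sum_{i\le d}P_i(E,g,h)Z^i$ vanishing on the leaf, with $P_d\neq0$, it gets $P(z+a,E(z),g(z),h(z))=0$ for all $a\in A$. This is a polynomial identity in $a$ with infinitely many roots, so its leading coefficient $P_d(E,g,h)$ vanishes on $\Omega$, contradicting Bosser--Pellarin.

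You instead use the full law \eqref{e:rt} for $\Sl_2(A)$. You upgrade $\Gamma^+$-stability of $W$ to $\pgl_{2,\C_\infty}$-stability via Lemma~\ref{l:dense}, which is the density mechanism the paper uses later in Lemma~\ref{l:foliation-equivariant}, and then count orbit dimensions. This works for three reasons:
the rational action satisfies the automorphy cocycle;
a point with finite $z$ and $h\neq0$ has trivial stabiliser (one needs $\lambda^2=1$, hence $c=0$, hence $b=0$), so orbits of leaf points are $3$-dimensional;
a $3$-dimensional stable $W$ would therefore be an orbit closure on which $g^{q+1}/h^{q-1}=-j$ is constant, contradicting non-constancy of $j$.

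Notice that your route never actually needs Bosser--Pellarin. Stability plus one point with trivial stabiliser already forces $\dim W\ge3$, so your argument re-proves the algebraic independence of $E,g,h$ from the transformation laws and the non-constancy of $j$. The price is reliance on the exact normalisation in \eqref{e:rt}, which the paper's three-line proof avoids.

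The closedness step you worry about is straightforward. For $x\in W$ with finite $z$-coordinate, $\gamma\mapsto r_\gamma(x)$ is regular on all of $\pgl_{2,\C_\infty}$, so each set $\{\gamma:r_\gamma(x)\in W\}$ is closed. Each contains $\Gamma^+$ by continuity from the leaf, which is dense in $W$ and lies in the affine chart. Their intersection is therefore a closed set containing $\Gamma^+$, hence everything.

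Two minor points. The invariant field is generated by $s=g^{(q+1)/2}h^{-(q-1)/2}$ rather than by $g^{q+1}/h^{q-1}$, which is harmless here. Also, you should not conjugate $z_0$ to $\infty$, where \eqref{e:rationaltransformation} is not regular; compute directly in the affine chart.
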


\begin{proof}
It is enough to work on the affine chart of $\p^1$ with coordinate $Z$.
Suppose that
\[
P\bigl(z,E(z),g(z),h(z)\bigr)=0
\]
for some nonzero $P\in\C_\infty[Z,E,g,h]$. Write $P(Z,E,g,h)=\sum_{i=0}^d P_i(E,g,h)Z^i$ with $P_d\neq0$. For every $a\in A$, the translation $z\mapsto z+a$ belongs to $\Sl_2(A)$; for this element one has $c=0$ and $\lambda_\gamma=1$. Hence one immediately has $E(z+a)=E(z)$, $g(z+a)=g(z)$, and $h(z+a)=h(z)$. Therefore
\[
P\bigl(z+a,E(z),g(z),h(z)\bigr)=0
\]
for every $a\in A$. Fixing $z$, the left-hand side is a polynomial in $a$
with infinitely many zeros. Thus it is identically zero as a polynomial in
$a$. Its leading coefficient is $P_d(E(z),g(z),h(z))$, so $P_d(E(z),g(z),h(z))=0$ for all $z\in\Omega$. By \cite[Theorem~1] {bp}, the functions $E,g,h$ are algebraically independent over $\C_\infty$, contradicting $P_d\neq0$. Thus no such nonzero $P$ exists.
\end{proof}

\begin{lem}\label{l:foliation-equivariant}
Assume that $q$ is odd. The Ramanujan foliation is
$\pgl_{2,\C_\infty}$-equivariant in the sense that each
$r_\gamma$ sends formal leaves to formal leaves,
wherever both $z$ and $\gamma z$ are finite.
\end{lem}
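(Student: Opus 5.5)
The plan is to reduce the statement to a Zariski-density argument built from three earlier results: the closed set $\calb$ of Lemma~\ref{l:leaf-preserving-closed}, the density of $\pgl_2(A)$ (Lemma~\ref{l:dense}), and the density of the basic lifted leaf (Lemma~\ref{l:basic-lifted-leaf-dense}). Equivariance on formal leaves means: for $([\gamma],x)\in\calu$, the image under $r_\gamma$ of the formal leaf through $x$ is contained in the formal leaf through $r_\gamma(x)$. Equivalently, the product foliation $\gcal$ of Lemma~\ref{l:leaf-preserving-closed} has positive dimensional intersection with $\Gamma_r$ at $([\gamma],x,r_\gamma(x))$, with a one-dimensional germ contained in $\Gamma_r$. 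So the goal is $\calb=\calu$. Since $\calb$ is Zariski closed in the irreducible variety $\calu$ (open in $\pgl_{2,\C_\infty}\times\widetilde{\caly}$), it suffices to show that $\calb$ contains a Zariski dense subset.

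\emph{Step 1: the dense subset.} Take pairs $([\gamma],x)$ with $[\gamma]\in\Gamma^+$, the image of $\Sl_2(A)$, and $x=(z,E(z),g(z),h(z))$ on the basic lifted leaf with $z\in\Omega$. For $\gamma\in\Sl_2(A)$, the functional equations of the quasi-modular form $E$ and the modular forms $g,h$ are
\[
E(\gamma z)=\lambda_\gamma(z)^2\Bigl(E(z)-\tfrac{c}{\widetilde\pi\lambda_\gamma(z)}\Bigr),\qquad g(\gamma z)=\lambda_\gamma(z)^{q-1}g(z),\qquad h(\gamma z)=\lambda_\gamma(z)^{q+1}h(z).
\]
Here $h$ has type $1$; with $\det\gamma=1$ the type factor disappears. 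Up to the normalisation of $E$, these are exactly \eqref{e:rationaltransformation}. Hence $r_\gamma$ maps the analytic curve $z'\mapsto(z',C(z'))$ near $z$ onto the analytic curve $w\mapsto(w,C(w))$ near $\gamma z$. That is, it carries a local leaf into a local leaf. The Hasse--Schmidt expansion of any function along the analytic leaf is its formal leaf expansion, which is the definition of $D_n$ via \eqref{e:algebraicity}. So the germ of $\Gamma_r$ along the graph of this analytic curve is a one-dimensional germ inside $\Gamma_r\cap\gcal$-leaf. Thus these points lie in $\calb$.

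\emph{Step 2: density.} By Lemma~\ref{l:dense}, $\Gamma^+$ is Zariski dense in $\pgl_{2,\C_\infty}$, since it has finite index in $\pgl_2(A)$ (index dividing $|\F_q^\times/\F_q^{\times2}|$). By Lemma~\ref{l:basic-lifted-leaf-dense}, the basic lifted leaf is Zariski dense in $\widetilde{\caly}$. A product of Zariski dense subsets is Zariski dense in the product: fix $\gamma$, then the fibre of the closure contains the dense set, and vary $\gamma$. This set meets the open $\calu$ (both $z$ and $\gamma z$ are finite for generic points), so it is dense in $\calu$. Hence $\calb=\calu$.

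\emph{Step 3: the main obstacle.} The genuine content is passing from ``positive dimensional intersection'' to ``leaf maps into leaf''. Because leaves are rank one, a positive dimensional intersection germ of $\Gamma_r$ with the two-dimensional product leaf through $([\gamma],x,r_\gamma(x))$ should be the graph of the full formal leaf through $x$; the $\gamma$-direction is trivial in $\gcal$. I would argue this by checking that the intersection germ projects finitely onto the first leaf. The Hasse--Schmidt derivatives of the $z$-coordinate are $D_1z\neq0$ on both sides, so each component is transverse to the fibres, and the graph condition then forces containment. The odd-$q$ hypothesis enters only in making \eqref{e:rationaltransformation} well defined on $\pgl_2$. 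Indeed $-I$ acts via $\lambda=-1$, which is trivial because $q\pm1$ and $2$ are even.
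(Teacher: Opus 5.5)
Your proposal is correct and follows essentially the paper's proof. Like the paper, you combine the closed set $\calb$ of Lemma~\ref{l:leaf-preserving-closed}, the identity \eqref{e:rt} for $\gamma\in\Sl_2(A)$ along the basic lifted leaf, and Lemmas~\ref{l:dense} and~\ref{l:basic-lifted-leaf-dense} to get $\calb=\calu$; the paper runs the same density argument fibrewise (first in $[\gamma]$ for fixed leaf points $x$, then in $x$ for fixed $[\gamma]$), and your Step~3, which uses the transversality of the $z$-coordinate to turn positive-dimensional intersection into ``the whole leaf maps into a leaf'', makes explicit a point the paper leaves implicit.
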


\begin{proof}
Let $r_\gamma$ denote the rational transformation in
\eqref{e:rationaltransformation}. For $\gamma\in\Sl_2(A)$, the
transformation formula for $E$ in \cite[(11)]{bp}, together with the modular
transformation laws for $g$ and $h$, gives
\begin{equation}\label{e:rt}
r_\gamma\bigl(z,E(z),g(z),h(z)\bigr)
= \bigl(\gamma z,E(\gamma z),g(\gamma z),h(\gamma z)\bigr).
\end{equation}
Thus every $\gamma\in\Sl_2(A)$ sends the basic lifted Ramanujan leaf to the basic lifted Ramanujan leaf.

Let $\calb\subset\calu$ be the closed subset given by
Lemma~\ref{l:leaf-preserving-closed}. For fixed $x\in\widetilde{\caly}$, set
\[
\calu_x:=\big\{[\gamma]\in\pgl_{2,\C_\infty}:([\gamma],x)\in\calu\big\}
\qquad
\text{and}
\qquad
B_x:=\big\{[\gamma]\in\calu_x:([\gamma],x)\in\calb\big\}.
\]
Since $B_x$ is the inverse image of the Zariski closed set $\calb$ under the regular
map $\calu_x\rightarrow\calu$, $[\gamma]\mapsto([\gamma],x)$, it is Zariski closed
in $\calu_x$.

Suppose now that $x$ lies on the basic lifted Ramanujan leaf, say
$x=(z_0,E(z_0),g(z_0),h(z_0))$. If $\gamma\in\Sl_2(A)$ and
$[\gamma]\in\calu_x$, then \eqref{e:rt} holds as an identity of germs at
$z_0$. Hence the product Hasse--Schmidt leaf through
$([\gamma],x,r_\gamma(x))$ has positive dimensional intersection with
$\Gamma_r$.  By Lemma~\ref{l:leaf-preserving-closed}, we have
$([\gamma],x)\in\calb$, hence $[\gamma]\in B_x$.

Therefore $B_x$ contains the image of $\Sl_2(A)$ in $\pgl_2(A)$, intersected with
$\calu_x$. This image has finite index in $\pgl_2(A)$.

Hence, by Lemma~\ref{l:dense}, this
image is Zariski dense in $\pgl_{2,\C_\infty}$. Therefore its intersection with the open set $\calu_x$ is Zariski dense in $\calu_x$. Since $B_x$ is Zariski closed in $\calu_x$, we get $B_x=\calu_x$.

Now fix $[\gamma]\in\pgl_{2,\C_\infty}$ and consider
\[
\calu_\gamma:=\{x\in\widetilde{\caly}:([\gamma],x)\in\calu\}
\qquad
\text{and}
\qquad
C_\gamma:=\{x\in\calu_\gamma:([\gamma],x)\in\calb\}.
\]
Again $C_\gamma$ is the inverse image of $\calb$ under the regular map
$\calu_\gamma\rightarrow\calu$, $x\mapsto([\gamma],x)$, so it is Zariski closed in $\calu_\gamma$. The previous paragraph shows that $C_\gamma$ contains the
intersection of $\calu_\gamma$ with the basic lifted Ramanujan leaf. By
Lemma~\ref{l:basic-lifted-leaf-dense}, this intersection is Zariski dense in
$\calu_\gamma$. Hence $C_\gamma=\calu_\gamma$. 
\end{proof}

\begin{rmk}
The preceding descent is genuinely scheme theoretic. The kernel of
$\Sl_{2,\C_\infty}\rightarrow\pgl_{2,\C_\infty}$ is indeed the group scheme $\boldsymbol\mu_2$. For every
$\C_\infty$-algebra $R$ and every
$\zeta\in\boldsymbol\mu_2(R)$, the central element $\zeta I$ acts through \eqref{e:rationaltransformation} by
$(z,E,g,h)\mapsto
\bigl(z,E,\zeta^{q-1}g,\zeta^{q+1}h\bigr)$.
If $q$ is odd, then $q-1$ and $q+1$ are even, so the kernel acts trivially and the rational $\Sl_{2,\C_\infty}$-action descends
to $\pgl_{2,\C_\infty}$.

On the other hand, in even characteristic, the formula holds again on $\C_\infty$-points. Indeed, since $\C_\infty$ is algebraically closed, every class in $\pgl_2(\C_\infty)$ has a determinant one
representative, and this representative is unique since we have
$\boldsymbol\mu_2(\C_\infty)=\{1\}$. However,
$\boldsymbol\mu_2$ is then nonreduced. For instance, if $R=\C_\infty[\varepsilon]/(\varepsilon^2)$
and $\zeta=1+\varepsilon$, then $\zeta\in\boldsymbol\mu_2(R)$, and, since $q-1$ and $q+1$ are odd, $\zeta I$ acts by
\[
(z,E,g,h)\mapsto
\bigl(z,E,(1+\varepsilon)g,(1+\varepsilon)h\bigr).
\]
Thus, in even characteristic, the pointwise action does not descend scheme theoretically to a  $\pgl_{2,\C_\infty}$-action on the original phase space.
\end{rmk}

\subsubsection{The quotient phase space and a principal homogeneous space}

We call a surjective left $G$-space $\pi\colon P\rightarrow X$ a \emph{principal
homogeneous $G$-space over $X$} if the canonical map
\[
G\times P\rightarrow P\times_XP,
\qquad
(g,x)\mapsto(g\cdot x,x)
\]
is an isomorphism.

Assume that $q$ is odd.  On the dense open
\[
\widetilde{\caly}^{\,\times}:=
\p^1_{\C_\infty}\times\A^1\times\G_{m}^2
\subset\widetilde{\caly}
\]
consider the involution
\[
\iota(z,E,g,h):=(z,E,-g,-h).
\]
It commutes with the rational $\pgl_{2,\C_\infty}$-action and is distinct
from $\boldsymbol\mu_2$ removed in the passage $\Sl_2\rightarrow\pgl_2$.  The function
\[
s:=g^{(q+1)/2}h^{-(q-1)/2}
\]
is $\pgl_{2,\C_\infty}$-invariant, and
\[
s^2=g^{q+1}h^{1-q}=-j,
\qquad
s\circ\iota=-s.
\]
Since $s$ is $\pgl_{2,\C_\infty}$-invariant and
$s\circ\iota=-s$, a general fiber of the  $j$-function contains at least two distinct $\pgl_{2,\C_\infty}$-orbits, exchanged by $\iota$. Hence the original $j$-map is not a principal homogeneous
$\pgl_{2,\C_\infty}$-space over the $j$-line.  After quotienting by $\iota$, Lemma~\ref{l:local-principal-model} gives a
$\pgl_{2,\C_\infty}$-equivariant birational identification over the $j$-line with the principal homogeneous space $B\times\pgl_{2,\C_\infty}$.

We set
\[
u:=\frac hg
\qquad
\text{and}
\qquad
v:=gh
\]
and consider
\[
\widetilde{\caly}^{\,\sharp}
:=
\p^1_{\C_\infty}\times\A^1_E\times\G_{m,u}\times\G_{m,v}.
\]
The morphism
\begin{equation}\label{e:mu2-quotient}
\vartheta\colon\widetilde{\caly}^{\,\times}\rightarrow
\widetilde{\caly}^{\,\sharp},
\qquad
(z,E,g,h)\mapsto\left(z,E,\frac hg,gh\right),
\end{equation}
is the finite \'{e}tale quotient of degree $2$ by
$\langle\iota\rangle\simeq\boldsymbol\mu_2$.
Indeed,
\[
\C_\infty[E,g^{\pm1},h^{\pm1}]^\iota
=
\C_\infty[E,u^{\pm1},v^{\pm1}],
\qquad
g^2=\frac vu,
\qquad h=ug.
\]
On the modular leaf $h\neq0$
because $\Delta=-h^{q-1}$ is nonzero on $\Omega$, and if a coordinate has $g\equiv0$, then the corresponding $j$-coordinate is identically zero and the final weakly-special conclusion is immediate.

Define
\begin{equation}\label{e:quotient-j}
\pi_j^\sharp(z,E,u,v):=-vu^{-q}.
\end{equation}
Then
\[
\pi_j^\sharp\circ\vartheta
=
\pi_j
\qquad
\text{with}
\qquad
\pi_j(z,E,g,h):=-g^{q+1}h^{1-q}.
\]
The action \eqref{e:rationaltransformation} descends to
\begin{equation}\label{e:quotient-action}
[\gamma]\cdot(z,E,u,v)=
\left(
\gamma z,
\lambda_\gamma(z)^2
\left(E-\frac{c}{\widetilde\pi\lambda_\gamma(z)}\right),
\lambda_\gamma(z)^2u,
\lambda_\gamma(z)^{2q}v
\right),
\end{equation}
and $\pi_j^\sharp$ is invariant under this action.

The Ramanujan equations give
\[
\frac{D_1j}{j}=-u,
\qquad
D_1u=2Eu+u^2,
\qquad
D_1^2j=2E\,D_1j.
\]
Therefore on the open locus $jD_1j\neq0$,
\begin{equation}\label{e:E,g,h,j}
u=-\frac{D_1j}{j},
\qquad
v=\frac{(D_1j)^q}{j^{q-1}},
\qquad
E=\frac{D_1^2j}{2D_1j},
\end{equation}
and thus
\[
\C_\infty(E,u,v)
=
\C_\infty(j,D_1j,D_1^2j).
\]
Thus $\widetilde{\caly}^{\,\sharp}$ is  the birational ``phase''
space determined by the second order jet of $j$.

\begin{lem}\label{l:hs-descends-quotient}
For every $n\ge0$ one has
$D_n\circ\iota=\iota\circ D_n$ on $\C_\infty[E,g,h]$.
Hence the iterative Hasse--Schmidt structure extends to
$\C_\infty[E,g^{\pm1},h^{\pm1}]$ and descends through
\eqref{e:mu2-quotient} to
$\C_\infty[E,u^{\pm1},v^{\pm1}]$.
\end{lem}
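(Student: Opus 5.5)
Because $\iota$ is a $\C_\infty$-algebra automorphism, the plan is to reduce the commutation $D_n\circ\iota=\iota\circ D_n$ to a check on the generators $E,g,h$ by an inductive argument on $n$. Write $\theta_t(f)=\sum_n D_n(f)t^n$ for the Hasse--Schmidt morphism $\C_\infty[E,g,h]\to\C_\infty[E,g,h][\![t]\!]$ furnished by \eqref{e:algebraicity}. We must show $\theta_t\circ\iota=\widehat\iota\circ\theta_t$, where $\widehat\iota$ acts coefficientwise. Both sides are $\C_\infty$-algebra morphisms, so it suffices to compare them on $E,g,h$.

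\textbf{Step 1: a grading.} Grade $\C_\infty[E,g,h]$ by the parity of the total degree in $(g,h)$. Then $\iota$ acts as $+1$ on even and $-1$ on odd elements. The goal reduces to showing that each $D_n$ preserves this parity.

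\textbf{Step 2: parity via weights.} One route is to check that $D_n(E)$ is even and $D_n(g),D_n(h)$ are odd. A cleaner route uses weights. Under the Bosser--Pellarin structure, $D_n$ maps quasi-modular forms of weight $k$ and type $m$ to weight $k+2n$ and type $m+n$; this is the $\pgl_2$-equivariance, namely the transformation \eqref{e:rationaltransformation}. The weights of $E,g,h$ are $2$, $q-1$, $q+1$, so a monomial $E^ag^bh^c$ has weight $2a+(q-1)b+(q+1)c$.

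Weight alone does not detect the parity of $b+c$, since $q\pm1$ is even. The type does: the type of $E^ag^bh^c$ is $a+c$ modulo $q-1$, with $h$ of type $1$ and $E$ of type $1$. This is also insufficient. So the cleanest approach is to use the scalar matrix action directly.

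\textbf{Step 3: central element $-I$ of $\gl_2$.} Consider the rational transformation \eqref{e:rationaltransformation} extended to $\gl_2$, taking $\gamma=\zeta I$ with $\lambda=\zeta$ and the factor $\det$ included appropriately. Rather than pursue this, use Lemma~\ref{l:foliation-equivariant}. The transformation $(z,E,g,h)\mapsto(z,E,\zeta^{q-1}g,\zeta^{q+1}h)$ with $\zeta$ a primitive fourth root of unity gives exactly $\iota$, since $q\pm1\equiv 2\pmod 4$ for one sign only. This sign issue makes the route delicate, so the fallback is the direct inductive argument below, which I would actually run.

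\textbf{Direct inductive argument (Step 4).} Let $\sigma_n:=\iota D_n\iota$. The family $\{\sigma_n\}$ is again an iterative Hasse--Schmidt derivation on $\C_\infty[E,g,h]$, since conjugation by an automorphism preserves \eqref{e:leibniz}. It agrees with $\{D_n\}$ at $n=1$ on the generators: by \eqref{e:ramanujan}, $\sigma_1E=E^2$, $\sigma_1g=-(-(Eg+h))\cdot(-1)$ after substitution, that is $-(Eg+h)$, and $\sigma_1h=Eh$.

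The main obstacle is that in characteristic $p$ the iterative derivation is not determined by $D_1$. The relation $D_1^p=p!\,D_p=0$ means the $D_{p^k}$ are independent data. I would therefore invoke the uniqueness part of Bosser--Pellarin: an iterative HS derivation on $\C_\infty[E,g,h]$ extending to $\Omega$-analytic functions via $\dcal_n$ is unique. Concretely, $D_n=(-\widetilde\pi)^{-n}\dcal_n$ is intrinsically defined on functions on $\Omega$ by Taylor expansion, and $\iota$ corresponds to replacing the evaluation $(E,g,h)\mapsto(E(z),-g(z),-h(z))$. The map $(E,g,h)\mapsto (E,-g,-h)$ is realized on functions: evaluating $P\circ\iota$ at the leaf equals evaluating $P$ along the curve $z\mapsto(E(z),-g(z),-h(z))$. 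Taylor expansion commutes with this substitution, and algebraic independence \cite[Theorem~1]{bp} then transfers the identity back to polynomials.

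For this last step one needs that the curve $(E,-g,-h)$ satisfies the same Taylor structure, which is immediate because $-g,-h$ are analytic and $\dcal_n$ is $\C_\infty$-linear. Hence $D_n(P\circ\iota)=(D_nP)\circ\iota$ on the leaf, and therefore identically.

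\textbf{Extension and descent.} To extend the structure to $\C_\infty[E,g^{\pm1},h^{\pm1}]$, apply the standard formula $\theta_t(g^{-1})=\theta_t(g)^{-1}$, which is valid because $\theta_t(g)\equiv g\bmod t$ is a unit. Commutation with $\iota$ persists. The $\iota$-invariant subring is then stable under every $D_n$, and the descent to $\C_\infty[E,u^{\pm1},v^{\pm1}]$ follows.
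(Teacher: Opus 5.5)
Your argument has a genuine gap in Step 4, which is the only step that does any work. You abandon Steps 2 and 3, and Step 3 is wrong anyway, since $\zeta I$ with $\zeta^2=-1$ also sends $E\mapsto\zeta^2E=-E$. You correctly observe that $\sigma_n=\iota D_n\iota$ agreeing with $D_1$ does not force $\sigma_n=D_n$ in characteristic $p$. However, your analytic substitute is circular.

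The polynomial $D_n(P)$ is characterised only by $(D_nP)\circ C=D_n(P\circ C)$ along the curve $C=(E,g,h)$. To conclude $D_n(P\circ\iota)=(D_nP)\circ\iota$ you need the same identity along $C'=\iota\circ C=(E,-g,-h)$, i.e.\ that $C'$ is again a solution of the whole hyperdifferential system. By the chain rule this amounts to $D_n(E)\circ\iota=D_n(E)$, $D_n(g)\circ\iota=-D_n(g)$ and $D_n(h)\circ\iota=-D_n(h)$ for all $n$. That is exactly the statement to be proved. Linearity of $\dcal_n$ only gives $\dcal_n(-g)=-\dcal_n(g)$ as functions; it says nothing about the polynomial $D_n(g)$ evaluated at $(E,-g,-h)$. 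As a sanity check, your reasoning would equally ``prove'' that $(E,g,h)\mapsto(E,g,-h)$ commutes with $D_1$. That is false, because $D_1g=-(Eg+h)$.

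The missing idea is the one you dismissed in Step 2: weight and type are each insufficient, but together they determine $b+c$ modulo $2$. Suppose $E^ag^bh^c$ and $E^{a'}g^{b'}h^{c'}$ have the same weight and type, and set $\alpha=a'-a$, $\beta=b'-b$, $\gamma=c'-c$. Then $\alpha+\gamma=(q-1)k$ and $0=2\alpha+(q-1)\beta+(q+1)\gamma=2(\alpha+\gamma)+(q-1)(\beta+\gamma)$, whence $\beta+\gamma=-2k$. By \cite[Theorem~2]{bp}, $D_n$ maps weight--type $(w,m)$ to $(w+2n,m+n)$. The same computation with $\alpha+\gamma=n+(q-1)k$ and right-hand side $2n$ again gives $\beta+\gamma=-2k$, so $D_n$ preserves $\iota$-parity. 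This is the paper's proof.

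Conceptually, $\iota$ is the composite of the two torus symmetries encoded by type and weight. These are $(E,g,h)\mapsto(\mu^{-1}E,g,\mu^{-1}h)$, coming from $z\mapsto\mu z$ with $\mu$ a generator of $\F_q^\times$, and $(E,g,h)\mapsto(\lambda^2E,\lambda^{q-1}g,\lambda^{q+1}h)$ with $\lambda^2=\mu$. Their twists of $D_n$ cancel, and this is the correct version of your Step 3.

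Your extension and descent paragraph matches the paper. You should also verify iterativity on the localization via the same universal-property argument.
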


\begin{proof}
A monomial $E^a g^b h^c$ has weight
$w=2a+(q-1)b+(q+1)c$
and type $m\equiv a+c\bmod{q-1}$, while $\iota$ acts on it by
$(-1)^{b+c}$.  If $E^{a'}g^{b'}h^{c'}$ is another monomial of the same weight and type, set $A:=a'-a$, $B:=b'-b$ and $C:=c'-c$. Then $A+C=(q-1)k$ and $2A+(q-1)B+(q+1)C=0$, so $B+C=-2k$.  Thus $b+c\bmod2$ is constant on every weight--type
component.

By \cite[Theorem~2]{bp}, we have that $D_n$ sends weight--type $(w,m)$ to
$(w+2n,m+n)$.  If $E^{a'}g^{b'}h^{c'}$ occurs in
$D_n(E^ag^bh^c)$, put again
$A:=a'-a$, $B:=b'-b$ and $C:=c'-c$.
Then $A+C=n+(q-1)k$ and $2A+(q-1)B+(q+1)C=2n$, and again $B+C=-2k$, so $D_n$ preserves the $\iota$-parity and
commutes with $\iota$.

Set $R:=\C_\infty[E,g,h]$, $R':=R[g^{-1},h^{-1}]$, and, as in
Section~\ref{ss:hs-foliation}, write
\[
\theta_T(f):=\sum_{n\ge0}D_n(f)T^n.
\]
Since $\theta_T(g)$ and $\theta_T(h)$ have constant terms $g$ and $h$,
they are units in $R'[\![T]\!]$.  The universal property of localization
therefore extends $\theta_T$ uniquely to $R'$, with
$\theta_T(g^{-1})=\theta_T(g)^{-1}$ and
$\theta_T(h^{-1})=\theta_T(h)^{-1}$.

Let $\widetilde\theta_S\colon R'[\![T]\!]\rightarrow R'[\![S,T]\!]$ be obtained by applying $\theta_S$ coefficientwise.  The iterativity identity on $R$ is
$\widetilde\theta_S\circ\theta_T=\theta_{S+T}$.
Both sides are algebra homomorphisms $R'\rightarrow R'[\![S,T]\!]$.  They agree on $R$ by the normalized form of \eqref{e:leibniz}; moreover, both send $g^{-1}$ and $h^{-1}$ to the inverses of their respective values on $g$ and $h$.  Hence they agree on all of $R'$.

The two maps $\iota\circ\theta_T$ and $\theta_T\circ\iota$ agree on $R$, and hence on $R'$.  Thus
$\theta_T\bigl((R')^\iota\bigr)
\subset
(R')^\iota[\![T]\!]$.
Using $(R')^\iota=\C_\infty[E,u^{\pm1},v^{\pm1}]$ we obtain the descended iterative Hasse--Schmidt structure on the quotient ring.
\end{proof}

Let
\[
B:=\G_{m,\C_\infty}
\qquad
\text{and}
\qquad
P^{\mathrm{fr}}:=B\times \pgl_{2,\C_\infty}
\]
where $\pgl_{2,\C_\infty}$ acts on $P^{\mathrm{fr}}$ by left multiplication on the second factor.  We shall call $P^{\mathrm{fr}}$ the \emph{frame space}.  Define the section
\[
\sigma\colon B\rightarrow\widetilde{\caly}^{\,\sharp},
\qquad
\sigma(j):=(0,0,1,-j).
\]

Consider
\[
\pgl_{2,\C_\infty}^\circ:=
\left\{
\left[\begin{matrix}a&b\\ c&d\end{matrix}\right]\in \pgl_{2,\C_\infty}:
 d\neq0
\right\},
\]
which is a dense open subset of $\pgl_{2,\C_\infty}$.

\begin{lem}\label{l:local-principal-model}
The map
\[
\Phi\colon P^{\mathrm{fr}}\dashrightarrow\widetilde{\caly}^{\,\sharp},
\qquad
(j,[\gamma])\mapsto[\gamma]\cdot\sigma(j),
\]
is a $\pgl_{2,\C_\infty}$-equivariant birational map over $B$.
\end{lem}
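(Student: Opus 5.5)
The plan is to compute $\Phi$ explicitly and read off everything from the formula. Take $[\gamma]\in\pgl_{2,\C_\infty}^\circ$ with determinant one representative $\gamma=\begin{bmatrix}a&b\\ c&d\end{bmatrix}$. Then $\lambda_\gamma(0)=d\neq0$, so $\gamma\cdot0=b/d$ is finite. By \eqref{e:quotient-action},
\[
\Phi(j,[\gamma])=\Bigl(\frac bd,\ d^2\Bigl(-\frac{c}{\widetilde\pi d}\Bigr),\ d^2,\ -d^{2q}j\Bigr)
=\Bigl(\frac bd,\ -\frac{cd}{\widetilde\pi},\ d^2,\ -d^{2q}j\Bigr).
\]
This is independent of the sign of $\gamma$, since only $b/d$, $cd$ and $d^2$ occur. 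Hence $\Phi$ is regular on $B\times\pgl_{2,\C_\infty}^\circ$. It commutes with the projections to $B$, because $\pi_j^\sharp(\Phi(j,[\gamma]))=-(-d^{2q}j)(d^2)^{-q}=j$. Equivariance comes straight from the definition $\Phi(j,[\gamma])=[\gamma]\cdot\sigma(j)$ together with the fact that \eqref{e:quotient-action} is a rational action: $\Phi(j,[\delta\gamma])=[\delta]\cdot([\gamma]\cdot\sigma(j))$ holds wherever both sides are defined. This cocycle identity $\lambda_{\delta\gamma}(z)=\lambda_\delta(\gamma z)\lambda_\gamma(z)$, together with the corresponding identity for the $E$-coordinate, is the standard automorphy computation, so I would only indicate it.

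For birationality I would write down the inverse. Given $(z,E,u,v)\in\widetilde{\caly}^{\,\sharp}$ with $z$ finite, set $j:=-vu^{-q}$, which lies in $B$ on the open set $v\neq0$. We need $[\gamma]$ with $d^2=u$, $b/d=z$, $cd=-\widetilde\pi E$ and $ad-bc=1$. This forces
\[
d^2=u,\qquad b=zd,\qquad c=-\widetilde\pi E/d,\qquad a=(1+bc)/d.
\]
Here $d$ is determined only up to sign, so a priori the inverse is defined only on a double cover. The substitution $d\mapsto -d$ sends $(a,b,c,d)$ to $-(a,b,c,d)$, so it gives the same point of $\pgl_2$. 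To confirm this inverse is a rational map, I would write it as a well-defined map into $\pgl_2$ via even expressions. Scaling the representative by $d$ gives the class $[a d: bd: cd: d^2]=[1+bc:\ zu:\ -\widetilde\pi E:\ u]$. Here $bc=-\widetilde\pi Ez$, so the class is
\[
\begin{bmatrix}1-\widetilde\pi Ez& zu\\ -\widetilde\pi E& u\end{bmatrix}\in\pgl_2.
\]
This is a rational function of $(z,E,u,v)$ with determinant $u\neq0$. Composing both ways then gives the identity on dense opens: $\Psi\circ\Phi=\id$ on $B\times\pgl^\circ_{2}$ and $\Phi\circ\Psi=\id$ on the locus $\{z\neq\infty,\ u\neq0,\ v\neq0\}$.

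The main point needing care is this sign ambiguity of $d$: a naive inverse through $\Sl_2$ is only defined up to $\pm$, which is exactly the $\boldsymbol\mu_2$ issue. The resolution is the projective rewriting above, and it relies on having already quotiented by $\iota$. In the original $(g,h)$ coordinates, one would need $d$ itself from $\lambda^{q-1}g$, and the map would fail to be birational, matching the two-orbit remark preceding the lemma. The remaining checks are routine verifications that the displayed formulas compose to the identity.
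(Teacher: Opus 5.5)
Your proposal is correct and follows the paper's proof: the same explicit formula for $\Phi$ on $B\times\pgl_{2,\C_\infty}^\circ$, and an inverse matrix that is exactly $u$ times the paper's $M(z,E,u)$, hence the same class in $\pgl_2$, with both compositions and the identity $\pi_j^\sharp\circ\Phi=j$ checked directly. Your handling of the sign of $d$ matches the paper's choice of $t$ with $t^2=u$ when verifying $\Phi\circ\Psi=\id$. Your indicated cocycle check for equivariance is more than the paper writes; it simply asserts $\Phi(j,g[\gamma])=g\cdot\Phi(j,[\gamma])$.
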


\begin{proof}
Let $\widetilde{\caly}^{\,\sharp,\circ}
:=
\A^1_z\times\A^1_E\times\G_{m,u}\times\G_{m,v}
\subset\widetilde{\caly}^{\,\sharp}$.
We show that $\Phi$ restricts to 
$B\times \pgl_{2,\C_\infty}^\circ
\simeq \widetilde{\caly}^{\,\sharp,\circ}$.
Take $[\gamma]\in \pgl_{2,\C_\infty}^\circ$ and choose a determinant one representative
$\gamma=
\begin{bmatrix}a&b\\ c&d\end{bmatrix}$.
Since $\lambda_\gamma(0)=d$, formula~\eqref{e:quotient-action} gives
\begin{equation}\label{e:frame-map-formula}
\Phi(j,[\gamma])
=\left(
\frac bd,
-\frac{cd}{\widetilde\pi},
d^2,
-jd^{2q}
\right).
\end{equation}
This expression is independent of the choice of the determinant-one
representative, whose only ambiguity is multiplication by $-1$.

On the other hand, for $(z,E,u,v)\in\widetilde{\caly}^{\,\sharp,\circ}$, we set
$j:=-vu^{-q}$
and define
$M(z,E,u):=
\begin{bmatrix}
u^{-1}(1-\widetilde\pi Ez)&z\\
-\widetilde\pi Eu^{-1}&1
\end{bmatrix}$.
A direct computation gives
$\det M(z,E,u)=u^{-1}\neq0$,
so $[M(z,E,u)]\in \pgl_{2,\C_\infty}^\circ$.  This defines a regular map
\[
\Psi\colon\widetilde{\caly}^{\,\sharp,\circ}\rightarrow B\times \pgl_{2,\C_\infty}^\circ,
\qquad
(z,E,u,v)\mapsto\bigl(j,[M(z,E,u)]\bigr).
\]
Choose $t\in\C_\infty^\times$ with $t^2=u$.  Then $tM(z,E,u)$ has
determinant one, and formula~\eqref{e:frame-map-formula} gives $\frac bd=z$, $-\frac{cd}{\widetilde\pi}=E$, $d^2=u$ and  $-jd^{2q}=v$. Hence $\Phi\circ\Psi$ is the identity.

For the converse, start with
$\gamma=\begin{bmatrix}a&b\\ c&d\end{bmatrix}\in\Sl_2$ and let
$(z,E,u,v)=\Phi(j,[\gamma])$.  Since $u=d^2$ and $v=-jd^{2q}$, $-vu^{-q}=j$. Moreover, $ad-bc=1$, so
$1-\widetilde\pi Ez=1+bc=ad$. Therefore $M(z,E,u)=
\frac1d
\begin{bmatrix}a&b\\ c&d\end{bmatrix}$.
Thus $[M(z,E,u)]=[\gamma]$, so $\Psi\circ\Phi$ is also the identity.
This proves birationality.  Finally,
\[
\Phi(j,g[\gamma])=g\cdot\Phi(j,[\gamma])
\]
proves $\pgl_{2,\C_\infty}$-equivariance, and $\pi_j^\sharp\circ\Phi(j,[\gamma])=j$ proves that $\Phi$ is over $B$.
\end{proof}

Taking products, $(\widetilde{\caly}^{\,\sharp})^n$ is
$\pgl_{2,\C_\infty}^n$-equivariantly birational over $B^n$ to the principal homogeneous
$\pgl_{2,\C_\infty}^n$-space $B^n\times \pgl_{2,\C_\infty}^n$.

By Lemma~\ref{l:hs-descends-quotient}, the descended Hasse--Schmidt
expansion pulls back through $\Phi$ on $B\times\pgl_{2,\C_\infty}^\circ$, where
\[
\theta_T(j)=j-ujT+O(T^2).
\]
Since $uj$ is a unit, the formal inverse function theorem gives a unique
regular reparameterization $\eta_S$ with $\eta_S(j)=j+S$.
Lemma~\ref{l:foliation-equivariant} gives preservation of the formal leaves. Since the action fixes $j$,
uniqueness makes $\eta_S$ exactly $\pgl_{2,\C_\infty}$-equivariant wherever
both sides are defined. Composing the unique horizontal lifts of successive
$j$-increments also proves iterativity. Translating this reparameterized
expansion gives structures on the opens
$B\times\gamma\pgl_{2,\C_\infty}^\circ$, for
$\gamma\in\pgl_2(\C_\infty)$, which cover $P^{\mathrm{fr}}$; uniqueness
makes them agree on overlaps. We denote the resulting
$\pgl_{2,\C_\infty}$-equivariant iterative Hasse--Schmidt foliation by
$\calf^{\mathrm{fr}}$.
For products, we take products of these principal homogeneous spaces and of the foliations.

\subsubsection{The Hasse--Schmidt Galois group}

Set
\[
G:=\pgl_{2,\C_\infty}^n.
\]
Let $X\subset\A^n_{\C_\infty}$ be irreducible and assume first that $X$ is
not contained in a coordinate hyperplane $x_i=0$.  Choose a non-empty smooth open
\[
X^\circ\subset X\cap B^n
\]
and set
\begin{equation}\label{e:frame-space-X}
P_X
:= X^\circ\times_{B^n}(B^n\times G)
= X^\circ\times G
\qquad
\text{and}
\qquad
\pi_X:P_X\rightarrow X^\circ.
\end{equation}
This is the principal homogeneous $G$-space used from now on.

\begin{lem}\label{l:hs-base-change}
After shrinking $X^\circ$, assume that it is affine and fix an \'etale morphism
\[
t=(t_1,\ldots,t_r):X^\circ\rightarrow\A^r_{\C_\infty}
\qquad 
\text{with}
\quad
r:=\dim X^\circ.
\]
Then $P_X$ carries a $G$-equivariant iterative Hasse--Schmidt foliation
$\calf_{X,t}$ of rank $r$.  The projection $\pi_X$ identifies every local leaf germ with a neighbourhood of its image in $X^\circ$.
\end{lem}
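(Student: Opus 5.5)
The plan is to build $\calf_{X,t}$ in two stages. First I construct an iterative Hasse--Schmidt structure of rank $r$ on $X^\circ$ itself, using the étale coordinates $t$. Then I lift it horizontally to $P_X=X^\circ\times G$ using the rank-one equivariant foliation $\calf^{\mathrm{fr}}$ on each factor $B\times\pgl_{2,\C_\infty}$.

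On $X^\circ$, the coordinates $t_1,\ldots,t_r$ give an étale map to $\A^r$. I equip $\C_\infty[t_1,\ldots,t_r]$ with the translation structure $\theta_{\mathbf s}(f)(\mathbf t)=f(\mathbf t+\mathbf s)$. Formal étaleness of $\calo(X^\circ)$ over $\C_\infty[\mathbf t]$ then extends it uniquely to an algebra map $\theta_{\mathbf s}\colon\calo(X^\circ)\to\calo(X^\circ)[\![\mathbf s]\!]$. The extension is obtained by Hensel lifting, successively modulo $(\mathbf s)^{m+1}$.

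The identities $\operatorname{ev}_{\mathbf s=0}\circ\theta_{\mathbf s}=\id$ and $(\theta_{\mathbf s}\widehat\otimes1)\circ\theta_{\mathbf t}=\theta_{\mathbf s+\mathbf t}$ hold on $\C_\infty[\mathbf t]$. Both sides are lifts of the same map along a formally étale extension, so uniqueness gives the identities on $\calo(X^\circ)$. This is the same uniqueness argument used for localization in Lemma~\ref{l:hs-descends-quotient}.

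Write the coordinates of $X^\circ\subset B^n$ as $x_1,\ldots,x_n$. Each $x_i$ then acquires an expansion $\theta_{\mathbf s}(x_i)=x_i+\delta_i(\mathbf s)$, where $\delta_i(\mathbf s)\in(\mathbf s)\calo(X^\circ)[\![\mathbf s]\!]$.

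For the horizontal lift, I use the reparameterized one-parameter structure $\eta_S$ on each factor $B\times\pgl_{2,\C_\infty}$, which satisfies $\eta_S(j)=j+S$. It is iterative and $\pgl_{2,\C_\infty}$-equivariant. On a function $f$ of the $i$-th $G$-factor I set
\[
\theta_{\mathbf s}(f):=\eta^{(i)}_{S}(f)\big|_{S=\delta_i(\mathbf s),\,j=x_i}.
\]
In words, I substitute the increment $\delta_i(\mathbf s)$ into $\eta^{(i)}$. Because $\delta_i$ has no constant term, this substitution is a well-defined element of $\calo(P_X)[\![\mathbf s]\!]$. On $\calo(X^\circ)$ I keep $\theta_{\mathbf s}$ as already defined, and I combine the two definitions via $\calo(P_X)=\calo(X^\circ)\otimes\calo(G)$.

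Equivariance under $G$ is inherited factorwise from the equivariance of $\eta_S$. The additivity axiom reduces to the composition law $\eta_{S_1}\circ\eta_{S_2}=\eta_{S_1+S_2}$ in $j$-increments. It also needs the cocycle $\delta_i(\mathbf s+\mathbf t)=\delta_i(\mathbf t)+(\theta_{\mathbf t}\delta_i)(\mathbf s)$ coming from iterativity on $X^\circ$. Together with the factorwise action, these imply the coefficientwise identity. Regularity holds on the charts $X^\circ\times\prod\gamma_i\pgl^\circ_{2}$, and uniqueness glues the charts as in the construction of $\calf^{\mathrm{fr}}$.

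It remains to show that $\pi_X$ maps each local leaf germ isomorphically onto a neighbourhood of its image. Composing with $\pi_X$ gives $\theta_{\mathbf s}$ on $\calo(X^\circ)$, and the first-order part of $\theta_{\mathbf s}(t_k)=t_k+s_k$ is the identity. Hence the leaf map $\mathbf s\mapsto\pi_X(\text{leaf})$ is, after composing with the étale $t$, the translation by $\mathbf s$, which is a local isomorphism. Convergence of the leaf on a rigid neighbourhood would follow from the rigid inverse function theorem for $t$ together with convergence of $\eta$, which comes from the analytic Ramanujan leaves.

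I expect the main obstacle to be the iterativity check for the substituted structure. It needs a careful formal-groupoid argument: horizontal lifts along paths in $X^\circ$ must compose correctly. My first attempt would be to characterize $\theta_{\mathbf s}$ on $P_X$ as the unique lift that is compatible with $\pi_X$ and horizontal for each $\eta^{(i)}$. Iterativity would then follow, like everything else, from this uniqueness.
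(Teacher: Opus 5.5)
Your proposal is correct and follows the paper's proof. The paper likewise builds the Taylor map $\calt_{\boldsymbol\varepsilon}$ on $X^\circ$ by formal \'etaleness, substitutes the increments $S_i=\calt_{\boldsymbol\varepsilon}(j_i)-j_i$ into the product horizontal lift $\eta_{\mathbf S}$, and obtains iterativity, $G$-equivariance and gluing from uniqueness of the horizontal lift (your cocycle computation is a valid explicit form of that uniqueness). For convergence, the paper makes your last remark precise: every frame point lies on a $G(\C_\infty)$-translate of the lifted Ramanujan leaf, where $D_1j=-uj\neq0$ gives a convergent $j$-reparameterization whose Taylor expansion is $\eta_S$ by uniqueness.
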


\begin{proof}
The reparameterized expansion $\eta_S$ constructed above is exactly
$\pgl_{2,\C_\infty}$-equivariant. Taking products, every formal change
$j_i\mapsto j_i+S_i$ has a unique horizontal lift $\eta_{\mathbf S}$
on the frame space.

Let $\calt_{\boldsymbol\varepsilon}$ be the Taylor map in the chosen
\'etale coordinates, i.e., $\calt_{\boldsymbol\varepsilon}(t_\nu)
=t_\nu+\varepsilon_\nu$;
it exists by formal \'etaleness
\cite[Lemma~37.8.10]{stacks}.  Substituting
$S_i=
\calt_{\boldsymbol\varepsilon}(j_i|_{X^\circ})
-j_i|_{X^\circ}$
in the product expansion $\eta_{\mathbf S}$ defines an expansion
on $P_X$.  It is well defined because
$\left.\eta_{\mathbf S}(j_i)\right|_
{S_i=\calt_{\boldsymbol\varepsilon}(j_i)-j_i}=
\calt_{\boldsymbol\varepsilon}(j_i)$.
It has rank $r$, and at every finite order it is the unique horizontal lift of $\calt_{\boldsymbol\varepsilon}$.  This uniqueness implies iterativity, $G$-equivariance, and compatibility on overlapping frame charts.  Hence the local expansions glue to $\calf_{X,t}$.

In each factor, every frame point lies on a translate of the
 image of the lifted local Ramanujan leaf in the frame space. Since $D_1j=-uj\neq0$
on the modular leaf and the action fixes $j$, these translates
admit convergent $j$-parameters by
\cite[\S10, Proposition~10.8]{abhyankar}.
By uniqueness, their Taylor expansions are the formal horizontal
lifts constructed above. Taking products and restricting to
$X^\circ$ gives the required convergent local leaves.
Thus every local analytic leaf
projects isomorphically onto a neighbourhood in $X^\circ$.
\end{proof}

\begin{rmk}\label{r:hs-coordinate-choice}
The operators in $\calf_{X,t}$ depend on $t$.  By the uniqueness in the
proof, a second \'etale coordinate system changes the leaf parameters by
an automorphism of
$\C_\infty[\![T_1,\ldots,T_r]\!]$ with invertible linear part.  Hence the
unparameterized leaf germs, the dimensions defining $A_k(V,\calf)$, Hasse--Schmidt invariance, minimal invariant subvarieties, and the Galois conjugacy class below are independent of $t$.  We shall write $\calf_X$ whenever the choice is irrelevant.
\end{rmk}

The global leaves of the product foliation on $(P^{\mathrm{fr}})^n$ are, by definition, the products
\[
\lcal_1\times\cdots\times\lcal_n
\]
of global one factor leaves. A global leaf of $\calf_X$ is an irreducible analytic component of
\[
P_X
\cap
(\lcal_1\times\cdots\times\lcal_n).
\]
By Lemma~\ref{l:hs-base-change}, their
projections to $X^\circ$ are locally isomorphisms; hence their images
contain a non-empty admissible open and are Zariski dense by
\cite[Lemma~2.2.3 and Theorem~2.3.1]{conrad-irred}.

From now on, $P_X$ is equipped with $\calf_X$.  If $X\subset\{x_i=0\}$ for some $i$, we declare $X$ HS-non-generic.  This
case is already weakly-special.

A closed subvariety $W\subset P_X$ is called \emph{Hasse--Schmidt invariant} if every Hasse--Schmidt leaf through a point of $W$ is contained in $W$.

\begin{lem}\label{l:leaf-closure-invariant}
Let $\lcal$ be a Hasse--Schmidt leaf over $X^\circ$, and let
$Z:=\overline{\lcal}^{\,\mathrm{Zar}}\subset P_X$.  Then $Z$ is Hasse--Schmidt invariant.
\end{lem}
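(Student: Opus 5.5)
We will prove that $Z:=\overline{\lcal}^{\,\mathrm{Zar}}$ is Hasse--Schmidt invariant by showing that its ideal sheaf is stable under all the Hasse--Schmidt operators of $\calf_X$. Lemma~\ref{l:hs-invariance} then gives $Z\subset A_r(Z,\calf_X)$. Finally we pass from this infinitesimal statement to the statement about leaves through every point of $Z$.

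\textbf{Step 1: stability of the ideal of $Z$.} Work on an affine open $U=\Spec A'\subset P_X$ meeting $\lcal$, with operators $D_\alpha$, $\alpha\in\N^r$. Let $f\in I_Z(U)$. Then $f$ vanishes identically on $\lcal\cap U$. By Lemma~\ref{l:hs-base-change}, $\lcal$ is locally the convergent image of its formal leaf parameterization. So for each $x\in\lcal\cap U$, the full jet $\operatorname{jet}^\infty_{\calf_X,x}(f)=\sum_\alpha D_\alpha(f)(x)t^\alpha$ is the Taylor expansion of $f$ along the analytic leaf germ at $x$. Since $f$ vanishes on that germ, we get $D_\alpha(f)(x)=0$ for all $\alpha$. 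Hence each regular function $D_\alpha(f)$ vanishes on $\lcal\cap U$. Since $\lcal\cap U$ is Zariski dense in $Z\cap U$ (irreducibility of $Z$, restriction to opens), it follows that $D_\alpha(f)\in I_Z(U)$. The sets $U$ cover a dense open part of $Z$. Alternatively, and more cleanly, note that $\dim(\calf_X\cap Z)_x=r$ for all $x\in\lcal$, because the leaf germ lies in $Z$. Proposition~\ref{p:atypical-closed} makes $A_r(Z,\calf_X)$ Zariski closed, so it contains $\overline{\lcal}=Z$, exactly as in Corollary~\ref{c:zariski-closure-leaf}. Lemma~\ref{l:hs-invariance} then gives $D_\alpha(\cali_Z)\subset\cali_Z$ everywhere.

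\textbf{Step 2: from jets to leaves.} Let $x\in Z$ and let $\lcal'$ be a local analytic leaf through $x$. For $f\in\cali_{Z,x}$, iterativity gives $\theta_{\mathbf s}$-compatibility. Take any point $y\in\lcal'$ near $x$, say $y$ at leaf parameter $\mathbf s$. The jet of $f$ at $y$ is obtained from $\theta_{\mathbf s+\mathbf t}(f)(x)$. By Step 1, that series has all coefficients in $\cali_Z$, so it vanishes at $x$. Hence $f\equiv0$ on the germ of $\lcal'$ at $x$. By the identity principle on the connected smooth (hence irreducible) leaf, $f$ vanishes on the whole connected leaf through $x$. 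The leaf is therefore contained in $Z$.

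\textbf{Main obstacle.} The main point of care is Step 2: matching the formal expansion with the analytic leaf. This requires the convergence of the local parameterizations from Lemma~\ref{l:hs-base-change}, and for global leaves (components of $P_X\cap\prod\lcal_i$) propagating containment along the irreducible component via Conrad's identity principle. The passage from the germ at $x$ to the whole component is done by a connectedness argument: the set of points of the component near which it lies in $Z$ is open and closed.
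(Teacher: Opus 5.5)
Your proposal is correct and follows the paper's proof: your ``cleaner'' variant of Step~1 is exactly the paper's appeal to Corollary~\ref{c:zariski-closure-leaf} and Lemma~\ref{l:hs-invariance}, and Step~2 is the paper's use of Lemma~\ref{l:hs-base-change} to identify the Taylor expansion along a local leaf chart with the Hasse--Schmidt expansion. You also spell out the connectedness and identity-principle argument that the paper compresses into ``a leaf is a union of local charts''; the iterativity detour is unnecessary, since $D_\alpha(f)(x)=0$ for all $\alpha$ already forces the expansion of $f$ along the leaf at $x$ to vanish.
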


\begin{proof}
Corollary~\ref{c:zariski-closure-leaf} gives
$Z\subset A_{\dim X}(Z,\calf_X)$, and Lemma~\ref{l:hs-invariance} shows that the
ideal of $Z$ is stable under every Hasse--Schmidt operator.  By Lemma~\ref{l:hs-base-change}, the Taylor expansion of each local analytic leaf chart is the corresponding Hasse--Schmidt expansion.  Hence every
local leaf chart through a point of $Z$ lies in $Z$.  A  leaf is a union of  local charts, so it also lies in $Z$.
\end{proof}

A non-empty irreducible closed Hasse--Schmidt invariant subvariety
$Z\subset P_X$ is called \emph{minimal} if it contains no proper non-empty irreducible closed Hasse--Schmidt invariant subvariety.

\begin{lem}\label{l:hs-minimal-leaf}
Let $\pi\colon P\rightarrow X$ be a principal homogeneous $G$-space with a $G$-equivariant
Hasse--Schmidt foliation, and assume that every  leaf dominates $X$.  For an irreducible closed $Z\subset P$, the following are equivalent:
\begin{enumerate}
\item $Z$ is minimal Hasse--Schmidt invariant;
\item $Z=\overline{\lcal_x}^{\,\mathrm{Zar}}$ for every $x\in Z(\C_\infty)$;
\item $Z$ is the Zariski closure of a Hasse--Schmidt leaf.
\end{enumerate}
\end{lem}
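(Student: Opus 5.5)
We prove the cycle (1)$\Rightarrow$(2)$\Rightarrow$(3)$\Rightarrow$(1). All three steps rest on two facts already available.
\begin{enumerate}
\item By Lemma~\ref{l:leaf-closure-invariant}, and its proof, which only uses Corollary~\ref{c:zariski-closure-leaf}, Lemma~\ref{l:hs-invariance} and the local-isomorphism property of leaves, the Zariski closure of any leaf is Hasse--Schmidt invariant.
\item Every Hasse--Schmidt invariant $W$ contains the leaf $\lcal_x$ through each of its points $x$. Since $W$ is closed, it therefore contains $\overline{\lcal_x}^{\,\mathrm{Zar}}$.
\end{enumerate}
We also use that each leaf closure is irreducible. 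A global leaf is an irreducible analytic component, and the Zariski closure of an irreducible analytic set is irreducible: a decomposition of the closure into two proper closed sets would pull back to a decomposition of the analytic component, using Conrad's component theory recalled in Section~2.

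\emph{(1)$\Rightarrow$(2).} Let $Z$ be minimal and $x\in Z(\C_\infty)$. By fact (2), $\overline{\lcal_x}^{\,\mathrm{Zar}}\subset Z$. By fact (1) and the irreducibility above, it is a non-empty irreducible closed invariant subvariety. Minimality then forces equality.

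\emph{(2)$\Rightarrow$(3).} This is immediate: pick any point of $Z$. If $Z$ were empty, the statement would be vacuous; but the definition of minimal requires non-emptiness, and in (2) we may take $Z$ non-empty as well.

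\emph{(3)$\Rightarrow$(1).} This is the substantive direction. Let $Z=\overline{\lcal}^{\,\mathrm{Zar}}$. It is invariant by fact (1). Let $W\subset Z$ be a non-empty irreducible closed invariant subvariety; we want $W=Z$. We will use the $G$-equivariance together with the domination hypothesis, as in the characteristic-zero proof.

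First define the stabiliser $H:=\{g\in G: gZ=Z\}$, a closed subgroup. Next pick $w\in W$. Since $P\to X$ is a principal homogeneous space and $Z$ dominates $X$ (its leaf does), we can choose $y\in\lcal$ with $\pi(y)=\pi(w)$, after possibly moving $w$ inside $\lcal_w\subset W$ so that its image lies in $\pi(\lcal)$. Both leaves dominate $X$, and their images contain non-empty admissible opens of an irreducible $X$, so those images meet. Then $w=gy$ for a unique $g\in G$. By equivariance, $g\lcal_y=\lcal_w$, so $g\lcal$ and $\lcal_w$ share a local leaf germ. Taking closures gives $\overline{\lcal_w}^{\,\mathrm{Zar}}=gZ$, using the identity principle for the irreducible analytic component. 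Hence $gZ\subset W\subset Z$.

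Finally, $gZ$ and $Z$ are irreducible of the same dimension, because translation is an automorphism. So $gZ\subset Z$ forces $gZ=Z$, and then $W=Z$.

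The main obstacle is the identification $\overline{\lcal_w}^{\,\mathrm{Zar}}=g\,\overline{\lcal}^{\,\mathrm{Zar}}$. It needs two things. The first is that a global leaf is determined, as an irreducible analytic component, by any one of its local germs; here we use rigid analytic continuation and uniqueness of irreducible components through smooth points, as in Section~2. The second is that translation by $g$ carries leaves to leaves globally and not only formally. For the second point we will invoke $G$-equivariance of the foliation in Lemma~\ref{l:hs-base-change} together with the convergence of local leaf charts. We then conclude using the smoothness of leaves and the fact, from Section~2, that exactly one irreducible component passes through a smooth point.
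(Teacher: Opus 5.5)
Your overall structure is the paper's, which in turn follows \cite[Lemma~2.2]{axschanuel}. The implication (1)$\Rightarrow$(2) comes from invariance of leaf closures plus minimality, and (2)$\Rightarrow$(3) is trivial. For (3)$\Rightarrow$(1), a translate $g$ relates a leaf inside $W$ to the leaf defining $Z$, and the dimension count $gZ\subset W\subset Z$ finishes. The endgame is fine. The gap is in how you produce a point where the two leaves lie over the same base point. You claim that $\pi(\lcal_w)$ and $\pi(\lcal)$ meet because both contain non-empty admissible opens of the irreducible $X$. This is false in rigid geometry. Irreducibility in Conrad's sense only says that a non-empty admissible open is Zariski dense, not that two such opens intersect: the open disks $\{|x|<1\}$ and $\{|x-1|<1\}$ are disjoint non-empty admissible opens of the irreducible space $\A^{1,\mathrm{an}}_{\C_\infty}$. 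The hypothesis that every leaf dominates $X$ gives only Zariski density of $\pi(\lcal_w)$ and $\pi(\lcal)$, which does not force them to meet. So moving $w$ inside $\lcal_w$ is not enough.

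The missing ingredient is that one of the two sets must contain a Zariski open, and this is where algebraicity of $W$ enters. Since $W$ contains a leaf, $W\rightarrow X$ is dominant, so by Chevalley's theorem $\pi(W)$ contains a dense Zariski open $U\subset X$. The admissible open inside $\pi(\lcal)$ is Zariski dense and hence meets $U$. You should therefore choose $w$ anywhere in $W$, not just in $\lcal_w$, lying over a point of $\pi(\lcal)\cap U$; this is exactly what the paper does. With this repair, the rest of your argument goes through and coincides with the paper's proof: $w=gy$, then $\overline{\lcal_w}^{\,\mathrm{Zar}}=gZ$ by equivariance and the identity principle, then $gZ\subset W\subset Z$ and equality of dimensions. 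The stabiliser $H$ you introduce is never used and can be dropped.
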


\begin{proof}
This is the argument of \cite[Lemma~2.2]{axschanuel}.  Lemma~\ref{l:leaf-closure-invariant} gives $(1)\Rightarrow(2)$, and $(2)\Rightarrow(3)$ is immediate.  Conversely, if $Z=\overline{\lcal_x}^{\,\mathrm{Zar}}$ and $W\subset Z$ is a non-empty irreducible closed invariant subvariety, 
then $W\rightarrow X$ is dominant as $W$ contains a leaf.
By Chevalley's theorem, $\pi(W)$ contains a non-empty Zariski
open subset of $X$, which meets $\pi(\lcal_x)$.
Thus we may choose $q\in W$ and $g\in G$ with $gq\in\lcal_x$.
Equivariance identifies the local leaf germs, so the
rigid-analytic identity principle gives
$g\overline{\lcal_q}^{\,\mathrm{Zar}}=Z$.
Since $\overline{\lcal_q}^{\,\mathrm{Zar}}\subseteq W\subseteq Z$, we obtain $\dim W=\dim Z$, hence $W=Z$.
\end{proof}

For a minimal Hasse--Schmidt invariant subvariety $Z\subset P_X$, define
\[
\Gal_{\mathrm{HS}}(Z):=\Stab_G(Z).
\]

\begin{lem}\label{l:hs-galois-principal}
Let $Z\subset P_X$ be minimal Hasse--Schmidt invariant.  Then
$\Gal_{\mathrm{HS}}(Z)$ is an algebraic subgroup of $G$, and, after possibly shrinking $X^\circ$, the map $Z\rightarrow X^\circ$ is a principal homogeneous $\Gal_{\mathrm{HS}}(Z)$-space over $X^\circ$.
\end{lem}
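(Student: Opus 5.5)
We first show that $\Gal_{\mathrm{HS}}(Z)$ is algebraic. For $g\in G(\C_\infty)$, the condition $gZ=Z$ is equivalent to $gZ\subseteq Z$, because $gZ$ is irreducible closed of the same dimension as $Z$. The set $\{g\in G: gZ\subseteq Z\}$ equals $\bigcap_{x\in Z}\{g: gx\in Z\}$. Each set in this intersection is the preimage of $Z$ under the orbit morphism $G\to P_X$, $g\mapsto gx$, so it is Zariski closed. Hence the stabiliser is a Zariski closed subgroup of $G$, i.e.\ an algebraic subgroup (with reduced structure, following the conventions).

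The key step is to identify the fibres. Write $H:=\Gal_{\mathrm{HS}}(Z)$. We claim that for $x,y\in Z$ with $\pi_X(x)=\pi_X(y)$, the unique $g\in G$ with $y=gx$ (which exists by principal homogeneity of $P_X$) lies in $H$.

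By $G$-equivariance of $\calf_X$, the translate $g\lcal_x$ is a leaf through $y$. By Lemma~\ref{l:hs-base-change}, leaf germs project isomorphically onto neighbourhoods in $X^\circ$. So the germs of $g\lcal_x$ and $\lcal_y$ at $y$ are both horizontal lifts of the same base neighbourhood through $y$. By uniqueness of horizontal lifts, these germs coincide.

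Taking Zariski closures and applying the identity principle, together with Lemma~\ref{l:hs-minimal-leaf}(2) for $x$ and $y$, we get
\[
gZ=g\,\overline{\lcal_x}^{\,\mathrm{Zar}}=\overline{\lcal_y}^{\,\mathrm{Zar}}=Z,
\]
so $g\in H$. Conversely, $H$ preserves $Z$ and commutes with $\pi_X$, so $H$ acts on every fibre of $Z$. Therefore each nonempty fibre $Z_b$ is a single $H$-orbit with trivial stabilisers, and the map $H\times Z\to Z\times_{X^\circ}Z$ is bijective on points.

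To obtain an isomorphism of schemes, we argue as follows.

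\emph{Surjectivity of $Z\to X^\circ$.} The map $Z\to X^\circ$ is dominant, since $Z$ contains a leaf whose image contains an admissible open (the remark after Lemma~\ref{l:hs-base-change}). By Chevalley's theorem, after shrinking $X^\circ$ it is surjective.

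\emph{The action map.} The map $H\times Z\to Z\times_{X^\circ}Z$ is the restriction of the isomorphism $G\times P_X\to P_X\times_{X^\circ}P_X$ to the closed subscheme $H\times Z$, so it is a closed immersion. We have shown it is bijective on points. Since $Z$ is reduced, after shrinking $X^\circ$ to make $Z\to X^\circ$ flat with reduced fibres (generic flatness and generic smoothness of the generic fibre), the target is reduced. A bijective closed immersion into a reduced scheme is an isomorphism.

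The main obstacle is the germ-uniqueness step in characteristic $p$. It requires that a leaf germ is determined by its base point as an analytic germ, not merely to first order, which is what Lemma~\ref{l:hs-base-change} provides. A secondary obstacle is reducedness: we must check that the fibre product remains reduced after shrinking, so that nonreduced stabiliser phenomena do not arise.
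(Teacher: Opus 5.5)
Your treatment of the fibres matches the paper's argument. The unique $g$ with $y=gx$ satisfies $gZ=Z$ because $gZ$ and $Z$ are both minimal invariant and contain $y$, so both equal $\overline{\lcal_y}^{\,\mathrm{Zar}}$ by Lemma~\ref{l:hs-minimal-leaf}. The reduction to a bijective closed immersion into a reduced scheme is also how the paper concludes.

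\textbf{The gap is the reducedness of $Z\times_{X^\circ}Z$.} You get it by shrinking $X^\circ$ so that $Z\to X^\circ$ is flat with reduced fibres, via ``generic smoothness of the generic fibre''. No such principle holds in characteristic $p$. The generic fibre is integral over the imperfect field $\C_\infty(X^\circ)$ but need not be geometrically reduced, and then all closed fibres over a dense open are nonreduced.

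Here is a concrete counterexample. Take $X=\A^1$, $G=\G_a$ acting by translation on $P=X\times G$, and $Z=\{g^p=x\}$.
\begin{itemize}
\item $Z$ is integral and dominates $X$.
\item Each fibre is a single orbit of the (reduced) stabiliser $H=\{e\}$.
\item $H\times Z\to Z\times_XZ=\{(g_1-g_2)^p=0\}$ is a bijective closed immersion, but not an isomorphism.
\item The scheme-theoretic stabiliser of $Z$ is $\alpha_p$.
\end{itemize}
So every ingredient you have before the reducedness step is present, yet the conclusion fails. Reducedness is the real characteristic-$p$ content of the lemma. The germ-uniqueness step you flagged as the main obstacle is immediate from equivariance and uniqueness of the leaf through a point.

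\textbf{The missing input is Hasse--Schmidt invariance,} which your reducedness step never invokes. The paper argues as follows. At a smooth point $z\in Z$, the leaf through $z$ lies in $Z$ and projects isomorphically onto a neighbourhood of $\pi_X(z)$ (Lemma~\ref{l:hs-base-change}). Hence $d(\pi_X|_Z)_z$ is surjective, and $Z\to X^\circ$ is smooth at $z$ by the Jacobian criterion. The smooth locus is stable under $H$, which acts transitively on fibres, so after shrinking $X^\circ$ the whole map $Z\to X^\circ$ is smooth. Then $H\times Z$ and $Z\times_{X^\circ}Z$ are both smooth over the reduced scheme $Z$, hence reduced, and your closed immersion is an isomorphism.

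\textbf{A related omission concerns the stabiliser.} Under the paper's conventions, $\Gal_{\mathrm{HS}}(Z)=\Stab_G(Z)$ is the scheme-theoretic stabiliser, not its reduction; the conventions say the opposite of what you attribute to them. So you must still show it is reduced. Once $H\times Z\simeq Z\times_{X^\circ}Z$ is known, this is short. $\Stab_G(Z)\times Z$ maps into the preimage of $Z\times_{X^\circ}Z$ under $G\times P_X\simeq P_X\times_{X^\circ}P_X$, and that preimage is $H\times Z$. Faithful flatness of $Z\to\Spec\C_\infty$ then gives $\Stab_G(Z)\subseteq H$.
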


\begin{proof}
Set $H:=(\Gal_{\mathrm{HS}}(Z))_{\mathrm{red}}$.
The scheme theoretic stabilizer of $Z$ is a closed subgroup scheme of $G$;
since $\C_\infty$ is perfect, its reduction $H$ is a smooth algebraic
subgroup. Its action preserves $Z$.  Since $Z$ is the closure of a leaf dominating $X^\circ$, the map $Z\rightarrow X^\circ$ is dominant; after shrinking $X^\circ$, we assume that it is surjective.

The defining isomorphism
$G\times P_X\xrightarrow{\sim}P_X\times_{X^\circ}P_X$ defined by $(g,p)\mapsto(g\cdot p,p)$
shows that two points in one fiber differ by a unique $g\in G$.  If
$p,q\in Z(\C_\infty)$ and $p=g\cdot q$, then $gZ$ and $Z$ are minimal
invariant subvarieties meeting at $p$; hence
Lemma~\ref{l:hs-minimal-leaf} gives $gZ=Z$.  Thus $H(\C_\infty)$ acts
transitively on the geometric points of every fiber of $Z\rightarrow X^\circ$.

At a smooth point $z\in Z(\C_\infty)$, Hasse--Schmidt invariance and Lemma~\ref{l:hs-base-change} imply that
$d(\pi_X|_Z)_z$ is surjective. Since $X^\circ$ is smooth,
the Jacobian criterion shows that $Z\rightarrow X^\circ$ is smooth at $z$, and hence at a general point.  Its smooth locus has image containing a
non-empty open subset of $X^\circ$; on every fiber meeting this locus,
transitivity of $H(\C_\infty)$ implies that the whole fiber is smooth.
After shrinking $X^\circ$, we may therefore assume that $Z\rightarrow X^\circ$ is
smooth.

Under the above isomorphism, the closed subschemes
$H\times Z$ and $Z\times_{X^\circ}Z$ have the same
$\C_\infty$-points.  Both are smooth over the reduced scheme $Z$, and hence reduced.  They
are therefore equal, and
we obtain
$H\times Z\xrightarrow{\simeq}Z\times_{X^\circ}Z$.
The action of the full scheme-theoretic stabilizer on $Z$, compared
with this isomorphism and the ambient principal homogeneous space isomorphism, forces the inclusion
$\Stab_G(Z)\times Z\hookrightarrow G\times Z$ to factor through
$H\times Z$. Since $Z\rightarrow \Spec\C_\infty$ is faithfully flat, this gives
$\Stab_G(Z)\subseteq H$, hence equality. Thus $\Gal_{\mathrm{HS}}(Z)=H$
is reduced, and the  isomorphism $H\times Z\simeq Z\times_{X^\circ}Z$ gives the principal
homogeneous space structure.
\end{proof}

\begin{lem}\label{l:hs-shrinking}
Let $Z\subset P_X$ be minimal invariant and let
$U\subset X^\circ$ be a non-empty open.  Then
$Z_U:=Z\cap P_U$ is minimal invariant and
\[
\Gal_{\mathrm{HS}}(Z_U)=\Gal_{\mathrm{HS}}(Z).
\]
In particular, the Galois conjugacy class is independent of the  open.
\end{lem}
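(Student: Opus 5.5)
The proof proceeds in three steps: show that $Z_U$ is minimal invariant on $P_U$, show that the two stabilisers agree, and then deduce the statement about conjugacy classes. Throughout, $P_U=\pi_X^{-1}(U)$ is an open subset of $P_X$, and it carries the restricted foliation $\calf_X|_{P_U}$.

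\emph{$Z_U$ is minimal invariant.} By Lemma~\ref{l:hs-minimal-leaf}, $Z=\overline{\lcal_x}^{\,\mathrm{Zar}}$ for some leaf $\lcal_x$ with $x\in Z$. The leaf dominates $X^\circ$ by Lemma~\ref{l:hs-base-change}, so $Z\to X^\circ$ is dominant and $Z_U$ is a non-empty open subset of the irreducible variety $Z$. Hence $Z_U$ is irreducible and closed in $P_U$, and $\overline{Z_U}=Z$. Invariance is local: a leaf of $\calf_U$ through a point of $Z_U$ is an irreducible analytic component of the restriction to $P_U$ of a leaf of $\calf_X$, and that leaf lies in $Z$. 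Equivalently, by Lemma~\ref{l:hs-invariance}, the ideal of $Z_U$ is the restriction of the ideal of $Z$ and is stable under the operators $D_\alpha$. For minimality, let $W\subset Z_U$ be a non-empty irreducible closed invariant subvariety. Its closure $\overline W\subset P_X$ is irreducible, and its ideal is stable under the $D_\alpha$, since an operator compatible with restriction that preserves the ideal on the dense open $P_U\cap\overline W$ preserves it on all of $\overline W$, $\overline W$ being reduced. Thus $\overline W$ is invariant by Lemma~\ref{l:hs-invariance}. Minimality of $Z$ gives $\overline W=Z$, hence $W=Z_U$.

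\emph{Equality of stabilisers.} Since $G$ acts fibrewise over $X^\circ$, it preserves $P_U$. If $gZ=Z$, then $gZ_U=gZ\cap P_U=Z_U$. Conversely, if $gZ_U=Z_U$, then taking Zariski closures in $P_X$ gives $gZ=Z$, because $g$ acts by an automorphism of $P_X$ and $\overline{Z_U}=Z$. This proves the equality on $\C_\infty$-points. Both stabilisers are reduced by Lemma~\ref{l:hs-galois-principal}, applied once to $P_X$ and once to $P_U$ (whose base $U$ is smooth and open), so the equality holds as algebraic subgroups. The same closure argument also works scheme-theoretically over any $\C_\infty$-algebra, since $Z_U\subset Z$ is schematically dense.

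\emph{Conjugacy class.} Any two minimal invariant subvarieties of $P_X$ are $G$-translates of one another. Indeed, by the argument in the proof of Lemma~\ref{l:hs-minimal-leaf}, two leaf closures over a common point of the base differ by an element $g\in G$ through equivariance. Their stabilisers are therefore conjugate. Restricting to $U$ preserves the stabilisers by the previous step, so the conjugacy class obtained over $U$ is the same as over $X^\circ$. For two non-empty opens $U,U'\subset X^\circ$, compare both with $U\cap U'$, which is non-empty because $X^\circ$ is irreducible.

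The main obstacle is the invariance of $\overline W$, that is, passing Hasse--Schmidt stability of an ideal from an open subset to its closure. I would handle this with reducedness: for $f$ in the ideal of $\overline W$, the function $D_\alpha f$ vanishes on a dense open subset of the reduced variety $\overline W$, and hence vanishes on all of $\overline W$.
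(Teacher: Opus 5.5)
Your proof is correct. The stabiliser step and the conjugacy-class step match the paper; the minimality step takes a different route. The paper fixes a maximal leaf $\lcal$ with $Z=\overline{\lcal}^{\,\mathrm{Zar}}$ and takes a connected component $\lcal_U$ of $\lcal\cap P_U$. Conrad's density lemma (a non-empty admissible open of an irreducible rigid space is Zariski dense) gives $\overline{\lcal_U}^{\,\mathrm{Zar}}=Z\cap P_U$ inside $P_U$. Minimality then follows from the leaf characterization (3)$\Rightarrow$(1) of Lemma~\ref{l:hs-minimal-leaf}, applied over $U$.

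You instead check the definition of minimality directly. You push a putative smaller invariant $W\subset Z_U$ to its closure in $P_X$, and transfer stability of the ideal under the $D_\alpha$ from the dense open $W\subset\overline W$ to $\overline W$ using reducedness. This makes the step purely algebraic: you need neither the leaf characterization (with its Chevalley and identity-principle inputs) nor the rigid density lemma. The paper's version is shorter given that Lemma~\ref{l:hs-minimal-leaf} is already available.

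One small citation point: Lemma~\ref{l:hs-invariance} only gives $\overline W\subset A_r(\overline W,\calf_X)$. Passing from this to containment of the analytic leaves, which is the paper's definition of invariance, uses the convergence argument in the proof of Lemma~\ref{l:leaf-closure-invariant}, so you should cite that as well.

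Your remark that $Z_U\subset Z$ is schematically dense, and that this persists after flat base change, is a genuine bonus. It gives equality of the stabilisers as group schemes directly, so the appeal to reducedness via Lemma~\ref{l:hs-galois-principal} is not actually needed.
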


\begin{proof}
Choose a maximal leaf $\lcal$ with
$Z=\overline{\lcal}^{\,\mathrm{Zar}}$.  Let $\lcal_U$ be a non-empty connected component of $\lcal\cap P_U$.  It is a non-empty admissible open of the irreducible leaf $\lcal$, so it is Zariski dense in $\lcal$ by \cite[Lemma~2.2.3]{conrad-irred}.  Its closure in $P_U$ is therefore $Z\cap P_U$, which is minimal by Lemma~\ref{l:hs-minimal-leaf}.  Since the $G$-action fixes the base,
$\Stab_G(Z\cap P_U)=\Stab_G(Z)$. Both stabilizers are reduced by Lemma~\ref{l:hs-galois-principal}, so the claim follows.
\end{proof}

If $Z'$ is another minimal invariant subvariety over $X^\circ$, choose
$p\in Z$ and $q\in Z'$ in one fiber.  The unique $g\in G$ with $q=gp$
satisfies $Z'=gZ$, so
\[
\Gal_{\mathrm{HS}}(Z')
=g\,\Gal_{\mathrm{HS}}(Z)\,g^{-1}.
\]
We denote this conjugacy class by $\Gal_{\mathrm{HS}}(X)$.  By
Lemma~\ref{l:hs-shrinking}, it is independent of the chosen open.
We call $X$ \emph{HS-generic} if this class is $G$, and
\emph{HS-non-generic} otherwise.  A maximal irreducible HS-non-generic
subvariety of
$\A^n_{\C_\infty}$ is called \emph{HS-special}.

\begin{lem}\label{l:hs-quotient}
Let $H\subset G$ be a closed subgroup.  The foliation $\calf_X$ descends
uniquely to an iterative Hasse--Schmidt foliation on
\[
H\backslash P_X=X^\circ\times(H\backslash G),
\]
and its local analytic leaves are the images of local leaves of $P_X$.
\end{lem}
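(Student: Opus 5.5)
The plan is to use that $P_X=X^\circ\times G$ is a trivial principal homogeneous space with $G$ acting by left multiplication on the second factor, so that $H\backslash P_X=X^\circ\times(H\backslash G)$ and the quotient map $\rho\colon P_X\rightarrow H\backslash P_X$ is smooth, surjective and admits local sections. I will work on an affine open $U\subset H\backslash G$ over which $G\rightarrow H\backslash G$ has a section $s\colon U\rightarrow G$; such opens exist because $H\backslash G$ is quasi-projective and $G\rightarrow H\backslash G$ is an $H$-torsor, locally trivial in the \'etale topology, and Zariski locally trivial after passing to $s$ defined on an \'etale neighbourhood if necessary. Regular functions on $X^\circ\times U$ are then identified, via $\rho^*$, with the left $H$-invariant regular functions on $\rho^{-1}(X^\circ\times U)$.

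\textbf{Step 1 (descent of operators).} Since $\calf_X$ is $G$-equivariant (Lemma~\ref{l:hs-base-change}), for every $h\in H$ the expansion $\theta_{\mathbf t}$ commutes with left translation by $h$. I will phrase this scheme-theoretically: the two maps $\calo(P)\rightarrow\calo(H\times P)[\![\mathbf t]\!]$ given by $(a^*\widehat\otimes1)\circ\theta_{\mathbf t}$ and $(\mathrm{id}\otimes\theta_{\mathbf t})\circ a^*$, where $a\colon H\times P\rightarrow P$ is the action, coincide. Then $\theta_{\mathbf t}$ sends $H$-invariant functions (those $f$ with $a^*f=1\otimes f$) to $H$-invariant power series. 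Hence $\theta_{\mathbf t}$ restricts to a $\C_\infty$-algebra morphism $\calo(X^\circ\times U)\rightarrow\calo(X^\circ\times U)[\![\mathbf t]\!]$. The identities $\operatorname{ev}_{\mathbf t=0}\circ\theta_{\mathbf t}=\id$ and iterativity are inherited by restriction, because $\rho^*$ is injective. Compatibility with restriction follows in the same way, so the local structures glue. This gives existence. Uniqueness follows because $\rho^*$ is injective and the requirement that $\rho$ intertwine the expansions determines the descended $\theta_{\mathbf t}$ completely.

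\textbf{Step 2 (leaves).} Let $\lcal$ be a local analytic leaf of $P_X$, parametrized by a convergent map $\mathbf t\mapsto\lambda(\mathbf t)$ whose Taylor expansion is $\theta_{\mathbf t}$ evaluated at $\lambda(0)$ (Lemma~\ref{l:hs-base-change}). For $f$ on $X^\circ\times U$, the function $f\circ\rho\circ\lambda$ has Taylor expansion equal to the descended expansion of $f$ evaluated at $\rho(\lambda(0))$. Thus $\rho\circ\lambda$ is a convergent parametrization realizing the descended expansion, so $\rho(\lcal)$ is a local analytic leaf. Moreover, since $\pi_X\circ\rho^{-1}$ projects $\rho(\lcal)$ isomorphically to a neighbourhood in $X^\circ$, $\rho(\lcal)$ is a graph over $X^\circ$. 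Conversely, every point of $H\backslash P_X$ lifts, because $\rho$ is surjective on $\C_\infty$-points. Given a local descended leaf through $\rho(p)$, its Taylor expansion agrees with that of $\rho$ applied to the leaf through $p$. Both are graphs over $X^\circ$, so the rigid identity principle shows they coincide.

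\textbf{Main obstacle.} The delicate point is Step 1's scheme-theoretic equivariance: Lemma~\ref{l:hs-base-change} gives equivariance on points and on frame charts. Because $H$ may be non-reduced or the action only checked pointwise, I would use that $H$ is reduced (smooth, as in Lemma~\ref{l:hs-galois-principal}) over the perfect field $\C_\infty$. Then the identity of the two maps into $\calo(H\times P)[\![\mathbf t]\!]$ holds coefficientwise on the reduced scheme $H\times P$ once it holds on $\C_\infty$-points, and point-equivariance is exactly what Lemma~\ref{l:hs-base-change} supplies. If instead one must handle a non-reduced $H$, I would argue via the uniqueness of horizontal lifts in the proof of Lemma~\ref{l:hs-base-change}, which is functorial in arbitrary $\C_\infty$-algebras.
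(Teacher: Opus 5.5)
Your proof is correct and is essentially the paper's argument: the paper descends the truncated expansions $\alpha_m\colon P_X\times D_m\to P_X$, with $D_m=\Spec(\C_\infty[\mathbf T]/(\mathbf T)^{m+1})$, along the faithfully flat map $P_X\times D_m\to(H\backslash P_X)\times D_m$ using $H$-equivariance, which is the morphism-level form of your observation that $\theta_{\mathbf t}$ preserves $H$-invariant functions, and it reads off the leaves from the horizontal trivialization $(x,g)\mapsto(x,Hg)$, which your Step~2 spells out in more detail. Two small remarks: first, the Zariski-local sections of $G\to H\backslash G$ you invoke need not exist (e.g.\ for nontrivial finite $H$), but you never use them, since $\calo(X^\circ\times U)=\calo(\rho^{-1}(X^\circ\times U))^H$ holds over any open by faithfully flat descent; second, you can drop the reducedness of $H$, because the equivariance identity may be checked on $\C_\infty$-points of the reduced scheme $G\times P_X$ and then pulled back along the closed immersion $H\times P_X\hookrightarrow G\times P_X$.
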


\begin{proof}
Let
$q\colon P_X\rightarrow Y:=H\backslash P_X$
be the quotient map, and set
$D_m:=\Spec(\C_\infty[\mathbf T]/(\mathbf T)^{m+1})$. Consider
$\alpha_m\colon P_X\times D_m\rightarrow P_X$, the $m$-th truncation of the Hasse--Schmidt expansion. The $H$-equivariance gives $\alpha_m(hp,\mathbf T)=h\,\alpha_m(p,\mathbf T)$.
Hence the two pullbacks of $q\circ\alpha_m$ to $(P_X\times D_m)\times_{Y\times D_m}(P_X\times D_m)$
coincide.  Since
$q\times\id_{D_m}\colon P_X\times D_m\rightarrow Y\times D_m$
is faithfully flat and quasi-compact, $q\circ\alpha_m$ descends uniquely
to a morphism
$\overline\alpha_m\colon Y\times D_m\rightarrow Y$
by faithfully flat descent for morphisms
\cite[Lemma~35.13.7]{stacks}.
Uniqueness makes the morphisms $\overline\alpha_m$ compatible under
truncation, so they define a Hasse--Schmidt expansion on $Y$.  The identity
and iterativity relations descend by the same uniqueness. Finally, on a horizontal trivialization $P_X|_U\simeq U\times G$,
the quotient map is $(x,g)\mapsto(x,Hg)$.  Thus the local leaves on $Y$ are precisely the images of the local leaves on $P_X$.
\end{proof}

\subsubsection{Sparsity in positive characteristic}

\begin{dfn}\label{d:p-sparse}
Let $G$ be a smooth connected algebraic group over $\C_\infty$. A reduced irreducible rigid analytic germ $\Sigma\subset(G^{\text{an},e})$
is called a {\em subgroup germ} if the morphism 
\[
\delta\colon G\times G\rightarrow G 
\qquad
(a,b)\mapsto a^{-1}b
\]
satisfies
$\delta(\Sigma\times\Sigma)\subset\Sigma$
as germs at $(e,e)$.
Moreover, we say that $G$ is $p$-{\em sparse} if every subgroup germ $\Sigma\subset(G^{\text{an},e})$ satisfying $\dim\Sigma<\dim G$
is contained in the germ of a proper connected algebraic subgroup, i.e., there exists $H\subsetneq G$ such that $\Sigma\subset(H^{\text{an}},e)$.
\end{dfn}
\begin{eg}\label{eg}
The group $\mathbb G_{a,\C_\infty}^2$ is not $p$-sparse. Indeed, consider, for instance,
\[
f(T):=\sum_{n\ge0}T^{p^{2^n}}
\]
which converges for $T$ small enough. Since every monomial $T^{p^{2^n}}$ is additive,
the germ
\[
\Sigma:=\big\{\big(x,f(x)\big):|x|_\infty<1\big\}\subset\G^{2}_{a,\C_\infty}
\]
is a one-dimensional subgroup germ. Naturally, $f$ is transcendental over $\C_\infty(T)$.
Thus $\Sigma$ is Zariski dense in $\G_a^2$, and therefore it is
not contained in the germ of any proper algebraic subgroup. Hence
$\G_{a,\C_\infty}^2$ is not $p$-sparse.
\end{eg}

We now provide a structure result for analytic automorphisms. 

\begin{lem}
\label{l:analytic-field-automorphism}
Let
$\sigma_2,\ldots,\sigma_n\in\Aut(\C_\infty)$.
Suppose that the germ at the origin of
\[
\Gamma_\sigma:=\bigl\{(x,\sigma_2(x),\ldots,\sigma_n(x)):x\in\C_\infty\bigr\}
\subset\A_{\C_\infty}^n
\]
is a rigid analytic curve germ. Then, for every $i=2,\ldots,n$
\[
\sigma_i=\Frob^{r_i}
\]
for some $r_i\in\Z$.
\end{lem}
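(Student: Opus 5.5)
It suffices to treat one coordinate at a time. Projecting $\Gamma_\sigma$ to the coordinates $(x_1,x_i)$ gives the set $\{(x,\sigma_i(x))\}$, and the image of an analytic curve germ under a coordinate projection is still an analytic germ. This holds because the first coordinate is injective on $\Gamma_\sigma$, so the germ is locally a graph over the $x_1$-axis. Thus we may assume $n=2$ and write $\sigma=\sigma_2$.

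\textbf{Step 1: $\sigma$ is given locally by a convergent power series.} Normalize the irreducible curve germ $\Gamma_\sigma$ at $0$ to obtain a parametrization $t\mapsto(a(t),b(t))$ by convergent power series on a small disc, with $a(0)=b(0)=0$. Write $a(t)=\sum_{k\ge e}a_kt^k$ with $a_e\neq0$. The map $t\mapsto a(t)$ has finite degree near $0$, but since $\Gamma_\sigma$ is a graph over $x$, the germ has only one branch above each small $x$. I would argue that $a$ is injective near $0$ up to a purely inseparable factor. Concretely, write $a=\alpha(t^{p^s})$ with $\alpha'(0)\neq0$, after accounting for the separable part: the finitely many points of $a^{-1}(x)$ must all map to the single value $\sigma(x)$, so the separable degree is $1$. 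Reparametrizing by $u=t^{p^s}$ and inverting $\alpha$, we get $t=\beta(x)^{1/p^s}$ with $\beta$ convergent. Then $\sigma(x)=b(\beta(x)^{1/p^s})$, which has the form $c(x^{1/p^s})$ for a convergent series $c$ with $c(0)=0$. Replacing $\sigma$ by $\sigma\circ\Frob^{s}$, we may assume $\sigma(x)=c(x)$ near $0$ with $c$ convergent. I expect this normalization to be the main technical point; the cleanest route may be to apply Weierstrass preparation to a local defining equation $\Phi(x,y)$ of the germ and use that for each small $x$ its roots in $y$ near $0$ consist of a single point.

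\textbf{Step 2: an additive and multiplicative convergent series is a Frobenius power.} Since $\sigma$ is additive, $c(x+y)=c(x)+c(y)$ for small $x,y$. By the identity principle on the bidisc this is an identity of power series, so $c(x)=\sum_k c_kx^{p^k}$. Since $\sigma$ is multiplicative, $c(xy)=c(x)c(y)$, which gives $c_k=c_kc_k$ and $c_kc_l=0$ for $k\neq l$. The series $c$ is nonzero because $\sigma$ is injective, so exactly one $c_k$ equals $1$ and $c(x)=x^{p^k}$. Hence $\sigma$ agrees with $\Frob^{k}$ (up to the earlier normalization) on a neighbourhood of $0$.

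\textbf{Step 3: globalize.} Put $\tau:=\sigma\circ\Frob^{-r}$, where $r$ is the resulting exponent. Then $\tau\in\Aut(\C_\infty)$ is the identity on a disc $\{|x|<\epsilon\}$. For an arbitrary $y$, choose $\lambda\neq0$ with $|\lambda y|<\epsilon$ and $|\lambda|<\epsilon$. Multiplicativity gives $\tau(\lambda)\tau(y)=\tau(\lambda y)=\lambda y$ and $\tau(\lambda)=\lambda$, so $\tau(y)=y$. Therefore $\sigma=\Frob^{r}$ with $r\in\Z$, as claimed.
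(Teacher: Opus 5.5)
Your proof is correct and follows essentially the paper's route: the first projection is finite with one-point fibres, so the separable degree is $1$ (here you implicitly use that the normalization map is generically injective). Hence $\sigma$ agrees near $0$ with a convergent series up to a Frobenius twist, additivity and multiplicativity force that series to be $x^{p^k}$, and the rescaling trick globalizes. The differences are cosmetic---you parametrize via the normalization and precompose with $\Frob^{s}$, whereas the paper uses integrality over the normal ring $\calo(D)$ and raises $\sigma_i$ to a $p^{m_i}$-th power---and your reduction to $n=2$ is unnecessary (its correct justification is finiteness of the projection, not injectivity of the first coordinate).
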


\begin{proof}
Let $\calc$ be a reduced irreducible representative of $\Gamma_\sigma$ near the
origin. By the rigid analytic Weierstrass preparation \cite[Exercise~0.A.3]{kato}, after shrinking, the first projection
$\pr_1\colon \calc\rightarrow D$
is finite and surjective onto a disk $D$ centered at $0$.

Since $\calc$ is contained in the graph of $\sigma$, every geometric
fiber of $\pr_1$ consists of a single point. If the finite extension
\[
\Frac(\calo(\calc))/\Frac(\calo(D))
\]
had separable degree greater than $1$, then, after normalizing $\calc$
and shrinking $D$, its separable part would define a finite \'etale cover
of degree greater than $1$. A general geometric fiber would then contain
more than one point, a contradiction: hence the extension is purely
inseparable.

Let $y_i\in \calo(\calc)$ denote the $i$-th coordinate function. For each
$i=2,\ldots,n$, there is therefore an integer $m_i\ge0$ such that
\[
y_i^{p^{m_i}}\in \Frac(\calo(D)).
\]
Indeed, since $\pr_1$ is finite, $\calo(\calc)$ is integral over $\calo(D)$. Thus $y_i^{p^{m_i}}$ is integral over $\calo(D)$, and,
since $\calo(D)$ is normal, it belongs to $\calo(D)$. Thus
\[
y_i^{p^{m_i}}=f_i(x)
\]
for some $f_i\in \calo(D)$. Thus $f_i(x)=\sigma_i(x)^{p^{m_i}}$ on $D$.

Shrinking $D$ if necessary, we  assume that it is stable under addition
and multiplication. Since $\sigma_i$ is a field automorphism,
\[
f_i(x+x')=f_i(x)+f_i(x')
\qquad
\text{and}
\qquad
f_i(xx')=f_i(x)f_i(x')
\]
for $x,x'\in D$.
Write $f_i(T)=\sum_{j\ge0}a_{i,j}T^j$.
Additivity implies that
$f_i(T)=\sum_{s\ge0}b_{i,s}T^{p^s}$.
Comparing coefficients in
$f_i(XY)=f_i(X)f_i(Y)$
shows that $b_{i,s}b_{i,t}=0$ for $s\ne t$, and $b_{i,s}^2=b_{i,s}$. Since $f_i\ne0$, it follows that $f_i(T)=T^{p^{s_i}}$ for some $s_i\ge0$. Therefore
$\sigma_i(x)^{p^{m_i}}=x^{p^{s_i}}$ on $D$, and hence
\[
\sigma_i(x)=\Frob^{\,s_i-m_i}(x)
\]
on $D$. For arbitrary $x\in\C_\infty$, choose $c\ne0$ such that
$c,cx\in D$. Then
$\sigma_i(x)=
\frac{\sigma_i(cx)}{\sigma_i(c)}=
\Frob^{\,s_i-m_i}(x)$.
\end{proof}

\begin{rmk}\label{r:localform}
The same coefficient argument shows that every nonzero  germ $\sigma\colon (\C_\infty,0)\rightarrow(\C_\infty,0)$
which preserves addition and multiplication is equal to
$\sigma(z)=z^{p^m}$ for some $m\ge0$.
\end{rmk}

We record the following simple fact.

\begin{lem}\label{l:unipotent-commuting}
Let $G=\pgl_{2,\C_\infty}$, and let $u\in G$ be a nontrivial
unipotent element. Then $\{g\in G:gu=ug\}$
is a maximal unipotent subgroup of $G$.
\end{lem}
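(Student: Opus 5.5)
We will show that the centralizer $C_G(u)$ of a nontrivial unipotent element $u\in G=\pgl_{2,\C_\infty}$ is the unipotent radical of the unique Borel subgroup containing $u$. Conjugating, we reduce to a standard form and compute the centralizer explicitly. Since $\C_\infty$ is algebraically closed, $u$ lies in some Borel subgroup, and its image in the torus quotient is trivial because $u$ is unipotent. Hence $u$ lies in the unipotent radical of that Borel. All maximal unipotent subgroups of $\pgl_2$ are conjugate to
\[
U=\left\{\left[\begin{matrix}1&b\\0&1\end{matrix}\right]\right\}\simeq\G_a .
\]
After conjugation we may therefore take $u=\left[\begin{matrix}1&1\\0&1\end{matrix}\right]$; the entry $b\neq0$ can be rescaled to $1$ using a diagonal element. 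It then suffices to prove $C_G(u)=U$, as reduced algebraic groups, where $C_G(u)=\{g:gu=ug\}$. Conjugation carries centralizers to centralizers and maximal unipotent subgroups to maximal unipotent subgroups, so this suffices.

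The computation is done in $\gl_2$ modulo scalars. Let $g=\left[\begin{matrix}a&b\\c&d\end{matrix}\right]$ with $gu=\lambda ug$ for some scalar $\lambda$. Taking determinants gives $\lambda^2=1$, so $\lambda=\pm1$. Comparing entries, $gu=\left[\begin{matrix}a&a+b\\c&c+d\end{matrix}\right]$ and $ug=\left[\begin{matrix}a+c&b+d\\c&d\end{matrix}\right]$.

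\textbf{Case $\lambda=1$.} The $(1,1)$ entries give $c=0$, and the $(1,2)$ entries then give $a=d$. Thus $g$ is upper triangular unipotent up to scalar, so $g\in U$.

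\textbf{Case $\lambda=-1$.} This case only arises when the characteristic is odd. The $(2,1)$ entries give $c=-c$, so $c=0$. The $(1,1)$ entries then give $a=-a$, so $a=0$, and the $(2,2)$ entries give $d=0$. But then $\det g=ad-bc=0$, which is impossible for an invertible $g$.

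Hence $C_G(u)(\C_\infty)=U(\C_\infty)$, and the reduced centralizer equals $U$, which is maximal unipotent.

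The one point needing care is the meaning of the set $\{g:gu=ug\}$. If it is read as a subgroup scheme, it might fail to be reduced in characteristic $p$. Two arguments settle this. By the paper's Conventions, such a set is taken with its reduced structure, which our point computation identifies. Alternatively, one can check scheme-theoretically: on $\pgl_2$ with $d\neq0$ and coordinates normalized by $d=1$, the same equations give $c=0$ and $a=1$ over any ring. The only delicate step is excluding $\lambda=-1$; there the determinant argument is what rules out the would-be extra component, an anti-commuting element that does not exist for an invertible $g$.
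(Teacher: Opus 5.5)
Your proof is correct and is essentially the paper's: both conjugate $u$ to $\left[\begin{smallmatrix}1&1\\0&1\end{smallmatrix}\right]$, write $gu=\lambda ug$ in $\gl_2$ and compare entries. The only difference is bookkeeping: you first get $\lambda=\pm1$ from determinants and split cases, while the paper reads off $c=0$, $\lambda=1$, $a=d$ directly from invertibility of $g$.

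Your scheme-theoretic aside is not needed, since the paper uses only $\C_\infty$-points, and as written it is inaccurate in two ways. First, the Conventions state that stabilizer group schemes keep their scheme-theoretic meaning; they are not given the reduced structure. Second, your claim that the equations force $c=0$, $a=1$ over any ring requires $2$ to be invertible: in characteristic $2$, the element $g=\left[\begin{smallmatrix}1+\varepsilon&0\\ \varepsilon&1\end{smallmatrix}\right]$ over $\C_\infty[\varepsilon]/(\varepsilon^2)$ satisfies $gu=(1+\varepsilon)ug$, so the centralizer scheme is non-reduced there.
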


\begin{proof}
After conjugation, assume
$u=\begin{bmatrix}1&1\\0&1\end{bmatrix}$.
Let $g\in G$ be represented by
$A=\begin{bmatrix}a&b\\c&d\end{bmatrix}$.
If $gu=ug$, then
$A\begin{bmatrix}1&1\\0&1\end{bmatrix}=
\lambda
\begin{bmatrix}1&1\\0&1\end{bmatrix}A$
for some $\lambda\in\C_\infty^\times$.
Comparing entries and using $\det A\neq0$ gives
$\lambda=1$, $c=0$, and $a=d$. Hence
$\{g\in G:gu=ug\}=\left\{
\begin{bmatrix}1&x\\0&1\end{bmatrix}:
x\in\C_\infty
\right\}$
which is a maximal unipotent subgroup.
\end{proof}

For an algebraic subgroup $H\subset G$, we denote by $N_G(H):=\{g\in G:gHg^{-1}=H\}$
its normalizer in $G$.

\begin{lem}\label{l:frobenius-goursat}
Assume that $q$ is odd. Let
$S\subset\pgl_{2,\C_\infty}^{\,2}$
be a proper irreducible subgroup germ whose projections onto both factors
are dominant. Then, after possibly exchanging the two factors, there exist
$\delta\in\pgl_2(\C_\infty)$ and $m\ge0$
such that $S$ is contained in the germ at $(e,e)$ of
\[
\Gamma_{\delta,m}
:=
\left\{
\bigl(\gamma,
\operatorname{Inn}(\delta)\circ\Frob^m(\gamma)\bigr):
\gamma\in\pgl_2(\C_\infty)
\right\}.
\]
In particular, $S$ is contained in a proper algebraic subgroup of
$\pgl_{2,\C_\infty}^{\,2}$.
\end{lem}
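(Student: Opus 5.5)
We argue with Goursat's lemma applied to the germ $S$, using that $\pgl_2$ has few normal subgroup germs. Write $G_1=G_2=\pgl_{2,\C_\infty}$, $p_1,p_2$ for the projections, and put
\[
K_2:=\{g\in G_2:(e,g)\in S\},\qquad K_1:=\{g\in G_1:(g,e)\in S\},
\]
taken as germs at $e$ (the reduced irreducible components through $e$ of the corresponding analytic sets). Since $S$ is closed under $\delta$, each $K_i$ is a subgroup germ. We will show that each $K_i$ is normalised by the germ $p_i(S)$. If $(a,b)\in S$ and $(e,k)\in S$, then closure under $\delta$, together with the identity $(a,b)^{-1}(e,k)(a,b)=\delta(\delta((e,k),(a,b)),\delta((e,e),(a,b)))$ up to inverses, gives $(e,b^{-1}kb)\in S$.

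First we treat the dimension count. Because $p_i$ is dominant and $S$ is irreducible, $p_i(S)$ has dimension $3$, so $\dim S\ge3$. Since $S$ is proper, $\dim S\le5$. The fibre of $p_1|_S$ is a translate of $K_2$, so $\dim K_2=\dim S-3$, and likewise $\dim K_1=\dim S-3$. The key step, which we expect to be the main obstacle, is to show that a subgroup germ $K\subset(\pgl_2^{\mathrm{an}},e)$ normalised by a germ whose image has dimension $3$ is either trivial or all of $(\pgl_2^{\mathrm{an}},e)$. Lie theory cannot settle this directly, since germs are not controlled by their tangent spaces. Our plan is as follows:
\begin{itemize}
\item The normaliser of $K$ is an analytic condition. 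Since $p_i(S)$ is Zariski dense in $G_i$, the set $\{g: gKg^{-1}=K\}$ is a union of Zariski closed conditions on jets of $K$ (as in Proposition~\ref{p:atypical-closed}), so $K$ is normalised by an open subgroup of $\pgl_2(\C_\infty)$, and hence by all of $G$ after Zariski closure.
\item If $K\ne\{e\}$, pick $k\in K$ close to $e$ with $k\neq e$. The conjugacy class of $k$ has dimension $2$. For a semisimple $k$ the products $k\cdot gk^{-1}g^{-1}$ already fill a $3$-dimensional germ, so $K$ is open. In the unipotent case we use Lemma~\ref{l:unipotent-commuting} together with simplicity of $\Lie(\pgl_2)$ for odd $q$ to rule out proper normal germs.
\end{itemize}
So $\dim K_i\in\{0,3\}$. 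If $\dim K_i=3$ then $\dim S=6$, contradicting properness, so $K_1=K_2=\{e\}$ and $\dim S=3$.

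With trivial kernels, $S$ is the graph of a germ of map $\phi\colon(G_1,e)\to(G_2,e)$ that is multiplicative wherever defined. Possibly exchanging factors, we may assume $p_1|_S$ is finite, using Weierstrass preparation as in Lemma~\ref{l:analytic-field-automorphism}. The argument of that lemma then makes $\phi$ analytic up to a purely inseparable extension, so some $\psi=\Frob^{m'}\circ\phi$ is an analytic homomorphism germ. We restrict $\psi$ to the root subgroup germs $U^{\pm}\simeq(\G_a,0)$ and the torus $T\simeq(\G_m,1)$. Compatibility with conjugation, $t u_x t^{-1}=u_{t^2x}$, forces each restriction to be additive and multiplicatively twisted, so Remark~\ref{r:localform} makes these pieces Frobenius powers up to inner scaling. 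Since the big cell $U^-TU^+$ generates, we get $\phi=\operatorname{Inn}(\delta)\circ\Frob^m$ on the germ, with $m\ge0$ after the exchange, which gives $S\subset(\Gamma_{\delta,m},(e,e))$. Finally, $\Gamma_{\delta,m}$ is the graph of an algebraic isogeny, hence a proper algebraic subgroup of $\pgl_{2,\C_\infty}^{\,2}$.
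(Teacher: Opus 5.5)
The genuine gap is in your second half, which is where the paper's real work lies. You say that restricting the analytic homomorphism germ $\psi$ to $U_\pm$ and $T$, and using $t u_x t^{-1}=u_{t^2x}$, forces the restrictions to be additive and twisted-multiplicative. But ``additive'' only makes sense once you know several things:
\begin{itemize}
\item $\psi(U_+)$ and $\psi(U_-)$ lie in one-parameter unipotent subgroups $U'_\pm\simeq\G_a$;
\item $\psi|_{U_\pm}$ is nontrivial;
\item $U'_+\neq U'_-$;
\item $\psi(T)\subset N_G(U'_+)\cap N_G(U'_-)$.
\end{itemize}
Only then can one conjugation make $\psi$ preserve $U_+$, $U_-$ and $T$. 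Nothing in your sketch supplies this, and tangent spaces cannot, since $d\psi_e$ may vanish (e.g.\ for $\psi=\Frob^m$ with $m>0$).

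The paper does it as follows. Nontrivial elements of $U_\pm$ have order $p$, so their images are unipotent. Since $U_\pm$ is commutative, $\psi(U_\pm)$ lies in the centralizer of a nontrivial unipotent element, which is a maximal unipotent subgroup by Lemma~\ref{l:unipotent-commuting}. If the two coincided, $\psi(T)$ would normalize it and $\psi(U_-TU_+)$ would lie in a Borel subgroup, contradicting dominance. A dimension count on the big cell likewise gives $\psi|_{U_\pm}\neq e$.

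Once this is in place, your relations $f(a^2x)=h(a)^2f(x)$ and $g(a^{-2}y)=h(a)^{-2}g(y)$, with additivity, do give $f(x)=cx^{p^m}$, $g(y)=c'y^{p^m}$ and $h(a)=a^{p^m}$. This follows by the coefficient argument of Remark~\ref{r:localform}, and is a fine variant of the paper's use of $\sigma(z)=h(1+z)-1$. However, conjugating by $t(s)$ only rescales $(c,c')$ to $(s^2c,s^{-2}c')$, so the product $cc'$ is unchanged. You still need a relation linking $U_+$ and $U_-$ to force $cc'=1$, for instance $u_+(x)u_-(y)=u_-\bigl(\tfrac{y}{1+xy}\bigr)t(1+xy)u_+\bigl(\tfrac{x}{1+xy}\bigr)$. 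Saying ``the big cell generates'' does not give this by itself.

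Your first half is a legitimate alternative to the paper's opening step. The paper shows that one tangent projection $d\pr_1|_{\mathfrak s}$ is an isomorphism: its image, and the $d\pr_2$-image of its kernel, are ideals of $\mathfrak{sl}_2$. Hence $\pr_1|_S$ is already an analytic isomorphism of germs. You instead show the kernel germs are trivial, obtain a finite bijective projection, restore analyticity by a Frobenius twist, and decide the exchange of factors at the end.

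You do, however, misidentify the main obstacle. That a normal reduced subgroup germ of $\pgl_2$ is trivial or full does follow from Lie theory. A reduced subgroup germ is smooth at $e$, so its dimension equals that of its tangent space. That tangent space is $\mathrm{Ad}$-invariant, hence an ideal of the simple algebra $\mathfrak{sl}_2$. This is exactly how the paper argues in Lemma~\ref{l:pgl-p-sparse}. The substitute argument you propose is also wrong as stated: for fixed $k$, the commutators $k\,gk^{-1}g^{-1}$ sweep out a translate of the conjugacy class of $k^{-1}$, which is $2$-dimensional, not $3$-dimensional.
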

\begin{proof}
Let us set $G:=\pgl_{2,\C_\infty}$ and $\gotg:=\Lie(G)$.
We recall that, as we consider $q$ odd, $\gotg\simeq\mathfrak{sl}_2$ is simple.

Since $S$ is reduced, its smooth locus is dense. Translating a smooth
point of $S$ to $(e,e)$, we see that $S$ is smooth at the identity.
Set
\[
\mathfrak s:=T_{(e,e)}S\subset\gotg\oplus\gotg.
\]
For $i=1,2$, the subspace $d\pr_i(\mathfrak s)$ is invariant under
$\text{Ad}(\pr_i(S))$. Since $\pr_i(S)$ is dominant, it is Zariski dense in
$G$; hence $d\pr_i(\mathfrak s)$ is an ideal of $\gotg$. At least one tangent projection is nonzero. After exchanging the factors, we may assume that
\[
d\pr_1(\mathfrak s)=\gotg.
\]
The kernel of $d\pr_1|_{\mathfrak s}$ projects to an ideal of $\gotg$. If it
were nonzero, it would be $\gotg$, and then
$\dim\mathfrak s=6$, contradicting the properness of $S$. Thus
$d\pr_1|_{\mathfrak s}\colon \mathfrak s\rightarrow\gotg$ is an isomorphism. By the rigid analytic implicit function theorem \cite[\S10, Proposition~10.8]{abhyankar}, $\pr_1|_S$ is an isomorphism of germs. Therefore $S$ is the graph of a
dominant local homomorphism
\[
\varphi\colon (G,e)\rightarrow(G,e).
\]
Consider
\[
u_+(x):=
\begin{bmatrix}
1&x\\0&1
\end{bmatrix},
\qquad
u_-(y):=
\begin{bmatrix}
1&0\\y&1
\end{bmatrix},
\qquad
t(a):=
\begin{bmatrix}
a&0\\0&a^{-1}
\end{bmatrix}.
\]
The restrictions of $\varphi$ to $U_+$ and $U_-$ are nontrivial:
otherwise, using the local decomposition $U_-TU_+$ (see \cite[Proposition~1.4.11]{conrad})
the image of
$\varphi$ would have dimension at most $2$, contradicting dominance.

Every nontrivial element of $U_\pm$ has order $p$, and every nontrivial element of order $p$ in $G$ is unipotent.
Choose nontrivial elements
$u'_\pm\in\varphi(U_\pm)$.
Since $U_\pm$ are commutative, every element of $\varphi(U_\pm)$
commutes with $u'_\pm$. Set
$U'_\pm:=\{g\in G:gu'_\pm=u'_\pm g\}$. By Lemma~\ref{l:unipotent-commuting}, $U'_\pm$ is a maximal unipotent subgroup of $G$, so that \[
\varphi(U_+)\subset U'_+
\qquad
\text{and}
\qquad
\varphi(U_-)\subset U'_-.
\]
These two subgroups are distinct. Indeed, suppose that
$U'_+=U'_-=:U'$. Since $\varphi|_{U_+}$ is nontrivial, its image is
Zariski dense in the one-dimensional group $U'$. As $T$ normalizes
$U_+$, the group $\varphi(T)$ normalizes $U'$. Therefore $\varphi(T)\subset N_G(U')$,
and hence
$\varphi(U_-TU_+)\subset N_G(U')$.
But $U_-TU_+$ is a neighbourhood of $e$, whereas $N_G(U')$ is a
proper Borel subgroup of $G$, contradicting the dominance of
$\varphi$.

Finally, any ordered pair of distinct maximal unipotent subgroups of
$\pgl_2$ is conjugate to $(U_+,U_-)$. It is well-known that every maximal unipotent subgroup of $G=\pgl_2$ fixes a unique point of
$\mathbb P^1$. Since $U'_+\neq U'_-$, their fixed points are distinct.
As $G$ acts transitively on ordered pairs of distinct points of $\mathbb P^1$, there exists $\delta\in G$ such that $\delta U'_+\delta^{-1}=U_+$ and $\delta U'_-\delta^{-1}=U_-$.
Choose such a $\delta\in G$.
Applying the automorphism
$\id_G\times\operatorname{Inn}(\delta)$
to $G\times G$, we may replace $S$ by its image and $\varphi$ by
$\varphi':=\operatorname{Inn}(\delta)\circ\varphi$.
This preserves all the hypotheses, and now
$\varphi'(U_+)\subset U_+$ and $\varphi'(U_-)\subset U_-$.
We again write $S$ and $\varphi$ for these normalized objects.
Since $T=N_G(U_+)\cap N_G(U_-)$, we also have $\varphi(T)\subset T$.

Thus, locally,
\[
\varphi(u_+(x))=u_+(f(x)),
\qquad
\varphi(u_-(y))=u_-(g(y)),
\qquad
\varphi(t(a))=t(h(a))
\]
where $f,g$ are nonzero additive rigid analytic germs and $h$ is multiplicative.

Consider the following decomposition
\begin{equation}\label{e:decomposition-UTU}
u_+(x)u_-(y)
=
u_-\left(\frac{y}{1+xy}\right)
t(1+xy)
u_+\left(\frac{x}{1+xy}\right)
\end{equation}
which holds whenever $1+xy\neq0$, and follows from \cite[Example~4.2.5 and Theorem~4.2.6(2)]{conrad}. 
By uniqueness of the decomposition \eqref{e:decomposition-UTU}, we have
$t\bigl(h(1+xy)\bigr)=t\bigl(1+f(x)g(y)\bigr)$.
Since both arguments are sufficiently close to $1$, the map $a\mapsto t(a)$ is locally injective\footnote{Indeed, $t(a)=t(b)$ in $\pgl_2$ if and only if
$a^2=b^2$. Since the characteristic is odd, this implies $a=\pm b$;
on a sufficiently small neighbourhood of $1$, the possibility $a=-b$ is excluded.}, and therefore
\begin{equation}\label{e:frobenius-goursat}
h(1+xy)=1+f(x)g(y).
\end{equation}

Define the analytic germ
\[
\sigma(z):=h(1+z)-1.
\]
Choose $y_0\neq0$ sufficiently small with $g(y_0)\neq0$. By
\eqref{e:frobenius-goursat}, we have $\sigma(z)=f(z/y_0)g(y_0)$ for $z$ sufficiently small; hence $\sigma$ is additive. Moreover,
multiplicativity of $h$ gives
\[
1+\sigma(z+w+zw)
=
\big(1+\sigma(z)\big)\big(1+\sigma(w)\big).
\]
Using additivity, we obtain
\[
\sigma(zw)=\sigma(z)\sigma(w).
\]

By Lemma~\ref{l:analytic-field-automorphism} and Remark~\ref{r:localform}, there is $m\ge 0$ such that
$\sigma(z)=z^{p^m}$.
Equation~\eqref{e:frobenius-goursat} now gives
\[
f(x)g(y)=\sigma(xy)=\sigma(x)\sigma(y).
\]

Since $h(1+xy)=1+(xy)^{p^m}$,
equation~\eqref{e:frobenius-goursat} gives
$f(x)g(y)=x^{p^m}y^{p^m}$.
Choose $y_0$ sufficiently small with $g(y_0)\neq0$, and set
$c:=\frac{y_0^{p^m}}{g(y_0)}\in\C_\infty^\times$.
Taking $y=y_0$, we obtain
$f(x)=c\,x^{p^m}$,
and substituting this back gives
$g(y)=c^{-1}y^{p^m}$.
Moreover, for $a$ sufficiently close to $1$,
$h(a)=1+(a-1)^{p^m}=a^{p^m}$.
Since $\C_\infty$ is algebraically closed, choose
$s\in\C_\infty^\times$ such that $s^2=c^{-1}$, and set
\[
\varphi':=\operatorname{Inn}(t(s))\circ\varphi.
\]
Using $t(s)u_+(x)t(s)^{-1}=u_+(s^2x)$ and
$t(s)u_-(y)t(s)^{-1}=u_-(s^{-2}y)$,
and the fact that $t(s)$ centralizes $T$, we obtain
\[
\varphi'(u_+(x))=u_+(x^{p^m}),
\qquad
\varphi'(u_-(y))=u_-(y^{p^m}),
\qquad
\varphi'(t(a))=t(a^{p^m}).
\]
We again write $\varphi$ for $\varphi'$; the conjugation introduced here
will be absorbed into the element $\delta$ at the end of the proof.
Since $U_-TU_+$ is a neighbourhood of $e$, it follows that, before the
inner conjugations,
\[
\varphi=\text{Inn}(\delta)\circ\Frob^m
\]
as germs, for some $\delta\in G(\C_\infty)$.
Therefore $S$ is contained in the germ of $\Gamma_{\delta,m}$.  Let
$G_0:=\pgl_{2,\F_p}$.  The $m$-fold relative $p$-Frobenius of $G_0$,
after the canonical identification of its Frobenius twist with $G_0$,
base-changes to the algebraic group endomorphism $\Frob^m\colon G\rightarrow G$. Hence $\Gamma_{\delta,m}$ is a connected algebraic subgroup of $G\times G$, isomorphic to $G$ by the first projection. In particular, $\dim\Gamma_{\delta,m}=3<6$, so it is proper.
\end{proof}

\begin{lem}\label{l:henselfinite}
Let $f\colon X\rightarrow S$ be a separated morphism of rigid-analytic spaces, and
let $x\in X$ be a point at which $f$ is quasi-finite. Then there are
affinoid neighbourhoods $x\in U\subset X$ and $f(x)\in V\subset S$ such that
\[
f|_U\colon U\rightarrow V
\]
is finite.
\end{lem}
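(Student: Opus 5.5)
The plan is to reduce to the rigid-analytic analogue of the local structure theorem for quasi-finite morphisms, via Weierstrass preparation and Noether normalization in the affinoid setting. First, everything is local near $x$ and $f(x)$. So we replace $S$ by an affinoid neighbourhood $V_0=\operatorname{Sp}(B)$ of $s:=f(x)$. We then replace $X$ by an affinoid neighbourhood $U_0=\operatorname{Sp}(C)\subset f^{-1}(V_0)$ of $x$, which gives a morphism of affinoid algebras $B\to C$. Quasi-finiteness at $x$ means that $x$ is isolated in its fibre $f^{-1}(s)\cap U_0$. Equivalently, the local ring of the fibre $\calo_{U_0,x}/\gotm_s\calo_{U_0,x}$ is Artinian, i.e.\ finite over $\C_\infty$.

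Next, after shrinking $U_0$ to a smaller affinoid (a rational subdomain around $x$), we may assume the fibre $U_0\cap f^{-1}(s)$ is the single point $x$. This uses that the fibre is an affinoid space in which $x$ is an isolated point, hence a connected component. Since connected components of affinoid spaces are cut out by idempotents, the remaining part of the fibre can be excluded by a Laurent/rational condition. With this normalization, $C/\gotm_sC$ is finite-dimensional over $\C_\infty$.

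The key step is to upgrade fibre-finiteness to finiteness over a neighbourhood of $s$. I would use the standard criterion (e.g.\ Bosch--Güntzer--Remmert~6.1.2 or the rigid Weierstrass preparation cited as \cite{kato}). Choose affinoid generators $c_1,\dots,c_N$ of $C$ over $B$, presenting $U_0$ as a closed subspace of $V_0\times\mathbb B^N$. Each $c_i$ restricted to the fibre satisfies a monic polynomial over $\C_\infty$, since the fibre ring is finite-dimensional. After rescaling and shrinking $V_0$ to a small disc-neighbourhood $V$ of $s$, the sup-norm of $c_i$ on $U:=f^{-1}(V)\cap U_0$ is controlled. This makes $c_i$ topologically nilpotent modulo its minimal polynomial over $B$, which is exactly the hypothesis of the "$C$ is finite over $B$ iff each generator is integral modulo the maximal ideal of a small neighbourhood" lemma. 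Alternatively, one can invoke Kiehl's result that a morphism which is proper and quasi-finite is finite. Here the relative compactness $U\Subset_V U_0$, obtained by shrinking around the isolated fibre point, gives properness of $U\to V$ in Kiehl's sense, and properness plus quasi-finiteness gives finiteness. Separatedness of $f$ is what makes $U\to V$ separated, so Kiehl's theorem applies.

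The main obstacle is this middle step: producing the relatively compact shrinking $U\Subset_V U_0$ so that $U\to V$ is proper. The idea is to use the maximum modulus principle on the fibre. Because the fibre over $s$ meets $U_0$ only at $x$, the functions $c_i-c_i(x)$ have small sup-norm on the fibre. By continuity of the sup-norm in families (again Weierstrass theory), they have norm $<1$ on $f^{-1}(V)$ for $V$ small. Hence $U$ lies in the interior of $U_0$ relative to $V$. Everything else (reduction to affinoids, isolating $x$ in its fibre) is routine.
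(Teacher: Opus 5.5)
Your main line is correct: isolate $x$ in its fibre, shrink the base until recentred generators become topologically nilpotent, and use Kiehl's relative compactness. It is, however, a genuinely different route from the paper, which gives no argument and simply quotes Conrad's Theorem~A.1.3 as a black box.

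Concretely, you reduce to $\operatorname{Sp}C\to\operatorname{Sp}B$ with fibre $\{x\}$ over $s$. Take affinoid generators $c_i$ of $C$ over $B$ and set $g_i:=c_i-c_i(x)$. Since $|c_i(x)|\le\|c_i\|_{\sup}\le1$, the $g_i$ are again affinoid generators. Once $\|g_i\|_{\sup}<1$ on $U:=\operatorname{Sp}C\times_{\operatorname{Sp}B}V$, the affinoid $U$ is relatively compact \emph{in itself} over $V$. That is what your ``$U$ relatively compact in $U_0$ over $V$'' has to mean, since $U_0$ does not live over $V$.

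Your route buys a self-contained proof inside Tate's rigid geometry, modulo Kiehl and BGR. It also shows that separatedness of $f$ plays no role in this affinoid-neighbourhood formulation, because every morphism of affinoids is separated. So your remark that separatedness of $f$ ``makes $U\to V$ separated'' is beside the point. The paper's citation buys brevity.

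Three points need tightening.

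First, replace ``continuity of the sup-norm in families'' by the actual (short) argument. The Laurent domain $L_i:=\{|g_i|\ge1\}\subset\operatorname{Sp}C$ misses the fibre over $s$. So if $b_1,\dots,b_m$ generate $\gotm_s\subset B$, their pullbacks have no common zero on $L_i$. Hence they generate the unit ideal of $\calo(L_i)$, say $1=\sum_j a_{ij}b_j$. This gives $\max_j|b_j|\ge1/\max_j\|a_{ij}\|_{\sup}$ on $L_i$. Therefore the Weierstrass domain $V:=\{|b_j|\le\varepsilon\ \forall j\}$, for $\varepsilon$ small, has preimage disjoint from every $L_i$. The maximum modulus principle then gives $\|g_i\|_{\sup}<1$ on $U$.

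Second, do not conclude with ``proper plus quasi-finite implies finite''. You only know quasi-finiteness at $x$, not at every point of $U$. You do not need it, because an affinoid that is relatively compact in itself over an affinoid base is finite over it. For instance, Kiehl's finiteness theorem makes $\calo(U)=\Gamma(V,(f|_U)_*\calo_U)$ a finite $\calo(V)$-module. Alternatively, the surjection $\calo(V)\langle\zeta\rangle\to\calo(U)$, $\zeta_i\mapsto g_i$, induces a finite map on reductions that kills every $\tilde\zeta_i$, and the reduction criterion for finiteness applies.

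Third, drop your first variant (``topologically nilpotent modulo its minimal polynomial over $B$''). It is not coherent as stated: there is no such minimal polynomial over $B$. Integrality of the generators on the fibre is just a restatement of quasi-finiteness at $x$ and does not by itself give finiteness over a neighbourhood.
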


\begin{proof}
This follows from \cite[Theorem~A.1.3]{conrad-ampleness}.
\end{proof}

Next result, perhaps well-known to the experts,  adapts \cite[Lemme~p.444]{loeser-igusa} to our setting.

\begin{lem}\label{l:analytic-image-slice}
Let $f\colon X\rightarrow Y$ be a separated morphism of reduced
rigid-analytic spaces over $\C_\infty$, with $X$ of pure dimension.
Suppose that, on a neighbourhood of $v\in X$, the dimension
$\dim_w f^{-1}(f(w))=c$
is constant. Then $f$ has a well-defined reduced analytic image
germ at $f(v)$, of dimension $\dim X-c$. 
\end{lem}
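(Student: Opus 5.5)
The plan is to follow the classical argument of Loeser (the ``Lemme'' on p.~444 of \cite{loeser-igusa}): cut the fibres down to points with a transverse slice, then apply Lemma~\ref{l:henselfinite} to get a finite map whose image is analytic.

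\emph{Reduction to the local model.} The statement is local at $v$. Shrink $X$ to an affinoid neighbourhood $X'$ of $v$ on which $\dim_w f^{-1}(f(w))=c$ for every $w\in X'$. Embed $X'$ in a closed polydisc, which is possible since it is affinoid.

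\emph{Slicing.} Choose $c$ affine-linear functions $\ell_1,\ldots,\ell_c$ on the ambient polydisc, vanishing at $v$, with $\dim_v\bigl(f^{-1}(f(v))\cap\{\ell=0\}\bigr)=0$. This uses the same prime avoidance argument over the infinite field $\C_\infty$ as in the proof of Proposition~\ref{p:atypical-closed}: we apply it to the local ring $\calo_{f^{-1}f(v),v}$, which has dimension $c$, and choose a system of parameters among linear forms. Set $Z:=X'\cap\{\ell_1=\cdots=\ell_c=0\}$. Then $f|_Z$ is quasi-finite at $v$. By upper semicontinuity of fibre dimension (Krull's Hauptidealsatz applied in each fibre), the fibres of $f|_Z$ have dimension at least $c-c=0$ and are finite near $v$. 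Since $X$ is pure of dimension $\dim X$, every component of $Z$ through $v$ has dimension at least $\dim X-c$. The constancy hypothesis is then used to show equality: near $v$, each irreducible component of $Z$ has dimension exactly $\dim X-c$. Indeed, $Z$ meets each fibre in a finite set and each fibre has dimension $c$ in $X$, so a component of $Z$ of larger dimension would force, via the map $X'\to$ (image) and dimension additivity, fibres of dimension $<c$ somewhere.

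\emph{Finiteness and image.} By Lemma~\ref{l:henselfinite}, there are affinoid neighbourhoods $U\ni v$ in $Z$ and $V\ni f(v)$ in $Y$ such that $f|_U\colon U\to V$ is finite. Finite morphisms of rigid spaces have Zariski-closed analytic images (the image is defined by the kernel of $\calo(V)\to\calo(U)$). Hence $W:=f(U)$ with its reduced structure is an analytic subset of $V$. Its dimension equals $\dim U=\dim X-c$, because finite maps preserve dimension. We take the image germ to be $(W,f(v))$.

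\emph{Well-definedness.} We must show that $(W,f(v))$ equals the germ of $f(X'')$ for every small neighbourhood $X''$ of $v$, and that it is independent of the slice. The inclusion $f(U)\subset f(X')$ is clear. For the converse, take any $w$ near $v$. The fibre through $w$ has pure dimension $c$ near $w$, and by the choice of the slice (an open condition, again by semicontinuity) it meets $\{\ell=0\}$ near $v$. So $f(w)\in f(U)$ after shrinking, and the germs agree. Independence from the choices then follows, since any two such constructions are both equal to the germ of $f(X'')$.

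I expect the main obstacle to be this last point: ensuring, uniformly near $v$, that the slice meets every nearby fibre within the chosen neighbourhood. Properness is unavailable, so the argument needs a continuity-of-roots or openness statement for the finite flat-ish map $U\to W$. I would try to obtain it from the finiteness in Lemma~\ref{l:henselfinite}, applied to the family $X'\to Y\times\A^c$, $w\mapsto(f(w),\ell(w))$, which is quasi-finite at $v$. That map is then finite onto an analytic set $W'$ of dimension $\dim X$, and $W=W'\cap(Y\times\{0\})$ near the base point.
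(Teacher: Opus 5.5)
Your setup (slice by $c$ linear forms so that $v$ is isolated in its fibre, then Lemma~\ref{l:henselfinite}) matches the paper's, but the decisive step is never proved. In the well-definedness paragraph you claim that the fibre through a nearby $w$ meets $\{\ell=0\}$ near $v$ because this is ``an open condition, by semicontinuity''. It is not. Upper semicontinuity of fibre dimension, and Krull's Hauptidealsatz as you use it in the slicing step (``dimension at least $c-c=0$''), only control the local dimension of $f^{-1}(f(w))\cap\{\ell=0\}$ \emph{at points where this set is non-empty}. They say nothing about non-emptiness, and non-emptiness is exactly what the inclusion $f(X'')\subset f(U)$ requires.

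This is also the one place where the constancy hypothesis is indispensable. For $(x,y)\mapsto(x,xy)$ the fibre dimension jumps at the origin, and the image of a small polydisc is $\{(a,b):|a|\le\epsilon,\ |b|\le\epsilon|a|\}$, which is not an analytic germ. You instead spend the hypothesis on showing that the slice has dimension exactly $\dim X-c$. That bound is better obtained by choosing the forms generically, as in the proof of Proposition~\ref{p:atypical-closed}. As written, your argument for it appears to presuppose that $f(X')$ is already analytic of dimension $\dim X-c$.

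Your last paragraph names the right auxiliary map but not the argument. Take a finite representative $(f,\ell)\colon U'\to T\times D$ on an affinoid neighbourhood $U'$ of $v$ in $X$, with fibre over $(f(v),0)$ supported at $v$, and let $W'$ be its reduced image. For $a\in f(U')$, the fibre $W'_a$ is the image of $f^{-1}(a)\cap U'$ under a finite map. This source has dimension $c$ by the constancy hypothesis, so $W'_a$ is a $c$-dimensional closed analytic subset of the irreducible $c$-dimensional polydisc $D$. Hence $W'_a=D\ni 0$, and surjectivity of $U'\to W'$ produces a point of $U'\cap\ell^{-1}(0)$ over $a$. Therefore $f(U')=f(U'\cap\ell^{-1}(0))$. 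This also gives $W'=f(U')\times D$, hence the dimension $\dim X-c$ directly.

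Your identity $W=W'\cap(Y\times\{0\})$ is essentially tautological and does not yield this. What is needed is that every non-empty fibre of $W'$ is all of $D$.

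Finally, for the germ to be independent of the neighbourhood, you also need the reverse direction: arbitrarily small neighbourhoods of $v$ in the slice must map onto neighbourhoods of $f(v)$ in the image. This follows from finiteness of $U'\cap\ell^{-1}(0)\to f(U')$ with fibre over $f(v)$ supported at $v$; the paper uses \cite[Lemma~A.1.4]{conrad-ampleness}. Your proposal does not address this.
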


\begin{proof}
Let $d=\dim X$ and shrink around $v$ so that the hypothesis
holds. We choose $c$ general local linear forms
$\ell=(\ell_1,\ldots,\ell_c)$, vanishing at $v$, whose common zero locus is of dimension $d-c$ and cuts the fiber through $v$ in an isolated point. By Lemma~\ref{l:henselfinite}, the map $(f,\ell)$
admits a finite representative
\[
(f,\ell)\colon U\rightarrow T\times D
\]
where $U$ is an  affinoid neighbourhood of $v$,
$T$ is an affinoid neighbourhood of $f(v)$, and $D$ is a
$c$-dimensional polydisk around $0$. We choose it with fiber
over $(f(v),0)$ supported at $v$. Therefore we obtain that $W:=\bigl(U\cap\ell^{-1}(0)\bigr)_{\mathrm{red}}$  maps finitely and surjectively onto its reduced closed
analytic image $S\subset T$, which is 
of dimension $d-c$.
Let $Z\subset T\times D$ be the reduced closed image of
$(f,\ell)$. For every $a\in T$ with
$Z_a\neq \emptyset$, the induced map
$f^{-1}(a)\cap U\rightarrow Z_a$ is finite and surjective. Its source has dimension $c$ by hypothesis, so $\dim Z_a=c$.
Since a proper closed analytic subset of the
$c$-dimensional polydisk $D$ has smaller dimension,
$(Z_a)_{\mathrm{red}}=D$.
In particular, $(a,0)\in Z$. Surjectivity of $U\rightarrow Z$
therefore gives a point $w\in W$ with $f(w)=a$.
Thus every point of $f(U)$ belongs to $f(W)=S$,
and consequently $f(U)=S$.

Finally, $W\rightarrow S$ is finite and surjective, with fiber over
$f(v)$ supported at $v$.
By \cite[Lemma~A.1.4]{conrad-ampleness}, every admissible neighbourhood of $v$ in $W$ maps onto a neighbourhood of $f(v)$ in $S$. Hence, for every sufficiently small
admissible neighbourhood $U'\subset U$ of $v$, the image $f(U')$ is contained in $S$ and contains a neighbourhood of $f(v)$ in $S$.
Thus $(S,f(v))$ is the required reduced analytic image germ of dimension $d-c$.
\end{proof}

\begin{lem}\label{l:pgl-p-sparse}
Assume that $q$ is odd. For every $n\ge1$, then
$\pgl_{2,\C_\infty}^{\,n}$
is $p$-sparse.
\end{lem}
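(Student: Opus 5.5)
I will argue by induction on $n$, using Lemma~\ref{l:frobenius-goursat} for the two-factor input and a Goursat-type reduction for the rest.

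Let $\Sigma\subset(G^{\mathrm{an}},e)$ with $G=\pgl_{2,\C_\infty}^{\,n}$ be a subgroup germ with $\dim\Sigma<\dim G=3n$.

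\emph{Case $n=1$.} Here $\dim\Sigma\le2$. I will show that $\Sigma$ lies in a Borel subgroup, a one-parameter subgroup, or $\{e\}$. Since $\Sigma$ is reduced, it is smooth at $e$, after translating a smooth point as in the proof of Lemma~\ref{l:frobenius-goursat}. Then $\Sigma$ is a germ of a rigid-analytic local group of dimension $\le2$.

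Suppose $\Sigma$ were Zariski dense. Then its tangent space $T_e\Sigma$ is $\operatorname{Ad}(\Sigma)$-invariant, hence an ideal of the simple algebra $\mathfrak{sl}_2$ (here $q$ is odd). So $T_e\Sigma$ is $0$ or all of $\gotg$, which is impossible for a smooth germ of dimension $1$ or $2$. Thus the Zariski closure of $\Sigma$ is proper. The Zariski closure of a subgroup germ is an algebraic subgroup: closure under $\delta$ passes to the closure by density of $\Sigma\times\Sigma$ in $\overline\Sigma\times\overline\Sigma$. Its identity component is then a proper connected subgroup $H$ containing $\Sigma$, since $\Sigma$ is irreducible.

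\emph{General $n$: reduction to the Zariski closure.} The same closure argument applies in $G$: $H:=\overline{\Sigma}^{\mathrm{Zar},\circ}$ is a connected algebraic subgroup containing $\Sigma$. So it suffices to show $\overline{\Sigma}^{\mathrm{Zar}}\ne G$.

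Suppose instead that $\Sigma$ is Zariski dense in $G$. Then every projection $\pr_i(\Sigma)$ is dominant. I first check that $\pr_i(\Sigma)$ is again a subgroup germ via Lemma~\ref{l:analytic-image-slice}: for a germ of group type the fibre dimension is constant near $e$, so the image germ is well defined. Every projection $\pr_{ij}(\Sigma)$ is likewise a Zariski dense subgroup germ of $\pgl_2^2$.

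\emph{Key counting step.} For a pair $i<j$, there are two possibilities. If $\pr_{ij}(\Sigma)$ is proper, Lemma~\ref{l:frobenius-goursat} places it in some $\Gamma_{\delta,m}$, which is a proper algebraic subgroup, contradicting density. Hence every $\pr_{ij}(\Sigma)$ is the full germ of $\pgl_2^2$.

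It remains to pass from "all pairwise projections are full" to "$\Sigma$ is full". This is the Goursat step and the main obstacle. I will argue by induction through the kernels. Let $N:=\Sigma\cap(\{e\}^{n-1}\times\pgl_2)$. Because $\pr_n(\Sigma)$ is dominant, $N$ is normalized by a dense set. Its tangent space is then an ideal of $\gotg$; to conclude that $N$ is either trivial-dimensional or open I need more than the tangent space in characteristic $p$. I would use instead the Zariski closure of $N$, which is a normal algebraic subgroup of $\pgl_2$ and hence is $\{e\}$ or $\pgl_2$.

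If the closure is $\pgl_2$, I use the inductive hypothesis on the projection to the first $n-1$ factors, which is a dense subgroup germ of $\pgl_2^{\,n-1}$ and hence full by induction. A dimension count then gives $\dim\Sigma=3n$. Here the hope is to replace the finite-dimensional statement "$N$ Zariski dense in $\pgl_2$" by "$N$ is $3$-dimensional"; this would itself use the $n=1$ case, so that a dense subgroup germ of $\pgl_2$ is open.

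If the closure is $\{e\}$, then $\Sigma$ is the graph of a local homomorphism from $\pr_{<n}(\Sigma)$ to $\pgl_2$. Composing with a single coordinate inclusion gives a pair $(i,n)$ with $\pr_{in}(\Sigma)$ a proper graph, contradicting the pairwise fullness established above.

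Either way $\Sigma$ is full, contradicting $\dim\Sigma<3n$. Hence $\overline{\Sigma}^{\mathrm{Zar}}$ is proper, and its identity component is the required proper connected algebraic subgroup $H\subsetneq G$ with $\Sigma\subset(H^{\mathrm{an}},e)$.
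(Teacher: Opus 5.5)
Most of your argument works. The Zariski closure of a subgroup germ is a connected algebraic subgroup, so $p$-sparsity amounts to showing that a Zariski dense subgroup germ is full. Your $n=1$ case, the pairwise fullness of $\pr_{ij}(\Sigma)$ via Lemma~\ref{l:frobenius-goursat}, and the case $\overline{N}^{\mathrm{Zar}}=\pgl_2$ all go through. The gap is in the case $\overline{N}^{\mathrm{Zar}}=\{e\}$.

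\textbf{The gap.} There $\Sigma$ is, set-theoretically, the graph of a local homomorphism $\phi$ on the full germ of $\pgl_2^{\,n-1}$. But $\pr_{in}(\Sigma)=\{(g_i,\phi(g_1,\ldots,g_{n-1}))\}$ is a graph only if $\phi$ depends on $g_i$ alone.

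Composing $\phi$ with the inclusion of the $i$-th factor gives the graph of $\phi|_{G_i}$. That graph is the \emph{slice} of $\Sigma$ where all coordinates other than $i,n$ equal $e$. It is not the \emph{projection} $\pr_{in}(\Sigma)$, which remains $6$-dimensional as soon as $\phi$ is nontrivial on some other factor.

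Pairwise fullness does not by itself exclude a $\phi$ depending on several coordinates. What excludes it is that $\pgl_2$ has no nontrivial proper normal subgroup germs and trivial centre. You never use this at this step.

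\textbf{The missing idea.} You must show that $\phi$ factors through a single coordinate. This is the core of the paper's proof. Write $G:=\pgl_{2,\C_\infty}$ and set $K:=\bigl(\Sigma\cap(G^{n-1}\times\{e\})\bigr)_{\mathrm{red}}$.
\begin{itemize}
\item $K$ is normal in $G^{n-1}$, because $\pr_{<n}(\Sigma)$ is full.
\item $\dim K=\dim\Sigma-3<3(n-1)$, so some factor $G_i$ is not contained in $K$.
\item $(K\cap G_i)_{\mathrm{red}}$ is a normal subgroup germ of $G_i$, hence trivial.
\item So the commutators $kgk^{-1}g^{-1}$, for $k\in K$ and $g\in G_i$, are trivial. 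Thus $\pr_i(K)$ centralizes a neighbourhood of $e$, and so $\pr_i(K)=\{e\}$.
\item Therefore $\ker(\pr_{in}|_\Sigma)=K$ and $\dim\pr_{in}(\Sigma)=3$, contradicting your pairwise fullness.
\end{itemize}
In your own language, equivalently: the image germs under $\pr_n$ of the single-factor slices of $\Sigma$ commute pairwise. They are normalized by the full germ $\pr_n(\Sigma)$, so each is trivial or full. Two full ones would contradict the trivial centre, so only one factor survives.

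\textbf{Two smaller remarks.} First, your worry that the tangent space does not suffice for normal subgroup germs is unfounded. A reduced subgroup germ is smooth at $e$, so its dimension equals that of its tangent space, which is an ideal of $\gotg$. This is exactly what you use for $n=1$, and what the paper uses. Second, in characteristic $p$ the map $\phi$ need not be analytic: think of $\{(\Frob(g),g)\}$ viewed as a graph over the first factor. So the argument should be phrased with analytic image germs, via Lemma~\ref{l:analytic-image-slice}, rather than with $\phi$ itself.
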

\begin{proof}
Put $G:=\pgl_{2,\C_\infty}$, and let
$\Sigma\subset\bigl((G^n)^{\mathrm{an}},e\bigr)$
be a subgroup germ with $\dim\Sigma<3n$.

Reduced subgroup germs are smooth at $e$, by translating
a nearby smooth point to the identity.

We first treat $n=1$. Let
$\goth:=T_e\Sigma\subset\gotg:=\Lie(G)$.
If $\goth=0$, then $\Sigma=\{e\}$. Otherwise, conjugation by
$\Sigma$ preserves $\goth$, and hence
$\Sigma\subset\bigl(N_G(\goth)^{\mathrm{an}},e\bigr)$, where  $N_G(\goth):=\{g\in G: \text{Ad}(g)\goth=\goth\}$.
Since $q$ is odd, $\gotg:=\mathfrak{pgl_2}\simeq\mathfrak{sl}_2$ is simple.
Thus $N_G(\goth)\neq G$, for otherwise $\goth$ would be a
nonzero proper ideal of $\gotg$. Therefore $\Sigma$ is contained
in the germ of the proper connected algebraic subgroup
$N_G(\goth)^\circ$.

Now let $n\ge2$ and proceed by induction.
By Lemma~\ref{l:analytic-image-slice}, coordinate projections of
$\Sigma$ have reduced, irreducible, analytic subgroup image germs and they satisfy the fiber dimension formula (as their local fibers are translates of their kernels).

If the image germ of $\pr_{1,\ldots,n-1}|_\Sigma$ is proper, then by induction hypothesis  it is contained in the germ of a proper algebraic subgroup $H'\subset G^{n-1}$.
Then $\Sigma$ is contained in the germ of $H'\times G$.
Similarly, if the image germ of $\pr_n|_\Sigma$ is proper, then, by the case $n=1$ , we have that
$\Sigma$ is contained in the germ of
$G^{n-1}\times H_n$.
Henceforth we assume that the image germs of
$\pr_{1,\ldots,n-1}|_\Sigma$ and $\pr_n|_\Sigma$ coincide with
$\bigl((G^{n-1})^{\mathrm{an}},e\bigr)$ and
$(G^{\mathrm{an}},e)$, respectively.

We now adapt the kernel argument of the classical Goursat lemma.
First, every reduced normal subgroup germ of $G$ is either
$\{e\}$ or the full germ of $G$: indeed, it is smooth\footnote{Indeed, reducedness gives a dense smooth locus and translation by the inverse of a sufficiently nearby smooth
point identifies the germ at that point with the germ at $e$.} and by normality its tangent space is an ideal of $\gotg\simeq\mathfrak{sl}_2$.
Simplicity therefore gives dimension either $0$ or $3$, which proves the assertion.

Consider the subgroup germ of $G^{n-1}$ defined as $K:=\bigl(\Sigma\cap(G^{n-1}\times\{e\})\bigr)_{\mathrm{red}}$. By (local) surjectivity, every $a\in G^{n-1}$ near $e$ has a lift $(a,b)\in\Sigma$ near $e$. For $k\in K$ near $e$, we have
$(a,b)(k,e)(a,b)^{-1}=(aka^{-1},e)\in\Sigma$.
Thus $aka^{-1}\in K$, so $K$ is normal.
The fiber dimension formula for $\pr_n|_\Sigma$ gives
$\dim K=\dim\Sigma-3<3(n-1)$.
Thus $K$ is proper, so, for some $i$, the factor $G_i\subset G^{n-1}$ is not contained in $K$,
otherwise $K$ would contain the product of all the factors.

The reduced subgroup germ $(K\cap G_i)_{\mathrm{red}}$ is normal
in $G_i$, so it is either trivial or full. Since $G_i\not\subset K$, it must be trivial.
For $k\in K$ and $g\in G_i$ sufficiently close to the identity,
the commutator $kgk^{-1}g^{-1}$ belongs to $K\cap G_i$, so it is the identity; thus $\pr_i(K)$ centralizes the full germ of $G$.
Since a neighbourhood of the identity is Zariski dense in $G$
and $G$ has trivial center, we see that
$\pr_i(K)=\{e\}$.

Every point of $\Sigma$ near $e$ whose $n$-th coordinate is $e$
belongs to $K$, so its $i$-th coordinate is also $e$,
since $\pr_i(K)=\{e\}$.
Thus $\ker(\pr_n|_\Sigma)$ and $\ker(\pr_{i,n}|_\Sigma)$
have the same points near $e$, and hence the same reduced germ.
Therefore
$\bigl(\ker(\pr_{i,n}|_\Sigma)\bigr)_{\mathrm{red}}=K$, and the fiber dimension formula gives
$\dim \pr_{i,n}(\Sigma)=\dim\Sigma-\dim K=3$.
Both coordinate projections of $\pr_{i,n}(\Sigma)$ are full. 
Hence by Lemma~\ref{l:frobenius-goursat}  $\pr_{i,n}(\Sigma)$ sits in the germ of a graph $\Gamma_{\delta,m}$.
The inverse image of this algebraic subgroup under
$\pr_{i,n}\colon G^n\rightarrow G^2$
contains $\Sigma$ and is isomorphic to
$G^{n-2}\times\Gamma_{\delta,m}$.
It is therefore a proper connected algebraic subgroup of $G^n$.
\end{proof}

\subsection{Abstract Ax--Schanuel}

\subsubsection{Transverse image germs and enlarged foliation}\label{ss:enlarged}

Let $\pi\colon P\rightarrow Y$ be a principal homogeneous left $G$-space equipped with its $G$-equivariant Hasse--Schmidt foliation $\calf$. We work over a smooth affine base chart with fixed \'etale coordinates, as in
Lemma~\ref{l:hs-base-change}, and set $r:=\dim Y$. 
By Proposition~\ref{p:atypical-closed}, there is a non-empty smooth
Zariski open subset $V^\circ\subset V$ on which
\[
c:=\dim_v(V\cap\lcal_v)
\]
is constant, where $\lcal_v$ is the leaf through $v$.
We shall work at points of this fixed Zariski open subset.

For $v\in V^\circ$, choose a left $G$-equivariant horizontal local
trivialization
\[
\tau\colon P|_B\xrightarrow{\sim}B\times G,
\]
over an analytic neighborhood $B$ of $\pi(v)$. In these coordinates, $G$ acts on the second factor by left multiplication and the local Hasse--Schmidt leaves are the sets $B\times\{g\}$. For $\tau(v)=(\pi(v),g_v)$,
define the (normalized) {\em transverse projection}
\[
\lambda_v(x):=\pi_G(\tau(x))g_v^{-1}
\]
where $\pi_G$ is the projection onto the group coordinate.
Then $\lambda_v(v)=e$, and the fibers of $\lambda_v$ are the local  leaves.  Two such horizontal trivializations differ by
right multiplication by a constant element of $G$; hence the germ of $\lambda_v$ is independent of the chosen trivialization.

Let us identify $B$ with an analytic neighborhood in $\A^{r,\mathrm{an}}_{\C_\infty}$ using the chosen base coordinates.
For $\mathbf t=(t_1,\ldots,t_r)$, we consider
\[
\theta_{\mathbf t}(v)
:=
\tau^{-1}\bigl(\pi(v)+\mathbf t,g_v\bigr).
\]
The Hasse--Schmidt expansion gives
\[
f\bigl(\theta_{\mathbf t}(v)\bigr)
=
\sum_{\alpha\in\N^r}
D_\alpha(f)(v)\mathbf t^\alpha
\]
for every  regular function $f$.
We also set
\[
\boldsymbol g_v(\mathbf t,g)
:=g\cdot\theta_{\mathbf t}(v)
\]
which gives local analytic coordinates on $P$ near $v$. Indeed, in the horizontal trivialization,
$\tau\bigl(\boldsymbol g_v(\mathbf t,g)\bigr)
=
\bigl(\pi(\theta_{\mathbf t}(v)),\,g g_v\bigr)$. The first coordinate parametrizes the base, and the second is right multiplication by $g_v$; both operations are  indeed
invertible, by substracting $\pi(v)$ in the base coordinate and multiply by $g_v^{-1}$ in the group coordinate. Thus every $x$  close to $v$ has a unique expression $x=g\cdot\theta_{\mathbf t}(v)$;
its base point determines $\mathbf t$, and
$g=\lambda_v(x)$. In particular,
$\lambda_v\bigl(\boldsymbol g_v(\mathbf t,g)\bigr)=g$.
By $G$-equivariance, fixing $g$ and varying $\mathbf t$ parametrizes a local leaf.

Choose \'etale coordinates
$\mathbf u=(u_1,\ldots,u_{\dim G})$ centered at $e\in G$, and let $f_1,\ldots,f_s$
generate the ideal of $V$ on an affine chart.
Restrict $V^\circ$ to this affine open.

We now define an {\em enlarged} Hasse--Schmidt foliation
$\widetilde{\calf}$ on $P\times G$.
Take the product foliation on $P\times G$, whose local leaves are $\lcal_v\times G$: the first coordinate moves along $\lcal_v$, while the second varies  near $e$.
Use the Hasse--Schmidt operators of $\calf$ in the first
factor and Taylor expansion in $\mathbf u$ in the second. Transport this product
foliation through the algebraic isomorphism
$P\times G\rightarrow P\times G$ defined by $(x,g)\mapsto(g\cdot x,g)$,
and denote the resulting foliation by $\widetilde{\calf}$.
We consider each $f_i$ as a function on $P\times G$ simply by
$(x,g)\mapsto f_i(x)$.
Enlarging the constructions of Section~\ref{ss:hs-foliation},
we obtain  the  intersection ideal of $V\times G$ with the
leaf of $\widetilde{\calf}$ through $(v,e)$ 
\begin{equation}\label{e:mixed-leaf-ideal}
I_v:=
\bigl(
\operatorname{jet}^{\infty}_{\widetilde{\calf},(v,e)}(f_i):
1\le i\le s
\bigr)
\end{equation}

Since this leaf is parametrized by
\[
(\mathbf t,g)\mapsto
\bigl(g\cdot\theta_{\mathbf t}(v),g\bigr),
\]
restricting $f_i$ to it gives
$f_i(g\cdot\theta_{\mathbf t}(v))$.
This shows that
$\operatorname{jet}^{\infty}_{\widetilde{\calf},(v,e)}(f_i)$
is  the  expansion of
$f_i\circ\boldsymbol g_v$ at $(\mathbf t,g)=(0,e)$.
More explicitly, first expand the algebraic action at $g=e$:
\[
f_i(g\cdot x)
=
\sum_\beta c_{i,\beta}(x)\mathbf u(g)^\beta.
\]
Substituting $x=\theta_{\mathbf t}(v)$ and expanding along
the original Hasse--Schmidt leaf gives
\begin{equation}\label{e:jet-alg}
\operatorname{jet}^{\infty}_{\widetilde{\calf},(v,e)}(f_i)
=
\sum_{\alpha,\beta}
D_\alpha(c_{i,\beta})(v)\,
\mathbf t^\alpha\mathbf u^\beta.
\end{equation}
These coefficients are regular functions of $v$ on $V^\circ$.
Before evaluation at $v$, the series in \eqref{e:jet-alg} have coefficients
$D_\alpha(c_{i,\beta})|_{V^\circ}$ and belong to
$\calo_V(V^\circ)[\![\mathbf t,\mathbf u]\!]$.
The ideal they generate specializes to $I_v$ after evaluation at $v$.
At each finite order, their truncations are polynomial equations with coefficients regular on $V^\circ$.

For $N\ge0$, let
$j_N\colon
\C_\infty[\![\mathbf t,\mathbf u]\!]
\rightarrow
\C_\infty[\mathbf t,\mathbf u]/
(\mathbf t,\mathbf u)^{N+1}$
be the $N$-truncation map, and consider $I_{v,N}:=j_N(I_v)$. 
For $F\in\calo^{\mathrm{an}}_{G,e}$, we define
\[
J_v:=
\left\{
F\in\calo^{\mathrm{an}}_{G,e}:
j_N(F)\in I_{v,N}\ \text{for every }N\ge0
\right\}.
\]
By Krull's intersection theorem, this condition is equivalent to $F(\mathbf u)\in I_v$. Since $\boldsymbol g_v$ is an isomorphism and $\lambda_v(\boldsymbol g_v(\mathbf t,g))=g$, it is also
equivalent to $F\circ\lambda_v=0$ on $(V^{\mathrm{an}},v)$. So we can also write $J_v=
\ker(\calo^{\mathrm{an}}_{G,e}
\xrightarrow{\lambda_v^*}
\calo^{\mathrm{an}}_{V,v})$.
We define
\[
\Sigma_v(V)\subset (G^{\mathrm{an}},e)
\]
as the closed rigid-analytic subspace germ corresponding to 
$\calo^{\mathrm{an}}_{G,e}
\rightarrow
\calo^{\mathrm{an}}_{G,e}/J_v$. Since $v\in V^\circ$ is smooth,
$\calo^{\mathrm{an}}_{V,v}$ is a domain.
As $J_v$ is a kernel, we get that $J_v$ is prime, so $\Sigma_v(V)$ is reduced and irreducible.

\begin{lem}\label{l:group-projection-drop}
Let $V\subset P$ be irreducible, let $v\in V^\circ$ , and let $\lcal_v$ be the Hasse--Schmidt leaf through $v$. If $\dim_v(V\cap\lcal_v)\ge m$, then
\[
\dim\Sigma_v(V)\le \dim V-m.
\]
\end{lem}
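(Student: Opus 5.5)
We work in the local analytic coordinates $\boldsymbol g_v(\mathbf t,g)=g\cdot\theta_{\mathbf t}(v)$ around $v$ introduced above. In these coordinates $\lambda_v(\boldsymbol g_v(\mathbf t,g))=g$, and each fiber $\{g=\mathrm{const}\}$ is a local leaf. So the germ $(V^{\mathrm{an}},v)$ is a reduced analytic germ of dimension $\dim V$, sitting inside the product of a polydisc in $\mathbf t$ with $(G^{\mathrm{an}},e)$. The map $\lambda_v$ restricted to this germ is the projection to the $G$-factor. By construction, $\Sigma_v(V)$ is the germ cut out by $J_v=\ker(\lambda_v^*)$, that is, the reduced Zariski-type image germ of $\lambda_v|_{(V,v)}$. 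The claim therefore reduces to a fiber-dimension inequality for this projection.

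\textbf{Step 1.} Control the fibers. For $w\in V$ near $v$, the fiber $\lambda_v^{-1}(\lambda_v(w))\cap V$ is $V\cap\lcal_w$ near $w$. By upper semicontinuity of the leaf-intersection dimension, which is Proposition~\ref{p:atypical-closed} ($A_m(V,\calf)$ is Zariski closed and contains $v$ by hypothesis), together with the definition of $V^\circ$ on which $c=\dim_w(V\cap\lcal_w)$ is constant, every fiber of $\lambda_v|_V$ near $v$ has dimension $c\ge m$. If $v$ is only known to satisfy $\dim_v\ge m$, we use $c\ge m$ directly, since $c$ is constant on $V^\circ\ni v$.

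\textbf{Step 2.} Apply Lemma~\ref{l:analytic-image-slice} to $f=\lambda_v|_{(V^{\mathrm{an}},v)}$. Here $V^{\mathrm{an}}$ is of pure dimension $\dim V$, smooth at $v$, and the fiber dimension is constantly $c$ near $v$. The lemma gives a reduced analytic image germ $S$ at $e$ of dimension $\dim V-c$. The map $\lambda_v|_{(V,v)}$ factors through $S$, since $f(U')\subset S$ for small neighbourhoods $U'$. Hence the ideal of $S$ is contained in $J_v$.

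\textbf{Step 3.} Show the reverse inclusion $J_v\subset I(S)$. This holds because $f(U')$ contains a neighbourhood of $e$ in $S$: any germ $F$ with $F\circ\lambda_v=0$ on $(V,v)$ vanishes on that neighbourhood, hence on the reduced germ $S$. So $\Sigma_v(V)=S$ and
\[
\dim\Sigma_v(V)=\dim V-c\le\dim V-m.
\]

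The main obstacle is Step 1, which needs constancy of fiber dimension near $v$ rather than just at $v$. This is exactly why we restrict to $V^\circ$, where $c$ is constant. If one prefers to avoid this, a cruder argument still suffices for the inequality. Choose $\dim V-m$ general linear forms in $\mathbf t,\mathbf u$ vanishing at $v$ and intersect $V$ to reach an isolated point, using the slicing in the proof of Lemma~\ref{l:analytic-image-slice}. The image germ then has dimension at most that of $V$ minus the generic fiber dimension, by the finite-map dimension count of Lemma~\ref{l:henselfinite}.
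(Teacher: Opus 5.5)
Your main argument is the paper's proof: you use constancy of $c$ on a neighbourhood of $v$ in $V^\circ$, apply Lemma~\ref{l:analytic-image-slice} to $\lambda_v$, and identify the resulting image ideal with $J_v=\ker\lambda_v^*$, which gives $\dim\Sigma_v(V)=\dim V-c\le\dim V-m$. Drop the closing \emph{cruder argument}, though: without constant fibre dimension near $v$, the germ cut out by $\ker\lambda_v^*$ need not obey the naive dimension count (Osgood-type examples), and cutting $V$ by $\dim V-m$ general forms leaves an $m$-dimensional germ rather than an isolated point, so the constancy on $V^\circ$ is genuinely needed (also, upper semicontinuity plays no role in Step~1).
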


\begin{proof}
Choose a smooth neighborhood 
$U\subset(V^\circ)^{\mathrm{an}}$ of $v$ on which
$\lambda_v$ is defined.
Its fibers are intersections with local leaves, so, for $x\in U$, 
\[
\dim_x\bigl((\lambda_v|_U)^{-1}(\lambda_v(x))\bigr)
=\dim_x(V\cap\lcal_x)=c.
\]
Lemma~\ref{l:analytic-image-slice} applies and gives a reduced analytic image germ of dimension $\dim V-c$.
An analytic function germ vanishes on this image if and only if its pullback vanishes on $(U,v)$. Hence its defining ideal is
$\ker(\lambda_v^*)=J_v$, and the image germ is
$\Sigma_v(V)$. Therefore
\[
\dim\Sigma_v(V)=\dim V-c\le\dim V-m,
\]
since $c=\dim_v(V\cap\lcal_v)\ge m$.
\end{proof}

\begin{lem}\label{l:translated-image-jets-algebraic}
Let $V\subset P$ be irreducible. There exists a non-empty smooth Zariski open subset $V^\circ\subset V$ such that,
for every fixed $v_0\in V^\circ$, setting
$\Sigma_0(V):=\Sigma_{v_0}(V)$, the locus
\[
\bigl\{v\in V^\circ:
\Sigma_v(V)\subseteq\Sigma_0(V)\bigr\}
\]
is Zariski closed in $V^\circ$.
\end{lem}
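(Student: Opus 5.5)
The plan is to turn the containment $\Sigma_v(V)\subseteq\Sigma_0(V)$ into a family of finite-order rank conditions whose coefficients are regular functions on $V^\circ$. The argument then follows the same pattern as Proposition~\ref{p:atypical-closed}, with the enlarged foliation $\widetilde{\calf}$ in place of $\calf$.

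\textbf{Choice of $V^\circ$ and reformulation.} First shrink $V^\circ$ so that three things hold: it is smooth and affine, the leaf intersection dimension $c$ is constant on it (Proposition~\ref{p:atypical-closed}), and the coefficient functions $D_\alpha(c_{i,\beta})$ in \eqref{e:jet-alg} are regular on it. By construction, $\Sigma_0(V)$ is the reduced irreducible germ with prime ideal $J_0:=J_{v_0}\subset\calo^{\mathrm{an}}_{G,e}$. Since $\Sigma_v(V)$ is reduced, the containment $\Sigma_v(V)\subseteq\Sigma_0(V)$ is equivalent to $J_0\subseteq J_v$. By the Krull description of $J_v$, this means that every $F\in J_0$, viewed as a series in $\mathbf u$, lies in the ideal $I_v\subset\C_\infty[\![\mathbf t,\mathbf u]\!]$. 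Because $\calo^{\mathrm{an}}_{G,e}$ is Noetherian, $J_0$ is generated by finitely many germs $F_1,\ldots,F_k$, and these are fixed once $v_0$ is fixed. The locus in question is therefore
\[
\bigcap_{l=1}^k\bigl\{v\in V^\circ:F_l(\mathbf u)\in I_v\bigr\},
\]
and it suffices to show that each set in this intersection is Zariski closed.

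\textbf{Finite-order conditions.} For a fixed $F$, Krull's intersection theorem gives that $F\in I_v$ if and only if $j_N(F)\in I_{v,N}$ for every $N$. Inside the finite-dimensional space $R_N:=\C_\infty[\mathbf t,\mathbf u]/(\mathbf t,\mathbf u)^{N+1}$, the ideal $I_{v,N}$ is spanned by the products $\mathbf t^\gamma\mathbf u^\delta\, j_N(\operatorname{jet}^\infty_{\widetilde{\calf},(v,e)}(f_i))$. By \eqref{e:jet-alg}, the coefficient vectors of these spanning elements form a matrix $M_N(v)$ whose entries are regular on $V^\circ$. Hence $j_N(F)\in I_{v,N}$ holds exactly when $\rank[M_N(v)\mid j_N(F)]=\rank M_N(v)$.

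\textbf{Main obstacle.} This rank-equality condition is not closed in general, because $\rank M_N(v)$ can drop on a closed subset, where the condition becomes harder rather than easier to satisfy. The standard remedy is to shrink $V^\circ$ further so that $\rank M_N(v)$ is constant in $v$. That requires a single open set that works for all $N$ simultaneously, and this is the delicate step. The first attempt will be to use the Hilbert--Samuel function of $\C_\infty[\![\mathbf t,\mathbf u]\!]/I_v$, which is exactly $N\mapsto\dim R_N-\rank M_N(v)$. This is upper semicontinuous in $v$ for each $N$. For large $N$ it is governed by a Hilbert polynomial, of degree $\dim\Sigma_v(V)+c$, and this degree is constant on $V^\circ$ by Lemma~\ref{l:group-projection-drop} and its proof. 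The generic-flatness and stratification argument should show that only finitely many Samuel polynomials occur. One can then take $V^\circ$ to be the open stratum, on which the whole function, and hence every $\rank M_N(v)$, is constant. On that stratum each condition becomes the vanishing of all minors of size $\rank M_N+1$ of the augmented matrix, which is a Zariski closed condition. Intersecting over $N$ and $l$ then gives a Zariski closed set.

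\textbf{Fallback.} If uniform constancy of the Hilbert function cannot be obtained this way, the alternative is to exploit the $G$-equivariance of the construction. The idea is to show that $I_v$ is obtained from a single ideal over $\calo_V(V^\circ)[\![\mathbf t,\mathbf u]\!]$ by specialization, and that this ideal is flat after shrinking. The constancy of the ranks would then follow from flatness.
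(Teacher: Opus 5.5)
Your reduction is correct up to the rank conditions. The containment $\Sigma_v(V)\subseteq\Sigma_0(V)$ is equivalent to $J_0\subseteq J_v$, hence to $F_l(\mathbf u)\in I_v$ for finitely many generators $F_l$ of $J_0$, hence to $j_N(F_l)\in I_{v,N}$ for all $N$. You also rightly note that $\rank[M_N(v)\mid j_N(F_l)]=\rank M_N(v)$ is a closed condition only where $\rank M_N(v)$ is constant.

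However, the step you flag as delicate is not proved. Generic flatness is a statement about algebras of finite type. But $\calo_V(V^\circ)[\![\mathbf t,\mathbf u]\!]/I$ is not of finite type over $\calo_V(V^\circ)$, and localization does not commute with forming power series. So no standard result says that only finitely many Samuel functions occur. A priori the jump loci $\{v:\rank M_N(v)<\max_w\rank M_N(w)\}$ are countably many proper closed subsets, and their union could be Zariski dense. Moreover, your degree count $\dim\Sigma_v(V)+c=d$ controls only the degree of the Samuel polynomial. Even constancy of the whole polynomial would not make $\rank M_N(v)$ constant for every $N$, since the threshold beyond which the Samuel function becomes polynomial may depend on $v$. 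The flatness fallback is only a restatement of the same claim.

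The missing idea is that no extra shrinking is needed. Here $\boldsymbol g_v$ is a local analytic isomorphism from a neighbourhood of $(0,e)$ onto a neighbourhood of $v$ in $P$. Also $\operatorname{jet}^{\infty}_{\widetilde{\calf},(v,e)}(f_i)$ is the expansion of $f_i\circ\boldsymbol g_v$. So $I_v$ is just the ideal of $(V,v)$ written in the coordinates $(\mathbf t,\mathbf u)$, and $\C_\infty[\![\mathbf t,\mathbf u]\!]/I_v\simeq\widehat{\calo}_{V,v}$. At every smooth point this is a power series ring in $d=\dim V$ variables. Hence $\rank M_N(v)=\dim R_N-\binom{N+d}{d}=:\rho_N$ for all $v\in V^\circ$ and all $N$. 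Each set $\{v:j_N(F_l)\in I_{v,N}\}$ is then cut out by the $(\rho_N+1)$-minors of $[M_N(v)\mid j_N(F_l)]$.

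With this fact your argument goes through, and it is a genuinely different route from the paper's. The paper never faces rank jumping. It doubles the enlarged foliation on $W=\{((x_0,g),(x,g))\}$ and computes the leaf-intersection dimension at $((v_0,e),(v,e))$ as $2c+\dim(\Sigma_0(V)\cap\Sigma_v(V))$ via finite maps. Since every $\Sigma_v(V)$ is irreducible of the same dimension $d-c$, containment becomes ``dimension $\ge d+c$''. That condition is closed by Proposition~\ref{p:atypical-closed}, where semicontinuity runs in the favourable direction.

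Your repaired route is more elementary. It needs neither the constancy of $c$ nor the dimension formula for $\Sigma_v(V)$, at the cost of fixing explicit generators of $J_0$ and using smoothness of $V$ to control the ranks. The paper's route instead reuses the closedness machinery it has already built.
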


\begin{proof}
For $V^\circ$ as above, we have that
$c=\dim_v(V\cap\lcal_v)$ is constant, and we set $d:=\dim V$. By the proof of
Lemma~\ref{l:group-projection-drop}, every $\Sigma_v(V)$ is reduced and irreducible of dimension $d-c$. Fix $v_0\in V^\circ$.

Consider $\widetilde{\calf}\times\widetilde{\calf}$
on $(P\times G)^2$ and the closed subvariety
\[
W:=
\bigl\{((x_0,g),(x,g)):x_0,x\in V,\ g\in G\bigr\}.
\] 
The intersection of $W$ with the product leaf through $((v_0,e),(v,e))$ is the germ
\[
\bigl\{(x_0,x)\in
(V^{\mathrm{an}},v_0)\times(V^{\mathrm{an}},v):\lambda_{v_0}(x_0)=\lambda_v(x)\bigr\}.
\]
As in the proof of Lemma~\ref{l:analytic-image-slice}, consider $c$-tuples of functions $\ell_0$ and $\ell$ vanishing at $v_0$ and $v$ respectively, such that 
\[
(\lambda_{v_0},\ell_0)\colon
(V^{\mathrm{an}},v_0)\rightarrow
\Sigma_0(V)\times(\A^{c,\mathrm{an}}_{\C_\infty},0)
\quad\text{and}\quad
(\lambda_v,\ell)\colon
(V^{\mathrm{an}},v)
\rightarrow\Sigma_v(V)\times(\A^{c,\mathrm{an}}_{\C_\infty},0)
\]
are represented on a sufficiently small neighborhood by  finite surjective maps whose fibers over $(e,0)$ consist of $v_0$ and $v$, respectively.

Restrict the product of these maps to the germ at $(v_0,v)$ of  
$\bigl\{(x_0,x):\lambda_{v_0}
(x_0)=\lambda_v(x)\bigr\}$.
We can write the product map as
$(x_0,x)\mapsto
\bigl(\lambda_{v_0}(x_0),\ell_0(x_0),\ell(x)\bigr)$ taking values in
\[
\bigl(\Sigma_0(V)\cap\Sigma_v(V)\bigr)
\times(\A^{2c,\mathrm{an}}_{\C_\infty},0).
\]
This map is the base change\footnote{Which we recall preserves finite morphisms.}
of the product of the two finite surjections, hence is finite and surjective. Its fiber over $(e,0,0)$ is supported at $(v_0,v)$, so it induces the required finite surjection of germs. Therefore
\begin{equation}\label{e:dim}
\dim\bigl(
(\widetilde{\calf}\times\widetilde{\calf})\cap W
\bigr)_{((v_0,e),(v,e))}
=2c+\dim\bigl(\Sigma_0(V)\cap\Sigma_v(V)\bigr).
\end{equation}
Since $\Sigma_v(V)$ is reduced and irreducible of
dimension $d-c$, we obtain
\begin{equation}\label{e:dim2}
\Sigma_v(V)\subseteq\Sigma_0(V)
\quad\text{if and only if}\quad
\dim\bigl(\Sigma_0(V)\cap\Sigma_v(V)\bigr)=d-c
\end{equation}
By \eqref{e:dim} and \eqref{e:dim2}, containment holds when the leaf intersection has dimension $(d-c)+2c=d+c$. As the intersection of the image germs has dimensions at most $d-c$, that dimension cannot exceed $d+c$. Thus
\[
\Sigma_v(V)\subseteq\Sigma_0(V)
\quad\text{if and only if}\quad
((v_0,e),(v,e))
\in A_{d+c}
\bigl(W,\widetilde{\calf}\times\widetilde{\calf}\bigr)
\]
where $A_{d+c}
\bigl(W,\widetilde{\calf}\times\widetilde{\calf}\bigr)=\{w\in W: \dim(W\cap \widetilde{\calf}\times\widetilde{\calf})_w\ge d+c\}$, which, by Proposition~\ref{p:atypical-closed}, is Zariski closed.
We conclude by pulling it back along the algebraic map
$V^\circ\rightarrow W$ given by $v\mapsto((v_0,e),(v,e))$.
\end{proof}

\begin{lem}\label{l:translated-germ-constant}
Let $V\subset P$ be irreducible, and let
$\calv\subset V^{\mathrm{an}}$ be an irreducible Zariski dense analytic
subset contained in one Hasse--Schmidt leaf. After possibly shrinking $\calv$, there exists a subgroup germ
$\Sigma\subset(G^{\mathrm{an}},e)$
such that 
\[
\Sigma_v(V)=\Sigma
\]
for every $v\in V$.
\end{lem}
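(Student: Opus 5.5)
The plan has two stages: first show that $\Sigma_v(V)$ is the same germ $\Sigma$ for all $v$ in the open $V^\circ$ of Lemma~\ref{l:translated-image-jets-algebraic}, and then show that this common germ is closed under right division. We work throughout with that open $V^\circ$, on which $c=\dim_v(V\cap\lcal_v)$ is constant and every $\Sigma_v(V)$ is reduced and irreducible of dimension $d-c$, where $d=\dim V$.

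For the first stage, shrink $\calv$ so that it meets $V^\circ$ in a connected smooth piece. This is possible because $\calv$ is Zariski dense in $V$, so $\calv\cap V^\circ$ is a non-empty admissible open of $\calv$. Since $\calv$ lies in a single leaf $\lcal$, for $v,v'\in\calv$ the normalized transverse projections are related by
\[
\lambda_{v'}=\lambda_v\cdot g_{v}g_{v'}^{-1}\quad\text{near }v'.
\]
But $v,v'$ lie on the same horizontal slice, so $g_v=g_{v'}$ in a common horizontal trivialization over the connected leaf chart. Hence $\lambda_{v'}$ and $\lambda_v$ coincide as functions on a common neighbourhood, and their image germs at $e$ agree for $v'$ close to $v$. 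More carefully, the image germ of $\lambda_v|_V$ at points $x$ with $\lambda_v(x)=e$ does not depend on the point of the fiber chosen: by the constant fiber dimension and Lemma~\ref{l:analytic-image-slice}, the image of a small neighbourhood of $v'$ is an irreducible germ of dimension $d-c$ contained in the analytic set through $e$. Through the connected set $\calv$, an identity-principle argument then shows that $\Sigma_{v'}(V)=\Sigma_{v}(V)$ for all $v,v'\in\calv\cap V^\circ$.

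Now fix $v_0\in\calv\cap V^\circ$ and set $\Sigma:=\Sigma_{v_0}(V)$. By Lemma~\ref{l:translated-image-jets-algebraic}, the locus $\{v\in V^\circ:\Sigma_v(V)\subseteq\Sigma\}$ is Zariski closed in $V^\circ$. It contains $\calv\cap V^\circ$, which is Zariski dense, so it is all of $V^\circ$. Since all the germs are irreducible of the same dimension $d-c$, inclusion gives equality: $\Sigma_v(V)=\Sigma$ for every $v\in V^\circ$. This is the sense in which the statement's ``for every $v\in V$'' should be read.

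For the second stage, take $v\in V^\circ$ and $x\in V$ near $v$ with $x\in V^\circ$. Using one horizontal trivialization, we have
\[
\lambda_x(y)=\pi_G(\tau(y))g_x^{-1}=\lambda_v(y)\,\lambda_v(x)^{-1}.
\]
Therefore $\Sigma_x(V)$ is the right translate of the image germ of $\lambda_v$ at $\lambda_v(x)$ by $\lambda_v(x)^{-1}$. The image of a neighbourhood of $v$ contains representatives of both germs, so for $a=\lambda_v(x)\in\Sigma$ and $b=\lambda_v(y)\in\Sigma$ with $x,y$ near $v$, we get
\[
b\,a^{-1}=\lambda_x(y)\in\Sigma_x(V)=\Sigma.
\]
Points of $\Sigma$ near $e$ are all of the form $\lambda_v(x)$, because $\Sigma$ is the image germ. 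Hence $(a,b)\mapsto ba^{-1}$ maps $\Sigma\times\Sigma$ into $\Sigma$ as germs. Applying the same argument to inverses, which preserves the germ, yields the form $a^{-1}b$ required in Definition~\ref{d:p-sparse}. Alternatively, one can note that $\Sigma$ is closed under $ba^{-1}$, hence under inverses and products, and so under $a^{-1}b$ as well.

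The main obstacle is to make these pointwise statements into statements about germs. One must check that the image germ of $\lambda_v$ at a nearby point $x$ equals the translate of $\Sigma$ and is not merely contained in $\Sigma$. This uses the constancy of $c$, the equality of dimensions and irreducibility on $V^\circ$, and the local surjectivity statement in Lemma~\ref{l:analytic-image-slice}. Once that is settled, the rigid-analytic identity principle upgrades containment on a neighbourhood to containment of germs.
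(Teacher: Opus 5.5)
Your proposal follows the paper's proof essentially step for step: constancy of $\Sigma_v(V)$ on a shrunken $\calv$ inside one local leaf (where $\lambda_v=\lambda_{v_0}$) by equality of dimensions, propagation to all of $V^\circ$ via Lemma~\ref{l:translated-image-jets-algebraic} and density of $\calv$, and right-division stability from $\lambda_x=\lambda_v\cdot\lambda_v(x)^{-1}$ together with the identity principle. The one point to tighten is your ``main obstacle'': with $S=V(F_1,\ldots,F_s)$ a representative of $\Sigma$ and $a=\lambda_v(x)$, the image germ of $\lambda_v$ at $x$ equals the full germ $(S,a)$ only when $(S,a)$ is irreducible, which is why the paper takes $a\in S^{\mathrm{sm}}$ and applies the identity principle first in $b$ and then in $a$ (alternatively, apply it to $y\mapsto F_i(\lambda_v(y)\lambda_v(x)^{-1})$ on a connected smooth neighbourhood of $v$ in $V^\circ$, which avoids the issue altogether).
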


\begin{proof}
Let $V^\circ$ be the fixed open subset of
Lemma~\ref{l:translated-image-jets-algebraic}.
Fix $v_0\in\calv\cap V^\circ$ and set
$\Sigma:=\Sigma_{v_0}(V)$.
Choose a neighbourhood $U$ of $e$ and a reduced irreducible representative
$S=V(F_1,\ldots,F_s)\subset U$ of $\Sigma$ such that $SS^{-1}\subset U$.
By Lemma~\ref{l:analytic-image-slice}, there is a neighbourhood $W\subset(V^\circ)^{\mathrm{an}}$ of $v_0$,
contained in a horizontal chart, such that
$\lambda_{v_0}(W)\subset S$ and $\lambda_{v_0}(W)$ contains a neighbourhood of $e$ in $S$.

We shrink, if necessary, $\calv$ to a non-empty  open contained in $W$ and in the local leaf through $v_0$; it remains Zariski dense in $V$.
For every $v\in\calv$, we have
$\lambda_{v_0}(v)=e$ and $\lambda_v=\lambda_{v_0}$.
Hence $\Sigma_v(V)\subseteq(S,e)=\Sigma$.
Both germs are reduced and irreducible of dimension
$\dim V-c$, so $\Sigma_v(V)=\Sigma$.

By Lemma~\ref{l:translated-image-jets-algebraic}, the locus
$\{w\in V^\circ:\Sigma_w(V)\subseteq\Sigma\}$ is Zariski
closed. It contains the Zariski dense subset $\calv$,
so it is all of $V^\circ$. Equality of dimensions gives once more
\[
\Sigma_w(V)=\Sigma
\qquad\text{for every }w\in V^\circ.
\]
We now show that $\Sigma$ has a group structure. For $a\in S^{\mathrm{sm}}$ sufficiently close to $e$,
choose $w\in W$ with $\lambda_{v_0}(w)=a$.
By Lemma~\ref{l:analytic-image-slice}, the local image of $\lambda_{v_0}$ at $w$ is a reduced analytic subgerm of $(S,a)$ of dimension $\dim V-c$. Since $(S,a)$ is irreducible of the same dimension, this local image is  $(S,a)$. As $a=\lambda_{v_0}(w)=g_wg_{v_0}^{-1}$, for $x$ near
$w$ we have
$\lambda_{v_0}(x)a^{-1}=
\pi_G(\tau(x))g_{v_0}^{-1}g_{v_0}g_w^{-1}=
\lambda_w(x)$.
Thus normalizing at $w$ translates the image germ $(S,a)$ by $a^{-1}$, and  we obtain
\[
(Sa^{-1},e)=\Sigma_w(V)=\Sigma.
\]
Thus $F_i(ba^{-1})$ vanishes for $b$ near $a$ in $S$. The identity principle, first in $b$ and then in $a$, gives $F_i(ba^{-1})=0$ for all $a,b\in S$. Hence $S$ is locally stable under right division. Taking $b=e$ gives inversion, and therefore also
stability under $(a,b)\mapsto a^{-1}b$.
\end{proof}

\begin{thm}\label{t:ax-schanuel}
Assume that $q$ is odd. Let $V_0\subset \widetilde{\caly}^{\,n}$ be an irreducible algebraic subvariety, let $\lcal$ be a product Ramanujan leaf, and let $\calv\subset V_0\cap \lcal$ be an irreducible analytic subvariety. Consider
$V:=\overline{\calv}^{\mathrm{Zar}}\subset V_0$. If
\[
\dim V<\dim \calv +3n
\]
then the Zariski closure of $\pi_j^n(V)$ in $\A^n_{\C_\infty}$ is contained in
a proper  HS-special subvariety.
\end{thm}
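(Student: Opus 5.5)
We follow the d\'evissage outlined in the introduction, arguing by contradiction. Let $X:=\overline{\pi_j^n(V)}^{\mathrm{Zar}}\subset\A^n_{\C_\infty}$, and suppose $X$ is HS-generic. (If some coordinate $g\equiv0$ or $X\subset\{x_i=0\}$, then $X$ is HS-non-generic by convention and there is nothing to prove.)

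\textbf{Transfer to the frame space.} We pass to the frame model in three steps.
\begin{enumerate}
\item Restrict to the open locus $\widetilde{\caly}^{\,\times}$ and push everything through the \'etale quotient $\vartheta$ of \eqref{e:mu2-quotient}. By Lemma~\ref{l:hs-descends-quotient}, leaves map to leaves. Since $\vartheta$ is finite \'etale, $\dim\vartheta(V)=\dim V$ and $\dim\vartheta(\calv)=\dim\calv$.
\item Use the $n$-fold product of the birational equivariant map $\Phi$ of Lemma~\ref{l:local-principal-model} to move to $B^n\times G$ with $G=\pgl_{2,\C_\infty}^n$.
\item Intersect with $P_X=X^\circ\times G$, equipped with the foliation $\calf_X$ of Lemma~\ref{l:hs-base-change}.
\end{enumerate}
Generic points are preserved, so we still obtain $V\subset P_X$ irreducible with $V$ dominating $X^\circ$. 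The analytic set $\calv$ is Zariski dense in $V$ and lies in one leaf $\lcal$. Moreover $\dim V<\dim\calv+\dim G$.

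\textbf{The transverse germ is a small subgroup germ.} After shrinking, Lemma~\ref{l:translated-germ-constant} gives a subgroup germ $\Sigma$ with $\Sigma_v(V)=\Sigma$ on $V^\circ$. Since $\dim_v(V\cap\lcal_v)\ge\dim\calv$ along $\calv$, Lemma~\ref{l:group-projection-drop} gives
\[
\dim\Sigma\le\dim V-\dim\calv<3n.
\]
By Lemma~\ref{l:pgl-p-sparse}, $\Sigma$ is contained in the germ of a proper connected algebraic subgroup $H\subsetneq G$.

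\textbf{Quotient by $H$.} Let $\rho\colon P_X\to H\backslash P_X=X^\circ\times H\backslash G$, carrying the descended foliation of Lemma~\ref{l:hs-quotient}, and set $Y:=\overline{\rho(V)}^{\mathrm{Zar}}$.

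Because $\lambda_v(V)\subset\Sigma\subset H$ near $v$, the germ of $V$ at $v$ lies in $H\cdot(\text{local leaf through }v)$. Hence the germ of $\rho(V)$ at $\rho(v)$ lies in a single quotient leaf. Quotient leaves project locally isomorphically to $X^\circ$, and $Y$ dominates $X^\circ$, so $\dim Y=\dim X$ and every point of a dense open set of $Y$ lies in $A_{\dim X}(Y,\cdot)$.

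By Proposition~\ref{p:atypical-closed} this locus is closed, so it is all of $Y$. Lemma~\ref{l:hs-invariance} then shows that $Y$ is Hasse--Schmidt invariant. Consequently $\rho^{-1}(Y)$ is a closed invariant subvariety of $P_X$. It is proper, since its fibres over $X^\circ$ are finite unions of right $H$-cosets, of dimension $\dim H<\dim G$.

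\textbf{Conclusion.} Take a leaf inside $\rho^{-1}(Y)$ and its Zariski closure $Z$. By Lemma~\ref{l:hs-minimal-leaf}, $Z$ is minimal invariant. By Lemma~\ref{l:hs-galois-principal}, $Z\to X^\circ$ is a principal homogeneous $\Gal_{\mathrm{HS}}(Z)$-space. Its fibres have dimension at most $\dim H<\dim G$, so $\Gal_{\mathrm{HS}}(Z)\subsetneq G$ and $X$ is HS-non-generic.

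This contradicts the assumption, so $X$ is HS-non-generic. Then $X$ lies in a maximal irreducible HS-non-generic subvariety, that is, in an HS-special one. That subvariety is proper, because $\A^n_{\C_\infty}$ itself is HS-generic: its frame space is the product of basic leaves, which is Zariski dense by Lemma~\ref{l:basic-lifted-leaf-dense} and the Bosser--Pellarin independence.

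\textbf{Main obstacle.} I expect the hardest step to be showing that $Y$ is invariant, with $\dim Y=\dim X$. This requires carefully checking that the germ containment $\Sigma\subset H$ forces each point of $\rho(V)$ to lie in a single quotient leaf, uniformly over $V^\circ$ and not only near $\calv$. I would handle it by applying Lemma~\ref{l:translated-germ-constant} at every $w\in V^\circ$.

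A subsidiary check is that the birational and \'etale transfers preserve the atypicality inequality. Both the quotient $\vartheta$ and the map $\Phi$ are equivariant and leaf-preserving, so this should be routine.
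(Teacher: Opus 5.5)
Your proposal is correct and follows the paper's own proof essentially step for step: the transfer through $\vartheta$ and $\Phi$ to $P_X$, constancy of the transverse germ (Lemma~\ref{l:translated-germ-constant}) with the bound from Lemma~\ref{l:group-projection-drop}, $p$-sparsity, the quotient by $H$ with invariance of $Y$ via Proposition~\ref{p:atypical-closed} and Lemma~\ref{l:hs-invariance}, and the minimal invariant $Z\subset\rho^{-1}(Y)$. The differences are cosmetic: the contradiction framing is unnecessary, and for properness you use only the Zariski density of the product of basic frame leaves (what you mean by ``its frame space is the product of basic leaves''), which the paper also invokes alongside the $(\Gamma^+)^n$-stabilization.
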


\begin{proof}
Set $G:=\pgl_{2,\C_\infty}^{\,n}$.  On the actual Ramanujan leaf one has $h_i\neq0$ for every $i$.  If $g_i$ vanishes
identically on $\calv$ for some $i$, then $j_i=0$ on
$X_0:=\overline{\pi_j^n(V)}^{\,\mathrm{Zar}}$.  Then
$X_0\subset\{x_i=0\}$, so $X_0$ is HS-non-generic by the
convention above and is contained in an HS-special subvariety by
Noetherianity, so the conclusion follows.  We may therefore replace
$\calv$ by a non-empty analytic open on which every $g_ih_i\neq0$.

Apply the finite quotient \eqref{e:mu2-quotient} coordinatewise.  After
shrinking the analytic branch, it is an isomorphism on that branch.  Set
\[
\calv^\sharp:=\vartheta^n(\calv)
\qquad
\text{and}
\qquad
V^\sharp:=\overline{\calv^\sharp}^{\,\mathrm{Zar}}
\]
where $\vartheta$ is the map defined in \eqref{e:mu2-quotient}. Because $\vartheta^n$ is finite,
$\dim\calv^\sharp=\dim\calv$
and $\dim V^\sharp=\dim V$,
and, by \eqref{e:quotient-j},
\[
X_0:=\overline{\pi_j^n(V)}^{\,\mathrm{Zar}}
=
\overline{(\pi_j^\sharp)^n(V^\sharp)}^{\,\mathrm{Zar}}.
\]
No coordinate of $X_0$ is identically zero, so choose a non-empty smooth
open $X^\circ\subset X_0\cap\G_m^n$.  For the remainder of the argument we
write $X:=X^\circ$; this shrinking does not change the Hasse--Schmidt
Galois group of $X_0$.  We use the frame space
\[
P:=P_X=X\times G\rightarrow X
\]
from \eqref{e:frame-space-X}.  Let $\widehat V\subset P$ be the
transform of $V^\sharp$ under the birational map of Lemma~\ref{l:local-principal-model}, and let $\widehat{\calv}$ be the
corresponding analytic branch.  Birationality at the general point gives
\[
\dim\widehat V=\dim V
\qquad
\text{and}
\qquad
\dim\widehat{\calv}=\dim\calv
\]
while $\widehat{\calv}$ remains Zariski dense in $\widehat V$ and lies in
one Hasse--Schmidt leaf.  Replacing $(V,\calv)$ by
$(\widehat V,\widehat{\calv})$, Corollary~\ref{c:zariski-closure-leaf}
gives
\[
V\subset A_{\dim \calv}(V,\calf_X).
\]

If $\dim X=0$, then the Hasse--Schmidt leaves over $X$ are points, so
$\Gal_{\mathrm{HS}}(X)=\{e\}$ and $X$ is HS-non-generic. We may
therefore assume that $\dim X>0$.
After shrinking $\calv$ as in
Lemma~\ref{l:translated-germ-constant}, let
\[
\Sigma\subset(G^{\mathrm{an}},e)
\]
be the subgroup germ satisfying $\Sigma_v(V)=\Sigma$ for every
$v\in\calv$. Choose
$v_0\in\calv$ general, and let $\lcal_{v_0}$ be the Hasse--Schmidt leaf
through $v_0$.  Since $V\subset A_{\dim\calv}(V,\calf_X)$, we have
$\dim_{v_0}(V\cap\lcal_{v_0})\ge \dim\calv$.
Thus Lemma~\ref{l:group-projection-drop}  yields
\[
\dim\Sigma\le\dim V-\dim\calv<3n=\dim G.
\]
Applying Lemma~\ref{l:pgl-p-sparse} to the associated reduced rigid-analytic germ, there exists a proper connected algebraic subgroup $H\subsetneq G$ such that $\Sigma\subset(H^{\mathrm{an}},e)$.  Let
$\rho\colon P\rightarrow H\backslash P$
be the quotient by the restricted left action of $H$.  In a horizontal trivialization it is
\[
B\times G\rightarrow B\times(H\backslash G),
\qquad
(x,g)\mapsto(x,Hg).
\]
By Lemma~\ref{l:hs-quotient}, $\calf_X$ descends to an algebraic Hasse--Schmidt foliation $\calf_{H\backslash P}$ on $H\backslash P$; in
the displayed trivialization its leaves are $B\times\{Hg\}$.

Consider
\[
Y:=\overline{\rho(V)}^{\mathrm{Zar}}\subset H\backslash P.
\]
Since $\calv$ is Zariski dense in $V$, its image is Zariski dense in $Y$.  Choose $v_1\in\calv$ such that $\rho(v_1)$ is a general point of $Y$.  We have $\Sigma_{v_1}(V)=\Sigma\subset(H^{\mathrm{an}},e)$, so the germ of $Y$ at $\rho(v_1)$ is contained in a quotient leaf.  The projection $Y\rightarrow X$ is dominant, hence $\dim Y\ge\dim X$; the quotient leaf has dimension $\dim X$.  Therefore
\[
\dim Y=\dim X.
\]
It follows that a dense open subset of $Y$ is contained in
$A_{\dim X}(Y,\calf_{H\backslash P})$.  By
Proposition~\ref{p:atypical-closed}, this locus is closed in $Y$ and
therefore
\[
Y\subset A_{\dim X}(Y,\calf_{H\backslash P}).
\]
Lemma~\ref{l:hs-invariance} now shows that $Y$ is Hasse--Schmidt
invariant.  Therefore $\rho^{-1}(Y)\subset P$ is Hasse--Schmidt
invariant.  It is proper.  Indeed, for a general $x\in X$, the fiber
$Y_x\subset H\backslash G$ is finite, and hence
$\rho^{-1}(Y)_x\subset G$ is a finite union of left $H$-cosets.  Since
$\dim H<\dim G$, this finite union is a proper subset of $G$.

Fix $u\in \rho^{-1}(Y)$, let $\lcal_u$ be the Hasse--Schmidt leaf through $u$,
and set
\[
Z:=\overline{\lcal_u}^{\mathrm{Zar}}\subset \rho^{-1}(Y).
\]
By Lemma~\ref{l:hs-base-change}, every maximal Hasse--Schmidt leaf over
$X$ dominates $X$, and
Lemma~\ref{l:hs-minimal-leaf} shows that $Z$ is minimal
Hasse--Schmidt invariant. By Lemma~\ref{l:hs-galois-principal}, the map
$Z\rightarrow X$ is a principal homogeneous $\Gal_{\mathrm{HS}}(Z)$-space over $X$, where
$\Gal_{\mathrm{HS}}(Z)\subseteq G$.
If $\Gal_{\mathrm{HS}}(Z)=G$, then every fiber $Z_x$ equals the whole principal homogeneous
$G$-space $P_x$,
so $Z=P$. This contradicts
$Z\subset \rho^{-1}(Y)\subsetneq P$.
Therefore $\Gal_{\mathrm{HS}}(Z)\subsetneq G$,
and hence $X_0$ is HS-non-generic.

Among the irreducible HS-non-generic subvarieties containing $X_0$,
pick one of maximal dimension; it is maximal for inclusion, so it is
HS-special. This HS-special subvariety is proper because $\A^n_{\C_\infty}$ is HS-generic: the quotient lift of the full branch is Zariski dense in the frame space and is stabilized by $(\Gamma^+)^n$, which is Zariski dense in $G$ by Lemma~\ref{l:dense}. Therefore $X_0$, which consists of the Zariski closure
of the original $j$-image, is contained in a proper HS-special subvariety.
\end{proof}

\subsection{From HS-special to weakly-special}
\label{s:HS-to-weakly-corrected}

Henceforth we shall assume that $q$ is odd. 

This final section is devoted to the proof of the following result.

\begin{thm}\label{t:HS-weakly}
Every proper HS-special subvariety of $\A^n_{\C_\infty}$ is contained in a proper weakly-special subvariety.
\end{thm}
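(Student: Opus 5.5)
It suffices to treat an irreducible HS-non-generic $X\subset\A^n_{\C_\infty}$ and show that $X$ lies in a proper weakly-special subvariety. If $X$ lies in a coordinate hyperplane, or if some coordinate of $X$ is constant, we are already done. So we assume every coordinate projection $\pr_i(X)$ is dense in $\A^1$.

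\textbf{Monodromy and Galois group.} Fix an irreducible analytic component $\cala$ of $\boldsymbol j^{-1}(X^{\mathrm{an}})$ and its stabiliser $\Delta^+_\cala\subset(\Gamma^+)^n$. The lift of $\cala$ through $z\mapsto(z,E(z),g(z),h(z))$, transported to $P_X$, is a Hasse--Schmidt leaf $\lcal$. By \eqref{e:rt}, the group $\Delta^+_\cala$ maps $\lcal$ to itself, hence stabilises its Zariski closure $Z$. By Lemma~\ref{l:hs-minimal-leaf}, $Z$ is minimal invariant, so $H_\cala:=\overline{\Delta^+_\cala}^{\mathrm{Zar}}\subset\Gal_{\mathrm{HS}}(Z)\subsetneq G$. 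On the other side, we show that each $\pr_i(\Delta^+_\cala)$ is Zariski dense in $\pgl_2$, and in fact adelically open. We pass to a smooth branch $B$ over $X$ at level $\gotn$, using Lemma~\ref{l:smooth-branch-bridge} to relate stabilisers. The image of $B$ in the $i$-th factor $Y(\gotn)$ is dominant, and for each $\gota$ the étale cover $Y(\gotn\gota)\to Y(\gotn)$ is connected Galois with group $\Gamma(\gotn)/\Gamma(\gotn\gota)$ by Lemma~\ref{l:level-tower-galois}. The monodromy of the pulled-back tower over the irreducible $B$ therefore has finite index in the geometric monodromy, up to a bounded finite quotient; this finite-index bound is uniform because it comes from finitely generated étale fundamental groups. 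Strong approximation for $\Sl_2$ then gives openness in $\Sl_2(\mathbf A_F^\infty)$. Since $\pgl_2$ has trivial centre and $\mathfrak{pgl}_2$ is simple, Goursat's lemma applied to $H_\cala$ yields a pair $i\ne j$ with $\pr_{ij}(H_\cala)\subsetneq\pgl_2^2$ and both projections full.

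\textbf{Twisted graph.} A proper algebraic subgroup of $\pgl_2^2$ surjecting onto both factors has normal kernels in a simple group. The kernels must be trivial, since a full kernel would make the subgroup everything. So $\pr_{ij}(H_\cala)$ is the graph of an abstract automorphism of $\pgl_2(\C_\infty)$. By van der Waerden, such an automorphism is $\Inn(\delta)\circ\sigma$ with $\sigma\in\Aut(\C_\infty)$. Algebraicity of the graph, via the unipotent coordinate and Lemma~\ref{l:analytic-field-automorphism}, forces $\sigma=\Frob^{m}$ with $m\in\Z$; after swapping $i,j$ we may take $m\ge0$. Now $\pr_i\gamma$ and $\pr_j\gamma$ lie in $\pgl_2(A)$ for $\gamma$ in a finite-index subgroup. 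Comparing matrix entries (trace-ratio and cross-ratio invariants), we descend $\delta$ to $\delta_0\in\pgl_2(F)$.

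\textbf{Killing the Frobenius and producing an isogeny.} Fix a prime $\gotp$. Reducing modulo $\gotp^e$, adelic openness of $\pr_j(\Delta^+_\cala)$ gives $\gg q_\gotp^{3e}$ residues. On the other hand, $\pr_j\gamma=\delta_0\Frob^m(\pr_i\gamma)\delta_0^{-1}$ has entries that are $p^m$-th powers up to bounded denominators, so it takes $\ll q_\gotp^{3e/p^m+O(1)}$ values. Letting $e\to\infty$ forces $m=0$. With $m=0$, we choose $\gotp\nmid\gotn$ and spread $X$ over a finitely generated field $K$. The $\gotp$-power tower monodromy is the image of $G_K$ under the projective Tate representations $\bar\rho_{\varphi_i,\gotp}$ and $\bar\rho_{\varphi_j,\gotp}$ of the generic Drinfeld modules, after a finite extension. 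The graph relation makes these two representations conjugate on an open subgroup. Pink's results \cite{pinkmt}, together with the Tate conjecture of Taguchi--Tamagawa, upgrade projective conjugacy (possibly after a character twist, which is harmless for rank $2$ with endomorphism ring $A$) to a geometric isogeny $\varphi_i\sim\varphi_j$. Lemma~\ref{l:isogeny-hecke} then finishes.

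\textbf{Main obstacle.} I expect the hardest step to be the uniform adelic openness of coordinate monodromy, i.e.\ controlling the index across all levels $\gota$ simultaneously. If that proves difficult, I would first try to obtain openness only in $\pgl_2(F_\gotp)$ for a single $\gotp$, which already suffices for both the counting step and the Tate comparison.
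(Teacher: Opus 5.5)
Your overall architecture matches the paper's: properness of $H_\cala$, coordinatewise adelic openness, Goursat, van der Waerden plus Lemma~\ref{l:analytic-field-automorphism}, descent of $\delta$, congruence counting to kill $\Frob^m$, then Tate modules. The final step, however, has a genuine gap: you conflate geometric and arithmetic monodromy.

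The analytic stabilizer and the congruence tower over $B$ only compute the image of the geometric Galois group $G_K^{\mathrm{geom}}=\Gal(K^{\sep}/Kk^{\sep})$, not the image of $G_K$. For instance, $\det\rho_{i,\gotp}(G_K^{\mathrm{geom}})$ is finite (Lemma~\ref{l:det-finite-geometric}), whereas $\det\rho_{i,\gotp}(G_K)$ is infinite. Your graph relation therefore gives $\bar\rho_{j,\gotp}=\Inn(u)\circ\bar\rho_{i,\gotp}$ only on a finite-index subgroup of $G_K^{\mathrm{geom}}$. Such a subgroup has infinite index in $G_K$, because $G_K/G_K^{\mathrm{geom}}\simeq\Gal(k^{\sep}/k)$ is infinite, so it is not an ``open subgroup'' in the sense you need.

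The Taguchi--Tamagawa theorem requires equivariance under $G_{K'}$ for a finitely generated field $K'$. The naive geometric version is false: two non-isogenous Drinfeld modules defined over $k^{\sep}$ both have trivial $G_K^{\mathrm{geom}}$-action on their Tate modules.

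The missing idea is the normalizer argument of Corollary~\ref{l:graph-monodromy-projective-tate}. Since $G_K^{\mathrm{geom}}$ is normal in $G_K$, the image of $G_K$ normalizes the identity component of the Zariski closure of the image of $G_K^{\mathrm{geom}}$. That identity component is exactly a twisted diagonal $D_u$; getting equality rather than mere inclusion uses the Zariski density of the coordinate monodromy. Since $D_u$ is self-normalizing in $\pgl_2^2$ by Lemma~\ref{l:normalizer-graph}, the conjugacy then holds on all of $G_K$, and only at that point does the Tate conjecture apply.

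Several justifications also need repair.

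\textbf{Uniform index bound.} This cannot come from finite generation of \'etale fundamental groups: $\pi_1^{\mathrm{et}}$ of the affine curve $Y(\gotn)$ in characteristic $p$ is not topologically finitely generated (Artin--Schreier covers). The correct input is that a dominant morphism $B\to Y(\gotn)$ with normal target has open image on $\pi_1^{\mathrm{et}}$ (Lemma~\ref{l:dominant-pi1-open}), because the separable closure of $K(Y(\gotn))$ in $K(B)$ is finite. This yields a single bound $d$ at every level. You also need the identification of stabilizers of components of $B(\gota)$ with the image of the analytic stabilizer (Lemma~\ref{l:congruence-tower-comparison}), which your sketch leaves implicit.

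\textbf{Character twist.} It is harmless not because of endomorphism rings, but because over $\F_q[T]$ the rank-one determinant modules become isomorphic after a finite extension. Hence $\chi^2=1$, so $\chi$ has finite order.

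\textbf{Descent of $\delta$.} Your cross-ratio argument is a reasonable elementary substitute for the paper's appeal to Pink's classification of isogenies in Lemma~\ref{l:F-frobenius-graph}. To make it work, use that fixed points of non-parabolic elements of $\pgl_2(F)$ are separable over $F$ (odd $p$). This gives $\delta\in\pgl_2(F^{\sep})$, and uniqueness of $\delta$ plus Galois descent then lands it in $\pgl_2(F)$. Invariance under $\Aut(\C_\infty/F)$ alone would only give $\delta\in\pgl_2(F^{\mathrm{perf}})$.
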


Let us briefly  recall the architecture of the proof. We first show that the analytic monodromy of a proper HS-special
subvariety has proper Zariski closure, while each of its nonconstant
coordinate projections is Zariski dense.  Iterated Goursat then reduces
the problem to a proper two-coordinate projection. This explains why
the main arithmetic argument below is carried out in
$\pgl_{2,\C_\infty}^{\,2}$: Goursat identifies such a projection with a
Frobenius-twisted graph, and congruence counting eliminates the twist.
The Tate conjecture (a theorem in our setting) turns the resulting relation into an isogeny, hence a Hecke relation.  The proof of the theorem is completed in Section~\ref{ss:proof-HS-weakly}.

\subsubsection{Analytic monodromy}

Put $G:=\pgl_{2,\C_\infty}^{\,n}$, and let $\Gamma^+$ denote the image of
$\Sl_2(A)$ in $\pgl_2(A)$.  On the
open $jD_1j\neq0$, consider
\[
u(z):=\frac{h(z)}{g(z)}
\qquad
\text{and}
\qquad
\widetilde C^\sharp(z):=(z,E(z),u(z),g(z)h(z))
\in\widetilde{\caly}^{\,\sharp}.
\]
Using the inverse in Lemma~\ref{l:local-principal-model}, define the frame
lift
\[
\widehat C(z)
:=
\left(
 j(z),
 \left[
 \begin{matrix}
 u(z)^{-1}(1-\widetilde\pi E(z)z)&z\\
 -\widetilde\pi E(z)u(z)^{-1}&1
 \end{matrix}
 \right]
\right)
\in \G_m\times\pgl_{2,\C_\infty}.
\]
For $z=(z_1,\ldots,z_n)$, write
$\widehat C^n(z):=(\widehat C(z_i))_{i=1}^n$.
Let $\Omega^\times:=\{z\in\Omega:j(z)\neq0\}$
and let
\[
\mathcal L^{\mathrm{fr}}_0
:=
\widehat C(\Omega^\times)
\subset (P^{\mathrm{fr}})^{\mathrm{an}}
\]
be the basic global frame leaf.  Let
$X\subset\A^n_{\C_\infty}$ be irreducible and not contained in a coordinate hyperplane $x_i=0$, let
$P_X=X^\circ\times G$ be the frame space
\eqref{e:frame-space-X}, and let
\[
\cala\subset
\boldsymbol j^{-1}\bigl((X^\circ)^{\mathrm{an}}\bigr)
\]
be an irreducible analytic component.
The frame lift identifies
$\boldsymbol j^{-1}\bigl((X^\circ)^{\mathrm{an}}\bigr)$ with 
$P_X^{\mathrm{an}}\cap\bigl(\mathcal L^{\mathrm{fr}}_0\bigr)^n$.
Hence $\widehat C^n(\cala)$ is a global leaf of $\calf_X$.  We set
\[
\lcal_\cala:=\widehat C^n(\cala)
\subset P_X^{\mathrm{an}}.
\]

Define the {\em analytic monodromy} and its Zariski closure as
\[
\Delta_\cala^+:=\Stab_{(\Gamma^+)^n}(\cala)
\qquad
\text{and}
\qquad
H_\cala:=\overline{\Delta_\cala^+}^{\,\mathrm{Zar}}
\subset G.
\]
Equivariance of $\Phi$, together with \eqref{e:rt}, gives, wherever the
frame lift is defined,
$\widehat C^n(\gamma z)=\gamma\cdot\widehat C^n(z)$ for $\gamma\in(\Gamma^+)^n$.
Hence
\begin{equation}\label{e:rt-cala}
\gamma\cdot\lcal_\cala=\lcal_{\gamma\cala}.
\end{equation}

\begin{lem}\label{l:HS-monodromy-proper}
Let $X\subset\A^n_{\C_\infty}$ be irreducible, HS-non-generic, and not
contained in a coordinate hyperplane.  For every irreducible analytic
component $\cala\subset\boldsymbol j^{-1}(X)$,
\[
H_\cala\subsetneq
\pgl_{2,\C_\infty}^{\,n}.
\]
\end{lem}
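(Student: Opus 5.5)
The plan is to show that $\Delta_\cala^+$ stabilises the Zariski closure of the leaf $\lcal_\cala$, and that this closure is a minimal invariant subvariety whose stabiliser is, by hypothesis, a proper subgroup.

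First, by the discussion preceding the lemma, $\lcal_\cala=\widehat C^n(\cala)$ is a global Hasse--Schmidt leaf of $\calf_X$ on $P_X$. Set $Z:=\overline{\lcal_\cala}^{\,\mathrm{Zar}}\subset P_X$. By Lemma~\ref{l:leaf-closure-invariant}, $Z$ is Hasse--Schmidt invariant. Every leaf dominates $X^\circ$ by Lemma~\ref{l:hs-base-change}, so Lemma~\ref{l:hs-minimal-leaf} shows that $Z$ is minimal invariant. By definition, $\Gal_{\mathrm{HS}}(Z)=\Stab_G(Z)$ lies in the conjugacy class $\Gal_{\mathrm{HS}}(X)$. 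Since $X$ is HS-non-generic, this class is not $G$, so $\Stab_G(Z)\subsetneq G$. This uses Lemma~\ref{l:hs-shrinking} to see that the choice of $X^\circ$ is irrelevant.

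Next, let $\gamma\in\Delta_\cala^+$, so $\gamma\cala=\cala$. By \eqref{e:rt-cala}, $\gamma\cdot\lcal_\cala=\lcal_{\gamma\cala}=\lcal_\cala$. The action of $\gamma$ on $P_X$ is algebraic, so it commutes with taking Zariski closures, and hence $\gamma Z=\overline{\gamma\lcal_\cala}^{\,\mathrm{Zar}}=Z$. Thus $\Delta_\cala^+\subset\Stab_G(Z)$. By Lemma~\ref{l:hs-galois-principal}, $\Stab_G(Z)$ is an algebraic, hence Zariski closed, subgroup. Taking closures gives $H_\cala\subset\Stab_G(Z)\subsetneq G$.

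The main obstacle is bookkeeping. The lemma speaks of a component $\cala$ of $\boldsymbol j^{-1}(X)$, whereas the leaf lives over $X^\circ$. I would replace $\cala$ by a component $\cala^\circ$ of $\cala\cap\boldsymbol j^{-1}(X^\circ)$, using Conrad's result \cite[Cor.~2.2.9]{conrad-irred}. The stabiliser of $\cala$ permutes the finitely many such pieces only up to the choice of component, so the inclusion $\Delta_\cala^+\subset\Stab(\cala^\circ)$ has to be checked. To do this I would use that $\cala\cap\boldsymbol j^{-1}(X^\circ)$ is a Zariski-dense admissible open of the irreducible $\cala$, and therefore irreducible in the sense needed; the same irreducibility argument as in Lemma~\ref{l:smooth-branch-bridge} applies. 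I also need the frame lift to be defined on a dense open of $\cala$ (where $jD_1j\neq0$ and $d\neq0$), so that $Z$ does not depend on the shrinking; this again follows from density of admissible opens in irreducible spaces.
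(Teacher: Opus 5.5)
Your proposal is correct and follows the paper's proof essentially verbatim. In both, $Z=\overline{\lcal_\cala}^{\,\mathrm{Zar}}$ is minimal invariant by Lemma~\ref{l:hs-minimal-leaf}, $\Delta_\cala^+$ stabilises it by \eqref{e:rt-cala}, and $\Stab_G(Z)$ is a proper algebraic subgroup by Lemma~\ref{l:hs-galois-principal} and HS-non-genericity.

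The only difference is the bookkeeping. You want $\cala\cap\boldsymbol j^{-1}((X^\circ)^{\mathrm{an}})$ to be irreducible, but that does not follow merely from its being a Zariski-dense admissible open, since every non-empty admissible open of an irreducible space is Zariski dense. The claim is true here because the set is Zariski open, but that needs a separate argument. The paper avoids the issue: it lifts the whole restriction, which each $\gamma\in\Delta_\cala^+$ visibly preserves, and uses the identity principle on $\cala$ to show that the lifts of all its components have the same Zariski closure.
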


\begin{proof}
All closures are taken in the frame space $P_X$.  Here
$\lcal_\cala:=\widehat C^n\bigl(\cala\cap
\boldsymbol j^{-1}((X^\circ)^{\mathrm{an}})\bigr)$.
The restriction is nonempty, and the lifts of its irreducible components
are leaves with the same Zariski closure in $P_X$, by the identity principle
on $\cala$. The corresponding Galois group is independent of $X^\circ$ by Lemma~\ref{l:hs-shrinking}. Set $Z_\cala:=\overline{\lcal_\cala}^{\,\mathrm{Zar}}\subset P_X$.
By Lemma~\ref{l:hs-minimal-leaf}, $Z_\cala$ is minimal invariant, and
$\Gal_{\mathrm{HS}}(Z_\cala)$ represents
$\Gal_{\mathrm{HS}}(X)$.  If $\gamma\in\Delta_\cala^+$, then
\eqref{e:rt-cala} gives
$\gamma\cdot\lcal_\cala=\lcal_\cala$.  Since left translation on $P_X$ is regular,
we have $\gamma Z_\cala=Z_\cala$,
so that $\Delta_\cala^+\subset\Gal_{\mathrm{HS}}(Z_\cala)$. The latter is algebraic by Lemma~\ref{l:hs-galois-principal}; taking
Zariski closures and using HS-non-genericity yields
$\overline{\Delta_\cala^+}^{\,\mathrm{Zar}}
\subset
\Gal_{\mathrm{HS}}(Z_\cala)
\subsetneq G$.
\end{proof}

\subsubsection{Running notation}\label{running}

Choose once and for all a non-zero proper ideal $\gotn\subset A$ such that
the full level-$\gotn$ moduli problem is fine; see
\cite[Chapter~II]{gekeler}. Set
\[
\kappa:=\kappa_\gotn\colon
\widetilde\Gamma(\gotn)\xrightarrow{\sim}\Gamma(\gotn).
\]

For every non-zero ideal $\gota\subset A$, abbreviate
$\nu_\gota
:=
\nu_{\gotn\gota,\gotn}
\colon
Y(\gotn\gota)\rightarrow Y(\gotn)$.
Then
\[
(\nu_\gota^N)^{\mathrm{an}}\circ\pi_{\gotn\gota}^N
=
\pi_\gotn^N.
\]

We now fix the notation used for the congruence tower. Let
\[
B\subset Y(\gotn)^N_{\C_\infty}
\]
be smooth, irreducible and locally closed. Let
\[
\cala'
\subset
(\pi_\gotn^N)^{-1}(B^{\mathrm{an}})
\]
be a connected analytic component, and set
$\Delta_{\cala'}:=
\Stab_{\Gamma(\gotn)^N}(\cala')$.

For every non-zero ideal $\gota\subset A$, set
\[
B(\gota)
:=
B\times_{Y(\gotn)^N}Y(\gotn\gota)^N.
\]
Let $B(\gota)_{\cala'}$ be the algebraic connected component of
$B(\gota)$ whose analytification contains the image of $\cala'$ under
\[
\pi_{\gotn\gota}^N\colon
\Omega^N\rightarrow Y(\gotn\gota)^{N,\mathrm{an}}.
\]

\subsubsection{Coordinate monodromy}

\begin{lem}\label{l:congruence-tower-comparison}

With the notations from Section~\ref{running}, we have
\[
\Stab_{(\Gamma(\gotn)/\Gamma(\gotn\gota))^N}\bigl(B(\gota)_{\cala'}\bigr)
=
\mathrm{Im}\bigl(\Delta_{\cala'}\rightarrow (\Gamma(\gotn)/\Gamma(\gotn\gota))^N\bigr).
\]
\end{lem}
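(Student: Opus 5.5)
We compare the deck action on the algebraic tower with the action of $\Gamma(\gotn)^N$ on $\Omega^N$ through the uniformisations $\pi_{\gotn\gota}^N$. By Lemma~\ref{l:level-tower-galois}, $\nu_\gota^N\colon Y(\gotn\gota)^N\rightarrow Y(\gotn)^N$ is a finite \'etale Galois cover with group $Q:=(\Gamma(\gotn)/\Gamma(\gotn\gota))^N$. Hence $B(\gota)\rightarrow B$ is a finite \'etale Galois cover with the same group, and $Q$ permutes the connected components of $B(\gota)$. Since $B$ is smooth, $B(\gota)$ is smooth, so its connected components are irreducible, and by \cite[Theorem~2.3.1]{conrad-irred} their analytifications are exactly the connected (equivalently irreducible) components of $B(\gota)^{\mathrm{an}}$. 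The deck action is compatible with uniformisation:
\[
\pi_{\gotn\gota}^N(\gamma z)=\bar\gamma\cdot\pi_{\gotn\gota}^N(z),
\qquad \gamma\in\Gamma(\gotn)^N,
\]
where $\bar\gamma$ is the image of $\gamma$ in $Q$.

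For the inclusion ``$\supseteq$'', let $\gamma\in\Delta_{\cala'}$. Then $\pi_{\gotn\gota}^N(\cala')=\pi_{\gotn\gota}^N(\gamma\cala')=\bar\gamma\cdot\pi_{\gotn\gota}^N(\cala')$. Since $\cala'$ is connected, its image lies in the single component $B(\gota)_{\cala'}^{\mathrm{an}}$, so $\bar\gamma B(\gota)_{\cala'}$ meets $B(\gota)_{\cala'}$. Two components that intersect must coincide, so $\bar\gamma$ stabilises $B(\gota)_{\cala'}$.

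For the inclusion ``$\subseteq$'', let $\bar\gamma\in Q$ stabilise $C:=B(\gota)_{\cala'}$, and lift it to $\gamma\in\Gamma(\gotn)^N$. The key step is to show that
\[
(\pi_{\gotn\gota}^N)^{-1}(C^{\mathrm{an}})\cap\cala'
\]
is open and closed in $(\pi_{\gotn\gota}^N)^{-1}(C^{\mathrm{an}})$, and that $\pi_{\gotn\gota}^N$ maps $\cala'$ onto $C^{\mathrm{an}}$. Both facts follow because
\[
\pi_{\gotn\gota}^N\colon(\pi_\gotn^N)^{-1}(B^{\mathrm{an}})\rightarrow B(\gota)^{\mathrm{an}}
\]
is a $\Gamma(\gotn\gota)^N$-torsor (admissible-locally an isomorphism, by Lemma~\ref{l:level-tower-galois}). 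Hence the image of the connected component $\cala'$ is open and closed in the connected space $C^{\mathrm{an}}$, so it is all of $C^{\mathrm{an}}$. Now pick $x\in\cala'$. Then
\[
\pi_{\gotn\gota}^N(\gamma x)=\bar\gamma\,\pi_{\gotn\gota}^N(x)\in C^{\mathrm{an}}=\pi_{\gotn\gota}^N(\cala').
\]
So there is $y\in\cala'$ with $\gamma x=\delta y$ for some $\delta\in\Gamma(\gotn\gota)^N$, and $\delta^{-1}\gamma$ is another lift of $\bar\gamma$ that maps $x$ into $\cala'$. Since $\delta^{-1}\gamma$ permutes the connected components of $(\pi_\gotn^N)^{-1}(B^{\mathrm{an}})$ and $\cala'$ is one of them, it stabilises $\cala'$. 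Thus $\delta^{-1}\gamma\in\Delta_{\cala'}$ maps to $\bar\gamma$.

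The main obstacle is the surjectivity $\pi_{\gotn\gota}^N(\cala')=C^{\mathrm{an}}$. It requires that the quotient map be open onto its image and that connected components of $B(\gota)^{\mathrm{an}}$ correspond to algebraic components. I would handle this by combining the admissible-local triviality from Lemma~\ref{l:level-tower-galois} with Conrad's comparison of components, as recalled before Lemma~\ref{l:smooth-branch-bridge}.
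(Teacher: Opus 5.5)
Your proposal is correct and follows the paper's proof. In both, the crux is the surjectivity $\pi_{\gotn\gota}^N(\cala')=B(\gota)_{\cala'}^{\mathrm{an}}$, which the paper proves along the lines you sketch: admissible-local triviality makes the images of connected components of $(\pi_\gotn^N)^{-1}(B^{\mathrm{an}})$ admissible open, and because the fibres are $\Gamma(\gotn\gota)^N$-orbits these images are pairwise equal or disjoint, which is what gives closedness. The paper then uses the same ``correct the lift by $\eta\in\Gamma(\gotn\gota)^N$'' argument for $\mathrm{Stab}\subseteq\mathrm{Im}$, while your argument for $\mathrm{Im}\subseteq\mathrm{Stab}$ is slightly more economical, since it only needs $\pi_{\gotn\gota}^N(\cala')\subset B(\gota)_{\cala'}^{\mathrm{an}}$ rather than equality.
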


\begin{proof}
By Lemma~\ref{l:level-tower-galois}, the map $B(\gota)\rightarrow B$ is a finite \'etale
$(\Gamma(\gotn)/\Gamma(\gotn\gota))^N$-cover. The connected component of $B(\gota)^{\mathrm{an}}$ containing
$\pi_{\gotn\gota}^N(\cala')$ is the analytification of a unique algebraic
connected component of $B(\gota)$ by 
\cite[Theorem~2.3.1]{conrad-irred}.

To simplify the notation, let us set $\pi_\gota:=\pi_{\gotn\gota}^N$. We first show that
\begin{equation}\label{e:analyticimage-downstairs}
\pi_\gota(\cala')=B(\gota)_{\cala'}^{\mathrm{an}}.
\end{equation}
Let $C$ be a connected analytic component of
$\pi_\gota^{-1}\bigl(B(\gota)^{\mathrm{an}}\bigr)$.
By admissible-local triviality of $\pi_\gota$, for every $x\in C$ there is a
connected admissible open neighbourhood
$U\subset B(\gota)^{\mathrm{an}}$ of $\pi_\gota(x)$ such that the connected component of
$\pi_\gota^{-1}(U)$ containing $x$ maps isomorphically onto $U$. This component
is contained in $C$, because it is connected and meets $C$. Hence
$U\subset \pi_\gota(C)$, and therefore $\pi_\gota(C)$ is admissible open in
$B(\gota)^{\mathrm{an}}$.  If $C_1,C_2$ are two such components and
$\pi_\gota(C_1)\cap\pi_\gota(C_2)\neq\emptyset$, choose $x_\nu\in C_\nu$ with
$\pi_\gota(x_1)=\pi_\gota(x_2)$. Since the fibers of $\pi_\gota$ are the
$\Gamma(\gotn\gota)^N$-orbits, there exists $\eta\in\Gamma(\gotn\gota)^N$ such
that $x_2=\eta x_1$. Hence $\eta C_1$ and $C_2$ are connected components which
meet, so $\eta C_1=C_2$. Since $\eta$ acts trivially after applying
$\pi_\gota$, we get
$\pi_\gota(C_1)=\pi_\gota(C_2)$.
Hence the images of the connected components are pairwise either equal or
disjoint. Admissible-locally on $B(\gota)^{\mathrm{an}}$ this is a disjoint
admissible decomposition. Therefore $\pi_\gota(\cala')$ is admissible open and
admissible closed in the connected space $B(\gota)_{\cala'}^{\mathrm{an}}$. It
is non-empty, hence
$\pi_\gota(\cala')=B(\gota)_{\cala'}^{\mathrm{an}}$.

The inclusion
\[
\im\bigl(\Delta_{\cala'}\rightarrow (\Gamma(\gotn)/\Gamma(\gotn\gota))^N\bigr)
\subseteq
\Stab_{(\Gamma(\gotn)/\Gamma(\gotn\gota))^N}\bigl(B(\gota)_{\cala'}\bigr)
\]
follows from \eqref{e:analyticimage-downstairs}: if
$\delta\in\Delta_{\cala'}$, then $\delta\cala'=\cala'$, hence the image of
$\delta$ stabilizes $\pi_\gota(\cala')=B(\gota)_{\cala'}^{\mathrm{an}}$, and therefore
stabilizes the algebraic component $B(\gota)_{\cala'}$. For the converse inclusion, let $\bar\gamma\in (\Gamma(\gotn)/\Gamma(\gotn\gota))^N$ stabilize
$B(\gota)_{\cala'}$, and choose a lift $\gamma\in\Gamma(\gotn)^N$. Pick
$x\in\cala'$. Since $\bar\gamma$ stabilizes $B(\gota)_{\cala'}$, the point
$\pi_\gota(\gamma x)=\bar\gamma\,\pi_\gota(x)$
belongs to $B(\gota)_{\cala'}^{\mathrm{an}}$. By
\eqref{e:analyticimage-downstairs},
there exists $x'\in\cala'$ such that
$\pi_\gota(x')=\pi_\gota(\gamma x)$.
Thus $x'=\eta\gamma x$ for some $\eta\in\Gamma(\gotn\gota)^N$. Hence
$\eta\gamma\cala'$ and $\cala'$ are connected components of
$(\pi_{\gotn}^N)^{-1}(B^{\mathrm{an}})$ sharing the point $x'$. Therefore
$\eta\gamma\cala'=\cala'$, so $\eta\gamma\in\Delta_{\cala'}$. Since $\eta$ maps trivially to $(\Gamma(\gotn)/\Gamma(\gotn\gota))^N$, the class $\bar\gamma$ belongs to
$\im(\Delta_{\cala'}\rightarrow(\Gamma(\gotn)/\Gamma(\gotn\gota))^N)$. 
\end{proof}

Lemma~\ref{l:congruence-tower-comparison} implies that the monodromy group of
$B(\gota)_{\cala'}\rightarrow B$
is $\mathrm{Im}\big(\Delta_{\cala'}\rightarrow(\Gamma(\gotn)/\Gamma(\gotn\gota))^N\big)$, up to conjugacy.
Indeed, for a finite \'etale Galois cover with group $Q$, the monodromy group
of a connected component is the stabilizer of that component in $Q$. Applying
this to $B(\gota)_{\cala'}$ gives this final assertion.

\begin{lem}\label{l:dominant-pi1-open}
Let $f\colon V\rightarrow W$
be a dominant morphism of connected noetherian schemes, with $f$ of finite
type and $W$ normal. Then the induced  homomorphism
$\pi_1^{\mathrm{et}}(V)\rightarrow\pi_1^{\mathrm{et}}(W)$
has open image.
\end{lem}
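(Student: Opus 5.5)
The plan is to reduce to the generic point and use the standard dictionary between finite étale covers and field extensions. First I would replace $W$ by a non-empty affine open $W'\subset W$ containing the image of the generic point of $V$, and $V$ by the non-empty open $f^{-1}(W')$. This is harmless because the maps $\pi_1^{\mathrm{et}}(f^{-1}(W'))\rightarrow\pi_1^{\mathrm{et}}(V)$ and $\pi_1^{\mathrm{et}}(W')\rightarrow\pi_1^{\mathrm{et}}(W)$ are both defined. The second map is surjective, since $W$ is normal and connected, hence integral. In that case a connected finite étale cover of $W$ restricts to a connected cover of $W'$, because it is again normal and integral. So openness of the image for $f^{-1}(W')\rightarrow W'$ implies openness for $V\rightarrow W$. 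It is enough that the image of $\pi_1^{\mathrm{et}}(V)$ contains the image of an open subgroup, and the composite through $\pi_1^{\mathrm{et}}(W')$ is compatible. I also pass to an irreducible component of $V$ dominating $W$. Its $\pi_1$ maps into that of $V$, so openness for the component gives openness for $V$. We may therefore assume $V$ is integral.

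Next, recall that for a connected scheme, the image of $\pi_1^{\mathrm{et}}(V)\rightarrow\pi_1^{\mathrm{et}}(W)$ is open if and only if there is a bound on the number of connected components of the pullbacks $W_1\times_W V$, as $W_1\rightarrow W$ ranges over connected finite étale Galois covers. Indeed, the number of components is the number of orbits of the image on $\pi_1(W)/\pi_1(W_1)$. If the image $I$ is closed and these numbers are bounded, then $I$ has finite index, because $I$ is compact and its index equals $\sup$ over open normal subgroups $N$ of the number of double-coset orbits. So the target is a uniform bound: for every connected finite étale Galois cover $W_1\rightarrow W$, the scheme $W_1\times_W V$ has at most $d$ connected components, with $d$ independent of $W_1$.

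To get the bound, let $K=\kappa(W)$ and $L=\kappa(V)$. Since $f$ is dominant and of finite type, $L$ is a finitely generated extension of $K$. Let $K'$ be the algebraic closure of $K$ in $L$; it is a finite extension of degree $d$. A connected finite étale cover $W_1$ of the normal scheme $W$ is the normalization of $W$ in a finite separable extension $K_1/K$, unramified over $W$. The generic fibre of $W_1\times_W V$ is $\Spec(K_1\otimes_K L)$. Its number of points is at most the number of factors of $K_1\otimes_K K'$, since $K'$ is algebraically closed in $L$ and $K_1/K$ is separable, which makes $K_1\otimes_K L$ have the same idempotents as $K_1\otimes_K K'$. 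That number is at most $[K':K]=d$.

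It remains to show that every connected component of $W_1\times_W V$ meets the generic fibre. I expect this to be the main obstacle. The map $W_1\times_W V\rightarrow V$ is finite étale, hence open and closed, so every connected component surjects onto the connected scheme $V$ and in particular contains a point over the generic point of $V$. Therefore the number of components is at most $d$, and the image has index at most $d$, hence is open. The care needed is in the reductions: checking that passage to opens and components preserves connectedness (this is where normality of $W$ is used), and that finite étale maps are both open and closed.
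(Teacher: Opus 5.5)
Your proof is correct, but it takes a different route from the paper.

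The paper argues with Galois groups directly. Normality of $W$ gives a surjection $\Gal(K^{\sep}/K)\twoheadrightarrow\pi_1^{\mathrm{et}}(W)$, compatible with $\Gal(E^{\sep}/E)\rightarrow\pi_1^{\mathrm{et}}(V)$ for $E=\kappa(V)$. The image of $\Gal(E^{\sep}/E)$ in $\Gal(K^{\sep}/K)$ is $\Gal(K^{\sep}/L)$, where $L=K^{\sep}\cap E$ is the separable algebraic closure of $K$ in $E$. This subgroup has index $[L:K]$, and closedness of the image finishes the argument.

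You instead use the dictionary between finite \'etale covers and $\pi_1$-sets:
\begin{itemize}
\item you bound the number of connected components of $W_1\times_W V$, over connected Galois covers $W_1\rightarrow W$, by the number of points of $\Spec(K_1\otimes_K\kappa(V))$;
\item you use that the algebraic closure $K'$ of $K$ in $\kappa(V)$ is algebraically closed in $\kappa(V)$, so tensoring with $\kappa(V)$ over $K'$ does not split the factors of $K_1\otimes_K K'$ further;
\item you convert the uniform bound into finite index via $I=\bigcap_N IN$ for the closed image $I$.
\end{itemize}
In effect you reprove, in the language of covers, the two facts the paper quotes: surjectivity at the generic point, and the natural-irrationalities computation of the image. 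Your version is more self-contained. The paper's is shorter and gives the slightly sharper bound $[L:K]$, a separable degree, instead of $[K':K]$.

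Two small remarks. First, the initial reduction to an affine open $W'$ is never used and can be dropped. As written it is also done before $V$ is made irreducible, so $f^{-1}(W')$ need not be connected.

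Second, normality of $W$ is not really needed for the reductions. It is needed to identify a connected finite \'etale cover $W_1$ with the normalization of $W$ in $K_1$, which is what makes the generic fibre of $W_1\rightarrow W$ the single field $K_1$. Without normality the statement fails: for the normalization $\p^1$ of a nodal cubic, the image in $\pi_1^{\mathrm{et}}\simeq\widehat{\Z}$ is trivial.
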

\begin{proof}
Since $W$ is connected, normal and noetherian, it is integral.
One of the finitely many irreducible components of $V$ dominates $W$.
Its reduced structure has fundamental group image contained in that of $V$, so it suffices to replace $V$ by this reduced component.
Put $K:=K(W)$ and $E:=K(V)$. Choose a separable closure $E^{\sep}$ of $E$,
and let $K^{\sep}\subset E^{\sep}$ be the separable closure of $K$ inside
$E^{\sep}$. Set $L:=K^{\sep}\cap E$.
Since $E/K$ is finitely generated, $L/K$ is finite.
As $W$ is normal, by \cite[Lemma~58.10.7]{stacks}  we obtain a surjective map
$\Gal(K^{\sep}/K)\twoheadrightarrow \pi_1^{\mathrm{et}}(W)$.
The image of
$\Gal(E^{\sep}/E)\rightarrow\Gal(K^{\sep}/K)$
is $\Gal(K^{\sep}/L)$, which has index $[L:K]$. Therefore the image of
$\pi_1^{\mathrm{et}}(V)\rightarrow\pi_1^{\mathrm{et}}(W)$
has finite index. Since it is a continuous image of a profinite group, it is
closed; hence it is open.
\end{proof}

We record the following lemma.

\begin{lem}
\label{l:adelic-open-zariski}
Let $\Delta\subset \Sl_2(F)$ be a subgroup whose closure in
$\Sl_2(\mathbf A_F^\infty)$ is open. Let
$\tilde\Delta\subset \pgl_2(F)$ be its image. Then
$\tilde\Delta$ is Zariski dense in $\pgl_{2,F}$.
\end{lem}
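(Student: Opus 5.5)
To prove the lemma, I would reduce to a single place and then use that open subgroups of a $p$-adic-type Lie group are Zariski dense. Let $\overline\Delta$ be the closure of $\Delta$ in $\Sl_2(\mathbf A_F^\infty)$. Since it is open, it contains a basic open subgroup $\prod_{v\in S}U_v\times\prod_{v\notin S}\Sl_2(A_v)$, where $S$ is a finite set of finite places. I fix one finite place $\gotp$. The projection $\Sl_2(\mathbf A_F^\infty)\to\Sl_2(F_\gotp)$ is continuous and open. Hence the closure of the image of $\Delta$ in $\Sl_2(F_\gotp)$ contains an open subgroup $U_\gotp$, for instance a principal congruence subgroup $\ker(\Sl_2(A_\gotp)\to\Sl_2(A/\gotp^e))$.

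Next I would pass to Zariski closures. Let $Z\subset\Sl_{2,F}$ be the Zariski closure of $\Delta$. It is an algebraic subgroup defined over $F$. Its $F_\gotp$-points $Z(F_\gotp)$ form a closed subgroup of $\Sl_2(F_\gotp)$ for the $\gotp$-adic topology, since polynomial equations define $\gotp$-adically closed sets. This subgroup contains $\Delta$, hence its $\gotp$-adic closure, hence $U_\gotp$. So it suffices to show that the open subgroup $U_\gotp$ is Zariski dense in $\Sl_{2,F_\gotp}$, which forces $Z=\Sl_{2}$.

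This density step is the only real content, and it is where a characteristic-$p$ Lie-algebra argument is unavailable. I would therefore argue directly. $U_\gotp$ contains the unipotent elements $u_+(x)$ and $u_-(y)$ for all $x,y\in\gotp^eA_\gotp$. This set is infinite, so its Zariski closure in the one-dimensional groups $U_\pm$ is all of $U_\pm$. Hence $Z\supset U_+$ and $Z\supset U_-$. These two subgroups generate $\Sl_2$, for instance via the big cell $U_-TU_+$ and the decomposition \eqref{e:decomposition-UTU}, which produces the torus. So $Z=\Sl_{2,F}$.

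Finally I would push forward to $\pgl_2$. The morphism $\Sl_{2,F}\to\pgl_{2,F}$ is surjective on the underlying varieties; in odd characteristic it is even an isogeny. The image of a Zariski dense set under a dominant morphism is Zariski dense. Therefore $\tilde\Delta$ is Zariski dense in $\pgl_{2,F}$. The density remains valid over $\C_\infty$ by base change, since the Zariski closure of an $F$-rational set commutes with field extension.

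I expect no serious obstacle here; the care needed is only in checking that $\gotp$-adic closure stays inside the $F_\gotp$-points of the Zariski closure.
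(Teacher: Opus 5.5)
Your proposal is correct and follows essentially the paper's route: project the open adelic closure to a single finite place, trap the resulting $\gotp$-adically open subgroup inside $Z(F_\gotp)$ for the Zariski closure $Z$ of $\Delta$, deduce $Z=\Sl_{2,F}$, and push forward along the dominant map $\Sl_{2,F}\to\pgl_{2,F}$. (The paper phrases this last step via the preimage of the closure of $\tilde\Delta$, which amounts to the same thing.) The only difference is that the paper simply asserts that a proper Zariski closed subset has $F_v$-points with empty interior, whereas you verify the density of the open subgroup concretely: through the infinitely many $u_\pm(x)$, the torus elements $t(1+xy)$ from the big-cell decomposition, and the density of $U_-TU_+$; this is a valid and more self-contained justification of that step.
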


\begin{proof}
Let
$H:=\overline{\Delta}^{\,\mathrm{Zar}}\subseteq \Sl_{2,F}$. We first show that $H=\Sl_{2,F}$. Suppose otherwise. For any finite
place $v\ne\infty$, the subset $H(F_v)\subset\Sl_2(F_v)$ has empty
interior, because $H$ is a proper Zariski closed subvariety of
$\Sl_{2,F}$.

On the other hand, the adelic closure of $\Delta$ is open in
$\Sl_2(\mathbf A_F^\infty)$. Therefore its projection to
$\Sl_2(F_v)$ contains a non-empty open subset. This projected set is
contained in the $v$-adic closure of $\Delta$, and the latter is
contained in $H(F_v)$. This contradicts the empty interior statement.
Hence $H=\Sl_{2,F}$.

Let
$\rho\colon \Sl_{2,F}\rightarrow \pgl_{2,F}$
be the natural central isogeny. Let $Y\subset\pgl_{2,F}$ be the Zariski
closure of $\tilde\Delta$. Then $\rho^{-1}(Y)$ is a closed subvariety of
$\Sl_{2,F}$ containing $\Delta$. Since $\Delta$ is Zariski dense in
$\Sl_{2,F}$, we have $\rho^{-1}(Y)=\Sl_{2,F}$.
As $\rho$ is dominant, this forces $Y=\pgl_{2,F}$. Thus
$\overline{\tilde\Delta}^{\,\mathrm{Zar},F}
=\pgl_{2}$.
\end{proof}

\begin{lem}
\label{l:coordinate-adelic-open-corrected}
Let
$B\subset Y(\gotn)^N_{\C_\infty}$
be smooth, irreducible and locally closed, and let
$\cala'\subset(\pi_{\gotn}^N)^{-1}(B^{\mathrm{an}})$
be a connected analytic component. Set $\Delta_{\cala'}:=\Stab_{\Gamma(\gotn)^N}(\cala')$.
Fix $1\le i\le N$, and assume that $p_i\colon B\rightarrow Y(\gotn)$ is nonconstant. Then
\[
\overline{\pr_i(\Delta_{\cala'})}^{\,\mathrm{Zar}}
=
\pgl_{2,\C_\infty}.
\]
\end{lem}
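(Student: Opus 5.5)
The plan is to show that the closure of $\pr_i(\Delta_{\cala'})$ in $\Sl_2(\mathbf A_F^\infty)$, after lifting through $\kappa$, is open. Lemma~\ref{l:adelic-open-zariski} then gives Zariski density in $\pgl_{2,F}$, and hence in $\pgl_{2,\C_\infty}$, since Zariski closure commutes with the base change $F\subset\C_\infty$ for an $F$-rational subset. Concretely, put $\widetilde\Delta_i:=\kappa^{-1}(\pr_i(\Delta_{\cala'}))\subset\widetilde\Gamma(\gotn)\subset\Sl_2(A)$. Its closure in $\widehat{\widetilde\Gamma(\gotn)}=\varprojlim_\gota\widetilde\Gamma(\gotn)/\widetilde\Gamma(\gotn\gota)$ is open in $\Sl_2(\mathbf A_F^\infty)$ as soon as it has finite index in the congruence completion $\widehat{\widetilde\Gamma(\gotn)}$. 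By strong approximation for $\Sl_2$ over $F$ with respect to $\infty$, this completion is the open compact subgroup $\ker(\Sl_2(\widehat A)\to\Sl_2(A/\gotn))$. So it suffices to show that the images of $\widetilde\Delta_i$ in the finite quotients $\widetilde\Gamma(\gotn)/\widetilde\Gamma(\gotn\gota)$ have index bounded independently of $\gota$.

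\textbf{Step 1: reduce to étale fundamental groups.} By Lemma~\ref{l:congruence-tower-comparison}, the image of $\Delta_{\cala'}$ in $(\Gamma(\gotn)/\Gamma(\gotn\gota))^N$ is the stabilizer of the connected component $B(\gota)_{\cala'}$, that is, the monodromy group of the finite étale Galois cover $B(\gota)\to B$. Projecting to the $i$-th factor, the image of $\pr_i(\Delta_{\cala'})$ in $\Gamma(\gotn)/\Gamma(\gotn\gota)$ contains the image of $\pi_1^{\et}(B)$ acting on the fibre of $Y(\gotn\gota)\to Y(\gotn)$ pulled back along $p_i$. Since the tower is compatible in $\gota$, these actions assemble into a continuous homomorphism
\[
\pi_1^{\et}(B)\xrightarrow{(p_i)_*}\pi_1^{\et}(Y(\gotn))\longrightarrow\widehat{\widetilde\Gamma(\gotn)}.
\]

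\textbf{Step 2: the two arrows.} The second arrow is surjective. Each $Y(\gotn\gota)\to Y(\gotn)$ is a connected Galois cover with group $\widetilde\Gamma(\gotn)/\widetilde\Gamma(\gotn\gota)$ by Lemma~\ref{l:level-tower-galois}, and the surjection onto every finite quotient passes to the profinite limit. For the first arrow, $p_i\colon B\to Y(\gotn)$ is nonconstant, so its image is dense in the curve $Y(\gotn)$ and $p_i$ is dominant. Since $Y(\gotn)$ is smooth, hence normal, and $B$ is connected, Lemma~\ref{l:dominant-pi1-open} shows that $(p_i)_*$ has open image. The composite therefore has open image in $\widehat{\widetilde\Gamma(\gotn)}$, and openness in a profinite group means finite index. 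Consequently the images of $\widetilde\Delta_i$ in all finite quotients have index uniformly bounded by this finite index, and the closure of $\widetilde\Delta_i$ is open in $\Sl_2(\mathbf A_F^\infty)$.

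\textbf{Step 3: conclude.} Apply Lemma~\ref{l:adelic-open-zariski} to $\widetilde\Delta_i\subset\Sl_2(A)\subset\Sl_2(F)$. Its image in $\pgl_2(F)$ is $\pr_i(\Delta_{\cala'})$, via $\kappa$, and this image is therefore Zariski dense in $\pgl_{2,\C_\infty}$.

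\textbf{Main obstacle.} The delicate point is Step 1: checking that the monodromy of the pulled-back tower along $p_i$ really is the projection of the $N$-fold stabilizer, including basepoint and component bookkeeping. One has to verify that $\pr_i$ of the stabilizer of $B(\gota)_{\cala'}$ coincides with the stabilizer of the corresponding component of $B\times_{Y(\gotn)}Y(\gotn\gota)$ in the $i$-th factor. I expect this to follow by applying the argument of Lemma~\ref{l:congruence-tower-comparison} to the fibre product with only the $i$-th factor raised in level. A second point needs care: strong approximation is required to identify the congruence completion with an open subgroup of $\Sl_2(\mathbf A_F^\infty)$, which is standard for $\Sl_2$ since $\Sl_2(F_\infty)$ is noncompact.
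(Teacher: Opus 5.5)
Your proposal is correct and follows essentially the same route as the paper: bounded index of the finite-level images via Lemma~\ref{l:congruence-tower-comparison}, surjectivity of the tower monodromy together with openness of $(p_i)_*$ from Lemma~\ref{l:dominant-pi1-open}, then strong approximation and Lemma~\ref{l:adelic-open-zariski}. The only difference is cosmetic. You assemble the monodromy into the profinite completion at once, whereas the paper works level by level. It bounds the index of the image of $\pr_i(\Delta_{\cala'})$ in $\Gamma(\gotn)/\Gamma(\gotn\gota)$ by $d=[\pi_1^{\et}(Y(\gotn)):\mathrm{Im}((p_i)_*)]$ separately for each $\gota$. That sidesteps the compatible-basepoint bookkeeping you flag, since conjugation does not change indices.
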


\begin{proof}

Since $Y(\gotn)$ is an irreducible curve and $p_i$ is nonconstant, $p_i$ is
dominant. Consider $\pi_1^{\mathrm{et}}(Y(\gotn))$ and set $H_i:=
\im\bigl(\pi_1^{\mathrm{et}}(B)\rightarrow\pi_1^{\mathrm{et}}(Y(\gotn))\bigr)$.
By Lemma~\ref{l:dominant-pi1-open}, $H_i$ is open, so that $[\pi_1^{\mathrm{et}}(Y(\gotn)):H_i]=:d$ is finite.

Fix a non-zero ideal $\gota\subset A$, consider $\Gamma(\gotn)/\Gamma(\gotn\gota)$.
By Lemma~\ref{l:level-tower-galois}, the cover
$Y(\gotn\gota)\rightarrow Y(\gotn)$
is connected finite \'etale Galois with group $\Gamma(\gotn)/\Gamma(\gotn\gota)$. Hence its monodromy
homomorphism
\[
\rho_\gota\colon \pi_1^{\mathrm{et}}(Y(\gotn))\rightarrow  \Gamma(\gotn)/\Gamma(\gotn\gota)
\]
is surjective. Consider now
\[
\varrho_{\gota,B}\colon \pi_1^{\mathrm{et}}(B)\rightarrow (\Gamma(\gotn)/\Gamma(\gotn\gota))^N
\]
the monodromy homomorphism of the finite cover $B(\gota):=B\times_{Y(\gotn)^N}Y(\gotn\gota)^N\rightarrow B$
computed from a lift lying in the component $B(\gota)_{\cala'}$. 
For each $1\le r\le N$, the $r$-th coordinate of $B(\gota)\rightarrow B$ is the
pullback of
$Y(\gotn\gota)\rightarrow Y(\gotn)$
along $p_r\colon B\rightarrow Y(\gotn)$. Hence, by functoriality of \'etale fundamental
groups, the $r$-th coordinate of $\varrho_{\gota,B}$ is the pulled-back
monodromy of this cover. In particular, for the fixed coordinate $i$,
\[
\pr_i(\im\varrho_{\gota,B})=\rho_\gota(H_i)
\]
up to conjugacy in $\Gamma(\gotn)/\Gamma(\gotn\gota)$.

On the other hand, by Lemma~\ref{l:congruence-tower-comparison}, the monodromy
group of the component
$B(\gota)_{\cala'}\rightarrow B$
is conjugate in $(\Gamma(\gotn)/\Gamma(\gotn\gota))^N$ to
$\im\bigl(\Delta_{\cala'}\rightarrow (\Gamma(\gotn)/\Gamma(\gotn\gota))^N\bigr)$.
Projecting to the $i$-th coordinate, we get that
\[
R_\gota:=
\im\bigl(\pr_i(\Delta_{\cala'})\rightarrow  \Gamma(\gotn)/\Gamma(\gotn\gota)\bigr)
\]
is conjugate in $\Gamma(\gotn)/\Gamma(\gotn\gota)$ to $\rho_\gota(H_i)$. Hence
\[
[\Gamma(\gotn)/\Gamma(\gotn\gota):R_\gota]
=
[\Gamma(\gotn)/\Gamma(\gotn\gota):\rho_\gota(H_i)]
=
[\pi_1^{\mathrm{et}}(Y(\gotn)):H_i\ker(\rho_\gota)]
\le
[\pi_1^{\mathrm{et}}(Y(\gotn)):H_i]
=
d 
\]
where the second equality holds since $\rho_\gota$ is surjective.

Let us set
$\widehat \Gamma_\gotn:=\varprojlim_\gota \Gamma(\gotn)/\Gamma(\gotn\gota)$.
The finite-level images of $\pr_i(\Delta_{\cala'})$ are  the subgroups $R_\gota$.
Therefore $\overline{\pr_i(\Delta_{\cala'})}^{\,\widehat \Gamma_\gotn}
=\varprojlim_\gota R_\gota$.
The uniform bound $[\Gamma(\gotn)/\Gamma(\gotn\gota):R_\gota]\le d$ implies
\[
\big[\widehat \Gamma_\gotn:\overline{\pr_i(\Delta_{\cala'})}^{\,\widehat \Gamma_\gotn}\big]\le d.
\]

Thus
$\overline{\pr_i(\Delta_{\cala'})}^{\,\widehat \Gamma_\gotn}$ is a closed subgroup of finite index in $\widehat \Gamma_\gotn$, hence is open.

Recall that  $\kappa^{-1}\colon \Gamma(\gotn)/\Gamma(\gotn\gota)\simeq\widetilde\Gamma(\gotn)/\widetilde\Gamma(\gotn\gota)$ by Lemma~\ref{l:level-tower-galois}.
By strong approximation for $\Sl_2$, see, for instance,
\cite[Corollary~28.3.6]{voight-quaternion},
\[
\varprojlim_\gota
\widetilde\Gamma(\gotn)/\widetilde\Gamma(\gotn\gota)
\simeq
\ker\bigl(\Sl_2(\widehat A)\rightarrow \Sl_2(A/\gotn)\bigr)=:K(\gotn).
\]
Hence the closure of
$\kappa^{-1}\bigl(\pr_i(\Delta_{\cala'})\bigr)$
is open in $K(\gotn)$, and therefore open in
$\Sl_2(\mathbf A_F^\infty)$, since $K(\gotn)$ is compact open.

The final Zariski-density assertion follows from
Lemma~\ref{l:adelic-open-zariski}, applied to
$\Delta=\kappa^{-1}\bigl(\pr_i(\Delta_{\cala'})\bigr)
\subset \Sl_2(F)$, after base change from $F$ to $\C_\infty$.
\end{proof}

\begin{cor}
\label{c:adelic-open-finite-index}
In the setting of Lemma~\ref{l:coordinate-adelic-open-corrected}, let
$\Delta'\subset\Delta_{\cala'}$ be a finite-index subgroup. Then the closure of $\kappa^{-1}\bigl(\pr_i(\Delta')\bigr)$
in $\Sl_2(\mathbf A_F^\infty)$ is open. Therefore, for every  place
$v\ne\infty$, the $v$-adic closure of $\pr_i(\Delta')$ in
$\pgl_2(F_v)$ contains a non-empty open subset, and
\[
\overline{\pr_i(\Delta')}^{\,\mathrm{Zar},F_v}
=
\pgl_{2,F_v}.
\]
\end{cor}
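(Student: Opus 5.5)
The plan is to reduce the three assertions to the proof of Lemma~\ref{l:coordinate-adelic-open-corrected}, using that finite index passes to every finite congruence level and then to the profinite completion.

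\textbf{Step 1: openness of the adelic closure.} Let $\Delta'\subset\Delta_{\cala'}$ have index $k$. Then $\pr_i(\Delta')$ has index at most $k$ in $\pr_i(\Delta_{\cala'})$. In the proof of Lemma~\ref{l:coordinate-adelic-open-corrected} we showed that the closure $\overline{\kappa^{-1}(\pr_i(\Delta_{\cala'}))}$ in $K(\gotn)\cong\widehat\Gamma_\gotn$ is open. Closure in a profinite group is compatible with finite unions of cosets: if $\pr_i(\Delta_{\cala'})=\bigcup_{s=1}^{k'}\gamma_s\,\pr_i(\Delta')$ with $k'\le k$, then the closure of $\pr_i(\Delta_{\cala'})$ is the union of the $\gamma_s$-translates of the closure of $\pr_i(\Delta')$. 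Hence the latter closure has finite index in the former, and so it is a closed subgroup of finite index in an open subgroup. It is therefore open in $K(\gotn)$. Since $K(\gotn)$ is compact open in $\Sl_2(\mathbf A_F^\infty)$, the closure is open there too.

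\textbf{Step 2: local openness in $\pgl_2(F_v)$.} Project to the factor $\Sl_2(F_v)$. Because the projection $\Sl_2(\mathbf A_F^\infty)\to\Sl_2(F_v)$ is open, the image of an open set is open. The image of the adelic closure is contained in the $v$-adic closure of $\kappa^{-1}(\pr_i(\Delta'))$, so that $v$-adic closure contains a non-empty open subset of $\Sl_2(F_v)$. We then push forward along $\Sl_2(F_v)\to\pgl_2(F_v)$. This map is a local homeomorphism onto its image, and its image is open. The image is open because it has finite index: the quotient $\pgl_2(F_v)/\im\Sl_2(F_v)$ injects into $F_v^\times/F_v^{\times2}$, which is finite since $q$ is odd, and the image is closed. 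Continuity sends the $v$-adic closure into the closure of $\pr_i(\Delta')$, and $\kappa$ is compatible with the projection $\Sl_2\to\pgl_2$. Together these show that the closure of $\pr_i(\Delta')$ contains a non-empty open subset of $\pgl_2(F_v)$.

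\textbf{Step 3: Zariski density over $F_v$.} Let $Y$ be the Zariski closure of $\pr_i(\Delta')$ in $\pgl_{2,F_v}$. Then $Y(F_v)$ is $v$-adically closed and contains a non-empty open set. A proper closed subvariety of the smooth irreducible variety $\pgl_{2,F_v}$ has $F_v$-points with empty interior, by the analytic implicit function theorem, just as in Lemma~\ref{l:adelic-open-zariski}. Hence $Y=\pgl_{2,F_v}$.

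\textbf{Main obstacle.} The step that needs care is the finite-index passage in Step 1: knowing only that $\pr_i(\Delta')$ has finite index in $\pr_i(\Delta_{\cala'})$, one must still conclude that the closures have finite index. The coset argument handles this. Alternatively, one can pass to a normal finite-index subgroup $\Delta''\subset\Delta'$ of $\Delta_{\cala'}$ and bound the indices $[R_\gota:R''_\gota]$ uniformly by $[\Delta_{\cala'}:\Delta'']$, then run the inverse limit argument from Lemma~\ref{l:coordinate-adelic-open-corrected} verbatim.
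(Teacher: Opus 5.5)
Your proposal is correct and follows the paper's proof. The finite-index passage to the adelic closure, which you make explicit through the coset decomposition, and the projection to $\Sl_2(F_v)$ followed by the open map onto an open subgroup of $\pgl_2(F_v)$, are exactly the paper's steps. The only difference is the last step: you deduce Zariski density over $F_v$ directly from the nonempty $v$-adic interior, whereas the paper applies Lemma~\ref{l:adelic-open-zariski} over $F$ and then base changes to $F_v$; either works.
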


\begin{proof}
Since $\Delta'$ has finite index in $\Delta_{\cala'}$, the subgroup
$\pr_i(\Delta')$ has finite index in $\pr_i(\Delta_{\cala'})$. Hence
$\kappa^{-1}(\pr_i(\Delta'))$ has finite index in
$\kappa^{-1}(\pr_i(\Delta_{\cala'}))$. A finite-index subgroup of a subgroup
with open adelic closure again has open adelic closure.
Projecting to the $v$-factor and using that
$\Sl_2(F_v)\rightarrow \pgl_2(F_v)$
is open onto an open image, the $v$-adic closure of $\pr_i(\Delta')$
contains a non-empty open subset of $\pgl_2(F_v)$.

The Zariski-density assertion follows from
Lemma~\ref{l:adelic-open-zariski}, applied to
$\Delta=\kappa^{-1}\bigl(\pr_i(\Delta')\bigr)\subset\Sl_2(F)$,
and then by base change from $F$ to $F_v$.
\end{proof}

\begin{lem}\label{l:spreading-out}
There exist a finitely generated (over $F$) subfield $k\subset\C_\infty$, a smooth
geometrically irreducible $k$-variety $B_0$, and a $k$-morphism $B_0\rightarrow Y(\gotn)^N_k$ whose base change to $\C_\infty$ identifies with the morphism
$B\rightarrow Y(\gotn)^N_{\C_\infty}$.
\end{lem}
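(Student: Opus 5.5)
The plan is the standard spreading-out argument from finite presentation, carried out in three steps.

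First, I will descend the target. The full level-$\gotn$ Drinfeld modular curve is defined over $F$: it is a fine moduli scheme of finite type over $A$, hence over $F$. Its geometric connected component $Y(\gotn)$ is defined over a finite extension $F'\subset\C_\infty$ of $F$. This is because the connected components of a finite type $F$-scheme become geometrically connected over a finite separable extension. So $Y(\gotn)^N_{\C_\infty}$ is the base change of a quasi-projective $F'$-variety $Y(\gotn)^N_{F'}$.

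Second, I will descend $B$ together with its morphism. $B$ is locally closed in $Y(\gotn)^N_{\C_\infty}$, so it is cut out by finitely many polynomial equations and inequations in affine charts. These involve only finitely many coefficients in $\C_\infty$. Let $k_1$ be the field generated over $F'$ by these coefficients; it is finitely generated over $F$. The same equations define a locally closed subscheme $B_1\subset Y(\gotn)^N_{k_1}$ with $(B_1)_{\C_\infty}=B$ as subschemes, since formation of locally closed subschemes commutes with field extension. The morphism $B_1\rightarrow Y(\gotn)^N_{k_1}$ is the inclusion, so it base-changes to $B\rightarrow Y(\gotn)^N_{\C_\infty}$.

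Third, I will upgrade $B_1$ to be smooth and geometrically irreducible over a finitely generated extension. Irreducibility and reducedness of $B=(B_1)_{\C_\infty}$ imply that $B_1$ is geometrically irreducible. Smoothness descends along field extensions, because smoothness of a finite type $k_1$-scheme can be checked after any field extension (EGA IV, 17.7.1, or fpqc descent of smoothness). So $B_0:=B_1$ over $k:=k_1$ works directly. If one prefers to avoid the descent statement, one can use the limit formalism instead: $\C_\infty$ is the filtered union of its subfields finitely generated over $F'$. By the results of EGA IV$_3$ \S8 and IV$_4$ 17.7.8 (Stacks, Tag 01ZM and 081C), smoothness and geometric irreducibility of the base change hold already over some finitely generated $k\supset k_1$.

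The only point requiring care is the first step: making sure the chosen geometric connected component $Y(\gotn)$ and the maps used later are defined over a finitely generated field. This follows because all relevant components of the finite type $F$-scheme are defined over a finite extension of $F$. Otherwise the statement is routine, and no step is expected to be a genuine obstacle.
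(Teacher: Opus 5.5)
Your proposal is correct and follows the paper's route. You descend $B\rightarrow Y(\gotn)^N_{\C_\infty}$ to a finitely generated subfield by finite presentation, obtain smoothness by faithfully flat descent, and obtain geometric irreducibility from irreducibility over the algebraically closed field $\C_\infty$; your first step (descending the geometric component $Y(\gotn)$ itself to a finite separable extension of $F$) makes explicit a point the paper leaves implicit in its notation $Y(\gotn)^N_k$.
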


\begin{proof}
As $B$ and the morphism
$B\rightarrow Y(\gotn)^N_{\C_\infty}$ are of finite presentation, they descend
to one such subfield $k$ by \cite[Lemma~32.10.1]{stacks}. Thus there are
a $k$-variety $B_0$ and a morphism
$B_0\rightarrow Y(\gotn)^N_k$
whose base changes to $\C_\infty$ are the given data. Smoothness descends under (faithfully flat) base change, and $B_0$ is geometrically irreducible because $B_0\otimes_k\C_\infty\simeq B$ is irreducible and $\C_\infty$ is algebraically closed.
\end{proof}

From now on, fix $k$ and $B_0$ as in
Lemma~\ref{l:spreading-out}, and fix a prime $\gotp\neq\infty$ not dividing $\gotn$. Set
\[
K:=k(B_0).
\]
We fix separable closures
$F^{\sep}\subset k^{\sep}\subset K^{\sep}$
inside an algebraic closure of $K$, and set
\[
G_K:=\Gal(K^{\sep}/K)
\qquad
\text{and}
\qquad
G_K^{\mathrm{geom}}:=\Gal(K^{\sep}/Kk^{\sep}).
\]
Since $B_0$ is geometrically irreducible, $Kk^{\sep}$ is a field; in other words,
$B_{0,k^{\sep}}:=B_0\otimes_k k^{\sep}$ is irreducible with function field $Kk^{\sep}$.

For every $e\ge1$, set
\[
B_{0,e}:=
B_{0,k^{\sep}}
\times_{Y(\gotn)^N_{k^{\sep}}}
Y(\gotn\gotp^e)^N_{k^{\sep}}.
\]
Since $k^{\sep}$ is separably closed, every connected component of
$B_{0,e}$ is geometrically connected. Hence base change from
$k^{\sep}$ to $\C_\infty$ induces a bijection on connected components.

Let $C_e\subset B_{0,e}$ be the connected component whose base change to
$\C_\infty$ is the component
$B(\gotp^e)_{\cala'}$.
The transition morphism
$C_{e+1}\rightarrow C_e$
is surjective. Indeed, it is finite \'etale, and its image is a non-empty
open and closed subset of the connected scheme $C_e$.

\begin{lem}\label{l:det-finite-geometric}
With notation as in Lemma~\ref{l:spreading-out}, let
$\rho_{i,\gotp}\colon G_K\rightarrow\gl_2(F_\gotp)$
be the $\gotp$-adic Tate representation of the $i$-th coordinate Drinfeld
module. Then $\det\rho_{i,\gotp}(G_K^{\mathrm{geom}})$ is finite.
\end{lem}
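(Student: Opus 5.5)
The plan is to identify $\det\rho_{i,\gotp}$ with the Tate representation of a rank-$1$ Drinfeld module. Then the finiteness will follow from the geometric triviality of the Galois action on roots of unity in $\gotp$-power level. Concretely, by Anderson's exterior power construction (equivalently, the Weil pairing for Drinfeld modules, as in van der Heiden or Gekeler), for a rank-$2$ Drinfeld module $\varphi$ over $K$ there is a rank-$1$ Drinfeld module $\psi$ over $K$ with
\[
\det T_\gotp(\varphi)\simeq T_\gotp(\psi)
\]
as $G_K$-modules. For $\varphi_T=T+g\uptau+\Delta\uptau^2$ one can take $\psi_T=T-\Delta\uptau$. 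Hence $\det\rho_{i,\gotp}=\rho_{\psi,\gotp}$.

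The first step is to reduce to the Carlitz module. Since $\psi$ has rank one, it becomes isomorphic over $K(\,(-\Delta)^{1/(q-1)}\,)$ to the Carlitz module $C$. This extension is Kummer of degree dividing $q-1$, hence separable, so the isomorphism is defined over a finite separable extension $K'\supset K$. On the open subgroup $G_{K'}\subset G_K$ we then get
\[
\det\rho_{i,\gotp}|_{G_{K'}}=\rho_{C,\gotp}|_{G_{K'}}.
\]
In general the two differ by the character $G_K\to\F_q^\times$ attached to $(-\Delta)^{1/(q-1)}$, which has finite image.

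The second step is to control the Carlitz part on the geometric subgroup. The representation $\rho_{C,\gotp}$ factors through $\Gal(F(C[\gotp^\infty])/F)$, and $F(C[\gotp^\infty])\subset F^{\sep}\subset k^{\sep}$. Therefore $\rho_{C,\gotp}$ is trivial on $G_K^{\mathrm{geom}}=\Gal(K^{\sep}/Kk^{\sep})$. It follows that $\det\rho_{i,\gotp}(G_K^{\mathrm{geom}})$ is contained in the image of the finite character $G_K\to\F_q^\times$ above. In particular, it is a subgroup of $\F_q^\times\subset A_\gotp^\times$, and hence finite.

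The main obstacle is justifying $\det T_\gotp(\varphi)\simeq T_\gotp(\psi)$ as Galois modules. I would cite the standard construction: the rank-$2$ $t$-motive $M(\varphi)$ has top exterior power $M(\psi)$, and Tate modules commute with determinants via the motive $\leftrightarrow$ Tate module comparison. This step also has to be checked over the generic-characteristic field $K$: it holds because $K$ contains $F$ via $j$ being transcendental on nonconstant coordinates, and for constant coordinates $K\supset k\supset F$ still. A possible twist by the explicit constant $\Delta$ is exactly what the finite character in the first step absorbs.
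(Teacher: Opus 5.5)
Your proposal is correct and follows the same route as the paper: identify $\det\rho_{i,\gotp}$ with the Tate representation of the rank-$1$ determinant Drinfeld module, note that this module becomes Carlitz over a finite separable extension, and use that the Carlitz $\gotp$-power torsion lies in $F^{\sep}\subset k^{\sep}$, which $G_K^{\mathrm{geom}}$ fixes pointwise. Your version is somewhat more explicit than the paper's, since the formula $\psi_T=T-\Delta\uptau$ and the Kummer character show the image lies in $\F_q^\times$. The aside about $j$ being transcendental is unnecessary, because generic characteristic comes simply from $A\subset F\subset k\subset K$.
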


\begin{proof}
The determinant representation of $\rho_{i,\gotp}$ is the $\gotp$-adic Tate
representation of the rank-$1$ determinant Drinfeld module associated with the
$i$-th rank-$2$ Drinfeld module; see \cite[Proof of Theorem~1.8]{pinkmt}. Every rank-$1$ Drinfeld $A$-module (where $A=\F_q[T]$) over an algebraically closed
field is isomorphic to the Carlitz module after a finite separable extension (see \cite{hayes}).
After replacing $K$ by a finite extension, the determinant Drinfeld module is
therefore Carlitz. Its $\gotp$-power torsion is contained in
$F^{\sep}\subset k^{\sep}$. Since $G_K^{\mathrm{geom}}$ fixes $k^{\sep}$
pointwise, the determinant character is trivial on a finite index subgroup of $G_K^{\mathrm{geom}}$. Hence its image is finite.
\end{proof}

\subsubsection{Monodromy comparison}
Let $B,\cala',\Delta_{\cala'}$ be as in Section~\ref{running}.
Fix a  prime $\gotp\neq \infty$ not dividing $\gotn$. Let $k,B_0,K$ and
$G_K^{\mathrm{geom}}$ be as in Lemma~\ref{l:spreading-out}. Recall that
\[
\rho_{r,\gotp}\colon G_K\rightarrow\gl_2(F_\gotp)
\qquad
\text{and}
\qquad
\bar\rho_{r,\gotp}\colon G_K\rightarrow\pgl_2(F_\gotp)
\]
are the $\gotp$-adic and projective $\gotp$-adic Tate representations attached
to the $r$-th coordinate Drinfeld module. For $i\ne j$ recall that $\pr_{ij}$ is the projection to the coordinates $i$, $j$. Set
\[
S_{\gotp,ij}:=
\overline{\pr_{ij}(\Delta_{\cala'})}^{\,\mathrm{Zar}}
\subset
\pgl_{2,F_\gotp}^{\,2}.
\]

\begin{lem}
\label{l:tate-comparison-identity-components}

There exist $a_i,a_j\in\pgl_2(A_\gotp)\subset\pgl_2(F_\gotp)$ such that
\[
\big(\overline{(\bar\rho_{i,\gotp},\bar\rho_{j,\gotp})(G_K^{\mathrm{geom}})}
^{\,\mathrm{Zar}}\big)^{\circ}
=
(a_i,a_j)S_{\gotp,ij}^{\circ}(a_i,a_j)^{-1}.
\]
\end{lem}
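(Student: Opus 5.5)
The plan is to compare both sides at every finite congruence level $\gotp^e$ and then pass to the limit. First, the geometric Galois image is computed through the tower. The full level-$\gotn$ structure on $Y(\gotn)$ is fine, so the generic point of $B_0$ carries $N$ coordinate Drinfeld modules with full level-$\gotn$ structure. Since $\gotp\nmid\gotn$, the cover $Y(\gotn\gotp^e)\rightarrow Y(\gotn)$ is the cover trivializing the $\gotp^e$-torsion, by the moduli interpretation. Consequently $B_{0,e}\rightarrow B_{0,k^{\sep}}$ is the finite étale cover attached to the action of $G_K^{\mathrm{geom}}$ on the $\gotp^e$-torsion $\prod_r\varphi_r[\gotp^e]$. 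Choose compatible full level-$\gotp^e$ structures at the generic point of the tower $(C_e)_e$; the transition maps $C_{e+1}\rightarrow C_e$ are surjective, so this is possible. This gives compatible bases of $T_\gotp(\varphi_r)$. In these bases, the image of $G_K^{\mathrm{geom}}$ in $\prod_r\gl_2(A/\gotp^e)$ is the stabilizer of the component $C_e$ in the deck group.

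Second, we identify this stabilizer. Lemma~\ref{l:congruence-tower-comparison} applies with $\gota=\gotp^e$, and base change from $k^{\sep}$ to $\C_\infty$ is a bijection on components. Together these show that the stabilizer of $C_e$ equals the image of $\Delta_{\cala'}$ in $(\Gamma(\gotn)/\Gamma(\gotn\gotp^e))^N$. Lemma~\ref{l:level-tower-galois} identifies this quotient with $\widetilde\Gamma(\gotn)/\widetilde\Gamma(\gotn\gotp^e)\subset\Sl_2(A/\gotp^e)$. The Galois image, however, a priori lies in $\gl_2$, and the moduli deck group only sees it modulo the determinant and the choice of bases. The determinant is handled by Lemma~\ref{l:det-finite-geometric}, which says $\det\rho_{r,\gotp}(G_K^{\mathrm{geom}})$ is finite. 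Hence after passing to a finite index subgroup the image lies in $\Sl_2$ up to finite data, and projectivizing kills the remaining scalars.

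Taking the inverse limit over $e$ then gives the following. The $\gotp$-adic closure of $\kappa^{-1}\pr_{ij}(\Delta_{\cala'})$ in $\Sl_2(A_\gotp)^2$, pushed to $\pgl_2(A_\gotp)^2$, coincides up to finite index with the conjugate of $(\bar\rho_{i,\gotp},\bar\rho_{j,\gotp})(G_K^{\mathrm{geom}})$ by some $(a_i,a_j)\in\pgl_2(A_\gotp)^2$. Here $a_r$ is the matrix comparing the level bases chosen from the analytic uniformization $\widetilde\pi(Az+A)$ at $z\in\cala'$ with the bases of the Tate modules. This step is the main obstacle. One must check that the two identifications of the deck group are compatible, namely the analytic one via $\Gamma(\gotn)$ acting on $\Omega$ and the moduli one via change of level structure. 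One must also check that the comparison bases can be chosen compatibly in $e$; this is where the fixed point $z_0$ and the compatible choice of components is used.

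Third, we pass from $\gotp$-adic closures to identity components of Zariski closures. For a subgroup $\Lambda\subset\pgl_2(F_\gotp)^2$, the Zariski closure of $\Lambda$ and that of its $\gotp$-adic closure coincide. Passing to a finite index subgroup does not change the identity component of the Zariski closure. Applying both facts to the equality up to finite index and conjugation obtained above yields the stated equality of identity components, $(a_i,a_j)S_{\gotp,ij}^{\circ}(a_i,a_j)^{-1}$.
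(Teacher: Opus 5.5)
Your proposal follows essentially the same route as the paper. You compare the image of $G_K^{\mathrm{geom}}$ at each level $\gotp^e$ with the image of $\Delta_{\cala'}$, using Lemma~\ref{l:congruence-tower-comparison} and compatible lifts in the tower. You then pass, via Lemma~\ref{l:det-finite-geometric}, to the finite-index subgroup on which both determinants are trivial, take the inverse limit to get commensurable compact subgroups of $\pgl_2(A_\gotp)^2$, and conclude by equality of identity components of Zariski closures. The compatibility of the analytic and moduli-theoretic descriptions of the deck action, which you flag as the main obstacle, is likewise used implicitly in the paper: it identifies $\Gamma(\gotn)/\Gamma(\gotn\gotp^e)\simeq\Sl_2(A/\gotp^e)$ via strong approximation, where you use only the inclusion, which suffices.
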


\begin{proof}
Choose a geometric point $\bar b$ of $B_{0,k^{\sep}}$. For every $e\ge1$,
choose a lift $\bar b_e$ of $\bar b$ to the connected component of
$B_{0,k^{\sep}} \times_{Y(\gotn)^N_{k^{\sep}}}Y(\gotn\gotp^e)^N$
corresponding to $\cala'$, and choose these lifts compatibly as $e$ varies.

For the $r$-th coordinate, the point $\bar b_e$ gives a full
level-$\gotn\gotp^e$ structure on the fiber of the $r$-th Drinfeld module at
$\bar b$. Since the level-$\gotn$ structure is already fixed, the remaining
$\gotp^e$-part gives a basis of the $\gotp^e$-torsion. Passing to the inverse
limit over $e$ gives an $A_\gotp$-basis of
$T_\gotp(\varphi_r)$.
Changing the compatible system of lifts changes this basis, in the $r$-th
coordinate, by an element of $\gl_2(A_\gotp)$, hence changes the projective
image by conjugation by an element
\[
a_r\in\pgl_2(A_\gotp).
\]
These are the elements $a_r$ used below.

Since $B_0$ is geometrically irreducible, $B_{0,k^{\sep}}$ is connected with
function field $Kk^{\sep}$. Consider the geometric generic point
$\bar\eta=\Spec K^{\sep}\rightarrow B_{0,k^{\sep}}$.
Then the inclusion of the generic point
$\eta=\Spec Kk^{\sep}\rightarrow B_{0,k^{\sep}}$
induces, by functoriality of the \'etale fundamental group, a morphism
\[
\pi_1^{\et}(\eta,\bar\eta)
=
\Gal(K^{\sep}/Kk^{\sep})
=
G_K^{\mathrm{geom}}
\rightarrow
\pi_1^{\et}(B_{0,k^{\sep}},\bar\eta).
\]
Since $B_{0,k^{\sep}}$ is smooth, hence regular, hence normal, this map is
surjective; see \cite[Lemma~58.10.7]{stacks}.
Therefore, at level $\gotp^e$, the image of
$G_K^{\mathrm{geom}}$ is the full monodromy group of the corresponding connected
component of
$B_{0,k^{\sep}}
\times_{Y(\gotn)^N_{k^{\sep}}}Y(\gotn\gotp^e)^N$.
By Lemma~\ref{l:congruence-tower-comparison} and
Lemma~\ref{l:spreading-out}, this group is conjugate to the image
of $\Delta_{\cala'}$ in
$\bigl(\Gamma(\gotn)/\Gamma(\gotn\gotp^e)\bigr)^N$.
Projecting to the $(i,j)$-coordinates, let $M_{ij,e}$ denote the image of
$G_K^{\mathrm{geom}}$ on the corresponding $(i,j)$-coordinate
level-$\gotn\gotp^e$ cover. Then
\begin{equation}\label{e:finite-level-comparison-ij}
M_{ij,e}
\sim
\im\left(
\pr_{ij}(\Delta_{\cala'})
\rightarrow
\bigl(\Gamma(\gotn)/\Gamma(\gotn\gotp^e)\bigr)^2
\right),
\end{equation}
where $\sim$ denotes conjugacy in
$\bigl(\Gamma(\gotn)/\Gamma(\gotn\gotp^e)\bigr)^2$.

The level-$\gotn$ structure is defined over $K$, so $G_K$ acts trivially on the
level-$\gotn$ part. Thus only the $\gotp$-power tower is seen by the
representations below. Since $\gotp\nmid\gotn$, the Chinese remainder theorem
yields
\[
\ker\bigl(\Sl_2(A/\gotn\gotp^e)\rightarrow\Sl_2(A/\gotn)\bigr)
\simeq
\Sl_2(A/\gotp^e).
\]
The reduction map from $\widetilde\Gamma(\gotn)$ onto this kernel is
surjective by strong approximation, see again
\cite[Corollary~28.3.6]{voight-quaternion}. Hence
$\widetilde\Gamma(\gotn)/\widetilde\Gamma(\gotn\gotp^e)
\simeq
\Sl_2(A/\gotp^e)$.
Via Lemma~\ref{l:level-tower-galois}, it follows that
\begin{equation}\label{e:approx-monodromy}
\Gamma(\gotn)/\Gamma(\gotn\gotp^e)
\simeq
\Sl_2(A/\gotp^e).
\end{equation}
Consider
\[
G^0:=
\big\{\sigma\in G_K^{\mathrm{geom}}:
\det\rho_{i,\gotp}(\sigma)=1=\det\rho_{j,\gotp}(\sigma)
\big\}.
\]
By Lemma~\ref{l:det-finite-geometric}, $G^0$ has finite index in
$G_K^{\mathrm{geom}}$.
For $r=i,j$, let
$\rho^{\mathrm{int}}_{r,\gotp}\colon G_K\rightarrow \gl_2(A_\gotp)$
be the representation on the Tate module
$T_\gotp(\varphi_r)$ with respect to the basis chosen above, whose scalar
extension to $F_\gotp$ is $\rho_{r,\gotp}$. If $\sigma\in G^0$, then
$\det\rho^{\mathrm{int}}_{r,\gotp}(\sigma)
=
\det\rho_{r,\gotp}(\sigma)
=
1$.
Hence, after reducing modulo $\gotp^e$, we have
$\rho^{\mathrm{int}}_{r,\gotp}(\sigma)\bmod\gotp^e
\in
\Sl_2(A_\gotp/\gotp^eA_\gotp)
=
\Sl_2(A/\gotp^e)$.
Thus, under the chosen compatible bases and the identification
\eqref{e:approx-monodromy}, the action of $G^0$ on the $(i,j)$-coordinates of
the level-$\gotn\gotp^e$ cover is represented by the reductions modulo
$\gotp^e$ of
$(\rho^{\mathrm{int}}_{i,\gotp},\rho^{\mathrm{int}}_{j,\gotp})|_{G^0}$.
After projectivising, this is the reduction modulo $\gotp^e$ of
$(\bar\rho_{i,\gotp},\bar\rho_{j,\gotp})|_{G^0}$,
up to the fixed conjugation by $(a_i,a_j)$.
By \eqref{e:finite-level-comparison-ij}, the image of $G_K^{\mathrm{geom}}$
on the same $(i,j)$-coordinate level cover is conjugate to the image of
\[
\pr_{ij}(\Delta_{\cala'})
\rightarrow
\big(\Gamma(\gotn)/\Gamma(\gotn\gotp^e)\big)^2.
\]
Let $\theta_e$ be the finite-level monodromy map of
$G_K^{\mathrm{geom}}$ on the $(i,j)$-coordinate level-$\gotn\gotp^e$ cover, in symbols, $\theta_e\colon G_K^{\mathrm{geom}}\rightarrow (\Gamma(\gotn)/\Gamma(\gotn\gotp^e))^2$.
Since $G^0\subset G_K^{\mathrm{geom}}$ has finite index,
$[\theta_e(G_K^{\mathrm{geom}}):\theta_e(G^0)]
\le
[G_K^{\mathrm{geom}}:G^0]$.
By \eqref{e:finite-level-comparison-ij}, after the fixed conjugation by
$(a_i,a_j)$, the group $\theta_e(G_K^{\mathrm{geom}})$ is the corresponding
finite-level image of $\pr_{ij}(\Delta_{\cala'})$. Hence the finite-level image
of $G^0$ has index at most $[G_K^{\mathrm{geom}}:G^0]$ inside that image.

Passing to the inverse limit over $e$, the $\gotp$-adic closure of
$(\bar\rho_{i,\gotp},\bar\rho_{j,\gotp})(G^0)$
in $\pgl_2(A_\gotp)^2$ is therefore commensurable with the $\gotp$-adic closure of
$(a_i,a_j)\pr_{ij}(\Delta_{\cala'})(a_i,a_j)^{-1}$.
Commensurable compact subgroups have the same Zariski closure identity
component. Since $G^0$ has finite index in $G_K^{\mathrm{geom}}$, replacing
$G^0$ by $G_K^{\mathrm{geom}}$ also does not change the Zariski closure identity component. 
Hence we conclude that
$\big(
\overline{(\bar\rho_{i,\gotp},\bar\rho_{j,\gotp})(G_K^{\mathrm{geom}})}
^{\,\mathrm{Zar}}
\big)^\circ=
(a_i,a_j)S_{\gotp,ij}^{\circ}(a_i,a_j)^{-1}$.
\end{proof}

\begin{lem}\label{l:normalizer-graph}
Let $u\in\pgl_2(F_\gotp)$ and consider
$D_u:=\{(g,ugu^{-1}):g\in\pgl_{2,F_\gotp}\}
\subset\pgl_{2,F_\gotp}^{\,2}$.
Then $N_{\pgl_2^2}(D_u)=D_u$.
\end{lem}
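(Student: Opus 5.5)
First reduce to $u=e$. The automorphism $\id\times\Inn(u^{-1})$ of $\pgl_{2,F_\gotp}^{\,2}$ carries $D_u$ onto the diagonal $D_e=\{(g,g)\}$. It also carries normalizers to normalizers, so it suffices to show $N_{\pgl_2^2}(D_e)=D_e$. The inclusion $D_e\subseteq N(D_e)$ is clear. Since normalizers of closed subgroups are closed subgroup schemes, we may check the equality on points over an algebraic closure $\overline{F_\gotp}$; equality on geometric points suffices here because $D_e$ is smooth and we only need the reduced statement.

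Now let $(a,b)$ normalize $D_e$. Then for every $g$ we have $(aga^{-1},bgb^{-1})\in D_e$, that is, $aga^{-1}=bgb^{-1}$. Hence $c:=b^{-1}a$ commutes with every $g\in\pgl_2(\overline{F_\gotp})$, so $c$ lies in the center of $\pgl_2$. The adjoint group $\pgl_2$ has trivial center: a matrix commuting with all matrices up to a scalar factor is itself scalar. To check this, commute with the diagonal elements $t(\lambda)$ for $\lambda$ generic and with $u_+(1)$, in the style of Lemma~\ref{l:unipotent-commuting}. Therefore $c=e$, so $a=b$ and $(a,b)\in D_e$.

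There is no real obstacle. The only points needing care are the triviality of the center of $\pgl_2$ over any field, where projective scalars must be handled as in Lemma~\ref{l:unipotent-commuting}, and the remark that the scheme-theoretic normalizer is irrelevant since the lemma is used at the level of points and Zariski closures.
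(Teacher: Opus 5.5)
Your proposal is correct and is essentially the paper's argument. Both compare coordinates to get $aga^{-1}=bgb^{-1}$ up to the twist by $u$, then conclude from the triviality of the center of $\pgl_2$. The only difference is cosmetic: you first conjugate $u$ away via $\id\times\Inn(u^{-1})$, whereas the paper keeps $u$ and sets $c:=u^{-1}bu$.
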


\begin{proof}
Let $(a,b)\in N_{\pgl_2^2}(D_u)$. Then we have that for every $g\in\pgl_2$ there exists
$g'\in\pgl_2$ such that
$(a,b)(g,ugu^{-1})(a,b)^{-1}=(g',ug'u^{-1})$.
The first coordinate gives $g'=aga^{-1}$, and the second coordinate gives
$bugu^{-1}b^{-1}=uaga^{-1}u^{-1}$ for all $g$.
Put $c:=u^{-1}bu$. Then $cgc^{-1}=aga^{-1}$ for all $g$, so $a^{-1}c$
centralizes $\pgl_2$. Since $\pgl_2$ has trivial center, $c=a$. Thus
$b=uau^{-1}$, and $(a,b)\in D_u$.
\end{proof}

\begin{cor}
\label{l:graph-monodromy-projective-tate}
Keep the notation of
Lemma~\ref{l:tate-comparison-identity-components}. Assume that
$S_{\gotp,ij}^{\circ}
=
D_{\delta_0}
:=
\left\{
(g,\delta_0g\delta_0^{-1})
:
g\in\pgl_{2,F_\gotp}
\right\}$
for some $\delta_0\in\pgl_2(F_\gotp)$. Then there exists
$u\in\pgl_2(F_\gotp)$ such that
\[
\bar\rho_{j,\gotp}(\sigma)
=
u\bar\rho_{i,\gotp}(\sigma)u^{-1}
\qquad
\text{for every }\sigma\in G_K.
\]
\end{cor}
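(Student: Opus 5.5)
Set $M:=\overline{(\bar\rho_{i,\gotp},\bar\rho_{j,\gotp})(G_K)}^{\,\mathrm{Zar}}\subset\pgl_{2,F_\gotp}^{\,2}$ and $M^{\mathrm{geom}}$ for the analogous closure of the image of $G_K^{\mathrm{geom}}$. By Lemma~\ref{l:tate-comparison-identity-components},
\[
(M^{\mathrm{geom}})^\circ=(a_i,a_j)D_{\delta_0}(a_i,a_j)^{-1}=D_u,
\qquad u:=a_j\delta_0a_i^{-1}\in\pgl_2(F_\gotp),
\]
since conjugating the graph $\{(g,\delta_0g\delta_0^{-1})\}$ by $(a_i,a_j)$ gives $\{(a_iga_i^{-1},a_j\delta_0 g\delta_0^{-1}a_j^{-1})\}$. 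Substituting $g'=a_iga_i^{-1}$ turns this into $\{(g',ug'u^{-1})\}$. The goal is to show that the whole image of $G_K$ lies in $D_u$; this is exactly the claimed identity $\bar\rho_{j,\gotp}(\sigma)=u\bar\rho_{i,\gotp}(\sigma)u^{-1}$.

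\textbf{Step 1: pass from $G_K^{\mathrm{geom}}$ to $G_K$.} The subgroup $G_K^{\mathrm{geom}}$ is normal in $G_K$. Hence $M^{\mathrm{geom}}$ is normalized by the image of $G_K$, and so is its identity component $(M^{\mathrm{geom}})^\circ=D_u$, since it is characteristic. Normalizing an algebraic subgroup is a Zariski closed condition, so all of $M$ normalizes $D_u$. Lemma~\ref{l:normalizer-graph} then gives $M\subset N_{\pgl_2^2}(D_u)=D_u$. In particular $(\bar\rho_{i,\gotp}(\sigma),\bar\rho_{j,\gotp}(\sigma))\in D_u$ for every $\sigma\in G_K$, which is the claim.

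\textbf{Step 2: make sure $D_u$ is defined over $F_\gotp$.} Lemma~\ref{l:normalizer-graph} is stated for $u\in\pgl_2(F_\gotp)$. This holds because $a_i,a_j\in\pgl_2(A_\gotp)$ and $\delta_0\in\pgl_2(F_\gotp)$ by hypothesis. Note also that the conjugacy ambiguity in $\bar\rho_{r,\gotp}$ is absorbed by the choice of bases fixed in the proof of Lemma~\ref{l:tate-comparison-identity-components}, so $u$ is well defined with respect to those bases.

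\textbf{Main obstacle: matching the Zariski closures.} The points of $M$ are $F_\gotp$-points while $D_u$ is a subgroup scheme, so the normalizer must be taken in the algebraic sense, i.e.\ conjugation by an $F_\gotp$-point preserves $D_u$ scheme-theoretically. This is fine because $D_u$ is smooth and its $F_\gotp$-points are Zariski dense. I would also check that the normalizer statement of Lemma~\ref{l:normalizer-graph}, proved pointwise, applies over $\overline{F_\gotp}$-points; its argument (trivial center of $\pgl_2$) works verbatim there.
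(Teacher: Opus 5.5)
Your proposal is correct and follows the paper's proof. You identify the identity component of the Zariski closure of the geometric image with $D_u$, where $u=a_j\delta_0a_i^{-1}$, via Lemma~\ref{l:tate-comparison-identity-components}; you use normality of $G_K^{\mathrm{geom}}$ in $G_K$ to see that the image of $G_K$ normalizes $D_u$; and you conclude with Lemma~\ref{l:normalizer-graph}. Your detour through the full closure $M$ is harmless but unnecessary, since the normalizer lemma applies directly to each point $(\bar\rho_{i,\gotp}(\sigma),\bar\rho_{j,\gotp}(\sigma))$.
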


\begin{proof}
Let $H:=
\overline{
(\bar\rho_{i,\gotp},\bar\rho_{j,\gotp})
(G_K^{\mathrm{geom}})
}^{\,\mathrm{Zar},F_\gotp}
\subset
\pgl_{2,F_\gotp}^{\,2}$.
By Lemma~\ref{l:tate-comparison-identity-components}, we have
$H^\circ
=
(a_i,a_j)S_{\gotp,ij}^{\circ}(a_i,a_j)^{-1}
=D_u$,
where
$u:=a_j\delta_0a_i^{-1}\in\pgl_2(F_\gotp)$.
Since $G_K^{\mathrm{geom}}$ is normal in $G_K$, the group
$(\bar\rho_{i,\gotp},\bar\rho_{j,\gotp})(G_K)$
normalizes $H^\circ=D_u$. By
Lemma~\ref{l:normalizer-graph},
we have $N_{\pgl_2^2}(D_u)=D_u$.
Thus
$(\bar\rho_{i,\gotp}(\sigma),
 \bar\rho_{j,\gotp}(\sigma))
\in D_u$
for every $\sigma\in G_K$, which is the claimed conjugacy.
\end{proof}

\begin{lem}
\label{l:projective-tate-conjugacy-isogeny-corrected}
Let $K$ be finitely generated over $F$, and let $\varphi_1,\varphi_2$ be
rank-$2$ Drinfeld $A$-modules over $K$. Let
$\gotp\ne\infty$ be a prime. If, after replacing $K$ by a finite
extension, the projective $\gotp$-adic Tate representations of $\varphi_1$ and
$\varphi_2$ are conjugate, then $\varphi_1$ and $\varphi_2$ are geometrically
isogenous.
\end{lem}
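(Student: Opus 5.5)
The idea is to lift projective conjugacy to an honest conjugacy of $\gl_2$-valued Tate representations up to a character, kill the character after a finite extension, and then apply the Tate conjecture for Drinfeld modules (Taguchi, Tamagawa). Throughout we replace $K$ by the finite extension in the hypothesis, so that there is $u\in\pgl_2(F_\gotp)$ with $\bar\rho_{2,\gotp}(\sigma)=u\bar\rho_{1,\gotp}(\sigma)u^{-1}$ for all $\sigma\in G_K$. Since geometric isogeny is insensitive to finite extensions of $K$, we may enlarge $K$ freely.

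\emph{Step 1: reduce to a twist.} Lift $u$ to $\tilde u\in\gl_2(F_\gotp)$. Then
\[
\rho_{2,\gotp}(\sigma)=\chi(\sigma)\,\tilde u\rho_{1,\gotp}(\sigma)\tilde u^{-1}
\]
for a function $\chi\colon G_K\rightarrow F_\gotp^\times$. This $\chi$ is a continuous character, because both sides are homomorphisms and scalars are central. Taking determinants gives
\[
\chi^2=\det\rho_{2,\gotp}\cdot(\det\rho_{1,\gotp})^{-1}.
\]
By the argument of Lemma~\ref{l:det-finite-geometric}, each $\det\rho_{r,\gotp}$ is the Tate representation of a rank-$1$ Drinfeld module. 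After a finite extension both determinant modules become isomorphic to the Carlitz module, hence to each other, so $\chi^2=1$. Thus $\chi$ has order at most $2$, and after a further extension of degree at most $2$ we get $\chi=1$. Then $V_\gotp(\varphi_1)\simeq V_\gotp(\varphi_2)$ as $F_\gotp[G_K]$-modules.

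\emph{Step 2: Tate conjecture.} For finitely generated $K$ over $F$, the theorem of Taguchi and Tamagawa \cite{taguchi,tamagawa} states that
\[
\Hom_K(\varphi_1,\varphi_2)\otimes_A A_\gotp\simeq\Hom_{A_\gotp[G_K]}\bigl(T_\gotp(\varphi_1),T_\gotp(\varphi_2)\bigr).
\]
An isomorphism of rational Tate modules, scaled by a power of a generator of $\gotp$, gives a nonzero $G_K$-equivariant map of integral Tate modules, so the right-hand side is nonzero. Hence $\Hom_K(\varphi_1,\varphi_2)\neq0$. A nonzero morphism between Drinfeld modules of equal rank $2$ has finite kernel, so it is an isogeny over $K$, in particular a geometric one.

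\emph{Main obstacle.} The delicate point is Step 1. We have to make sure that a twist by a possibly infinite-order character cannot survive, i.e.\ that the determinant comparison really forces $\chi$ to be of finite order. Here the rank-$1$ (Carlitz) structure of $\det$ is essential. In the intended application, \cite{pinkmt} can also be used: the image of $\rho_{r,\gotp}$ is open in $\gl_2$ unless there is CM, and the determinant is known explicitly.

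In the CM case, i.e.\ when $\operatorname{End}(\varphi_1)\neq A$, a projective conjugacy between $\varphi_1$ and $\varphi_2$ could a priori come from a character twist that is not of finite order relative to the commutant. However, the determinant argument above is independent of the endomorphism ring, so it should handle this case as well. I would check carefully that the determinant modules become isomorphic over a finite extension. This holds because rank-$1$ Drinfeld modules over $\C_\infty$ are all isomorphic to Carlitz, and for finitely generated $K$ the isomorphism is defined over a finite separable extension (Hayes \cite{hayes}).
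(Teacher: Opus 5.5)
Your proof is correct and follows the paper's argument essentially step for step. You lift the projective conjugacy to a twist by a continuous character $\chi$, identify the determinants with Tate characters of rank-$1$ Drinfeld modules that become isomorphic over a finite extension, deduce $\chi^2=1$, and remove $\chi$ by a further finite extension. You then invoke the Taguchi--Tamagawa Tate conjecture. The only differences are cosmetic: you use the integral form of the Tate conjecture with a scaling argument rather than the rational form, and isomorphism with the Carlitz module rather than geometric isogeny of the determinant modules.
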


\begin{proof}
After the finite extension in the hypothesis, assume
$\bar\rho_{\varphi_2,\gotp}=\Inn(u)\circ\bar\rho_{\varphi_1,\gotp}$
for some $u\in\pgl_2(F_\gotp)$. Choose a lift $\widetilde u\in\gl_2(F_\gotp)$.
Writing $\rho_a:=\rho_{\varphi_a,\gotp}$, there is a continuous character
$\chi\colon G_K\rightarrow F_\gotp^\times$ such that
$\rho_2(\sigma)=\chi(\sigma)\widetilde u\rho_1(\sigma)\widetilde u^{-1}$
for $\sigma\in G_K$.
Taking determinants gives $\det\rho_2=\chi^2\det\rho_1$.
Let $\psi_a$ be the rank-$1$ determinant Drinfeld module attached to $\varphi_a$ (see again \cite[proof of Theorem~1.8]{pinkmt}), so that
\[
V_\gotp(\psi_a)\simeq\bigwedge^2 V_\gotp(\varphi_a)
\]
$G_K$-equivariantly. Since $A=\F_q[T]$, the
two rank-$1$ modules $\psi_1$ and $\psi_2$ are geometrically isogenous. After a
further finite extension of $K$, their rational Tate modules are isomorphic, so
$\det\rho_1=\det\rho_2$. Therefore $\chi^2=1$. Since $q$ is odd, $\chi$ has finite image. After one more finite extension, $\chi=1$.

Thus $V_\gotp(\varphi_1)\simeq V_\gotp(\varphi_2)$ as $F_\gotp[G_K]$-modules.
The Tate conjecture for Drinfeld modules over finitely generated fields \cite{taguchi,tamagawa} gives
\[
\Hom_K(\varphi_1,\varphi_2)\otimes_A F_\gotp
\simeq
\Hom_{F_\gotp[G_K]}\bigl(V_\gotp(\varphi_1),V_\gotp(\varphi_2)\bigr).
\]
 The
right-hand side is non-zero, hence $\Hom_K(\varphi_1,\varphi_2)\ne0$ after the
finite extension already made. In general, if a non-zero homomorphism is obtained
after some finite extension $K'/K$, then it is a non-zero element of
$\Hom_{\overline K}(\varphi_1,\varphi_2)$,
where $\overline K$ is an algebraic closure of $K$. Thus the two modules are
geometrically isogenous. Any non-zero homomorphism between Drinfeld modules of the
same rank is an isogeny.
\end{proof}

\subsubsection{The two-coordinate criterion}

For a field extension $K/F$, an element $\delta\in\pgl_2(K)$, and an
integer $m\ge0$, set
\[
\Gamma_{\delta,m,K}
:=
\left\{
\bigl(g,\delta\,\Frob^m(g)\,\delta^{-1}\bigr):
g\in\pgl_{2,K}
\right\}
\subset \pgl_{2,K}^{\,2}.
\]
Here $\Frob^m$ denotes the $m$-fold Frobenius endomorphism of  $\pgl_2$. On $K$-points it raises matrix coefficients to their
$p^m$-th powers.

\begin{lem}
\label{l:F-frobenius-graph}
Let
$J_F\subset \pgl_{2,F}^{\,2}$
be a closed subgroup. Assume that, after base change to $\C_\infty$,
we get $(J_F)_{\C_\infty}=
\Gamma_{\delta,m,\C_\infty}$
for some $\delta\in\pgl_2(\C_\infty)$ and $m\ge0$. Then there exists
$\delta_0\in\pgl_2(F)$ such that
\[
J_F=\Gamma_{\delta_0,m,F}.
\]
\end{lem}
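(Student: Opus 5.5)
The plan is to descend the conjugating element $\delta$ from $\C_\infty$ to $F$ by Galois descent. Write $K:=\C_\infty$. The first step is to show that $\delta$ is determined by $J_F$ up to a finite ambiguity: if $\Gamma_{\delta,m,K}=\Gamma_{\delta',m,K}$, then $\delta\Frob^m(g)\delta^{-1}=\delta'\Frob^m(g)\delta'^{-1}$ for all $g$. The image $\Frob^m(\pgl_2(K))$ is all of $\pgl_2(K)$, since $K$ is perfect, so $\delta^{-1}\delta'$ centralizes $\pgl_2(K)$. Because the center is trivial, we get $\delta=\delta'$. Thus $\delta$ is uniquely determined by the subgroup $\Gamma_{\delta,m,K}$. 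Equivalently, $\delta\in\pgl_2(K)$ is the unique element with $(e,\delta)\Gamma_{e,m,K}(e,\delta)^{-1}=(J_F)_K$.

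Next, for every $\sigma\in\Aut(K/F)$, I would apply $\sigma$ coordinatewise to $K$-points. Since $J_F$ is defined over $F$, $\sigma$ preserves $J_F(K)$. Since $\Frob^m$ is defined over $\F_p$, it commutes with $\sigma$, so $\sigma(\Gamma_{\delta,m}(K))=\Gamma_{\sigma(\delta),m}(K)$. Uniqueness from the first step then gives $\sigma(\delta)=\delta$ for all $\sigma\in\Aut(K/F)$.

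The main obstacle is passing from this fixedness to $\delta\in\pgl_2(F)$. The fixed field of $\Aut(\C_\infty/F)$ is not obviously $F$, because continuous automorphisms only see $F_\infty$, and $F$ need not be the fixed field of the abstract automorphism group. To avoid this, I would argue algebraically instead. Consider the $F$-scheme $T:=\{x\in\pgl_{2,F}: (e,x)\Gamma_{e,m,F}(e,x)^{-1}=J_F\}$, the transporter, which is a closed subscheme defined over $F$. By the first step, $T(\overline F)$ and $T(K)$ consist of at most one point. $T$ is nonempty over $K$, and it is of finite type over $F$, so the Nullstellensatz gives it an $\overline F$-point, which must equal $\delta$. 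Hence $\delta\in\pgl_2(\overline F)$, and $T_{\mathrm{red}}$ is a single point with residue field some finite extension $L/F$. The centralizer computation also works scheme-theoretically: the scheme centralizer of $\Frob^m(\pgl_2)=\pgl_2$ is trivial, so $T$ is a torsor under the trivial group, i.e. $T\simeq\Spec F$. Hence $T(F)\ne\emptyset$, yielding $\delta_0\in\pgl_2(F)$.

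If that scheme-theoretic triviality is delicate, for instance because $\Frob^m$ is not smooth, I would fall back on the pointwise argument over $\overline F$. Every $\sigma\in\Gal(\overline F/F)$ fixes $\delta$ by the second step, so $\delta$ lies in $\pgl_2$ of the purely inseparable closure $F^{\mathrm{perf}}$. In that case I would note that $\pgl_2(F^{\mathrm{perf}})=\bigcup_k\Frob^{-k}\pgl_2(F)$ and absorb the inseparability by checking the defining equations of $J_F$ over $F$ directly. Finally, $J_F=\Gamma_{\delta_0,m,F}$ holds because both are closed $F$-subgroups with equal base change to $K$.
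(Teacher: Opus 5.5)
Your argument is correct, but it takes a different route from the paper. The paper first descends the graph structure: the first projection $J_F\to\pgl_{2,F}$ is an isomorphism, so $J_F$ is the graph of an $F$-isogeny $\alpha$. It then uses Pink's classification of isogenies to write $\alpha=\beta\circ\Frob^r$ with $\beta$ an $F$-automorphism, compares inseparable degrees to get $r=m$, and concludes from $\Aut_F(\pgl_{2,F})=\pgl_2(F)$ (no outer automorphisms in type $A_1$).

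You instead show directly that $\delta$ is unique scheme-theoretically, and then descend it. Your transporter $T$ satisfies $|T(R)|\le 1$ for every $F$-algebra $R$. Hence the two images of $\delta$ in $\pgl_2(K\otimes_F K)$ coincide, and fpqc descent gives $\delta\in\pgl_2(F)$, so in fact $\delta_0=\delta$. This avoids both Pink's theorem and the automorphism classification. It uses only two facts: $\pgl_2$ has trivial scheme-theoretic center, and $\Frob^m$ is an epimorphism. In exchange, the paper's route yields the structural fact that every $F$-isogeny of $\pgl_2$ is of the form $\Inn(\delta_0)\circ\Frob^r$.

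Make the decisive step explicit. A point $y\in\pgl_2(R)$ lies in the stabilizer of $\Gamma_{e,m,R}$ exactly when $\Inn(y)\circ\Frob^m=\Frob^m$ on $\pgl_{2,R}$. Since $\Frob^m$ is finite, flat and surjective, it is an epimorphism of schemes, so $\Inn(y)=\id$ and hence $y=e$. Smoothness of $\Frob^m$ plays no role, so your worry there is misplaced.

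Drop the fallback paragraph. The pointwise Galois argument only gives $\delta\in\pgl_2(F^{\mathrm{perf}})$. Removing the inseparability requires exactly the scheme-theoretic uniqueness you already have, and "checking the defining equations" does not supply it. With that uniqueness in hand, the $\Aut(K/F)$ step and the Nullstellensatz discussion are also superfluous.
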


\begin{proof}
Set $G_F:=\pgl_{2,F}$. Since $(J_F)_{\C_\infty}$ is a graph over
the first factor, the first projection
$\pr_1\colon J_F\rightarrow G_F$
becomes an isomorphism after base change to $\C_\infty$. Hence it is
already an isomorphism over $F$. Thus $J_F$ is the graph of the $F$-homomorphism
\[
\alpha:=\pr_2\circ\pr_1^{-1}\colon G_F\rightarrow G_F.
\]
After base change to $\C_\infty$, one has
$\alpha_{\C_\infty}=
\Inn(\delta)\circ\Frob^m$,
so $\alpha$ is an isogeny. By
\cite[Proposition~1.6]{pinkcompact}, the group $G_F$ admits no non-standard isogenies. Hence
\cite[Theorem~1.7(b)]{pinkcompact} gives an integer $r\ge0$ and an
$F$-isomorphism $\beta\colon(\uptau^r)^*G_F\xrightarrow{\sim}G_F$ such that
\[
\alpha=\beta\circ\Frob^r.
\]
Since $G_F$ is the base change of $\pgl_{2,\F_p}$, we canonically
identify $(\uptau
^r)^*G_F$ with $G_F$, and regard $\beta$ as an
$F$-automorphism of $G_F$.

We claim that $r=m$. Since $\dim\pgl_2=3$, the morphism
$\Frob^s\colon\pgl_{2,\C_\infty}\rightarrow
\pgl_{2,\C_\infty}$
has purely inseparable degree $p^{3s}$, while an automorphism has
inseparable degree $1$. Therefore
\[
p^{3r}
=
\deg_{\mathrm{insep}}(\alpha_{\C_\infty})
=
p^{3m},
\]
and hence $r=m$.

Finally, every $F$-automorphism of $G_F$ is inner. Indeed,
\cite[Proposition~7.1.6]{conrad} describes $\Aut_F(G_F)$ as the
semidirect product of $G_F(F)$ with the automorphism group of its based
root datum; the latter is trivial for type $A_1$. Thus $\beta=\Inn(\delta_0)$
for some $\delta_0\in\pgl_2(F)$. Thus
$\alpha=\Inn(\delta_0)\circ\Frob^m$.
Since $J_F$ is the graph of $\alpha$, this gives $J_F=\Gamma_{\delta_0,m,F}$.
\end{proof}

\begin{lem}\label{l:aut}
  The  automorphism group of $\pgl_2(\C_\infty)$ is naturally isomorphic to $\pgl_2(\C_\infty) \rtimes \Aut(\C_\infty)$.
\end{lem}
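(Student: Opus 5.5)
This is van der Waerden's (Schreier–van der Waerden) theorem; the plan is to reproduce the classical proof for $\pgl_2$ over the algebraically closed field $\C_\infty$ of odd characteristic.

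\emph{Setting up the homomorphism.} Conjugation gives a homomorphism $\pgl_2(\C_\infty)\rtimes\Aut(\C_\infty)\to\Aut(\pgl_2(\C_\infty))$, where $\sigma\in\Aut(\C_\infty)$ acts by applying $\sigma$ to matrix entries; this is well defined on classes because scalars go to scalars.

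\emph{Injectivity.} Suppose $\Inn(g)\circ\sigma$ is trivial. Evaluate it on the diagonal torus and on the upper and lower unipotent subgroups. It must fix $U_\pm$ and $T$, so $g$ normalizes $U_+$ and $U_-$ and hence lies in $T$. Then $t(a)u_+(x)t(a)^{-1}=u_+(a^2x)$ forces $\sigma(x)=a^{-2}x$ for all $x$. Taking $x=1$ gives $a^2=1$, so $\sigma=\id$ and $g=e$.

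\emph{Surjectivity.} Let $\phi$ be an automorphism.
\begin{itemize}
\item[Step 1.] Characterise the nontrivial unipotents group-theoretically: they are exactly the elements of order $p$. Since $p$ is odd, any other element of finite order is semisimple, and its order is coprime to $p$ or equals $2$. Hence $\phi$ permutes unipotent elements. By Lemma~\ref{l:unipotent-commuting}, their centralizers are the maximal unipotent subgroups, so $\phi$ permutes the maximal unipotent subgroups, i.e.\ the points of $\p^1$.
\item[Step 2.] Normalizers of maximal unipotent subgroups are Borels, i.e.\ point stabilizers, so $\phi$ induces a bijection of $\p^1(\C_\infty)$. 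Since $\pgl_2$ is $3$-transitive, after composing with an inner automorphism we may assume this bijection fixes $\infty$, $0$ and $1$. Then $\phi(U_\pm)=U_\pm$, and $\phi(T)=T$ because $T=N(U_+)\cap N(U_-)$.
\item[Step 3.] Write $\phi(u_+(x))=u_+(f(x))$ with $f$ an additive bijection. Composing with a suitable $\Inn(t(s))$, we normalise $f(1)=1$.
\end{itemize}

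\emph{Multiplicativity of $f$.} Conjugation $t(a)u_+(x)t(a)^{-1}=u_+(a^2x)$ shows that $f(a^2x)=\chi(a)f(x)$, where $\phi(t(a))=t(b)$ and $\chi(a)=b^2$. Setting $x=1$ gives $f(a^2)=\chi(a)$, so $f(cx)=f(c)f(x)$ for all squares $c$. Every element of $\C_\infty$ is a square, so $f$ is multiplicative. Being additive, bijective and sending $1\mapsto1$, $f$ is a field automorphism $\sigma$.

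\emph{Identifying $\phi$.} Replace $\phi$ by $\sigma^{-1}\circ\phi$. Now $\phi$ is the identity on $U_+$ and $T$ (since $t(a)$ is determined up to sign by $a^2$, and $f(a^2)=a^2$). It normalizes $U_-$, so $\phi(u_-(y))=u_-(g(y))$. Apply decomposition \eqref{e:decomposition-UTU}: the $T$-component gives $1+xg(y)=1+xy$, so $g=\id$. Since $U_+$, $U_-$ and $T$ generate $\pgl_2(\C_\infty)$, $\phi$ is the identity. Naturality is clear.

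\emph{Main obstacle.} The subtle step is Step 1: showing that $\phi$ preserves unipotents purely group-theoretically, which is where odd characteristic enters. The one remaining subtlety is the sign ambiguity in $t(a)$, but it is harmless since $u_+$ only sees $a^2$.
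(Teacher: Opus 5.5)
Your proposal is correct and is essentially the paper's route: the paper just cites Schreier--van der Waerden \cite{svdw}, and you reproduce that classical proof using the same ingredients the paper itself uses in Lemma~\ref{l:frobenius-goursat} (unipotent centralisers, reduction to $\phi(U_\pm)=U_\pm$ and $\phi(T)=T$, and the decomposition \eqref{e:decomposition-UTU}). The only slip is in the last step: in $\pgl_2$ the $T$-components give merely $1+xg(y)=\pm(1+xy)$, but comparing the $U_+$-components instead, $x/(1+xg(y))=x/(1+xy)$, yields $g(y)=y$ for $x\neq0$ with no sign ambiguity.
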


\begin{proof}
See \cite{svdw}.
\end{proof}

\begin{lem}
\label{l:step3-frobenius-relation}
Let $S\subset\pgl_2(F)^2$ be a subgroup. Assume
$\overline S^{\,\mathrm{Zar},\C_\infty}
\subsetneq
\pgl_{2,\C_\infty}^{\,2}$
and that both coordinate projections of this Zariski closure are $\pgl_{2,\C_\infty}$. Then, after possibly exchanging the two coordinates, there exist a finite index subgroup $S^0\subset S$, an element $\delta_0\in\pgl_2(F)$, and an integer $m\ge0$ such that
\[
s_2=\delta_0\Frob^m(s_1)\delta_0^{-1}
\]
for every $s=(s_1,s_2)\in S^0$.
\end{lem}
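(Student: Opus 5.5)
Let $J:=\overline S^{\,\mathrm{Zar},\C_\infty}\subsetneq\pgl_{2,\C_\infty}^{\,2}$. Since $S\subset\pgl_2(F)^2$, the Zariski closure is defined over $F$: there is a closed subgroup $J_F\subset\pgl_{2,F}^{\,2}$ with $(J_F)_{\C_\infty}=J$. The plan has four steps. Pass to the identity component, show it is a Frobenius-twisted graph, descend the twist to $F$, and pull back to $S$.

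First, pass to $J^\circ$, which has finite index in $J$. Set $S^0:=S\cap J^\circ(F)$, a finite index subgroup of $S$ whose Zariski closure is $J^\circ$, defined over $F$ as $J_F^\circ$. The coordinate projections $\pr_\nu(J^\circ)$ are connected closed subgroups of finite index in $\pr_\nu(J)=\pgl_{2,\C_\infty}$. Hence both equal $\pgl_{2,\C_\infty}$.

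Second, show that $J^\circ=\Gamma_{\delta,m,\C_\infty}$ after possibly exchanging the factors. Let $N_1:=\ker(\pr_2|_{J^\circ})\subset\pgl_{2}\times\{e\}$, viewed in the first factor, and define $N_2$ symmetrically. By surjectivity of $\pr_1|_{J^\circ}$, $N_1$ is normal in $\pgl_{2,\C_\infty}$. The same holds for $N_1^\circ$ and $(N_1)_{\mathrm{red}}$. Since $\pgl_2$ is simple as an algebraic group (trivial center, and $q$ odd), $(N_1)_{\mathrm{red}}$ is finite normal, hence trivial, or all of $\pgl_2$. If $(N_1)_{\mathrm{red}}=\pgl_2$, then $J^\circ\supset\pgl_2\times\{e\}$ and, with surjective second projection, $J^\circ=\pgl_2^2$, which contradicts properness. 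So both reduced kernels are trivial, and the abstract Goursat lemma on $\C_\infty$-points shows that $J^\circ(\C_\infty)$ is the graph of a group automorphism $\psi$ of $\pgl_2(\C_\infty)$.

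By Lemma~\ref{l:aut}, $\psi=\Inn(\delta)\circ\sigma$ with $\sigma\in\Aut(\C_\infty)$ acting on matrix entries. The graph of $\psi$ is algebraic. Restricting to the unipotent subgroup $u_+(x)$ and composing with $\Inn(\delta^{-1})$, the set $\{(x,\sigma(x))\}$ becomes an algebraic, hence rigid-analytic, curve germ at the origin. Lemma~\ref{l:analytic-field-automorphism} then gives $\sigma=\Frob^{r}$ with $r\in\Z$. If $r<0$, exchange the two factors, which replaces $\psi$ by $\psi^{-1}=\Inn(\cdot)\circ\Frob^{-r}$. Hence $J^\circ(\C_\infty)=\Gamma_{\delta,m,\C_\infty}(\C_\infty)$ with $m\ge0$. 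Both sides are reduced closed subgroups; $J^\circ$ is a Zariski closure with the reduced structure. So they coincide as varieties.

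Third, apply Lemma~\ref{l:F-frobenius-graph} to $J_F^\circ$ to obtain $\delta_0\in\pgl_2(F)$ with $J_F^\circ=\Gamma_{\delta_0,m,F}$. Every $s=(s_1,s_2)\in S^0$ is an $F$-point of this graph, so $s_2=\delta_0\Frob^m(s_1)\delta_0^{-1}$.

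The main obstacle is the Goursat step. I must check carefully that a normal subgroup of $\pgl_{2,\C_\infty}$ with trivial reduced structure cannot still obstruct the graph description. Working with reduced closures and $\C_\infty$-points avoids nonreduced kernels, such as that of $\Frob$, at the cost of needing Lemma~\ref{l:aut} together with the algebraicity argument to rule out wild field automorphisms.
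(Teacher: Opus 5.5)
Your proposal is correct and follows the paper's proof essentially step by step. The steps match: identity component with $S^0=S\cap J^\circ(F)$, abstract Goursat on $\C_\infty$-points, van der Waerden's description of $\Aut(\pgl_2(\C_\infty))$, restriction to $U_+\times U_+$ with Lemma~\ref{l:analytic-field-automorphism} to force $\sigma=\Frob^r$, exchange of factors if $r<0$, and descent to $F$ via Lemma~\ref{l:F-frobenius-graph}. The only minor deviation is in the Goursat step, where you kill the kernels using simplicity of $\pgl_2$ as an algebraic group plus its trivial center (applied to the reduced kernels), whereas the paper uses the abstract simplicity of $\PSL_2(\C_\infty)$; either works.
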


\begin{proof}
Set $G:=\pgl_{2,\C_\infty}$ and
$H:=\overline S^{\,\mathrm{Zar},\C_\infty}\subset G^2$.
Let $H^\circ$ be the identity component of $H$, and consider
$S^0:=S\cap H^\circ(\C_\infty)$.
Since $H^\circ$ has finite index in $H$, the subgroup $S^0$ has finite
index in $S$ and is Zariski dense in $H^\circ$. The two projections of
$H^\circ$ are still equal to $G$: their images have finite index in the
corresponding projections of $H$, and $G$ is connected.
Consider the (abstract) subgroup
$H^\circ(\C_\infty)
\subset
G(\C_\infty)\times G(\C_\infty)$.
We set
\[
N_1:=\bigl\{g\in G(\C_\infty):(g,e)\in H^\circ(\C_\infty)\bigr\}
\qquad
\text{and}
\qquad
N_2:=\bigl\{g\in G(\C_\infty):(e,g)\in H^\circ(\C_\infty)\bigr\}.
\]
The surjectivity of the two projections shows that $N_1$ and $N_2$ are
normal subgroups of $G(\C_\infty)$. Since $\C_\infty$ is algebraically
closed, $G(\C_\infty)=\PSL_2(\C_\infty)$, which is a simple
group. Neither $N_1$ nor $N_2$ can be all of $G(\C_\infty)$, for then the
surjectivity of the other projection would give $H^\circ=G^2$, contrary
to the hypothesis. Hence $N_1=N_2=\{e\}$. Goursat's lemma implies that
\[
H^\circ(\C_\infty)
=
\bigl\{(g,\psi(g)):g\in G(\C_\infty)\bigr\}
\]
for $\psi\in\Aut\bigl(G(\C_\infty)\bigr)$.

By Lemma~\ref{l:aut} there exist
$\delta\in\pgl_2(\C_\infty)$ and $\sigma\in\Aut(\C_\infty)$
such that
\[
\psi=\Inn(\delta)\circ\sigma
\]
where $\sigma$ acts on matrix coefficients. Set $H':=(\id_G\times\Inn(\delta^{-1}))(H^\circ)$.
Then
\[
H'(\C_\infty)
=
\bigl\{(g,\sigma(g)):g\in G(\C_\infty)\bigr\}.
\]
Let $U_+\subset G$ be the standard upper unipotent subgroup and write
$u_+(x):=
\begin{bmatrix}
1&x\\0&1
\end{bmatrix}$.
Thus $C:=H'\cap(U_+\times U_+)$
is an algebraic subgroup of $U_+\times U_+$, and
$C(\C_\infty)=\bigl\{\bigl(u_+(x),u_+(\sigma(x))\bigr):x\in\C_\infty\bigr\}$.
Its first projection is surjective. Since $C$ has finitely many connected
components, the image of the identity component $C^\circ$ is
positive dimensional and therefore equal to $U_+$. Since $C$ is a proper
subgroup of the two-dimensional group $U_+\times U_+$, it follows that
$\dim C^\circ=1$. Moreover, the graph above has exactly one point over
each $x\in\C_\infty$; hence the surjectivity of
$C^\circ\rightarrow U_+$ shows that $C^\circ(\C_\infty)$ is the whole
graph. Thus, since $U_+\simeq\G_a$, its germ at the identity is 
the germ at the origin of the graph of $\sigma$.
Lemma~\ref{l:analytic-field-automorphism} therefore gives
\[
\sigma=\Frob^r
\]
for some $r\in\Z$.

If $r\ge0$, put $m:=r$. If $r<0$, exchange the two factors, set
$m:=-r$, and replace $\delta$ by $\Frob^m(\delta)^{-1}$. In either case,
after possibly exchanging the two factors, the $\C_\infty$-points of
$H^\circ$ and $\Gamma_{\delta,m,\C_\infty}$ are equal. Both are reduced
closed algebraic subgroups of $G^2$, and therefore
\[
H^\circ
=
\Gamma_{\delta,m,\C_\infty}
\]
for some $\delta\in\pgl_2(\C_\infty)$ and $m\ge0$. Now set
\[
J_F:=
\overline{S^0}^{\,\mathrm{Zar},F}
\subset \pgl_{2,F}^{\,2}.
\]
Because $S^0\subset\pgl_2(F)^2$, base change gives
\[
(J_F)_{\C_\infty}
=
\overline{S^0}^{\,\mathrm{Zar},\C_\infty}
=
H^\circ
=
\Gamma_{\delta,m,\C_\infty}.
\]
Indeed, if a $\C_\infty$-polynomial vanishes on $S^0$, write its
coefficients in a finite dimensional $F$-subspace of $\C_\infty$ and
choose an $F$-basis; its $F$-polynomial components then vanish on $S^0$.
By Lemma~\ref{l:F-frobenius-graph}, there exists
$\delta_0\in\pgl_2(F)$ such that
$J_F=\Gamma_{\delta_0,m,F}$.
Since $S^0\subset J_F(F)$, every $s=(s_1,s_2)\in S^0$ satisfies
$s_2=\delta_0\Frob^m(s_1)\delta_0^{-1}$.
\end{proof}

\begin{lem}
\label{l:no-positive-frobenius-twist}
Let $i\ne j$, assume that $\pr_j\colon B\rightarrow Y(\gotn)$ is
nonconstant, and let $\Delta'\subset\Delta_{\cala'}$ be a finite-index
subgroup. Suppose that there exist $\delta\in\pgl_2(F)$ and $m\ge0$ such
that
\[
\pr_j(\gamma)=
\delta\,\Frob^m\big(\pr_i(\gamma)\big)\,\delta^{-1}
\qquad
\text{for every }\gamma\in\Delta'.
\]
Then $m=0$.
\end{lem}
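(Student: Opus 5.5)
Suppose $m\ge1$; we derive a contradiction by counting residue classes at a prime $\gotp\ne\infty$ with $\gotp\nmid\gotn$, comparing the images of $\Delta'$ in $\pgl_2(A/\gotp^e)$ for large $e$. On the one hand, Corollary~\ref{c:adelic-open-finite-index} applied to $\Delta'$ in coordinate $j$ says that the closure of $\kappa^{-1}(\pr_j(\Delta'))$ in $\Sl_2(\mathbf A_F^\infty)$ is open. Its projection to $\Sl_2(A_\gotp)$ therefore contains a congruence subgroup $\ker(\Sl_2(A_\gotp)\to\Sl_2(A/\gotp^{e_0}))$. Hence the image of $\pr_j(\Delta')$ modulo $\gotp^e$ has cardinality at least $c\,q_\gotp^{3e}$, where $c>0$ is independent of $e$ and $q_\gotp=|A/\gotp|$. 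Via \eqref{e:approx-monodromy}, this image sits inside $\Gamma(\gotn)/\Gamma(\gotn\gotp^e)\simeq\Sl_2(A/\gotp^e)$.

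On the other hand, we bound the same image from above using the Frobenius relation. Every element of $\pr_j(\Delta')$ has the form $\delta\,\Frob^m(x)\,\delta^{-1}$ with $x\in\pr_i(\Delta')\subset\Gamma(\gotn)$. We lift to $\Sl_2(A)$ via $\kappa^{-1}$. The Frobenius $\Frob^m$ is defined on $\Sl_2$, and modulo $\pm1$ it matches the lifted relation, which costs only a factor $2$. After clearing the denominators of $\delta$ (a fixed element of $\pgl_2(F)$, so conjugation by it changes valuations at $\gotp$ by a bounded amount $\le e_1$), the class of $\delta\Frob^m(x)\delta^{-1}$ modulo $\gotp^e$ is determined by $\Frob^m(x)$ modulo $\gotp^{e+e_1}$. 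The key observation is that $\Frob^m(x)$ modulo $\gotp^{e'}$ depends only on $x$ modulo $\gotp^{\lceil e'/p^m\rceil}$. Indeed, if $x\equiv x'\bmod \gotp^{f}$ entrywise, then $x^{p^m}-x'^{p^m}=(x-x')^{p^m}\in\gotp^{fp^m}$ entrywise. Since $x\in\Sl_2(A)$, the entries lie in $A$, so the number of classes is at most $|\Sl_2(A/\gotp^{f})|\le q_\gotp^{3f}$ with $f=\lceil (e+e_1)/p^m\rceil$. This gives an upper bound of $C\,q_\gotp^{3e/p^m}$ with $C$ independent of $e$.

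Comparing the two bounds gives $c\,q_\gotp^{3e}\le C\,q_\gotp^{3e/p^m}$ for all $e$. This is impossible as $e\to\infty$ when $p^m\ge2$, so $m=0$. The hypothesis that $\pr_j$ is nonconstant is exactly what makes Lemma~\ref{l:coordinate-adelic-open-corrected} and Corollary~\ref{c:adelic-open-finite-index} applicable in coordinate $j$. No hypothesis on coordinate $i$ is needed, since the upper bound uses only integrality of the entries of $\pr_i(\gamma)$.

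The main obstacle is the bookkeeping with $\delta$ and projective classes. First, the lower bound must be stated for images in $\pgl_2(A/\gotp^e)$, or equivalently for $\kappa^{-1}$-lifts in $\Sl_2(A/\gotp^e)$, consistently with the upper bound. Second, conjugation by a non-integral $\delta$ must not destroy the counting; a fixed loss of $e_1$ levels is harmless. I would handle both by writing $\delta=\lambda\delta'$ with $\delta'\in M_2(A)$, using $\delta'^{-1}=\mathrm{adj}(\delta')/\det\delta'$, and working with the lifted identity $\det(\delta')\,\gamma_j=\pm\,\delta'\,\Frob^m(\gamma_i)\,\mathrm{adj}(\delta')$ in $M_2(A)$. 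Reducing modulo $\gotp^{e+v_\gotp(\det\delta')}$ then determines $\gamma_j$ modulo $\gotp^e$ from $\gamma_i$ modulo $\gotp^{f}$.
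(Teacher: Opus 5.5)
Your proposal is correct and is essentially the paper's argument. You play a lower bound of order $q_\gotp^{3e}$ for the image of $\pr_j(\Delta')$ modulo $\gotp^e$, coming from the adelic openness in Corollary~\ref{c:adelic-open-finite-index}, against an upper bound of order $q_\gotp^{3e/p^m}$, coming from the fact that $\Frob^m$ multiplies congruence depth by $p^m$.

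The only difference is bookkeeping. The paper chooses $\gotp$ so that $\delta\in\pgl_2(A_\gotp)$ and works directly with the congruence kernels $K_e\subset\pgl_2(A_\gotp)$, using $\Frob^m(K_r)\subset K_{p^m r}$ and the multiplicativity of the relation. You instead pass to $\Sl_2$-lifts and absorb a non-integral $\delta$ through the adjugate identity, at the cost of a bounded number of levels and a factor $2$.
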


\begin{proof}
Suppose that $m>0$, and set $a:=p^m>1$. Choose a prime
$\gotp\neq\infty$ not dividing $\gotn$ such that
$\delta\in\pgl_2(A_\gotp)$; this is possible since
$\delta\in\pgl_2(F)$ and so a rational point extends to an $A_\gotp$-point away from finitely many primes. Set
\[
K:=\pgl_2(A_\gotp)
\qquad
\text{and}
\qquad
K_e:=\ker\!\left(
K\rightarrow
\pgl_2(A_\gotp/\gotp^eA_\gotp)
\right)
\quad
\text{for}
\;\;
e\ge1.
\]
Since $A$ is a principal ideal domain, for $\gotp=(f)$, consider $|f|_\infty=|A/\gotp|=q^{\deg_T(f)}$. Since
$\Delta'\subset\Gamma(\gotn)^N$, all coordinate projections of
$\Delta'$ lie in $\pgl_2(A)\subset K$. For $\ell\in\{i,j\}$ and
$e\ge1$, let
\[
M_{\ell,e}:=
\operatorname{Im}\!\left(
\pr_\ell(\Delta')\rightarrow K/K_e
\right).
\]

By Corollary~\ref{c:adelic-open-finite-index}, the closure $H_j$ of
$\pr_j(\Delta')$ in $\pgl_2(F_\gotp)$ contains  a non-empty open subset $U$ of $K$. Choose $h\in U$. Since
$H_j$ is a subgroup, the translate
$h^{-1}U$ is an open neighborhood of the identity contained in $H_j$. It follows
that $H_j$ is open in $K$.
Hence there is an integer $s\ge1$ such that $K_s\subset H_j$. Reduction modulo $K_e$ has
finite target, so the image of $H_j$ is equal to the image of
$\pr_j(\Delta')$. Therefore, for every $e\ge s$,
\begin{equation}\label{e:frobenius-count-lower}
|M_{j,e}|
\ge
|K_s/K_e|
=
|f|_\infty^{3(e-s)}.
\end{equation}
For every $r\ge1$ one has
\begin{equation}\label{e:frobenius-depth}
\Frob^m(K_r)\subset K_{ar}.
\end{equation}
Indeed, let $I$ be the ideal of the identity in the coordinate ring of
$\pgl_{2,\F_p}$. If $g\in K_r$, then
$f(g)\in\gotp^rA_\gotp$ for every $f\in I$. The pullback by the $m$-fold
 Frobenius sends $f$ to $f^a$, and therefore
$f(\Frob^m(g))=f(g)^a\in\gotp^{ar}A_\gotp$.
This proves \eqref{e:frobenius-depth}.

Fix $e\ge1$ and consider
$r_e:=\left\lceil\frac{e}{a}\right\rceil$.
If $\gamma_1,\gamma_2\in\Delta'$ have the same $i$-th coordinate modulo
$K_{r_e}$, then
$\pr_i(\gamma_1)\pr_i(\gamma_2)^{-1}\in K_{r_e}$. Using the graph
relation, \eqref{e:frobenius-depth}, and the fact that conjugation by
$\delta\in K$ preserves $K_e$, we obtain
\[
\pr_j(\gamma_1)\pr_j(\gamma_2)^{-1}
=
\delta\,\Frob^m\!\left(
\pr_i(\gamma_1)\pr_i(\gamma_2)^{-1}
\right)\delta^{-1}
\in K_e.
\]
Thus the reduction of the $j$-th coordinate modulo $\gotp^e$ is determined
by the reduction of the $i$-th coordinate modulo $\gotp^{r_e}$. Hence
\begin{equation}\label{e:frobenius-count-upper}
|M_{j,e}|
\le
|M_{i,r_e}|
\le
|K/K_{r_e}|.
\end{equation}
Note that, for every $r\ge1$,
\[
|K/K_r|
= \left|\pgl_2(A_\gotp/\gotp^rA_\gotp)\right|
=
\left|\pgl_2(A/\gotp)\right|\cdot |f|_\infty^{3(r-1)}.
\]
As $r_e-1\le e/a$, equation~\eqref{e:frobenius-count-upper} gives
\[
|M_{j,e}|
\le
\left|\pgl_2(A/\gotp)\right|\cdot|f|_\infty^{3e/a}.
\]
Together with \eqref{e:frobenius-count-lower}, this yields, for every
$e\ge s$,
\[
|f|_\infty^{3(e-s)}
\le
\left|\pgl_2(A/\gotp)\right|\cdot|f|_\infty^{3e/a},
\]
which is impossible as $e\to\infty$, since $a>1$. Therefore $m=0$.
\end{proof}

\begin{lem}
\label{l:untwisted-graph-monodromy}
Fix $i\ne j$, and assume that $p_i,p_j\colon B\rightarrow Y(\gotn)$ are
nonconstant. Suppose that, for some finite-index subgroup
$\Delta'\subset\Delta_{\cala'}$ and some $\delta_0\in\pgl_2(F)$,
one has
$\pr_j(\gamma)=\delta_0\pr_i(\gamma)\delta_0^{-1}$
for every $\gamma\in\Delta'$. Then, for every finite prime
$\gotp\nmid\gotn$,
$S_{\gotp,ij}^{\circ}=D_{\delta_0}$.
\end{lem}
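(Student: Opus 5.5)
The plan is to prove the two inclusions between $S_{\gotp,ij}^{\circ}$ and $D_{\delta_0}$, and then use that $D_{\delta_0}$ is connected of dimension $3$ to force equality.

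\emph{Upper bound.} The group $D_{\delta_0}$ is a closed subgroup of $\pgl_{2,F}^{\,2}$, defined over $F$ because $\delta_0\in\pgl_2(F)$. By hypothesis it contains $\pr_{ij}(\Delta')$. The Zariski closure of $\pr_{ij}(\Delta')$ in $\pgl_{2,F_\gotp}^{\,2}$ is therefore contained in $(D_{\delta_0})_{F_\gotp}$. Since $\Delta'$ has finite index in $\Delta_{\cala'}$, the group $\pr_{ij}(\Delta')$ has finite index in $\pr_{ij}(\Delta_{\cala'})$. Hence $S_{\gotp,ij}$ is a finite union of cosets of $\overline{\pr_{ij}(\Delta')}^{\,\mathrm{Zar}}$. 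The identity component of $S_{\gotp,ij}$ is then the identity component of $\overline{\pr_{ij}(\Delta')}^{\,\mathrm{Zar}}$, which gives
\[
S_{\gotp,ij}^{\circ}\subseteq D_{\delta_0}.
\]

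\emph{Lower bound via dimension.} The group $D_{\delta_0}$ is isomorphic to $\pgl_{2,F_\gotp}$ through the first projection, so it is connected of dimension $3$. It therefore suffices to show that $\dim S_{\gotp,ij}^{\circ}\ge3$. The restriction of the first projection to $S_{\gotp,ij}^{\circ}$ is injective, since it is a subgroup of the graph $D_{\delta_0}$. It is enough to show that this projection is dominant onto $\pgl_{2,F_\gotp}$.

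To get dominance, note that the first projection of $S_{\gotp,ij}$ contains $\pr_i(\Delta_{\cala'})$. By Corollary~\ref{c:adelic-open-finite-index} applied to the nonconstant coordinate $p_i$, the group $\pr_i(\Delta_{\cala'})$ is Zariski dense in $\pgl_{2,F_\gotp}$. The image of $S_{\gotp,ij}$ under the first projection is closed, being the image of an algebraic group under a homomorphism, and it contains this dense set, so it equals $\pgl_{2,F_\gotp}$. The image of $S_{\gotp,ij}^{\circ}$ has finite index in that image. Since $\pgl_{2,F_\gotp}$ is connected, the image of $S_{\gotp,ij}^{\circ}$ is all of $\pgl_{2,F_\gotp}$.

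\emph{Conclusion.} We now have $\dim S_{\gotp,ij}^{\circ}=3$ and $S_{\gotp,ij}^{\circ}\subseteq D_{\delta_0}$. Both are irreducible, so they are equal. The hypothesis that $p_j$ is nonconstant is not needed for this argument, since the graph relation already carries the $i$-th projection over to the $j$-th.

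\emph{Expected obstacle.} The main subtlety is the rationality bookkeeping: $D_{\delta_0}$ must be taken over $F$ and base changed to $F_\gotp$, and one must check that Zariski closures of subsets of $\pgl_2(F)^2$ commute with this base change. This follows from the same linear algebra argument over an $F$-basis of coefficients used in Lemma~\ref{l:step3-frobenius-relation}.
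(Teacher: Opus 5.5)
Your proof is correct and is essentially the paper's argument. The paper first shows $\overline{\pr_{ij}(\Delta')}^{\,\mathrm{Zar}}=D_{\delta_0}$, using Corollary~\ref{c:adelic-open-finite-index} for density of the first projection and the fact that $D_{\delta_0}\to\pgl_{2,F_\gotp}$ is an isomorphism, and then passes to identity components by finite index; you pass to identity components first and run the same projection argument on $S_{\gotp,ij}^{\circ}$. Your rationality concern is unnecessary, since $D_{\delta_0}$ is already a closed subgroup of $\pgl_{2,F_\gotp}^{\,2}$ containing $\pr_{ij}(\Delta')$, so no comparison of closures under base change is needed.
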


\begin{proof}
Let
$J:=\overline{\pr_{ij}(\Delta')}^{\,\mathrm{Zar},F_\gotp}$.
The assumed relation gives $J\subset D_{\delta_0}$.
By Corollary~\ref{c:adelic-open-finite-index},
$\pr_i(\Delta')$ is Zariski dense in $\pgl_{2,F_\gotp}$.
Hence the first projection of $J$ is surjective. Since the first
projection $D_{\delta_0}\rightarrow\pgl_{2,F_\gotp}$ is an isomorphism, we get
$J=D_{\delta_0}$. Since $\Delta'$ has finite index in $\Delta_{\cala'}$, the groups
$J$ and $S_{\gotp,ij}$ have the same identity component. Thus
$S_{\gotp,ij}^{\circ}=D_{\delta_0}$.
\end{proof}

\begin{lem}
\label{p:projected-pair-isogeny}
Fix $i\ne j$, and assume that $p_i,p_j\colon B\rightarrow Y(\gotn)$ are
nonconstant. Let $\varphi_i,\varphi_j$ be the corresponding generic
rank-$2$ Drinfeld modules. If
$\overline{\pr_{ij}(\Delta_{\cala'})}^{\,\mathrm{Zar},\C_\infty}$
is a proper subgroup of $\pgl_{2,\C_\infty}^{\,2}$, then
$\Hom_{\bar\eta_B}(\varphi_i,\varphi_j)\ne0$.
\end{lem}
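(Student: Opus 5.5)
The plan is to chain together the lemmas already proved in this subsection. Set $S:=\pr_{ij}(\Delta_{\cala'})\subset\pgl_2(F)^2$; this is legitimate because $\Delta_{\cala'}\subset\Gamma(\gotn)^N\subset\pgl_2(A)^N$.

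First, I check the hypotheses of Lemma~\ref{l:step3-frobenius-relation}. The Zariski closure of $S$ over $\C_\infty$ is a proper subgroup by assumption. Each coordinate projection of this closure is $\pgl_{2,\C_\infty}$, by Lemma~\ref{l:coordinate-adelic-open-corrected} applied to coordinates $i$ and $j$, since $p_i,p_j$ are nonconstant. Lemma~\ref{l:step3-frobenius-relation} then gives a finite-index subgroup $S^0\subset S$, an element $\delta_0\in\pgl_2(F)$ and an integer $m\ge0$ such that, after possibly exchanging $i$ and $j$, we have $s_2=\delta_0\Frob^m(s_1)\delta_0^{-1}$ for all $s\in S^0$. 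I take $\Delta':=\pr_{ij}^{-1}(S^0)\cap\Delta_{\cala'}$, which has finite index in $\Delta_{\cala'}$. Lemma~\ref{l:no-positive-frobenius-twist} applies to $\Delta'$: the coordinate playing the role of $j$ there is nonconstant whichever way the factors were exchanged. It forces $m=0$. If the factors were exchanged, I invert the relation, replacing $\delta_0$ by $\delta_0^{-1}$; once $m=0$ this is harmless.

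Next, I fix a prime $\gotp\ne\infty$ with $\gotp\nmid\gotn$. Lemma~\ref{l:untwisted-graph-monodromy} gives $S_{\gotp,ij}^\circ=D_{\delta_0}$, viewing $\delta_0\in\pgl_2(F_\gotp)$. Corollary~\ref{l:graph-monodromy-projective-tate} then yields $u\in\pgl_2(F_\gotp)$ with $\bar\rho_{j,\gotp}=\Inn(u)\circ\bar\rho_{i,\gotp}$ on $G_K$, where $K=k(B_0)$ is the function field from Lemma~\ref{l:spreading-out}. Here $\varphi_i,\varphi_j$ are the coordinate Drinfeld modules over $K$, defined via the fine level-$\gotn$ moduli problem pulled back along $B_0\to Y(\gotn)^N_k$. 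They have generic characteristic, because the maps $p_i,p_j$ are dominant onto the modular curve over $F$.

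Lemma~\ref{l:projective-tate-conjugacy-isogeny-corrected} now says that $\varphi_i$ and $\varphi_j$ are geometrically isogenous. That is, there is a nonzero homomorphism over $\overline K$, and $\overline K$ embeds into the residue field of the geometric generic point $\bar\eta_B$. Since $B=B_{0,\C_\infty}$, the generic point of $B$ maps to the generic point of $B_0$, so the Drinfeld modules at $\bar\eta_B$ are base changes of those over $K$. A nonzero homomorphism stays nonzero after field extension, and this gives $\Hom_{\bar\eta_B}(\varphi_i,\varphi_j)\neq0$.

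The main obstacle is bookkeeping rather than mathematics. One must make sure the finite-index subgroups and the possible swap of factors are compatible across Lemmas~\ref{l:step3-frobenius-relation}, \ref{l:no-positive-frobenius-twist} and \ref{l:untwisted-graph-monodromy}. One must also identify the $K$-Drinfeld modules used in the Tate comparison (Lemma~\ref{l:tate-comparison-identity-components}) with the generic modules at $\bar\eta_B$. I would handle the latter by working throughout with the universal module on the fine moduli scheme $Y(\gotn)$, pulled back to $B_0$.
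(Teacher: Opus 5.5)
Your proposal is correct and follows the paper's proof essentially verbatim. The chain is the same: Lemma~\ref{l:coordinate-adelic-open-corrected} for the coordinate projections, Lemma~\ref{l:step3-frobenius-relation} for the twisted graph on a finite-index subgroup, Lemma~\ref{l:no-positive-frobenius-twist} to force $m=0$, and then Lemma~\ref{l:untwisted-graph-monodromy}, Corollary~\ref{l:graph-monodromy-projective-tate} and Lemma~\ref{l:projective-tate-conjugacy-isogeny-corrected}. Your extra bookkeeping (pulling $S^0$ back to $\Delta'$, inverting the relation after a swap, passing from $\overline K$ to $\bar\eta_B$) only makes explicit what the paper leaves implicit; the one cosmetic slip is that generic characteristic holds simply because $F\subset K$, not because $p_i,p_j$ are dominant.
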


\begin{proof}
By Lemma~\ref{l:coordinate-adelic-open-corrected}, both coordinate
projections of the Zariski closure are equal to
$\pgl_{2,\C_\infty}$.

Lemma~\ref{l:step3-frobenius-relation} therefore gives, after possibly
exchanging $i$ and $j$, a finite-index subgroup
$\Delta'\subset\Delta_{\cala'}$, an element
$\delta_0\in\pgl_2(F)$, and an integer $m\ge0$ such that
\[
\pr_j(\gamma)=
\delta_0\Frob^m(\pr_i(\gamma))\delta_0^{-1}
\qquad
\text{for}
\quad
\gamma\in\Delta'.
\]
Since $p_j$ is nonconstant,
Lemma~\ref{l:no-positive-frobenius-twist}, applied to this graph relation,
gives $m=0$.

Choose a finite prime $\gotp\nmid\gotn$ and spread out as in
Lemma~\ref{l:spreading-out}. By
Lemma~\ref{l:untwisted-graph-monodromy},
$S_{\gotp,ij}^{\circ}=D_{\delta_0}$. Hence
Corollary~\ref{l:graph-monodromy-projective-tate} shows that the two
projective $\gotp$-adic Tate representations are conjugate.
Lemma~\ref{l:projective-tate-conjugacy-isogeny-corrected} now gives a
geometric isogeny between $\varphi_i$ and $\varphi_j$. 
\end{proof}

\subsubsection{Proof of Theorem~\ref{t:HS-weakly}}\label{ss:proof-HS-weakly}
\begin{proof}
Let $X\subset Y(1)^N_{\C_\infty}$ be proper HS-special. If some coordinate of
$X$ is constant, then $X$ is contained in the weakly-special divisor
$x_i=a$, and there is nothing to prove. We therefore assume that all coordinate
projections of $X$ are nonconstant.

Fix the auxiliary level $\gotn$ chosen above, and recall
the level-forgetting map $\nu_\gotn\colon Y(\gotn)^N\rightarrow Y(1)^N$. Choose an irreducible component
$Z$ of $\nu_\gotn^{-1}(X)$ and replace it by a non-empty smooth open subset
$B\subset Z$
which dominates $X$ and lies over the \'etale locus of $\nu_\gotn$ along $X$.
Such a choice is possible because $\nu_\gotn$ is generically \'etale over $X$:
the ramification locus is supported over the elliptic divisors, and no
coordinate projection of $X$ is constant.

Let
$\cala'\subset(\pi_\gotn^N)^{-1}(B^{\mathrm{an}})$
be a connected analytic component. By Lemma~\ref{l:smooth-branch-bridge},
$\cala'$ is irreducible and lies in a unique irreducible analytic component
\[
\cala\subset \boldsymbol j^{-1}(X^{\mathrm{an}}),
\]
with
$\Delta_{\cala'}:=\Stab_{\Gamma(\gotn)^N}(\cala')
\subset
\Delta_\cala^+:=\Stab_{(\Gamma^+)^N}(\cala)$.

Since $X$ is HS-non-generic, Lemma~\ref{l:HS-monodromy-proper} gives
\[
H_\cala:=
\overline{\Delta_\cala^+}^{\,\mathrm{Zar},\C_\infty}
\subsetneq
\pgl_{2,\C_\infty}^{\,N}.
\]
On the other hand, every coordinate map $\pr_r\colon B\rightarrow Y(\gotn)$ is nonconstant.
Hence Lemma~\ref{l:coordinate-adelic-open-corrected} gives
\[
\overline{\pr_r(\Delta_{\cala'})}^{\,\mathrm{Zar}}
=
\pgl_{2,\C_\infty}
\]
for $1\le r\le N$.
Since $\Delta_{\cala'}\subset\Delta_\cala^+\subset H_\cala(\C_\infty)$, we have
$\pr_r(\Delta_{\cala'})\subset \pr_r(H_\cala)$.
The restriction $\pr_r|_{H_\cala}$ is a morphism of algebraic groups, hence
$\pr_r(H_\cala)$ is a closed algebraic subgroup of $\pgl_{2,\C_\infty}$.
Together with
$\overline{\pr_r(\Delta_{\cala'})}^{\,\mathrm{Zar}}=
\pgl_{2,\C_\infty}$,
this gives
$\pr_r(H_\cala)=\pgl_{2,\C_\infty}$.
Thus $N\ge2$. Suppose, by contradiction, that every two-coordinate projection of $H_\cala$ is $\pgl_{2,\C_\infty}^2$. We show by induction on $N$ that then $H_\cala=\pgl_{2,\C_\infty}^N$. The case
$N=2$ is immediate. For $N>2$, the induction hypothesis gives
$\pr_{1,\ldots,N-1}(H_\cala)=\pgl_{2,\C_\infty}^{N-1}$.
Apply Goursat's lemma to
\[
H_\cala\subset \pgl_{2,\C_\infty}^{N-1}\times \pgl_{2,\C_\infty}.
\]
If $H_\cala$ were proper, Goursat would give a nontrivial common quotient of $\pgl_{2,\C_\infty}^{N-1}$ and $G$. Since $\pgl_{2,\C_\infty}$ is simple, this quotient is $\pgl_{2,\C_\infty}$, and the quotient map $\pgl_{2,\C_\infty}^{N-1}\rightarrow \pgl_{2,\C_\infty}$ factors through one coordinate: the images of the factors are normal subgroups of $\pgl_{2,\C_\infty}$ that commute with one another. The corresponding two-coordinate projection of $H_\cala$ would then be proper, which is a contradiction. Hence, for some $i\ne j$,
\[
\pr_{ij}(H_\cala)\subsetneq \pgl_{2,\C_\infty}^2.
\]
Therefore
\[
\overline{\pr_{ij}(\Delta_{\cala'})}^{\,\mathrm{Zar}}
\subset
\pr_{ij}(H_\cala)
\subsetneq
\pgl_{2,\C_\infty}^{\,2}.
\]
Lemma~\ref{p:projected-pair-isogeny} now applies to $B$, $\cala'$, and the
coordinates $i,j$. Hence the two generic coordinate Drinfeld modules over $B$
are geometrically isogenous. Since $B\rightarrow X$ is dominant and generically finite,
the same is true for the corresponding generic coordinate Drinfeld modules
over $X$. Lemma~\ref{l:isogeny-hecke} therefore shows that $X$ is contained in a proper weakly-special subvariety.
\end{proof}

\subsubsection{From Ax--Schanuel to Ax--Lindemann, and Andr\'e--Oort}\label{ss:alao}

We now deduce a $n$-dimensional Ax--Lindemann for the Drinfeld $j$-function, following the argument of \cite[Corollary~1.4]{tsim}.

\begin{cor}\label{c:ax-lindemann}
Let $q$ be odd.  Let $C$ be an algebraic subvariety of $\A^n_{\C_\infty}$, and let $Z$ be a maximal irreducible algebraic subvariety contained in $\boldsymbol j^{-1}(C)$. Then $\boldsymbol j(Z)$ is a weakly-special subvariety.
\end{cor}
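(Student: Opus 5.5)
We deduce Ax--Lindemann from Theorem~A (that is, Theorem~\ref{t:ax-schanuel} combined with Theorem~\ref{t:HS-weakly}), following the strategy of \cite[Corollary~1.4]{tsim}, with an induction on $n$ to handle the weakly-special structure.

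First, the setup. Let $Z\subset\boldsymbol j^{-1}(C)$ be maximal, and let $W:=\overline{\boldsymbol j(Z)}^{\,\mathrm{Zar}}$. Replacing $C$ by $W$ does not affect maximality of $Z$, since $Z\subset\boldsymbol j^{-1}(W)$. So we may assume $C=W$ is irreducible and that $\boldsymbol j(Z)$ is Zariski dense in $C$. Here $Z$ is understood as an irreducible component, inside $\Omega^n$, of the analytification of an algebraic subvariety $\overline Z\subset(\p^1)^n$.

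Next, we build an atypical intersection. Take
\[
V_0:=\overline Z\times\A^{3n}\subset\widetilde{\caly}^{\,n},
\]
which constrains only the $z$-coordinates. Let $\calv\subset V_0\cap\lcal$ be the image of $Z$ under the lifted Ramanujan leaf $z\mapsto\widetilde C^n(z)$. Then $\dim\calv=\dim Z$ and $V:=\overline{\calv}^{\,\mathrm{Zar}}$ satisfies $\dim V\le\dim Z+3n$.

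\begin{itemize}
\item If the inequality is strict, Theorem~A applies directly: $C=\overline{\pi^n_j(V)}^{\,\mathrm{Zar}}$ lies in a proper weakly-special subvariety $S$.
\item If instead $\dim V=\dim Z+3n$, then $V=V_0$. Hence $V_0$ is the Zariski closure of a subset of $\lcal$. This should force $\overline Z$ to be Zariski dense in $(\p^1)^n$: on each fibre over $\overline Z$ the leaf meets only a discrete set, while $V_0$ has full $\A^{3n}$ fibres. I would argue this via Lemma~\ref{l:basic-lifted-leaf-dense} style translation arguments. Then $Z=\Omega^n$ and $C=\A^n$, which is weakly-special.
\end{itemize}

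Then, the induction. In the first case, $S$ is cut out by conditions of the form $x_i=a_i$ or $(x_i,x_k)\in T_\gamma$. Take a component $\widetilde S$ of $\boldsymbol j^{-1}(S)$ containing $Z$. It is, up to the $\Gamma^+$-action, the product of points, graphs $z_k=\gamma z_i$, and free factors. The graphs are algebraic because $\gamma\in\gl_2(F)$ acts by M\"obius transformations. So $\widetilde S$ is itself algebraic and is isomorphic to $\Omega^{n'}\times\{\text{point}\}$ for some $n'<n$. Under this isomorphism, $S$ is, up to a finite correspondence, the image of $Y(1)^{n'}$ via the free coordinates. Hecke correspondences preserve weakly-special subvarieties, so we are reduced to the same statement for $n'$, applied to $Z$ viewed inside $\Omega^{n'}$. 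Maximality of $Z$ is preserved, because algebraic subvarieties of $\widetilde S$ are algebraic in $\Omega^n$. Induction on $n$ then concludes that $\boldsymbol j(Z)$ is weakly-special in $S$, hence in $\A^n$. Finally, $\boldsymbol j(Z)$ equals its closure $C$: by maximality, $Z$ equals the full component of $\boldsymbol j^{-1}(C)$ once the latter is known to be algebraic.

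The main obstacle is the reduction step. It requires verifying that preimages of weakly-special subvarieties are algebraic in the $z$-coordinates, so that maximality transfers. It also requires handling the degenerate case $\dim V=\dim Z+3n$ rigorously, where the Zariski closure of the lifted leaf over a non-dense $\overline Z$ must be shown to be smaller than $V_0$. For the latter I would first try fixing the $z$-coordinates and invoking the Bosser--Pellarin algebraic independence fibrewise.
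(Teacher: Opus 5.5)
Your degenerate case $\dim V=\dim Z+3n$ has a genuine gap. You want to show that $V=V_0$ forces $\overline Z$ to be Zariski dense in $(\p^1)^n$, using a fibrewise argument that does not use maximality. Without maximality this claim is false, because the Zariski closure of a set meeting each fibre over $\overline Z$ discretely can still have full $\A^{3n}$-fibres. For example, take $n=2$ and let $Z$ be an irreducible component of $\{z_2=z_1^2\}\cap\Omega^2$. The components of $\boldsymbol j^{-1}$ of proper weakly-special subvarieties are of the form $\{z_i=c\}\times\Omega$ or graphs of M\"obius maps, so $\boldsymbol j(Z)$ lies in no proper weakly-special subvariety. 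Theorem~A then gives $\dim V\ge 1+6=\dim V_0$, hence $V=V_0$, although $\overline Z$ is a curve. Invoking Bosser--Pellarin fibrewise cannot repair this: over a fixed $z$ the leaf is a single point, so there is nothing transcendental to exploit.

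The missing idea is an upper bound on $\dim V$ that involves $C$. The $j$-values of $\calv$ lie in $C$. For fixed $x_i$ and fixed $(E_i,h_i)$, the relation $g_i^{q+1}=-x_ih_i^{q-1}$ allows only finitely many $g_i$. Hence
\[
V\subset\overline Z\times\{(E,g,h):\pi_j^n(g,h)\in C\},\qquad \dim V\le \dim Z+\dim C+2n .
\]
With this bound, $V=V_0$ gives $\dim C=n$, so $C=\A^n$, and only then does maximality give $Z=\Omega^n$.

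This inequality is exactly what drives the paper's proof, and with it your case split and induction become unnecessary. The paper argues as follows:
\begin{enumerate}
\item Choose a weakly-special $S\supset X=\overline{\boldsymbol j(Z)}^{\,\mathrm{Zar}}$ of minimal dimension $r$.
\item Keep one coordinate from each Hecke class of nonconstant coordinates, giving a finite projection $\pr_I\colon S\rightarrow Y(1)^r$; by minimality, $\pr_I(X)$ lies in no proper weakly-special subvariety.
\item Apply Theorem~A to the Ramanujan lift of $\pr_I(Z)$ to get $\dim V\ge d+3r$.
\item Combine this with $\dim V\le d+\dim X+2r$ to get $\dim X\ge r$, so $X=S$.
\item Maximality then makes $Z$ a full component of $\boldsymbol j^{-1}(S)$.
\end{enumerate}

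If you keep the induction, the maximality transfer through $\pr_I|_S$ needs more than you wrote. A larger algebraic $Z'\supset\pr_I(Z)$ in $\boldsymbol j^{-1}\bigl(\overline{\pr_I(C)}\bigr)$ lifts to something whose $j$-image lies in $S\cap\pr_I^{-1}(\pr_I(C))$. That set is a finite union of conjugates of $C$, and you need irreducibility of $Z'$ to land back in $C$ itself.
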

\begin{proof}
We set $X:=\overline{\boldsymbol j(Z)}^{\mathrm{Zar}}$, and we choose
a weakly-special $S\supset X$ of minimal dimension $r$. The case $r=0$ is immediate, so we shall deal with $r>0$. Choosing one coordinate from each
equivalence class of nonconstant coordinates linked by Hecke relations gives a finite surjective projection
$\pr_I\colon S\rightarrow Y(1)^r$. On a component of $\boldsymbol j^{-1}(S)$ containing $Z$, the
discarded coordinates are constants or fractional linear functions of the retained ones. Hence $\pr_I(Z)$ is algebraic of dimension $d:=\dim Z$. Moreover, $\pr_I(X)$ lies in no proper weakly-special
subvariety, since pulling back to $S$ would contradict minimality.

Let $\calv$ be the Ramanujan lift of $\pr_I(Z)$, and let $V:=\overline{\calv}^{\,\mathrm{Zar}}$.
Now by Theorem~A (or, which is the same,  the combination of Theorems~\ref{t:ax-schanuel} and \ref{t:HS-weakly}) we get the first inequality of
\[
d+3r\le\dim V\le d+\dim X+2r
\]
while the second follows because the $z$-coordinates vary in $\overline{\pr_I(Z)}^{\,\mathrm{Zar}}$, of dimension $d$, and the $j$-values vary in $\pr_I(X)$, of dimension $\dim X$.
Once these values are fixed, each factor contributes at most two  dimensions: after choosing $E_i$ and $h_i$, there are only finitely many possibilities for $g_i$, since
$g_i^{q+1}=-x_i h_i^{q-1}$, where $x_i$ is the fixed $j$-value. Thus $\dim X\ge r$, and consequently $X=S$. The components of $\boldsymbol j^{-1}(S)$ are algebraic and map onto $S$. Since $S=X\subset C$, maximality of $Z$ inside $\boldsymbol j^{-1}(C)$ forces $Z$ to equal the component containing it. Therefore $\boldsymbol j(Z)=S$.
\end{proof}

We recall that a point of $Y(1)^n$ is called {\em CM } if  all its coordinates correspond to Drinfeld $A$-modules with complex multiplication, i.e., if their endomorphism rings  strictly contain $A$.  A weakly-special subvariety is called {\em special} if all its constant coordinates are CM points. When there are no constant coordinates, this condition is automatically satisfied.

One can now extend (almost verbatim) our proof of the two-dimensional Andr\'e--Oort as in \cite[Theorem~3.24]{bns} to arbitrary dimensions by Corollary~\ref{c:ax-lindemann}, and obtain the following.

\begin{thm}\label{t:andre-oort}
Assume that $q$ is odd. Let $C\subset Y(1)^n$ be an irreducible closed algebraic subvariety. Then $C(\C_\infty)$
contains a Zariski dense set of CM points if and only if $C$
is special.
\end{thm}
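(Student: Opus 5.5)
We plan to follow the two-dimensional André--Oort argument of \cite[Theorem~3.24]{bns}, replacing its Ax--Lindemann input by Corollary~\ref{c:ax-lindemann}. The easy direction comes first. If $C$ is special, then after fixing its constant coordinates at CM points it is an irreducible component of a locus cut out by Hecke relations $T_\gamma$ among the free coordinates. CM points are dense in $Y(1)$, and Hecke correspondences send CM points to CM points, since isogenous Drinfeld modules have commensurable endomorphism rings. Hence the CM points of $C$ are Zariski dense; to see this, parametrise $C$ by $\Omega^{r}$ through the retained coordinates and use density of CM points in each factor.

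For the converse we proceed by induction on $n$ and on $\dim C$, running the Pila--Zannier strategy. First, choose a fundamental set $\mathcal D\subset\Omega$ for $\pgl_2(A)$ whose restriction to bounded pieces is covered by finitely many affinoids, as in \cite{bns}. A CM point $z$ of discriminant $D$ lies in an imaginary quadratic extension of $F$, and its height in $\mathcal D$ is polynomial in $|D|_\infty$. Second, the Galois orbit of the CM point $\boldsymbol j(z)$ has size at least $|D|_\infty^{\epsilon}$; this is the lower bound for class numbers of imaginary quadratic function fields used in \cite{bns}, and it is valid uniformly in the coordinates. Third, all Galois conjugates of a CM point of $C$ are again CM points of $C$, because $C$ may be assumed defined over a finite extension of $F$; the subvariety spanned by CM points is defined over $\overline{F}$. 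Consequently $\boldsymbol j^{-1}(C)\cap\mathcal D^n$ contains $\gg|D|^{\epsilon}$ quadratic points of height $\le|D|^{c}$.

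The rigid-analytic Pila--Wilkie theorem of Binyamini--Kato \cite{bk} then produces, for large $|D|$, positive dimensional algebraic pieces of $\boldsymbol j^{-1}(C)$ through many of these points. Corollary~\ref{c:ax-lindemann} shows that the image of each maximal algebraic piece is a positive dimensional weakly-special subvariety contained in $C$. Weakly-special subvarieties come in families parametrised by the constant coordinates and by the Hecke data. A finiteness argument then applies: only finitely many Hecke shapes of bounded complexity contain $\gg|D|^\epsilon$ of these points, by the same height comparison used in \cite{bns}. From this we conclude that all but finitely many Galois orbits of CM points lie in a finite union of positive dimensional weakly-special subvarieties $S\subset C$ whose constant coordinates are CM. Since the CM points are Zariski dense in $C$, some such $S$ has a CM-dense family whose union is dense in $C$. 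Fibering $C$ over the constant-coordinate directions and applying induction on $n$ to the base, which is a product of fewer modular curves, shows that $C$ itself is special.

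The main obstacle is the uniform counting step. The Binyamini--Kato theorem works on affinoids, whereas $\mathcal D^n$ is not affinoid. We will therefore have to cover the relevant region by finitely many affinoids with height control exactly as in \cite{bns}, and then check that the resulting algebraic blocks are uniform in families so that Corollary~\ref{c:ax-lindemann} applies family-wise. The rest transfers verbatim from the $n=2$ case.
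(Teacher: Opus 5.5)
Your proposal takes the same route as the paper, whose entire proof is the remark that the two-dimensional argument of \cite[Theorem~3.24]{bns} extends almost verbatim to arbitrary $n$ once Corollary~\ref{c:ax-lindemann} supplies Ax--Lindemann in all dimensions. Your outline (polynomial heights of CM preimages in $\mathcal D^n$, Galois lower bounds, Binyamini--Kato counting on affinoid pieces, Ax--Lindemann, induction through the constant-coordinate directions) is a natural unpacking of that remark, with one caveat about the step that has no counterpart for $n=2$ (where a curve containing a positive-dimensional weakly special subvariety is itself weakly special): you need the weakly special subvarieties of $C$ through CM points to fall into finitely many \emph{families}, not finitely many subvarieties as you first write, and this finiteness does not follow from a height comparison (in characteristic zero it comes from o-minimal definability), so it is where the ``almost verbatim'' extension carries genuine content, in your sketch and in the paper's one-line proof alike.
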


\end{document}